\newtheorem{theorem}{Theorem}
\newtheorem*{theorem5}{Theorem 5}
\newtheorem*{theorem2}{Theorem 2}
\newtheorem*{theorem1}{Theorem 1}
\newtheorem{proposition}{Proposition}
\newtheorem{lemma}{Lemma}
\newtheorem{corollary}{Corollary}
\newtheorem*{corollary1}{Corollary}
\newtheorem*{definition1}{Definition}
\begin{document}
\title{ON THE REES---SUSHKEVICH VARIETY}
\author{Stanislav Kublanovsky\\
TPO "Severnyi ochag"\\ 30, B. Konjushennaja 15,\\
191186, St. Petersburg, Russia\\
e-mail: stas@norths.spb.su}
\date{19.07.1997}

\maketitle

\begin{abstract}
We study varieties of semigroups related to completely 0-simple semigroup. We present here an algorithmic descriptions of these varieties in terms of "forbidden" semigroups. We also describe residually completely 0-simple varieties of semigroups in terms of identities.
\end{abstract}

\section{Introduction.}

One of the most important classes of semigroups is the class of completely 0-simple semigroups. Recall that a semigroup is called 0-simple if it does not have ideals except itself and possibly 0. A 0-simple semigroup is called completely 0-simple if it has a minimal non-zero idempotent. Such semigroups play a very important role in semigroup theory. In 1940 Rees \cite{Rees1} showed that completely 0-simple semigroups may be described via the same construction as that used by Sushkevich \cite{Sushkevich1}.

It is shown in \cite{Hall1} that the identities
\begin{align}
\label{1}
& x^{2}=x^{n+2}\\
\label{2}
& xyx=(xy)^{n+1}x\\
\label{3}
& (xhz)^nxyz=xyz(xhz)^n
\end{align}
form an identity basis of the variety generated by all completely 0-simple
semigroups whose subgroups have exponent dividing $n$.

\begin{definition1}
Any semigroup variety satisfying, for some $n$, the identities \eqref{1}, \eqref{2}, \eqref{3} is called a Rees---Sushkevich variety.
\end{definition1}

We are interested in the following problem: given a finite set $\Sigma$ of identities, is the semigroup variety defined by $\Sigma$ a Rees---Sushkevich variety?

Clearly, this problem is a special case of the general problem of sequence of
identities. It was shown by  Murskii in 1968 \cite{Murskii1} that this problem is undecidable for semigroups. However we are able to prove that the problem of recognizing Rees---Sushkevich varieties is decidable.

Consider the following semigroup presentations:
\begin{enumerate}
  \item $\langle x,y \mid x=x^{2}; y^{2}=0; xy=yx\rangle$
  \item $\langle x,y \mid x^{2}=0; y^{2}=0; xyx=yxy\rangle$
  \item $\langle x,y \mid x^{2}=x^{3}; xy=y; yx^{2}=0; y^{2}=0\rangle$
  \item $\langle x,y \mid x^{2}=x^{3}; yx=y; x^{2}y=0; y^{2}=0\rangle$
  \item $\langle x \mid x^{3}=0\rangle$
  \item $\langle x,y \mid x^{2}=0; y=y^2; yxy=0\rangle$
  \item $\langle x,y \mid x^{2}=0; y^{2}=y^{n+2}; yxy=0; xy^{q}x=0, (q=2,\cdots,n); xyx=xy^{n+1}x\rangle$
  \item $\langle x,y \mid xy=xyx=xy^{2}; yx=yxy=yx^{2}; x^{2}=x^{2}y=x^{3}; y^{2}=y^{2}x=y^{3} \rangle$
  \item $\langle x,y \mid xy=yxy=x^{2}y; yx=xyx=y^{2}x; x^{2}=yx^{2}=x^{3}; y^{2}=xy^{2}=y^{3}\rangle$
  \item $\langle a,x,y \mid a^{2}=x^{2}=y^{2}=xy=yx=0; ax=axax; ay=ayay; xa=xaxa; ya=yaya; xay=xax; yax=yay\rangle$
  \item $\langle a,x,y \mid a^{2}=x^{2}=y^{2}=xy=yx=0; xa=xaxa; ya=yaya; ax=axax; ay=ayay; xay=yay; yax=xax\rangle$
  \item $\langle a,x,y \mid x=x^{2}; y=y^{2}; a=a^{2}; xy=x; yx=y; ax=xa=x; ay=ya=y\rangle$
  \item $\langle a,x,y \mid x=x^{2}; y=y^{2}; a=a^{2}; xy=y; yx=x; ax=xa=x; ay=ya=y\rangle$
\end{enumerate}
It can be shown that each of this presentation defines a finite semigroup. Note that the presentation $7$ defines in fact a series of semigroups depending on
an integer $n$.

\begin{theorem1}\label{MainTheorem}
A semigroup variety $V$ is a Rees---Sushkevich variety if and only if it does not contain any of the semigroups $(1)$-$(13)$.
\end{theorem1}

At first sight, Theorem \ref{MainTheorem} is not effective because the class of the semigroups forbidden by Theorem \ref{MainTheorem} is infinite. But in fact, it can be used to recognize in finite number of steps whether the variety $V$ defined by a finite set of identities $\Sigma$ is a Rees---Sushkevich variety. Call an identity balanced if the multiplicity of occurrence of each varible in the left and the right sides of the identity is the same. If $\Sigma$ consists only of balanced identities then any commutative semigroup belongs to $V$ and, consequently, $V$ is not a Rees---Sushkevich variety. Assume that $\Sigma$ contains an unbalanced identity; it is obvious that this identity implies an identity $x^{m}=x^{m+n}$. But this last identity fails in all semigroups $K_{r}$ with $r>n$. Therefore, to recognize if the variety $V$ satisfies the requirements of Theorem \ref{MainTheorem} it remains to test only a finite number of forbidden semigroups $1-6$, $8-13$, $K_{1}$, $K_{2}$ $\cdots$ $K_{n}$. Thus, we obtain the following corollary

\begin{corollary}\label{corollary1}
  There exists an algorithm determining if the variety defined by a given finite system of identities is a Rees---Sushkevich variety.
\end{corollary}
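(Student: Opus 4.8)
The plan is to turn the reduction sketched in the discussion above into an explicit decision procedure, using Theorem~\ref{MainTheorem} to replace ``is $V$ a Rees--Sushkevich variety?'' by finitely many tests of the form ``does the finite semigroup $S$ satisfy the system $\Sigma$?''. Each such test is decidable by brute force: $S$ satisfies an identity $u=v$ iff $\phi(u)=\phi(v)$ for all of the finitely many assignments $\phi$ of elements of $S$ to the variables occurring in $u=v$, so $S\in V$ is decidable. The one obstruction to immediacy is that the list of forbidden semigroups in Theorem~\ref{MainTheorem} is infinite, because presentation~(7) encodes an infinite series $K_{1},K_{2},\dots$; so the task is to cut this series down to a computable initial segment. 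I would branch according to whether $\Sigma$ contains an unbalanced identity.

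If every identity of $\Sigma$ is balanced --- a property checked syntactically, by comparing the multiplicity of each variable on the two sides of each identity --- then every commutative semigroup satisfies $\Sigma$ (in a commutative semigroup the value of a word depends only on the multiset of its letters), hence every commutative semigroup lies in $V$. In particular the commutative semigroup~(1) lies in $V$, so by Theorem~\ref{MainTheorem} the variety $V$ is not a Rees--Sushkevich variety; the procedure returns ``no''. (Directly: the free monogenic commutative semigroup lies in $V$ and fails $x^{2}=x^{n+2}$ for every $n$, so $V$ cannot be Rees--Sushkevich.)

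Otherwise $\Sigma$ contains an unbalanced identity $u=v$, in which some variable $z$ occurs $a$ times on the left and $b$ times on the right with $a\neq b$. Substituting $z\mapsto x$ and every other variable $\mapsto x^{p}$ transforms $u=v$ into $x^{\,a+p(|u|-a)}=x^{\,b+p(|v|-b)}$, whose two exponents coincide for at most one value of $p$; choosing $p\in\{1,2\}$ to avoid that value, we see that $\Sigma$ entails a nontrivial identity $x^{m}=x^{m+n}$ with $m\geq 1$ and $n\geq 1$. As recalled before the statement, $x^{m}=x^{m+n}$ fails in $K_{r}$ whenever $r>n$, so none of the semigroups $K_{r}$ with $r>n$ belongs to $V$. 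Consequently, by Theorem~\ref{MainTheorem}, $V$ is a Rees--Sushkevich variety if and only if none of the finitely many semigroups $(1)$--$(6)$, $(8)$--$(13)$, $K_{1},\dots,K_{n}$ satisfies $\Sigma$; the procedure carries out these tests and answers accordingly.

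The genuinely non-mechanical points, which I would write out carefully, are (i) that mere unbalancedness yields a \emph{nontrivial} power identity $x^{m}=x^{m+n}$ --- handled by the substitution above together with the observation that $a+p(|u|-a)-\bigl(b+p(|v|-b)\bigr)$ is a nonzero linear polynomial in $p$ --- and (ii) that $x^{m}=x^{m+n}$ indeed fails in every $K_{r}$ with $r>n$, which relies on the structure of the semigroups given by presentation~(7), namely that the maximal subgroup of $K_{r}$ has exponent $r$, so that a generator of that subgroup witnesses the failure precisely when $r\nmid n$. The balancedness test and the membership tests $S\models\Sigma$ are routine and plainly effective; the real content of the algorithm is carried by Theorem~\ref{MainTheorem} itself, which is assumed here.
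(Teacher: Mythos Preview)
Your proposal is correct and follows essentially the same approach as the paper: branch on whether $\Sigma$ contains an unbalanced identity, use the balanced case to conclude immediately that $V$ is not Rees--Sushkevich (via commutative semigroups), and in the unbalanced case extract a power identity $x^{m}=x^{m+n}$ to bound which $K_{r}$ need to be tested, reducing to finitely many membership checks against Theorem~\ref{MainTheorem}. Your version supplies explicit justifications---the substitution yielding a nontrivial power identity and the reason $K_{r}$ fails $x^{m}=x^{m+n}$ for $r>n$---that the paper leaves as ``obvious,'' but the structure of the argument is the same.
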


\begin{corollary}\label{corollary2}
For a semigroup variety $V$, if every finite semigroup of $V$ satisfies the identities \eqref{1}, \eqref{2}, \eqref{3} then each semigroup of $V$ satisfies these identities.
\end{corollary}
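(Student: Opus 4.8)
The plan is to deduce the statement from Theorem~\ref{MainTheorem}. By that theorem, as soon as we know that $V$ contains none of the semigroups $(1)$--$(13)$ we may conclude that $V$ is a Rees--Sushkevich variety, i.e.\ that there is an $n$ for which \emph{every} member of $V$ satisfies \eqref{1}, \eqref{2}, \eqref{3}; so the whole proof reduces to checking that the hypothesis ``every finite semigroup of $V$ satisfies \eqref{1}, \eqref{2}, \eqref{3}'' forces $V$ to omit each of $(1)$--$(13)$.

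To check this, recall (as noted just after the list) that each of the semigroups $(1)$--$(13)$ is finite. First, none of them is a Rees--Sushkevich semigroup: if some $S$ from the list satisfied \eqref{1}, \eqref{2}, \eqref{3} for some value of $n$, then the variety $\operatorname{var}(S)$ generated by $S$ would satisfy the same identities and hence be a Rees--Sushkevich variety, so by Theorem~\ref{MainTheorem} $\operatorname{var}(S)$ would omit $S$ --- impossible, since $S\in\operatorname{var}(S)$. Now suppose some $S$ from the list belonged to $V$. Being finite, $S$ would be a finite member of $V$, hence by hypothesis would satisfy \eqref{1}, \eqref{2}, \eqref{3} for some $n$, contradicting the previous sentence. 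Thus $V$ omits all of $(1)$--$(13)$, and Theorem~\ref{MainTheorem} gives that $V$ is a Rees--Sushkevich variety; fixing a witnessing $n$, every semigroup of $V$ satisfies \eqref{1}, \eqref{2}, \eqref{3}.

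This settles the corollary when ``satisfies \eqref{1}, \eqref{2}, \eqref{3}'' is read as ``is a Rees--Sushkevich semigroup''. If one wants the $n$ in the conclusion to be exactly the $n$ of the hypothesis, one short step remains, and this is the only place requiring care. Since $V$ is now a Rees--Sushkevich variety it satisfies $x^{2}=x^{m+2}$ for some $m$, so $V$ is periodic and every monogenic subsemigroup of a member of $V$ is finite; being a finite member of $V$ it satisfies $x^{2}=x^{n+2}$, whence $V\models x^{2}=x^{n+2}$. As $V$ satisfies both $x^{2}=x^{n+2}$ and $x^{2}=x^{m+2}$, every monogenic subsemigroup of a member has period dividing $\gcd(m,n)$, so $V\models w^{n+1}=w^{m+1}$ for every word $w$ and $V\models w^{n}=w^{m}$ whenever $n\ge 2$; substituting these into the instances of \eqref{2} and \eqref{3} already valid for the parameter $m$ yields \eqref{2} and \eqref{3} for $n$. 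The degenerate case $n=1$ --- where all subgroups of members of $V$ are trivial and $V$ is locally finite --- follows directly, by passing to finite subsemigroups. I do not expect a genuine obstacle here: the whole weight of the argument rests on Theorem~\ref{MainTheorem}, and what remains is the elementary reduction above.
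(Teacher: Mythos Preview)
Your core argument is correct and is exactly the approach the paper intends: Corollary~\ref{corollary2} is stated without proof because it is an immediate consequence of Theorem~\ref{MainTheorem} together with the observation (made just before the theorem) that each of the semigroups $(1)$--$(13)$ is finite. Your second paragraph spells this out cleanly; the contrapositive use of Theorem~\ref{MainTheorem} to see that none of $(1)$--$(13)$ can itself satisfy \eqref{1}--\eqref{3} is a nice touch, and the paper in fact verifies this directly in the necessity parts of Lemma~\ref{l10} and Theorem~\ref{t1}.

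Your final paragraph, on recovering the \emph{same} parameter $n$ in the conclusion, goes beyond what the paper attempts---the corollary is meant in the sense ``$V$ is a Rees--Sushkevich variety'', with the quantifier over $n$ left implicit on both sides. Your argument there is essentially right: once $V\models x^{2}=x^{m+2}$ and (via monogenic subsemigroups) $V\models x^{2}=x^{n+2}$, one gets $a^{k}=a^{k+\gcd(m,n)}$ for all $k\ge 2$, and hence $a^{m+1}=a^{n+1}$ and $a^{m}=a^{n}$ whenever $m,n\ge 2$, which transfers \eqref{2} and \eqref{3} from $m$ to $n$. The residual case $n=1$ does rely on the local finiteness of $VarCS^{0}(B_{1})$, which is true but not proved in the paper; since the paper does not aim for this refinement, this is not a defect in your proof of the corollary as stated.
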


It is easy to deduce from Theorem \ref{MainTheorem} the following corollary.

\begin{corollary}\label{corollary3}
The algorithm that determines if the variety defined by a given finite system of identities coincides with the variety of semigroups generated by all completely 0-simple semigroups over the periodic groups of exponent $n$ exists if and only if the equational theory of the variety of periodic groups of exponent $n$ is decidable.
\end{corollary}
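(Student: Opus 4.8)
The plan is to establish both implications of Corollary~\ref{corollary3} on top of Theorem~\ref{MainTheorem} and Corollary~\ref{corollary1}. Throughout, write $\mathcal B_n$ for the variety of groups of exponent dividing $n$, write $\mathcal R_n$ for the variety in the statement (by \cite{Hall1} it is defined by \eqref{1}, \eqref{2}, \eqref{3} for this $n$, so $\mathcal R_n$ is itself a Rees--Sushkevich variety), and for a finite set $\Sigma$ of identities put $V=\operatorname{Var}\Sigma$. The case $n=1$ is degenerate ($\mathcal B_1$ is trivial and $\mathcal R_1$ is a fixed locally finite combinatorial variety), so we may assume $n\ge2$.

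For the implication ``equational theory of $\mathcal B_n$ decidable $\Rightarrow$ the algorithm exists'' I would run, on input $\Sigma$, a three--step procedure. \emph{Step 1:} use the algorithm of Corollary~\ref{corollary1} to decide whether $V$ is a Rees--Sushkevich variety; if it is not, output ``$V\ne\mathcal R_n$'', which is legitimate since $\mathcal R_n$ is Rees--Sushkevich and every subvariety of a Rees--Sushkevich variety is again one. \emph{Step 2:} assuming $V$ is Rees--Sushkevich, compute from $\Sigma$ the free monogenic semigroup of $V$ — this is routine, since that semigroup is $\mathbb N$ modulo the congruence generated by all pairs obtained from the two sides of the identities of $\Sigma$ by substituting powers of the generator, and such congruences on $\mathbb N$ are effectively computable. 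Then $V\subseteq\mathcal R_n$ if and only if this semigroup satisfies $x^{2}=x^{n+2}$: the forward direction is immediate, and conversely, if it does then $V$ satisfies \eqref{1}, \eqref{2}, \eqref{3} for some (unknown) parameter $m$, hence $V$ satisfies \eqref{1} with parameter $d=\gcd(m,n)$, and reducing the relevant powers modulo $d$ turns \eqref{2} and \eqref{3} with parameter $m$ into \eqref{2} and \eqref{3} with parameter $n$, i.e.\ $V\subseteq\mathcal R_n$. If $V\not\subseteq\mathcal R_n$, output ``$V\ne\mathcal R_n$''. \emph{Step 3:} if $V\subseteq\mathcal R_n$, it remains to decide whether $\mathcal R_n\subseteq V$, i.e.\ whether each of the finitely many identities of $\Sigma$ holds in $\mathcal R_n$. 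Here I would invoke the classical fact that the equational theory of $\mathcal R_n$ is recursive in that of $\mathcal B_n$: an identity holds in $M^{0}(G;I,\Lambda;P)$ exactly when a combinatorial condition on the two words and on $P$ holds and a certain derived group identity holds in $G$, and quantifying over all $I,\Lambda,P$ and all $G\in\mathcal B_n$ reduces ``$\mathcal R_n\models u=v$'' to a finite combinatorial test together with one query to the equational theory of $\mathcal B_n$.

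For the converse, given the algorithm I would decide whether $\mathcal B_n\models w=1$ for a group word $w$. Using $y^{-1}=y^{\,n-1}$ in $\mathcal B_n$ we may take $w=w(y_1,\dots,y_k)$ to be a positive word, and as the cases $k\le1$ are decidable directly we assume $k\ge2$. The crux is a relativised form of $w=1$: with a fresh variable $a$, set
\[
\widehat w\colon\quad w(ay_1a,\dots,ay_ka)\cdot(ay_1a)(ay_2a)\cdots(ay_ka)=(ay_1a)(ay_2a)\cdots(ay_ka),
\]
and $\Sigma_w=\{\eqref{1},\eqref{2},\eqref{3},\widehat w\}$ with parameter $n$. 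I would prove the relativisation lemma: a completely $0$--simple semigroup $M^{0}(G;I,\Lambda;P)$ with $G\in\mathcal B_n$ satisfies $\widehat w$ if and only if $G\models w=1$. Indeed, if $a$ lies in a group $\mathcal H$--class then the products $ay_ja$ range over that whole $\mathcal H$--class, so $\widehat w$ forces $w$ to take the identity value there; if $a$ is nilpotent, or if some $ay_ja$ is zero, then both sides of $\widehat w$ collapse to $0$ — here the common suffix $(ay_1a)\cdots(ay_ka)$ occurring on both sides is exactly what makes the vanishing cases agree; and if $G\not\models w=1$ one exhibits a failing substitution inside one group $\mathcal H$--class (taking $a$ to be its identity). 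Granting this, and using that $\mathcal R_n$ is generated by the semigroups $M^{0}(G;I,\Lambda;P)$ with $G\in\mathcal B_n$: if $\mathcal B_n\models w=1$ then all these satisfy $\widehat w$, so $\mathcal R_n\models\Sigma_w$ and, as $\Sigma_w$ contains a basis of $\mathcal R_n$, $\operatorname{Var}\Sigma_w=\mathcal R_n$; if $\mathcal B_n\not\models w=1$, choose $G\in\mathcal B_n$ with $G\not\models w=1$, so $G^{0}\in\mathcal R_n$ while $G^{0}\not\models\widehat w$, forcing $\operatorname{Var}\Sigma_w\subsetneq\mathcal R_n$. Running the given algorithm on $\Sigma_w$ therefore decides whether $\mathcal B_n\models w=1$.

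The only nontrivial ingredients are the reduction of the equational theory of $\mathcal R_n$ to that of $\mathcal B_n$ used in Step~3 and, above all, the relativisation lemma. The latter — verifying, through a case analysis on the vanishing of $a$ and of the products $ay_ja$ in a completely $0$--simple semigroup, that $\widehat w$ encodes precisely ``$w=1$ in every subgroup'' — is the step I expect to require the most care; everything else is bookkeeping around Theorem~\ref{MainTheorem} and Corollary~\ref{corollary1}.
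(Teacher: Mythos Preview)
Your argument is sound, and there is essentially nothing to compare it to: the paper does not prove Corollary~\ref{corollary3} at all, merely asserting that it is ``easy to deduce from Theorem~\ref{MainTheorem}'' and then invoking Adian and Ivanov for the forward direction. Your proposal supplies exactly the details the paper omits, and does so along the expected lines.

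A few remarks on the individual steps. Your Step~2 is the one place where you do a little more than the paper hints at: rather than trying to decide directly whether $\Sigma$ implies \eqref{1}--\eqref{3} with parameter $n$, you first use Corollary~\ref{corollary1} to know that \eqref{1}--\eqref{3} hold for \emph{some} $m$, and then reduce the parameter by checking only the monogenic identity $x^2=x^{n+2}$. The verification that \eqref{2} and \eqref{3} with parameter $m$ together with $x^2=x^{\gcd(m,n)+2}$ yield \eqref{2} and \eqref{3} with parameter $n$ is correct (including the edge case $m=1$, where one iterates \eqref{3} once); this is a clean way to sidestep deciding arbitrary consequences of $\Sigma$. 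Step~3 is indeed the content of \cite{Hall1}: the equational theory of $\mathcal R_n$ is Turing-reducible to that of $\mathcal B_n$, so this citation carries the weight you put on it.

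For the converse, your relativisation lemma is correct and your case analysis is the right one. The suffix $(ay_1a)\cdots(ay_ka)$ with $k\ge 2$ is exactly what forces both sides of $\widehat w$ to vanish when $p_{\lambda i}=0$ or some $ay_ja=0$, and when all $ay_ja$ lie in a group $\mathcal H$-class the identity collapses to ``$w$ evaluates to the identity there'', which one can always violate inside any $M^0(G;I,\Lambda,P)$ once $G\not\models w=1$ by taking $a$ to be the idempotent of a group $\mathcal H$-class. This is the standard device for transporting group laws into $\mathcal R_n$, and your use of it is accurate.
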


Well-known results by Adian \cite{Adian1} and Ivanov \cite{Ivanov1} on the Burnside problem imply therefore that such an algorithm exists for all odd $n\ge 665$ and for all $n>2^{48}$ divisible by $2^9$.

By definition, any Rees---Sushkevich variety is contained in the variety
generated by all completely 0-simple semigroups over periodic groups
of bounded exponent. However it is easy to see that not every
Rees---Sushkevich variety is generated by its completely 0-simple semigroups.
To characterize varieties generated by its completely 0-simple semigroups
among Rees---Sushkevich varieties we need following semigroups:

$N_{2}=\langle a\mid a^2 =0\rangle$

$A_{0}=\langle x,y\mid x^2 =x;\; y^2=y;\; yx=0\rangle $

$A_{2}$, $B_{2}$ are well-known finite semigroups of matrix (see \cite{Shevrin1}).

These semigroups can be defined by presentations:

$A_2=\langle a,b\mid aba=a;\; bab=b;\; a^2=a;\; b^2=0\rangle$

$B_2=\langle a,b\mid aba=a;\; bab=b;\; a^2=b^2=0\rangle $

\begin{theorem2}\label{exacttheorem}
  Let $V$ be a periodic semigroup variety. The variety $V$ is generated by completely 0-simple semigroups if and only if $V$ is a Rees---Sushkevich variety and one of the following condition is fulfilled:
  \begin{enumerate}
    \item $B_{2}\in V$, $A_{0}\notin V$;
    \item $A_{2}\in V$;
    \item $N_{2}\notin V$.
  \end{enumerate}
\end{theorem2}

For brevity, we call any variety generated by completely 0-simple semigroups whose subgroups have a bounded exponent \emph{exact}. It is not hard to show that every exact variety is generated by a single completely 0-simple semigroup.

\begin{corollary}\label{corollary4}
  There exists an algorithm that determines if the variety defined by a given finite system of identities (or by a given finite semigroup) is exact.
\end{corollary}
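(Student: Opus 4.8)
The plan is to combine the recognition algorithm for Rees---Sushkevich varieties (Corollary~\ref{corollary1}) with a membership test for a fixed finite semigroup, via the equivalence: \emph{$V$ is exact if and only if $V$ is a Rees---Sushkevich variety and one of the conditions $(1)$, $(2)$, $(3)$ of Theorem~\ref{exacttheorem} holds}. For the forward direction, suppose $V$ is exact, say generated by completely 0-simple semigroups whose subgroups have exponent dividing $N$. Then $V$ is contained in the variety generated by \emph{all} completely 0-simple semigroups with subgroups of exponent dividing $N$, which by the theorem of Hall recalled in the introduction satisfies \eqref{1}, \eqref{2}, \eqref{3} with $n=N$; hence $V$ is a Rees---Sushkevich variety, in particular periodic, and it is also generated by completely 0-simple semigroups, so Theorem~\ref{exacttheorem} forces one of $(1)$, $(2)$, $(3)$. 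Conversely, a Rees---Sushkevich variety $V$ is periodic and satisfies $x^2=x^{n+2}$ for some $n$; if in addition one of $(1)$, $(2)$, $(3)$ holds, then by Theorem~\ref{exacttheorem} $V$ is generated by completely 0-simple semigroups, and these generators, being members of $V$, have subgroups of exponent dividing $n$, so $V$ is exact.

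Next I would note that for a fixed finite semigroup $S$ (in the application $S\in\{A_0,A_2,B_2,N_2\}$) the question ``$S\in V$'' is decidable for both kinds of input. If $V$ is given by a finite system $\Sigma$ of identities, then $S\in V$ iff $S$ satisfies every identity of $\Sigma$, which is checked by evaluating each identity at all assignments of its variables into $S$. If $V=\mathbf V(T)$ with $T$ finite, then $\mathbf V(T)=\mathbf{HSP}(T)$, and the $|S|$-generated relatively free semigroup of $\mathbf V(T)$ embeds into the finite power $T^{|T|^{|S|}}$; hence $S\in\mathbf V(T)$ iff $S$ is a homomorphic image of a subsemigroup of $T^{|T|^{|S|}}$, which is decided by exhaustive search.

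It remains to recognize Rees---Sushkevich varieties effectively. For identity input this is exactly Corollary~\ref{corollary1}. For $V=\mathbf V(T)$ with $T$ finite, I would use Theorem~\ref{MainTheorem}: $V$ is a Rees---Sushkevich variety iff none of the semigroups $(1)$--$(13)$ lies in $V$, and each such membership test is decidable by the previous step. The only point requiring care is that this list is infinite, because of the family $(7)$ and the nilpotent semigroups $K_r$. To cut it down, first compute the index $i$ and the period $p$ of $T$. If $i>2$, then $T$ --- hence $V$ --- fails $x^2=x^{n+2}$ for every $n$, so $V$ is not a Rees---Sushkevich variety and a fortiori not exact. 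If $i\le 2$, then $T$ satisfies $x^2=x^{p+2}$, an identity that fails in every $K_r$ with $r>p$ and in every member of the family $(7)$ with parameter larger than $p$; thus all but finitely many semigroups of the list are automatically absent from $V$, and only finitely many membership tests remain.

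Assembling these ingredients: run the recognition procedure for Rees---Sushkevich varieties; if it answers ``no'', output ``not exact''; otherwise decide the four questions $B_2\in V$, $A_0\in V$, $A_2\in V$, $N_2\in V$, and declare $V$ exact precisely when either $B_2\in V$ and $A_0\notin V$, or $A_2\in V$, or $N_2\notin V$. The step I expect to cost the most effort is the finite-semigroup input: one must justify the decidability of membership in $\mathbf V(T)$ and, above all, prune the infinite forbidden list of Theorem~\ref{MainTheorem} to a finite sublist by means of the index and period of $T$. For identity input the corollary is essentially immediate from Corollary~\ref{corollary1} and Theorem~\ref{exacttheorem}.
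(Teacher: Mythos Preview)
Your proposal is correct and is precisely the argument the paper has in mind: the corollary is stated without proof, being an immediate consequence of Theorem~2 together with Corollary~1, and you have supplied exactly those details. The extra work you do for the finite-semigroup input---deciding membership of a fixed finite semigroup in $\mathbf V(T)$ via the relatively free semigroup in a finite power of $T$, and pruning the infinite family $K_r$ of Theorem~1 by means of the index and period of $T$---is not spelled out in the paper but is the natural and correct way to handle that case.
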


We have also the following surprising corollary:

\begin{corollary}\label{corollary5}
  The class of all exact semigroup varieties forms a sublattice of the lattice of semigroup varieties.
\end{corollary}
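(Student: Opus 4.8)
The plan is to combine the description of exact varieties furnished by Theorem~2 with the definition of ``exact'': for a periodic variety $W$, Theorem~2 says that $W$ is generated by completely 0-simple semigroups exactly when $W$ is a Rees--Sushkevich variety satisfying one of the three conditions listed there, and — since a Rees--Sushkevich variety has subgroups of bounded exponent — for such $W$ this is the same as being exact. So it suffices to show that the property ``periodic, Rees--Sushkevich, and satisfying one of conditions (1)--(3) of Theorem~2'' is preserved by $V_1\vee V_2$ and by $V_1\cap V_2$ when $V_1,V_2$ are exact. Fix, for $i=1,2$, a class $\mathcal C_i$ of completely 0-simple semigroups generating $V_i$ whose subgroups all have exponent dividing some integer $n_i$.

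For the join this is immediate from the definition: $V_1\vee V_2=\operatorname{var}(\mathcal C_1\cup\mathcal C_2)$, and $\mathcal C_1\cup\mathcal C_2$ is again a class of completely 0-simple semigroups, all of whose subgroups have exponent dividing $\operatorname{lcm}(n_1,n_2)$. Hence $V_1\vee V_2$ is exact and nothing more needs to be said here.

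For the meet, first note that $V_1\cap V_2\subseteq V_1$ is periodic, and it is again Rees--Sushkevich: a short induction (multiply the defining relation by an appropriate power and iterate) shows that each of the identities \eqref{1}, \eqref{2}, \eqref{3} with parameter $n$ implies the corresponding identity with parameter any multiple of $n$, so $V_1$, $V_2$, and hence $V_1\cap V_2$ all satisfy \eqref{1}, \eqref{2}, \eqref{3} with the common parameter $n=\operatorname{lcm}(n_1,n_2)$. Now I would run the case analysis dictated by Theorem~2's trichotomy. If $N_2\notin V_1$ or $N_2\notin V_2$, then $N_2\notin V_1\cap V_2$ and condition (3) holds. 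Otherwise $N_2$ belongs to both $V_i$, so each of them satisfies condition (1) or condition (2). If both satisfy condition (2), then $A_2\in V_1\cap V_2$, giving condition (2). If at least one, say $V_1$, satisfies condition (1), then $A_0\notin V_1$ forces $A_0\notin V_1\cap V_2$, while $B_2\in V_1$; and $B_2\in V_2$ as well — immediately if $V_2$ satisfies condition (1), and because $B_2\in\operatorname{var}(A_2)$ if $V_2$ satisfies condition (2) — so $B_2\in V_1\cap V_2$ and condition (1) holds. Thus $V_1\cap V_2$ satisfies one of the conditions, and by Theorem~2 it is exact.

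The one ingredient above that is not pure bookkeeping is the inclusion $B_2\in\operatorname{var}(A_2)$, i.e. that $A_2\in W$ implies $B_2\in W$. This is a known fact: $A_2$ generates the variety of all aperiodic completely 0-simple semigroups, and $B_2$ is one of them (compare the case $n=1$ of the theorem of \cite{Hall1} recalled in the introduction). I expect this — confirming that the trichotomy of Theorem~2 really is closed under intersection, and in particular having $B_2\in\operatorname{var}(A_2)$ at hand — to be the only subtle point; the rest reduces to the definition of ``exact'' and to the elementary behaviour of the Rees--Sushkevich identities under passing from $n$ to a multiple of $n$.
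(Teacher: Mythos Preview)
Your argument is correct and is exactly the route the paper has in mind: the corollary is stated immediately after Theorem~2 without proof, so the intended argument is precisely to read off closure under join from the definition and closure under meet from the trichotomy of Theorem~2, with $B_2\in\operatorname{var}(A_2)$ as the one nontrivial input. Your case analysis for the meet is complete and your handling of the Rees--Sushkevich parameters (passing to $\operatorname{lcm}(n_1,n_2)$) is the standard one.
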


In Theorems 3-4 we describe the varieties of semigroups where every semigroup is the subdirect product of completely 0-simple semigroup (of principal factors).

In Theorem 5 we describe the varieties of semigroups where every semigroups is
embedable into direct product of completely 0-simple semigroups.

\begin{theorem5}
  For every variety of semigroup $V$ the following conditions are equivalent.
  \begin{enumerate}
    \item Every semigroup from $V$ is embeds into a direct product of completely 0-simple semigroups.
    \item Every finitely generated semigroup from $V$ is embeds into a direct product of completely 0-simple semigroups.
    \item $V$ satisfies one of following systems of identities:
    \begin{enumerate}
      \item $[xy = x^{n+1}y^{n+1}, (axyb)^{n} = (ayxb)^{n}]$;
      \item $[xy = xy^{n+1}, axay = ayax, abxy = abyx]$;
      \item $[xy = x^{n+1}y, xaya = yaxa, xyab = yxab]$.
    \end{enumerate}
  \end{enumerate}
\end{theorem5}

Theorem 6,7 are an analog of the Theorem 5 for some natural subclasses of completely 0-simple semigroups.

Remark that these varieties are in many cases closely connected with the varieties of residually finite semigroups which were independently described by E. Golubov and M. Sapir \cite{Golubov1}, R. McKenzie \cite{McKenzie2} and the author \cite{Kublanovsky6,Kublanovsky7}.

\section{Notation and terminology}

Let $B_{n}$ be the class of all groups of exponent $n$. We denote by $CS^{0}(B_{n})$ the class of all completely 0-simple semigroups over groups from $B_{n}$. Let $VarCS^{0}(B_{n})$ be  the semigroup variety generated by the class $CS^{0}(B_{n})$. Its identity basis quoted in \cite{Hall1} consists of three identities \eqref{1}, \eqref{2}, \eqref{3} in four variables.

We call a Rees---Sushkevich variety any semigroup variety $V$ in which the identities  \eqref{1}, \eqref{2}, \eqref{3} hold (for a positive integer n). Thus, Rees---Sushkevich varieties are exactly the varieties contained in $VarCS^{0}(B_{n})$ for some $n$.

Classic results by Sushkevich and Rees (see \cite{Clifford1}) show that completely 0-simple semigroups have the following structure. Let $G$ be a group, let $L$ and $R$ be two non-empty sets and let $P$ be an $R\times L$-matrix over the group $G$ with $0$ adjoined such that every row and every column of $P$ contains a non-zero element. Let $M^{0}(G; L, R, P)$ be the set $(L\times G\times R)\cup\{0\}$ with the following binary operation:
$$a\cdot0=0\cdot a=0 \text{ for all } a\in(L\times G\times R)\cup\{0\}, \text{ and}$$
$$(l_{1},g_{1},r_{1})\cdot(l_{2},g_{2},r_{2})=\begin{cases}
(l_{1},g_{1}p_{r_{1}l_{2}}g_{2},r_{2}), &\text{if } p_{r_{1}l_{2}}\neq0\\
0, &\text{otherwise}\end{cases}$$
Then $M^{0}(G; L, R, P)$ is a completely 0-simple semigroup and every completely 0-simple semigroup is isomorphic to $M^{0}(G; L, R, P)$ for some $G,L,R,P$. The semigroup $M^{0}(G; L, R, P)$ is called the \emph{Rees matrix semigroup} over $G$ with $0$ adjoined.

A semigroup $T$ is called a \emph{divisor} of a semigroup $S$ if $T$ is a homomorphic image of a subsemigroup $S$.

A semigroup $S$ is called \emph{periodic} if all its one-generated subsemigroups are finite. A theorem by Mann \cite{Clifford1} shows that every periodic 0-simple semigroup is completely 0-simple semigroup.

Let $S$ be a semigroup, $x\in S$. Then $J(x)$ denotes the principal ideal generated by $x$ while $I_{x}=\{y\in S \mid x\notin J(y)\}$. Then $K_{x}$ denotes the principal factor $J(x)/(I_{x}\cap J(x))$ (see \cite{Clifford1}). As usual, $S^{1}$ denotes $S$ with identity adjoined, $\mathcal{J}$, $\mathcal{L}$, $\mathcal{R}$, $\mathcal{D}$, $\mathcal{H}$ denote the Green relations on $S$ (see \cite{Clifford1}).

Let us fix notation. The free semigroup over an alphabet  $X$, i.e., the set of all words over $X$ with the operation of concatenation, will be denoted by $X^{+}$. For a word $\mathcal{U}\in X^{+}$ we denote by $\chi(\mathcal{U})$ the set of variables occurring in $\mathcal{U}$ and by $|\mathcal{U}|$ the length of the word $\mathcal{U}$. Further, for a word $\mathcal{U}=x_{1}x_{2}\cdots x_{n}$ and an integer $s\leq n=|\mathcal{U}|$ we denote by $l_{s}(\mathcal{U})$ the word $x_{1}x_{2}\cdots x_{s}$ and by $r_{s}(\mathcal{U})$ the word $x_{n-s+1}\cdots x_{n-1}x_{n}$; in particular $l(\mathcal{U})=l_1(\mathcal{U})$ is the first and $r(\mathcal{U})=r_{1}(\mathcal{U})$ is the last letter of the word $\mathcal{U}$.

Following \cite{Mashevitzky6}, we call a word of length $>1$ \emph{covered by cycles} if each its subword of length $2$ is contained in a subword in which the first and the last letters are the same.

For a class $C$ of semigroups we denote by $Var(C)$ and $qVar(C)$ the variety and the quasivariety of semigroups generated by $C$. We shall say that a semigroup $S$ is approximated by semigroups from $C$ or that it is residually $C$-semigroup (or residually in the class $C$) if for any two distinct elements $\mathcal{U}, \mathcal{V}\in S$ there exists a semigroup $T\in C$ and a homomorphism $f:S\rightarrow T$ such that $f(\mathcal{U})\not= f(\mathcal{V})$.
If $C$ is the class of all finite semigroups then $S$ is called residually
finite. If $C$ is the class of all completely 0-simple semigroups then $S$
is called residually completely 0-simple semigroup etc. It is easy to see that $S$ is a residually $C$-semigroup if and only if $S$ can be embedded into the direct product of semigroups from $C$.

We shall say that a semigroup $S$ is finitely approximated by Green relations $\rho$ ($\rho=\mathcal{L},\mathcal{R},\mathcal{H}$) if for any two elements $\mathcal{U}, \mathcal{V}\in S$ such that $(\mathcal{U}, \mathcal{V})\notin \rho_{S}$ there exists finite semigroup $T$ and a homomorphism $f:S\rightarrow T$ such that $(f(\mathcal{U}), f(\mathcal{V}))\notin \rho_{T}$ (see \cite{Kublanovsky7}).

For a semigroup $S$ we denote by $S^{1}$ the semigroup $S$ with the adjoined
outer unity. By $R_{n}$ and $L_{n}$ we denote the $n$-element semigroups of
right and left zeros. Further, we denote by $A_{2}$, $B_{2}$ the Rees
semigroups of matrix type over the 1-element group $E=\{1\}$ with the
sandwich-matrices $\left(
                    \begin{array}{cc}
                      1 & 1 \\
                      0 & 1 \\
                    \end{array}
                  \right)$,
                  $\left(
                    \begin{array}{cc}
                      1 & 0 \\
                      0 & 1 \\
                    \end{array}
                  \right)$.

It is easy to see that these semigroups coincide with the semigroups $A_{2}$, $B_{2}$ defined above (see Introduction).

A completely 0-simple semigroup $M=M^{0}(G; L, R, P)$ is called $B_{2}$-semigroup if the semigroup $A_{2}$ is not a divisor of $M$.

It is not difficult to show that a semigroup $M$ is a $B_{2}$-semigroup if and only if its matrix $P$ is equivalent \cite{Clifford1} to a matrix composed of several diagonal blocks each of which does not contain any $0$.

The following semigroups play a very important role in our study.

\begin{enumerate}
\item $A=\{ e,y,z,0 \mid e^2=e; ey=ye=z; ez=ze=z \}$
\item $B=\{x,y,a,b,c,0 \mid xy=a;yx=b; ax=xb=ya=by=c\}$
\item $C_{\lambda}=\{x,e,y,z,0 \mid x^2=xe=ex=e; e^2=e; xy=ey=y; yx=z; xz=ez=z\}$
\item $C_{\rho}$ is a semigroup antiisomorphic to $C_{\lambda}$
\item $N_{3}=\{a,b,0 \mid a^2=b\}$
\item $D=\{x,e \mid e^2=e; x^2=0; exe=0\}$
\item $K_{n}=\{x,y\mid y^2=y^{2+n}; x^2=0; yxy=0; xy^qx=0\,\,\, (q=2,\cdots,n);xyx=xy^{n+1}x\}$  ($n$ is an arbitrary positive integer)
\item  $F_{\lambda}=\{x,y\mid xy=xyx; yx=yxy; xy=xy^2; yx=yx^2;
x^3=x^2y=x^2 ;  y^3 =y^2  x=y^2 \}$
\item  $F_{\rho}$ --- the semigroup antiisomorphic to $F_\lambda $
\item  $ W_{\lambda}=\{a,x,y \mid a^2=0; x^2 = 0; y^2=0; xy=0; yx=0; axay=ax;
ayax=ay; xay=xax; yax=yay; xaxa=xa;  yaya=ya \}$
\item $W_\rho $ --- the semigroup antiisomorphic to $W_\lambda $
\item $L_{2}^{1}=\{a,b,e\mid a^2=ab=ae=ea=a; b^2=ba=eb=be=b; e^2=e\}$ (the 2-element semigroup of left zeroes with the outer unity)
\item $R_{2}^{1}=\{a,b,e\mid a^2=ba=ae=ea=a; b^2=ab=eb=be=b; e^2=e\}$ (the 2-element semigroup of left zeroes with the outer unity)
\end{enumerate}
All products which are not mentioned in the definitions (1)-(5) are equal to
0.

It is easy to prove that these semigroups coincide with the semigroups (1)-(13) defined above (see Introduction). These semigroups are called \emph{indicator Burnside semigroups}.

Introduce notation for some more semigroups:

$$S_{0}=\{a,b,c,0 \mid ab=ba=c\};$$
$$S_{1l}=\{e,a,0 \mid e=e^{2}, ae=a\};$$
$$S_{2l}=\{a,u,v,e,f \mid a=ue=vf=ae=af, e=e^{2}=ef, f=f^{2}=fe\};$$
$$S_{3l}=\{x,e,f,g \mid yz=z\text{ for } y,z\in S_{3l}, z\neq x, ex=fx=x^{2}=e, gx=f\}.$$
All products of the elements of the semigroups $S_{0}$, $S_{1l}$ which are not mentioned in the above definitions are equal to $0$. Further, by $S_{1r}$, $S_{2r}$, $S_{3r}$ we denote the semigroups antiisomorphic to the semigroups $S_{1l}$, $S_{2l}$, $S_{3l}$. All these semigroups were introduced by the author in \cite{Kublanovsky6} in order to investigate residually finite varieties of semigroups.

Let $CS$ ($CS(V)$) denote the class of all completely simple semigroups (of the variety $V$); recall that $CS^{0}$ ($CS^{0}(V)$) stands for the class of all completely 0-simple semigroups (of the variety $V$).

\section{Auxiliary statements}

To prove these theorems let us prove at first some Lemmas and other auxiliary facts.

\begin{proposition}\label{p1}
  If an identity $xy=\mathcal{U}$ holds in a semigroup variety $V$ and $|\mathcal{U}|>2$, then an identity of one of the following forms holds in $V$ as well:
\begin{subequations}\label{P1}
\begin{align}
\label{P1.1}
xy=(xy)^{n+1};\\
\label{P1.2}
xy=xy^{n+1};\\
\label{P1.3}
xy=x^{n+1}y
\end{align}
\end{subequations}
(here $n>1$ and $x$, $y$ are different variables).\qed
\end{proposition}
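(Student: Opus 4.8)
The plan is to start from the identity $xy=\mathcal U$ with $|\mathcal U|>2$ and show that, by substituting letters for the variables appearing in $\mathcal U$, one is forced into one of the three special forms. First I would record the elementary constraints that any consequence of $xy=\mathcal U$ must respect: since $V$ need not consist of balanced semigroups, I cannot assume $\mathcal U$ is balanced, but I can still read off that the first letter $l(\mathcal U)$ must be $x$ and the last letter $r(\mathcal U)$ must be $y$ — otherwise substituting a left zero or a right zero semigroup (or just using that $xy$ has $x$ on the left and $y$ on the right under suitable substitutions) gives a contradiction. Actually the cleanest way: substitute $x\mapsto x$, $y\mapsto x$ into $xy=\mathcal U$ to get $x^2=x^{k}$ where $k=|\mathcal U|\ge 3$, hence an identity $x^2=x^{2+n}$ holds for $n=k-2>0$; keep this $n$ fixed for the rest of the argument. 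Then substitute $x\mapsto 1$ (formally: work in $V^1$, or substitute a variable that we then treat as absorbing) to isolate the $y$-part, and dually for $x$.

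The core case analysis is on how many distinct variables actually occur in $\mathcal U$, i.e. on $\chi(\mathcal U)\subseteq\{x,y\}$ together with possibly other variables. If $\mathcal U$ contains a variable $z\notin\{x,y\}$, substitute $z\mapsto xy$ (or $z\mapsto$ whatever the context forces) to reduce to the two-variable situation; the length only goes up, so $|\mathcal U|>2$ is preserved, and it suffices to handle words in $x,y$ alone. So assume $\mathcal U\in\{x,y\}^+$, $|\mathcal U|\ge 3$, $l(\mathcal U)=x$, $r(\mathcal U)=y$. Now distinguish: (i) $\mathcal U$ is a power of $xy$, i.e. $\mathcal U=(xy)^{m}$ with $m\ge 2$ — then from $xy=(xy)^m$ and $x^2=x^{2+n}$ one derives $xy=(xy)^{n+1}$ directly (iterate $xy=(xy)^m$ to absorb exponents mod $n$), giving \eqref{P1.1}; (ii) $\mathcal U$ begins $x^2\cdots$, i.e. the second letter is $x$ — substitute $y\mapsto x^2$... no: rather substitute into $xy=\mathcal U$ the map $x\mapsto x$, $y\mapsto y$ but then post-multiply cleverly, or substitute $y\mapsto y$, $x\mapsto x$ and use that $\mathcal U$ starts with $x^2$ to peel an $x$ and conclude $xy=x^{j}y\cdot(\text{stuff})$ which, combined with $x^2=x^{2+n}$, collapses to $xy=x^{n+1}y$, i.e. \eqref{P1.3}; (iii) dually, if $\mathcal U$ ends $\cdots y^2$ (second-to-last letter is $y$) one gets \eqref{P1.2}. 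The remaining possibility is that $\mathcal U$ starts $xy$ and ends $xy$ — i.e. no doubled $x$ at the front and no doubled $y$ at the back — but $\mathcal U\ne (xy)^m$; then $\mathcal U$ contains an internal occurrence of $yy$ or of $xx$ (since it is not an alternating word $xyxy\cdots xy$), and by left/right multiplication we again land in case (ii) or (iii).

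The main obstacle I anticipate is the bookkeeping in cases (ii) and (iii): from a single occurrence of a square one must genuinely produce the clean form $xy=x^{n+1}y$ (respectively $xy=xy^{n+1}$) rather than some messier identity with $y$'s and $x$'s interleaved, and one has to be careful that the $n$ appearing there is the same $n$ as in $x^2=x^{2+n}$. The trick is that once we know $xy=\mathcal U$ with $\mathcal U$ long, we may \emph{iterate} the rewriting $xy\to\mathcal U$ inside words, and combine with the periodicity $x^2=x^{2+n}$ (and its consequence $x^{j}=x^{j+n}$ for $j\ge 2$) to show that all the "extra" letters can be shuffled to one side and reduced modulo $n$; substitutions sending the off-side variable to a left or right identity (available after adjoining $1$, or by substituting an idempotent that acts as identity on the relevant side) kill the cross terms. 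So the steps, in order, are: (1) derive $x^2=x^{2+n}$ and fix $n$; (2) reduce to $\mathcal U\in\{x,y\}^+$ by substituting for foreign variables; (3) force $l(\mathcal U)=x$, $r(\mathcal U)=y$; (4) split into "alternating power of $xy$" versus "contains an internal square"; (5) in the alternating case get \eqref{P1.1}; (6) in the square case, by one-sided multiplication and substitution of a one-sided identity, get \eqref{P1.2} or \eqref{P1.3}.
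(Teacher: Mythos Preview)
The paper does not actually prove this proposition: it is stated with a \qed\ and the remark ``This fact was proved independently by several authors (see, for example, \cite{Kublanovsky7}, Proposition 7).'' So there is no in-paper argument to compare against; what follows is an assessment of your sketch on its own terms.

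The overall shape---extract periodicity by the substitution $y\mapsto x$, reduce to a two-letter alphabet, and then do a case split on the structure of $\mathcal U$---is the standard one and is sound. Two places, however, are real gaps rather than routine bookkeeping.

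\textbf{Step (3) is not justified.} You assert that $l(\mathcal U)=x$ and $r(\mathcal U)=y$, arguing that otherwise ``substituting a left zero or a right zero semigroup gives a contradiction''. That reasoning is backwards: if $l(\mathcal U)=y$, all you learn is that the left-zero band fails $xy=\mathcal U$ and hence does not lie in $V$---which is no contradiction at all. Concretely, $xy=yxy$ holds in every right-zero band, so $\mathcal U=yxy$ is a legitimate instance with $l(\mathcal U)=y$. What \emph{is} true (and what you need) is that from $xy=\mathcal U$ one can \emph{derive} an identity $xy=\mathcal U'$ with $l(\mathcal U')=x$ and $r(\mathcal U')=y$, by repeatedly rewriting two-letter prefixes (respectively suffixes) via $ab=\mathcal U(a,b)$ and the swapped identity $yx=\mathcal U(y,x)$; each such rewrite shifts the initial letter one step along $\mathcal U$, and since $\mathcal U$ contains $x$ this terminates. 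That argument is short but it is not the one you gave.

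\textbf{The ``internal square'' case is not handled.} Granting $l(\mathcal U)=x$, $r(\mathcal U)=y$, you split into $(xy)^m$ versus ``contains $x^2$ or $y^2$''. For the second branch your plan is to ``substitute a one-sided identity'' or ``work in $V^1$''. Neither move is available: adjoining an identity takes you outside $V$, and there is no reason for a free semigroup of $V$ to contain a one-sided identity for $x$ or $y$. Consider $\mathcal U=x^2yxy$: here $\mathcal U$ begins with $x^2$, yet getting from $xy=x(xy)^2$ to any of \eqref{P1.1}--\eqref{P1.3} requires genuine manipulation (iterate to $xy=x^k(xy)^{k+1}$, combine with $x^2=x^{5}$, and then argue carefully), not a single substitution. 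Your own parenthetical (``substitute $y\mapsto x^2$\dots\ no: rather\dots'') already signals that this step is where the real work lies; as written it is a hope, not a proof.

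In short: the strategy is right, step~(1), (2), (4), (5) are fine, but step~(3) needs the rewriting argument above, and step~(6) needs an honest derivation that does not appeal to adjoined units.
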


This fact was proved independently by several authors (see, for example, \cite{Kublanovsky7}, Proposition 7).

\begin{proposition}\label{p2}
  Each of identities \eqref{P1.1}, \eqref{P1.2}, \eqref{P1.3} implies the identities \eqref{1}, \eqref{2}.
\end{proposition}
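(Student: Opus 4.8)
The plan is to handle the three identities \eqref{P1.1}, \eqref{P1.2}, \eqref{P1.3} one at a time, reducing each implication to a single substitution together with the elementary regrouping identity $(xy)^{n+1}x = x(yx)^{n+1}$, which holds in every semigroup by associativity (append $x$ to $(xy)^{n+1}=\underbrace{xy\cdots xy}_{n+1}$ and regroup it as $x\underbrace{yx\cdots yx}_{n+1}$).

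First I would dispose of \eqref{2}. If \eqref{P1.1} holds then $(xy)^{n+1}=xy$, so $(xy)^{n+1}x=(xy)x=xyx$ and \eqref{2} is immediate. If \eqref{P1.3} holds, substituting $x\mapsto xy$ and $y\mapsto x$ into $xy=x^{n+1}y$ gives $(xy)x=(xy)^{n+1}x$, which is exactly \eqref{2}. If \eqref{P1.2} holds, substituting $x\mapsto x$ and $y\mapsto yx$ into $xy=xy^{n+1}$ gives $x(yx)=x(yx)^{n+1}$, and rewriting the right-hand side by the regrouping identity turns this into \eqref{2}.

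Next I would treat \eqref{1}. For \eqref{P1.2} and \eqref{P1.3} it follows at once by putting $y\mapsto x$: one gets $x^2=xx^{n+1}=x^{n+2}$ and $x^2=x^{n+1}x=x^{n+2}$ respectively. The only implication needing an extra idea is \eqref{P1.1}$\Rightarrow$\eqref{1}. Here I would substitute $x\mapsto x^i$, $y\mapsto x^j$ with $i,j\ge 1$ into $xy=(xy)^{n+1}$, obtaining $x^m=x^{m(n+1)}$ for every $m\ge 2$. Taking $m=2$ and $m=3$ yields $x^2=x^{2+2n}$ and $x^3=x^{3+3n}$, which together say that the monogenic subsemigroup generated by $x$ has index at most $2$ and period dividing both $2n$ and $3n$, hence dividing $\gcd(2n,3n)=n$. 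Therefore $x^2=x^{n+2}$, i.e.\ \eqref{1}.

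The main (and essentially the only) subtlety lies in this last argument: the substitution with $m=2$ alone forces the period merely to divide $2n$, which does not give \eqref{1}; one must also use $m=3$ so that the greatest common divisor collapses to $n$. All the remaining steps are routine choices of substitution combined with the single regrouping identity.
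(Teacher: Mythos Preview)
Your proof is correct and follows essentially the same route as the paper: the same substitutions are used for \eqref{P1.2} and \eqref{P1.3}, and for \eqref{P1.1}$\Rightarrow$\eqref{2} both you and the paper simply multiply on the right by $x$. For \eqref{P1.1}$\Rightarrow$\eqref{1} the paper also substitutes to obtain $x^2=x^{2n+2}$ and $x^3=x^{3n+3}$ and then combines them; the only cosmetic difference is that the paper writes out an explicit chain of equalities $x^2=x^{2n+2}=x^3x^{2n-1}=x^{3n+3}x^{2n-1}=(x^{2n+2})^2x^{n-2}=(x^2)^2x^{n-2}=x^{n+2}$, whereas you phrase the same combination in the language of index and period and $\gcd(2n,3n)=n$.
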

\begin{proof}
  \eqref{P1.3}$\Rightarrow$\eqref{1}: make in \eqref{P1.3} the substitution $x\mapsto y$.

  \eqref{P1.3}$\Rightarrow$\eqref{2}: make in \eqref{P1.3} the substitution $x\mapsto xy$, $y\mapsto x$.

  Similarly it can be proven that \eqref{P1.2}$\Rightarrow$\eqref{1},\eqref{2}.

  \eqref{P1.1}$\Rightarrow$\eqref{1}: make in \eqref{P1.1} substitutions $y\mapsto x$, $y\mapsto x^{2}$; we obtain identities $x^{2}=x^{2n+2}$, $x^{3}=x^{3n+3}$, and it follows:
  $$x^{2}=x^{2n+2}=x^{3}x^{2n-1}=x^{3n+3}x^{2n-1}=(x^{2n+2})^{2}x^{n-2}=(x^2)^{2}x^{n-2}=x^{n+2}.$$

  \eqref{P1.1}$\Rightarrow$\eqref{2}: multiply \eqref{P1.1} on the right by $x$.
\end{proof}

\begin{proposition}\label{p3}
  For a periodic semigroup variety the following conditions are equivalent:
  \begin{enumerate}
    \item Any regular semigroup from $V$ is the subdirect product of its principal factors.
    \item Any regular semigroup from $V$ is the subdirect product of completely 0-simple semigroups.
    \item Any regular semigroup from $V$ is a residually completely 0-simple semigroup.
    \item There is a positive integer $n$ such that the identity $(axaya)^{n}=(ayaxa)^{n}$ holds in any regular semigroup of the variety $V$.
    \item The variety $V$ does not contain the semigroups $L_{2}^{1}$, $R_{2}^{1}$, $B_{2}^{1}$.
  \end{enumerate}
\end{proposition}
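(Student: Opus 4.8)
The plan is to close the cycle $(1)\Rightarrow(2)\Rightarrow(3)\Rightarrow(5)\Rightarrow(1)$, supplemented by $(1)\Rightarrow(4)\Rightarrow(5)$; then all five conditions are equivalent, and every implication except $(5)\Rightarrow(1)$ is either formal or a short computation. $(1)\Rightarrow(2)$: in a periodic \emph{regular} semigroup every $\mathcal{J}$-class contains an idempotent, so no principal factor is null; each principal factor is therefore $0$-simple and, by Mann's theorem quoted above, completely $0$-simple (the minimal ideal, if present, is completely simple). $(2)\Rightarrow(3)$ is immediate, a subdirect decomposition being an embedding into a direct product. $(1)\Rightarrow(4)$: a periodic variety satisfies $x^{k}=x^{k+l}$ for some $k,l$, so every subgroup occurring in $V$ has exponent dividing $l$; a direct computation in a Rees matrix semigroup $M^{0}(G;L,R,P)$ with $t^{l}=1$ for all $t\in G$ shows that $(axaya)^{l}$ and $(ayaxa)^{l}$ are non-zero for exactly the same values of the arguments, and that when non-zero both reduce — using $t^{l}=1$ — to an element depending only on $l_{a},r_{a}$ and $p_{r_{a}l_{a}}$; hence $(axaya)^{l}=(ayaxa)^{l}$ holds in every completely $0$-simple and every completely simple principal factor, so in any subdirect product of such, so, under $(1)$, in every regular semigroup of $V$, and $(4)$ holds with $n=l$.

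$(4)\Rightarrow(5)$ and $(3)\Rightarrow(5)$: one checks directly that $L_{2}^{1},R_{2}^{1},B_{2}^{1}$ are regular, violate $(4)$, and are not residually completely $0$-simple, so (varieties being closed under divisors) none of them can lie in a variety satisfying $(4)$ or $(3)$. Taking the outer identity as the variable $a$ and the two proper generators as $x,y$, the words $(axaya)^{n},(ayaxa)^{n}$ evaluate to the two distinct left (resp.\ right) zeros in $L_{2}^{1}$ (resp.\ $R_{2}^{1}$), and to $ab\neq ba$ in $B_{2}^{1}$, for every $n$. For failure of residual completely $0$-simplicity: a homomorphism $f$ into a completely $0$-simple $M$ sends the outer identity $e$ to an idempotent; if $f(e)=0$ the whole image collapses, and if $f(e)=\varepsilon\neq 0$ then the images of the remaining generators lie in $\varepsilon M\varepsilon=H_{\varepsilon}\cup\{0\}$ (a group with zero), and the defining relations force the two critical generators to a common value — for $L_{2}^{1},R_{2}^{1}$ the two idempotent generators must both map to $\varepsilon$ or both to $0$, and for $B_{2}^{1}$ the relations $a^{2}=b^{2}=0$, $aba=a$, $bab=b$ give $f(a)=f(a)^{-1}=f(b)$.

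The substance is $(5)\Rightarrow(1)$. Let $S\in V$ be regular and suppose it is not a subdirect product of its principal factors; then some $u\neq v$ in $S$ is separated by no homomorphism onto a principal factor. For a $\mathcal{J}$-class $J$ with ideal $\hat{J}=J(j_{0})$, the ``collapse'' map sending $\hat{J}$ onto $K_{J}$ and everything else to the zero of $K_{J}$ (a zero being adjoined when $J$ is the minimal ideal) is a homomorphism precisely when no element $s\notin\hat{J}$ acts non-degenerately on $J$, i.e.\ $sw\notin J$ and $ws\notin J$ for all such $s$ and all $w$; if all these maps are homomorphisms their product embeds $S$ into the product of its principal factors. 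For a pair in distinct $\mathcal{J}$-classes the maps onto $K_{J(u)}$ and $K_{J(v)}$ already separate them, and for $u\,\mathcal{J}\,v$ they have distinct images in $K_{J(u)}$; so we may assume that for $J=J(u)$ there is an $s\notin\hat{J}$ with $sw\in J$ or $ws\in J$ for some $w$. Replacing $s$ by a suitable idempotent and analysing, via Green's relations in $J(u)$, whether right multiplication by $s$, left multiplication by $s$, or both move the $\mathcal{R}$- and $\mathcal{L}$-classes of $J(u)$ non-trivially, one exhibits in the three cases, as a homomorphic image of the finite subsemigroup generated by $s$ and appropriately chosen idempotents of $J(u)$, a copy of $L_{2}^{1}$, of $R_{2}^{1}$, or of $B_{2}^{1}$, contradicting $(5)$.

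The main obstacle is this last step. Two points are delicate. First, it is not a clean ``reduce to a single $\mathcal{J}$-class'': separating elements of distinct $\mathcal{J}$-classes itself requires the collapse maps to be homomorphisms, and establishing that already uses $(5)$, so the two halves of the analysis must be run together. Second, reading off \emph{exactly} one of $L_{2}^{1},R_{2}^{1},B_{2}^{1}$ from a failure of the homomorphism property requires careful control of the sandwich-matrix structure of $J(u)$, of which idempotents are reachable from $s$, and a verification that the constructed quotient satisfies the relevant defining relations and collapses no further. I expect this case analysis — together with the argument that no other type of obstruction can arise — to be the bulk of the work.
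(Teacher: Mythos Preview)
Your handling of the easy implications and of $(3)\Rightarrow(5)$, $(4)\Rightarrow(5)$ is correct; the paper instead routes through $(3)\Rightarrow(4)\Rightarrow(5)$, but your direct verification that $L_{2}^{1},R_{2}^{1},B_{2}^{1}$ are not residually completely $0$-simple is fine. The substance is $(5)\Rightarrow(1)$, and here your outline both diverges from the paper and has a real gap.

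The paper does not build collapse maps. It invokes a criterion from \cite{Kublanovsky6}: a periodic regular semigroup is a subdirect product of its principal factors as soon as no two $\mathcal{D}$-equivalent idempotents of $S$ share a common two-sided identity. Granting that, it assumes $e\neq f$ are $\mathcal{D}$-equivalent with common identity $1\in S$, passes to the completely $0$-simple principal factor $K$ containing $e,f$, and does a short case split on $ef,fe$ inside $K$: if $(ef)^{2}\neq 0$ one reads off $L_{2}^{1}$ or $R_{2}^{1}$ as a three-element subsemigroup $\{e,(fe)^{n},1\}$ or $\{e,f,1\}$ of $S$; if $(ef)^{2}=0$ one finds $B_{2}^{1}$ directly, or $A_{2}^{1}$ as a divisor together with $B_{2}^{1}\in\mathrm{Var}(A_{2}^{1})$. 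That is the entire argument.

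Your collapse-map characterisation is correct, but the extraction of a forbidden semigroup from a failure is where the proof lives, and the trichotomy you propose does not match the three targets. In a periodic semigroup, $s\notin\hat{J}$ with $sw\in J$ forces $sw\,\mathcal{L}\,w$ by stability, so left action by such $s$ never ``moves $\mathcal{L}$-classes''; the operative distinction is not left-versus-right action but whether an idempotent strictly above $J$ acts as a two-sided identity on two distinct idempotents of $J$, and then whether those two idempotents multiply to $0$ in $K$ --- the $B_{2}^{1}$ case arises from a zero in the sandwich matrix, which is orthogonal to your left/right split. Your ``replace $s$ by a suitable idempotent'' is exactly where the cited lemma is hiding: to get anywhere you must produce an idempotent above $J$ that is a two-sided identity for two idempotents of $J$, i.e.\ re-derive the paper's criterion before your case analysis can even start. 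So the approach is not wrong, but as written it stops short of the one non-formal step, and carrying it through would amount to reproving the result the paper simply cites.
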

\begin{proof}
Remark that \textit{(1)} $\Rightarrow$ \textit{(2)} $\Rightarrow$ \textit{(3)} for any periodic variety. Let \textit{(3)} be fulfilled in the variety $V$. Let $n$ be the period of the variety $V$ and assume that $(axaya)^{n}\neq (ayaxa)^{n}$ for a regular semigroup $S\in V$ and elements $a,x,y\in V$. According to \textit{(3)} there exists a homomorphism $f:S\rightarrow M\in CS^{0}$ such that $f(\mathcal{U})\neq f(\mathcal{V})$, $\mathcal{U} =(axaya)^{n}, \mathcal{V}=(ayaxa)^{n}$. Recall that every idempotent of the regular semigroup $f(S)$  is 0-primitive, and (as it is shown in \cite{Kublanovsky6}) $f(S)$ decomposes into the direct product of completely 0-simple semigroups.  Therefore, we can assume $f(S)$ to be a completely 0-simple semigroup. But $(axaya)^{n}=(ayaxa)^{n}$ in any completely
0-simple semigroup of period $n$, i.e., $f(\mathcal{U})=f(\mathcal{V})$ in contradiction with the assumption. Hence, \textit{(3)} $\Rightarrow$ \textit{(4)}.

On the other hand, \textit{(4)} $\Rightarrow$ \textit{(5)}, since the identity $(axaya)^{n}=(ayaxa)^{n}$ is not fulfilled in the semigroups $L_{2}^{1}$, $R_{2}^{1}$, $B_{2}^{1}$. It remains to show, that \textit{(5)} implies \item1.
Let \textit{(5)} hold in the variety $V$. Let $S$ be a semigroup of the variety $V$ and $e,f\in S \quad (e \neq f)$ be idempotents belonging to the same $D$-class of the semigroup $S$, where $\mathcal{D}$ is the Green's relation (see \cite{Clifford1}). But in a regular periodic semigroup $\mathcal{D}$-classes coincide with $\mathcal{J}$-classes. Therefore, $e,f$ lay in the same principal factor $K$ of the semigroup $S$ and are not equal to 0. According to Mann's theorem, $K$ is a completely 0-simple semigroup.

Assume that the elements $e$ and $f$ have a common two-sided unity "1".
Suppose at first that $(ef)^{2} \neq 0$. If $e=fe$ then $ef= fef=(fe)^n=f$, and
consequently, $R_{2}^{1}\approx \{e,f,1\}\subset S$, i.e., $R_{2}^{1}\approx
\{e,f,1\}\subset S$ in contradiction with \item5. If $e\neq ef$ then $L_{2}^{1}\approx \{e,(fe)^{n},1\}\subset S$, and $L_{2}^{1}\in V$ in contradiction with \item5. Therefore, $(ef)^2=0$. Then $ef=0$ or $fe=0$ in $K$. If $ef=0$ and $fe=0$, then $B_{2}^{1}\subset K\in V$ in contradiction with the hypothesis. If $ef=0$ and $fe\neq 0$  (or \textit{vice versa}), then $A_{2}^{1}$ is a divisor of $K$ and, consequently, $A_{2}^{1}\in V$. But it is easy to show that $B_{2}^{1}\in Var(A_{2}^{1})$; therefore, $B_{2}^{1}\in V$ and it is again a contradiction.

The above argument shows that idempotents from the same $D$-class can not have a common two-sided unity. In this case the regular semigroup $S$ decomposes into the subdirect product of its principal factors (see \cite{Kublanovsky6}). Hence, \textit{(5)} $\Rightarrow$ \textit{(1)}.
\end{proof}

\begin{lemma}\label{l1}
  If an identity $\mathcal{U}=\mathcal{V}$ fails in the semigroup  $A$ then this identity implies an identity of the form
  \begin{equation}\label{L1}
    x^{k}yx^{l}=PyQyR
  \end{equation}
(here $P,Q,R$ are  words or empty symbols, $x,y$ are different variables, $k,l\geq0$).
\end{lemma}
\begin{proof}
  Let us consider all possible cases.
  \begin{enumerate}
    \item $\chi(\mathcal{U})\neq\chi(\mathcal{V})$. Replace all letters in $\mathcal{U}$ and all letters but one in $\mathcal{V}$ by $x$, and replace the remaining letter in $\mathcal{V}$ by $y$; we obtain an identity of the form $x^{k}=PyQ$. It follows that $x^{k}yx^{l}=PyQyx^{l}=PyQyR \quad (R=x^{l})$.
    \item $\chi(\mathcal{U})=\chi(\mathcal{V})$. Let $\chi(\mathcal{U})=\chi(\mathcal{V})=\{x_{1},x_{2},\cdots,x_{n}\}$. Since $\mathcal{U}=\mathcal{V}$ does not hold in the semigroup $A$, there exists a mapping $f:\{x_1,x_2,\cdots ,x_n\}\rightarrow A$ such that
\begin{equation}\label{IN}
\mathcal{U}(f(x_1),\cdots,f(x_n))\neq\mathcal{V}(f(x_1),\cdots,f(x_n)).
\end{equation}
We can conclude that either right-hand or left-hand side of this inequality
is distinct from 0. Assume, for example, that $\mathcal{U}(f(x_{1}),\cdots,f(x_{n}))\neq0$. Since $e$, $y$, $z$ are the only nonzero elements of the semigroup $A$, solely the following cases are possible:
  \begin{enumerate}
    \item $\mathcal{U}(f(x_1),\cdots,f(x_n))=e$. It follows from the definition of the semigroup $A$ that it is possible only if $f(x_1)=\cdots=f(x_n)$; then $\mathcal{V}(f(x_1)\cdots f(x_n))=e$ in contradiction with \eqref{IN}. Hence, this case is impossible.
    \item $\mathcal{U}(f(x_1)\cdots f(x_n))=y$. In this case $|\mathcal{U}|=1$ and $n=1$ (because $y \notin A^{2}$). Then $|\mathcal{V}|>1$ (otherwise $\mathcal{U}=\mathcal{V}$ should became the tautology $x_{1}=x_{1}$). This means that $\mathcal{U}=\mathcal{V}\Rightarrow y=y^{m}$ (for some $m>1$)  and, consequently,  $xyx=xy^mx$; it is an identity of the form \eqref{L1}.
    \item $\mathcal{U}(f(x_{1}),\cdots,f(x_{n}))=z$. This equality is possible only if there exists an index $i$ such that $f(x_{i})=y$ or $f(x_{i})=z$, the letter $x_{i}$ occurs in $\mathcal{U}$ only once, and $f(x_{j})=e$  for all $j\neq i$. If $x_{i}$ occurs in $\mathcal{V}$ only once as well, then $\mathcal{V}(f(x_1)\cdots f(x_n))=z$ in contradiction with \eqref{IN}. Hence, $n_{x_{i}}(\mathcal{V})>1$. Setting $x_{i}=y, x_{j}=x$ we obtain an identity of the form \eqref{L1}.
  \end{enumerate}
  \end{enumerate}
\end{proof}

\begin{lemma}\label{l2}
  If an identity $\mathcal{U}=\mathcal{V}$ fails in the semigroup  $B$ then this identity implies an identity of one of the following forms:
\begin{subequations}\label{L2}
\begin{align}
\label{L2.1}
xyx=P(xy)^{2}Q \text{ or } xyx=P(yx)^{2}Q;\\
\label{L2.2}
xyx=Py^{2}xQ \text{ or } xyx=Pxy^{2}Q;\\
\label{L2.3}
xyx=Px^{2}yQ \text{ or } xyx=Pyx^{2}Q,
\end{align}
\end{subequations}
where $P,Q$ are words or empty symbols and $x,y$ are different variables.
\end{lemma}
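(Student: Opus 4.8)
The plan is to follow the strategy of Lemma~\ref{l1}. Fix a substitution $g$ witnessing that $\mathcal{U}=\mathcal{V}$ fails in $B$, and assume (reading the identity as $\mathcal{V}=\mathcal{U}$ if necessary) that $\mathcal{U}(g)\neq 0$, so $\mathcal{U}(g)\in\{x,y,a,b,c\}$. The one feature absent from Lemma~\ref{l1} is that the non-generator nonzero elements of $B$ are themselves products, $a=xy$, $b=yx$, $c=xyx=yxy$; I would deal with this by \emph{unfolding} $g$: replace each variable $z$ by the word over the two letters $x,y$ equal to $x$, $y$, $xy$, $yx$, $xyx$, $x^{2}$ according as $g(z)$ equals $x$, $y$, $a$, $b$, $c$, $0$. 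This substitution sends $\mathcal{U}=\mathcal{V}$ to an identity $W_{1}=W_{2}$ between words over $\{x,y\}$ which holds in $V$ (being a substitution instance), still fails in $B$ under the evaluation $x\mapsto x$, $y\mapsto y$, and has $W_{1}$ evaluating to $\mathcal{U}(g)\neq 0$. Since the only words over $\{x,y\}$ evaluating in $B$ to $x,y,xy,yx$ are $x,y,xy,yx$, the only ones evaluating to $c$ are $xyx$ and $yxy$, and $B$ admits the automorphism exchanging $x\leftrightarrow y$ and $a\leftrightarrow b$, I may assume $W_{1}\in\{x,\,xy,\,xyx\}$ (the forms \eqref{L2.1}--\eqref{L2.3} are collectively closed under interchanging $x$ and $y$, so this ``without loss of generality'' is harmless). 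Thus it suffices to derive one of \eqref{L2.1}--\eqref{L2.3} from a word identity $W_{1}=W_{2}$ over $\{x,y\}$ with $W_{1}\in\{x,xy,xyx\}$ and $W_{2}$ evaluating in $B$ differently from $W_{1}$.

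\emph{Case $W_{1}=x$.} Then $W_{2}\neq x$, so collapsing all variables to $x$ gives $x=x^{m}$ with $m=|W_{2}|\geq 2$ (if $W_{2}=y$, the variety is trivial and every identity holds). Substituting $x\mapsto xyx$ gives $xyx=(xyx)^{m}$, and for $m\geq 2$ the word $(xyx)^{m}$ starts $xyxxyx\cdots$, i.e.\ has the shape $Px^{2}yQ$; this is \eqref{L2.3}.

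\emph{Case $W_{1}=xy$.} If $|W_{2}|>2$, Proposition~\ref{p1} gives one of \eqref{P1.1}, \eqref{P1.2}, \eqref{P1.3}; multiplying each on the right by $x$ yields $xyx=(xy)^{n+1}x$, $xyx=xy^{n+1}x$, $xyx=x^{n+1}yx$, which have the shapes $P(xy)^{2}Q$, $Pxy^{2}Q$, $Px^{2}yQ$. If $|W_{2}|\leq 2$ there are only a few possibilities: $W_{2}=yx$ gives commutativity, hence $xyx=x^{2}y$; $W_{2}=x^{2}$ or $y^{2}$ forces $xy=(xy)^{2}$ (substitute $y\mapsto y^{2}$, resp.\ $x\mapsto x^{2}$, and compare), hence $xyx=(xy)^{2}x$; $W_{2}=x$ or $y$ forces $x^{2}=x$ resp.\ $y^{2}=y$, hence $xyx=(xyx)^{2}$, of shape $Px^{2}yQ$.

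\emph{Case $W_{1}=xyx$.} Now $W_{2}\notin\{xyx,yxy\}$. The decisive point is the combinatorial fact that \emph{every} word over $\{x,y\}$ using both letters, of length $\geq 3$, and distinct from $xyx$ and $yxy$ is already literally one of the six templates: a factor $xx$ necessarily abuts a $y$ (giving $x^{2}y$ or $yx^{2}$, form \eqref{L2.3}), a factor $yy$ necessarily abuts an $x$ (form \eqref{L2.2}), and if there is no such factor the word is alternating and longer than $xyx,yxy$, hence begins with $(xy)^{2}$ or $(yx)^{2}$ (form \eqref{L2.1}). The remaining possibilities are quick: for $W_{2}\in\{xy,yx\}$ multiply $xyx=W_{2}$ by $x$ on the right, resp.\ left, to get $xyx=xyx^{2}$ (shape $Pyx^{2}Q$) resp.\ $xyx=x^{2}yx$; for $W_{2}=x^{m}$ substitute $y\mapsto yx$ to get $xyx^{2}=x^{m}=xyx$; for $W_{2}=y^{m}$ substitute $x\mapsto xy$ to get $xy^{2}xy=y^{m}=xyx$ (shape $Pxy^{2}Q$). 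I expect the real obstacles to be precisely (i) setting up the unfolding reduction and checking that the two ``without loss of generality'' passages are legitimate — this rests on the $x\leftrightarrow y$ automorphism of $B$ and on the six templates being closed under swapping $x$ and $y$ — and (ii) the combinatorial classification of short two-letter words used in the last case; all the remaining manipulations are routine rewriting.
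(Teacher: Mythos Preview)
Your argument is correct and shares the paper's decisive opening move: reduce to the two-letter alphabet $\{x,y\}$ by replacing each variable with the word representing its image in $B$ (the paper does exactly this in its first paragraph). The difference is organizational. The paper then splits on the pair of lengths $(|\mathcal{U}|,|\mathcal{V}|)$ --- both $>3$ is impossible, then $|\mathcal{U}|\leq 3<|\mathcal{V}|$, then both $\leq 3$ --- and enumerates all short words case by case. You instead observe that the nonzero side must literally be one of $x,y,xy,yx,xyx,yxy$, use the $x\leftrightarrow y$ automorphism of $B$ to reduce to $W_{1}\in\{x,xy,xyx\}$, and then classify $W_{2}$; your combinatorial remark that every two-letter word of length $\geq 3$ other than $xyx,yxy$ already contains one of the six target factors dispatches the main case in one line, where the paper checks several sub-cases individually. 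Your route also leans on Proposition~\ref{p1} for $W_{1}=xy$ with $|W_{2}|>2$, which the paper does not invoke here (it argues directly that a length-$\geq 4$ word contains $x_{1}^{2}$, $x_{2}^{2}$, $(x_{1}x_{2})^{2}$ or $(x_{2}x_{1})^{2}$). One small point: your derivation of $xy=(xy)^{2}$ from $xy=x^{2}$ actually needs two substitutions --- $y\mapsto y^{2}$ in the original gives $xy=xy^{2}$, and $x\mapsto xy$ in the original gives $xy^{2}=(xy)^{2}$ --- rather than the single one your parenthetical hint suggests; alternatively, and more directly, $xyx=x^{2}\cdot x=x\cdot x^{2}=x\cdot xy$ wait, rather $xyx = xy\cdot x = x^{2}\cdot x = x\cdot x^{2} = x\cdot xy$ gives nothing, but $x^{2}y = x\cdot xy = x\cdot x^{2} = x^{3} = x^{2}\cdot x = xy\cdot x = xyx$ yields $xyx=x^{2}y$ of form \eqref{L2.3} in one step.
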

\begin{proof}
  Remark first of all that the semigroup $B$ can be given by generators and defining relations in the following way:
  $$B = \{x,y \mid x^2=0; y^2=0; xyx=yxy; (xy)^2=(yx)^2\}.$$
  Assume that the identity $\mathcal{U}=\mathcal{V}$ does not hold in the semigroup $B$. Let $\mathcal{U}, mathcal{V}$ be words over $\{x_1,x_2,\cdots,x_{n}\}$. Then there exists a mapping $f:\{x_1,x_2,\cdots,x_n\}\rightarrow B$ such that \eqref{IN} holds. For any $i$ the element $f(x_i)$ can be equal only to $x$ or $y$ or $xy$ or $yx$ or $xyx$ or 0. But $0=x^{2}$; therefore, substituting $x_i\mapsto f(x_i))$ in $\mathcal{U}$ and $\mathcal{V}$ we obtain a new identity in two variables (not more) which does not hold in $A$. Therefore, without loss of generality we can consider $\mathcal{U}$ and $\mathcal{V}$ as words over the two-letters alphabet $\{x_1,x_2\}$. Let us consider all possible cases.
 \begin{enumerate}
   \item $|\mathcal{U}|>3$ and $|\mathcal{U}|>3$. This means that at least one of the products $x^2$ or $y^2$ or $(xy)^2$ or $(yx)^2$ occurs in each of the words $\mathcal{U}$, $\mathcal{V}$. Making use of the definition of the multiplication in the semigroup $B$ we see now that in this case both sides of \eqref{IN} are  equal to 0. It contradicts the inequality \eqref{IN}; therefore, this  case is impossible.
   \item $|\mathcal{U}|\leq3$ and $|\mathcal{U}|>3$.
        \begin{enumerate}
          \item $|\mathcal{U}|=1$. In this case the word $\mathcal{U}$ consists of one letter: $\mathcal{U}=z$. Replace all letters in $\mathcal{V}$ by $z$; then the identity $\mathcal{U}=\mathcal{V}$ will turn into the identity $z=z^m$  $(m>1)$. Multiply the last identity on the left and on the right by $x$ and substitute $z\mapsto y$; we shall obtain so the identity $xyx=xy^mx$ of the form \eqref{L2.2}.
          \item $|\mathcal{U}|=2$. If $\mathcal{U}=x_{i}^{2}$, then $\mathcal{U}(f(x_1)\cdots f(x_n))=f(x_{i})^{2}=0$, $\mathcal{V}(f(x_1)\cdots f(x_n))=0$ because $|\mathcal{V}|>3$. It contradicts \eqref{IN}. Therefore, $\mathcal{U}$ is the product of two different variables, $\mathcal{U}=x_1x_2$, $x_1\neq x_2$.
              \begin{enumerate}
                \item $\chi(\mathcal{V})=\{x_1,x_2\}$. Since $|\mathcal{V}|>3$, there are only the following possibilities:
                    \begin{itemize}
                      \item $\mathcal{V}$ contains $x_1^2$; then $\mathcal{U}x_1=\mathcal{V}x_1$ is an identity of the form \eqref{L2.3}.
                      \item $\mathcal{V}$ contains $x_2^2$; then $\mathcal{U}x_1=\mathcal{V}x_1$ is an identity of the form \eqref{L2.2}.
                      \item $\mathcal{V}$ contains $(x_1x_2)^2$ or $(x_2x_1)^2$;  then $\mathcal{U}x_1=\mathcal{V}x_1$ is an identity of the form \eqref{L2.1}.
                    \end{itemize}
                \item  $\chi(\mathcal{V})=\{x_2\}$. Then our identity $\mathcal{U}=\mathcal{V}$ is the identity $x_1x_2=x_2^m$. Multiplying on the right by $x_1$  we obtain the identity $x_1x_2x_1=x_2^mx_1$ of the form \eqref{L2.2}.
                \item $\chi(\mathcal{V})=\{x_1\}$. As in the previous case we see that an identity of the form \eqref{L2.3} follows from $\mathcal{U}=\mathcal{V}$.
              \end{enumerate}
          \item $|\mathcal{V}|=3$. If $\mathcal{U}$ contains a square of any variable, then
              $$\mathcal{U}(f(x_1)\cdots f(x_n))=\mathcal{V}(f(x_1)\cdots f(x_n))=0$$
              in contradiction with \eqref{IN}. Hence, $\mathcal{U}=x_{i_1}x_{i_2}x_{i_3}$, where $x_{i_1}\neq x_{i_2}$, $x_{i_2}\neq x_{i_3}$. Since $x_{i_1},x_{i_2},x_{i_3}\in \{x_1,x_2\}$, it follows that $x_{i_3}= x_{i_1}$. Setting $x_{i_1}=x, x_{i_2}=y$ and taking into account that $\mathcal{V}$ is a word over $\{x_1,x_2\}$, the length of which is at least 4, we see that the so obtained identity has one of the forms \eqref{L2.1}, \eqref{L2.2} or \eqref{L2.3}.
        \end{enumerate}
   \item $|\mathcal{U}|\leq3$ and $|\mathcal{V}|\leq3$. Without loss of generality we can assume that $|\mathcal{U}|\leq|\mathcal{V}|$.
       \begin{enumerate}
         \item $|\mathcal{U}|=1$. As in the above we see that the identity $\mathcal{U}=\mathcal{V}$ implies an identity of the form \eqref{L2.2}.
         \item $|\mathcal{U}|=2$.
            \begin{enumerate}
              \item $\mathcal{U}=x_i^2$. If $\mathcal{V}$ contains a square of any variable, then
                  $$\mathcal{U}(f(x_1)\cdots f(x_n))=\mathcal{V}(f(x_1)\cdots f(x_n))=0.$$
                  Otherwise, $\mathcal{V}=x_ix_j$ or $\mathcal{V}=x_jx_i$ or $\mathcal{V}=x_{i_1}x_{i_2}x_{i_3}$. In all cases an identity of the form  \eqref{L2.2} or \eqref{L2.3} follows from the identity $\mathcal{U}=\mathcal{V}$.
              \item $\mathcal{U}=x_{i_1}x_{i_2}$, $x_{i_1}\neq x_{i_2}$. If $\mathcal{V}$ contains a square of any variable, then we obtain an identity of the form \eqref{L2.2} or \eqref{L2.3}. Otherwise, there are the following possibilities:
                  \begin{itemize}
                    \item $\mathcal{V}=x_{i_2}x_{i_1}$; then $\mathcal{U}=\mathcal{V}$ $\Rightarrow$ $xy=yx$ $\Rightarrow$ $xyx=yx^2$, and the last identity has the form \eqref{L2.3};
                    \item $\mathcal{V}=x_{i_1}x_{i_2}x_{i_1}$; then $\mathcal{U}=\mathcal{V}$ $\Rightarrow$ $xy=xyx$ $\Rightarrow$ $xyx=xyx^2$, and the last identity has the form \eqref{L2.3};
                    \item $\mathcal{V}=x_{i_2}x_{i_1}x_{i_2}$; then $\mathcal{U}=\mathcal{V}$ $\Rightarrow$ $xy=xy=yxy$ $\Rightarrow$ $xy=(yx)^2$, and the last identity has the form \eqref{L2.1}.
                  \end{itemize}
            \end{enumerate}
         \item $|\mathcal{U}|=|\mathcal{V}|=3$. If both $\mathcal{U}$ and $\mathcal{V}$ contain a square of any variable, then as above we obtain a contradiction with \eqref{IN}. Therefore, at least one of the words $\mathcal{U}$ and $\mathcal{V}$ does not contain any square. Assume that $\mathcal{U}$ does not contain any square; then $\mathcal{U}=x_{i_1}x_{i_2}x_{i_1}$. If $\mathcal{V}$ does not contain squares, then $\mathcal{V}=x_{i_1}x_{i_2}x_{i_1}$ or $\mathcal{V}=x_{i_2}x_{i_1}x_{i_2}$. But both identities $xyx=xyx$, $xyx=yxy$ hold in the semigroup $B$; therefore, $\mathcal{V}$ contains a square, and $\mathcal{V}$ is equal to $x_{i_1}^2x_{i_2}$ or $x_{i_2}^2x_{i_1}$ or $x_{i_1}x_{i_2}^2$ or $x_{i_2}x_{i_1}^2$ or $x_{i_1}^3$ or $x_{i_2}^3$. In the first 4 cases the identity $\mathcal{U}=\mathcal{V}$ has the form \eqref{L2.2} or \eqref{L2.3}. If $\mathcal{V}=x_{i_1}^3$, we obtain the identity $xyx=x^3$; replacing $x$ by $x^4$ we have: $x^3=x^6$ and $xyx=x^3=x^6=xyxxyx$, and this identity has the form \eqref{L2.3}. Similarly, if $\mathcal{V}=x_{i_2}^3$, we obtain the identity $xyx=y^3$; replacing $x$ by $y^3$ we have: $y^3=y^7$ and $xyx=y^3=y^7=xyxxyxy$, and this identity has the form \eqref{L2.3} as well.
       \end{enumerate}
 \end{enumerate}
\end{proof}

\begin{lemma}\label{l3}
  If an identity $\mathcal{U}=\mathcal{V}$ fails in the semigroup $C_{\rho}$ then this identity implies an identity of one of the following forms:
  \begin{subequations}\label{L3}
  \begin{align}
  \label{L3.1}
  x^myx=x^nyx^p \quad (p>1);\\
  \label{L3.2}
  x^myx=PyQyR,
\end{align}
\end{subequations}
where $P,Q,R$ are words or empty symbols and $x,y$ are different variables.
\end{lemma}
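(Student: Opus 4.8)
The plan is to imitate the proofs of Lemmas~\ref{l1} and~\ref{l2}. First I would record the structure of $C_{\rho}$. Since it is antiisomorphic to $C_{\lambda}$, it is the $2$-generated semigroup in which $e=x^{2}$, $z=xy$, $0=y^{2}$, with $x^{2}=xe=ex=e$, $e^{2}=e$, $yx=ye=y$, $xy=z$, $zx=ze=z$ and all remaining products $0$; in particular, among the words over $\{x,y\}$ only $x^{m}$ $(m\ge1)$, $yx^{m}$ $(m\ge0)$ and $xyx^{m}$ $(m\ge0)$ have non-zero values (namely $x$ or $e$, $y$, $z$). From this I would extract the three facts that run the argument: for a substitution $f$, the value $\mathcal{U}(f)$ can equal $e$ only if every letter of $\mathcal{U}$ is sent into $\{x,e\}$; it can equal $y$ only if the first letter of $\mathcal{U}$ is sent to $y$ and every other one into $\{x,e\}$; and it can equal $z$ only if $|\mathcal{U}|\ge2$, the first letter is sent to $x$, the second to $y$ and every other one into $\{x,e\}$. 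In the last two situations the letter sent to $y$ occurs in $\mathcal{U}$ exactly once.

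Now assume $\mathcal{U}=\mathcal{V}$ fails in $C_{\rho}$. If $\chi(\mathcal{U})\neq\chi(\mathcal{V})$, I would argue exactly as in the first case of Lemma~\ref{l1}: sending the variable occurring on one side but not on the other to $y$ and all the others to $x$ gives an identity $x^{k}=PyQ$, and then $x^{k}yx=PyQyx$ is of the form~\eqref{L3.2}. So assume $\chi(\mathcal{U})=\chi(\mathcal{V})$, fix a substitution $f$ realizing the failure, and suppose (without loss of generality) $\mathcal{U}(f)\neq0$. I would then split cases by the value $\mathcal{U}(f)\in\{x,e,y,z\}$, as in the second case of Lemma~\ref{l1}. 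If $\mathcal{U}(f)\in\{x,e\}$ --- or, more generally, whenever one of $\mathcal{U},\mathcal{V}$ is a single variable --- the facts above together with $\chi(\mathcal{U})=\chi(\mathcal{V})$ force $\mathcal{U}=\mathcal{V}$ to be, in one variable $t$, a relation $t^{k}=t$ with $k\ge2$; then, substituting $t\mapsto y$ and multiplying by $x$ on both sides, $xyx=xy^{k}x$, which (as $k\ge2$) is of the form~\eqref{L3.2}.

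The substance of the proof is the case $\mathcal{U}(f)\in\{y,z\}$ with both $\mathcal{U}$ and $\mathcal{V}$ of length $>1$, which I would handle as the case $\mathcal{U}(f)=z$ of Lemma~\ref{l1}. Take $\mathcal{U}(f)=y$ (the case $z$ is the same, the first two letters of $\mathcal{U}$ playing the role of the first). Then the first letter $x_{i}$ of $\mathcal{U}$ is its unique occurrence in $\mathcal{U}$ and is sent to $y$, while every letter of $\mathcal{U}$, hence every letter of $\mathcal{V}$ other than $x_{i}$, is sent into $\{x,e\}$; since $\mathcal{V}(f)\neq y$, the variable $x_{i}$ either occurs more than once in $\mathcal{V}$ or is not the first letter of $\mathcal{V}$. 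Identifying $x_{i}$ with a fresh variable $y$ and all the remaining variables with a fresh variable $x$ --- choosing this identification, when necessary, so that a copy of $x_{i}$ is placed at the end of the words --- turns $\mathcal{U}=\mathcal{V}$ into an identity $yx^{s}=\mathcal{W}$ in which $\mathcal{W}$ has at least two occurrences of $y$ in the first subcase, and $\mathcal{W}=x^{a}yx^{b}$ with $a\ge1$ in the second. From such an identity, together with right-multiplication by $x$ and the power identities one obtains from $\mathcal{U}=\mathcal{V}$ by the substitution $y\mapsto x$, one extracts an identity of the form~\eqref{L3.2} in the first subcase and of the form~\eqref{L3.1} in the second.

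The hard part is this last case: one must keep track of the positions of the distinguished variable inside $\mathcal{V}$ and, when they are several, choose the identification of the auxiliary variables carefully, and then juggle the exponents of $x$ (via right-multiplication, the derived power identities, and the absorption rule $yx^{l}=y$ valid in $C_{\rho}$) so that the two sides acquire exactly the shapes prescribed by~\eqref{L3.1} and~\eqref{L3.2}. This is the same kind of case-by-case bookkeeping as in Lemmas~\ref{l1} and~\ref{l2}, but noticeably heavier, because $C_{\rho}$ --- unlike $A$ and $B$ --- possesses non-zero words of unbounded length.
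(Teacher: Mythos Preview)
Your overall plan---case analysis on the value $\mathcal{U}(f)$, reducing to the position and multiplicity of the distinguished variable---is exactly the paper's strategy. The difficulty is not in the decomposition but in a left--right mismatch between the semigroup you analyse and the target shapes \eqref{L3.1}, \eqref{L3.2}.

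You correctly observe that in $C_{\rho}$ the constraint on a non-zero value lies on the \emph{prefix}: the special variable $x_i$ (the one sent outside $\{x,e\}$) must be the first or second letter of $\mathcal{U}$. Substituting $x_i\mapsto y$ and the rest to $x$ therefore produces identities of the shape $yx^{s}=\dotsb$ or $xyx^{s}=\dotsb$, i.e.\ with a single $x$ (or none) \emph{before} $y$. But \eqref{L3.1} and \eqref{L3.2} demand a single $x$ \emph{after} $y$ on one side. Your ``exponent juggling'' cannot bridge this in general. Concretely, the identity $yx^{3}=xyx^{2}$ fails in $C_{\rho}$ (it gives $y\neq z$), yet it implies neither \eqref{L3.1} nor \eqref{L3.2}: in the variety it defines, the relation $\alpha\beta^{3}=\beta\alpha\beta^{2}$ preserves the number of occurrences of each letter, so no consequence has two $y$'s on one side; and one checks that the only rewritings applicable to $x^{n}yx^{p}$ move one $x$ across $y$ provided $p\ge 3$, so the class of $x^{m}yx$ is a singleton and never meets any $x^{n}yx^{p}$ with $p>1$.

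What is going on is that the paper's statement has $C_{\rho}$ and $C_{\lambda}$ interchanged. The paper's own proof uses repeatedly that $f(x_i)\cdot f(T)=0$ whenever $|T|>1$ (and, in case~(b)(ii), that $f(S)\cdot f(x_i)$ does not depend on $f(S)\in\{x,e\}$); both facts hold in $C_{\lambda}$ and fail in $C_{\rho}$. Consistently, the paper later \emph{applies} Lemma~\ref{l3} to $C_{\lambda}$ in the proof of Lemma~\ref{l10}. If you redo your analysis for $C_{\lambda}$---so that the special variable is forced to be the last or second-to-last letter of $\mathcal{U}$---the substitution directly yields $x^{s}yx=\dotsb$ (or $x^{s}y=\dotsb$, then multiply by $x$ on the right), and the cases close exactly as in the paper, with no juggling needed. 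One small point to add in that rewrite: your characterisation of the value $z$ should also allow the first letter to be sent to $z$ itself, not only the pattern ``first letter $\mapsto x$, second $\mapsto y$''.
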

\begin{proof}
    Let us consider all possible cases.
    \begin{enumerate}
      \item $\chi(\mathcal{U})\neq\chi(\mathcal{V})$. Then $\mathcal{U}=\mathcal{V}$ $\Rightarrow$ $x^m=PyQ$. From the last identity we deduce: $x^myx=PyQyx$, and this is an identity of the form \eqref{L3.2}.
      \item $\chi(\mathcal{U})=\chi(\mathcal{V})= \{x_1, x_2,\cdots,x_n\}$. Since the identity  $\mathcal{U}=\mathcal{V}$ does not hold in the semigroup $C_\rho$, there exists a mapping $f:\{x_1, x_2,\cdots ,x_n\}\rightarrow C_\rho$ such that \eqref{IN} holds. Consider all possible cases.
          \begin{enumerate}
            \item $(f(x_i)\in\{x,e\}$ for all $i$. If $|\mathcal{U}|>1$ and $|\mathcal{V}|>1$, then $f(\mathcal{U})=f(\mathcal{V})=e$ in contradiction with \eqref{IN}.

                If  $|\mathcal{U}|=1$ and $|\mathcal{V}|=1$, then the identity  $\mathcal{U} =\mathcal{V}$ has the form  $x_i=x_j$ ($x_i$ and $x_j$ are different variable). It is clear that this implies \eqref{L3.1}.

                If $|\mathcal{U}|=1$ and $|\mathcal{V}|>1$ (or \textit{vice versa}), then  $\mathcal{U}=\mathcal{V}$ $\Rightarrow$ $x=x^p$, where $p>1$, and an identity of the form \eqref{L3.1} can be easily deduced from the last identity.
            \item There is an index $i$ such that $(f(x_i)\notin\{x,e\}$. If $x_i$ enters in both words $\mathcal{U}$ and $\mathcal{V}$ at least twice, then $f(\mathcal{U}) = f(\mathcal{V})=0$, because $f(x_i)\in\{y,z,0\}$, and $\{y,z,0\}$ is an ideal of the semigroup $C_\rho$, in which the product of every two elements equals 0. But it contradicts \eqref{IN}. Hence, $x_i$ enters in $\mathcal{U}$ or in  $\mathcal{V}$ only once. Without loss of generality let us assume that $x_i$ enters in $\mathcal{U}$ only once.
                \begin{enumerate}
                  \item $x_i$ enters in $\mathcal{V}$ at least twice. Then the identity $\mathcal{U} =\mathcal{V}$ has the form  $Sx_iT=Px_iQx_iR$ where $x_i\notin\chi(S)\cup\chi(T)$. If $|T|>1$, then $f(x_i)\cdot f(T)=0$, and $f(\mathcal{U})=f(\mathcal{V})=0$ in contradiction with \eqref{IN}. If $|T|=1$, then the substitution $x_i\mapsto y$, $x_j\mapsto x$ for $j\neq i$, gives us an identity of the form \eqref{L3.1}. At last, if $|T|=0$, the same substitution followed by the multiplication on the right by $x$ produces an identity of the form \eqref{L3.1}.
                  \item $x_i$ occurs in $\mathcal{V}$ only once. The identity $\mathcal{U} =\mathcal{V}$ has the form $Sx_iT=Dx_iE$, and $x_i \not\in \chi(S)\cup \chi(T)\cup \chi(D)\cup\chi(E)$. If $|T|>1$, $|E|>1$, then $f(x_i) \cdot f(T)=f(x_i) \cdot f(E)=0$ and $f(\mathcal{U})=f(\mathcal{V})=0$ in contradiction with \eqref{IN}. Therefore, either $|T|=1$ or $|E|=1$. Without loss of generality assume that $|T|=1$. Suppose that $|E|=1$ as well and that $T=E=x_j$ $(x_i \neq x_j)$. If exists an index $k \neq i$ such that $f(x_k) \in \{y,z,0\}$, then $f(\mathcal{U})=f(\mathcal{V})=0$ in contradiction with \eqref{IN}. Thus, $f(x_k)\in\{e,x\}$ for all $k$, $k\neq i$; then $f(S) \in \{e,x\}$, $f(D) \in \{x,e\}$, and $f(S) \cdot f(x_i)=f(D) \cdot f(x_i) \Rightarrow  f(S) \cdot f(x_i) \cdot f(x_j)=f(D) \cdot f(x_i) \cdot f(x_j) \Rightarrow f(\mathcal{U})=f(\mathcal{V})$ in contradiction with \eqref{IN}. Thus $|E|>1$, or $E=x_k$, $T=x_j$, $k\neq i,j$, $j \neq i$. Make the substitution $x_i \mapsto y$, $x_l\mapsto x$ for $l \neq i$ in the first case and the substitution $x_i\mapsto y$, $x_k\mapsto x^2$, $x_l \mapsto x$ for $l \neq i,k$ in the second case. We shall obtain so an identity of the form \eqref{L3.1}.
                \end{enumerate}
          \end{enumerate}
    \end{enumerate}
\end{proof}

The left analog of Lemma \ref{l3} is valid for the semigroup $C_{\lambda}$.

\begin{lemma}\label{l4}
  If an identity $\mathcal{U}=\mathcal{V}$ fails in the semigroup  $D$ then this identity  implies an identity of the form
\begin{equation}\label{L4}
  xy^nx=PxQ,
\end{equation}
where $P,Q\neq\emptyset$ are nonempty words and $x,y$ are different variables.
\end{lemma}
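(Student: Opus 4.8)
The plan is to imitate the proofs of Lemmas \ref{l1}--\ref{l3}: from a substitution $f$ witnessing the failure of $\mathcal{U}=\mathcal{V}$ in $D$ we first cut the identity down to two variables, and then we produce an identity of the form \eqref{L4} by three elementary moves — left/right multiplication of the identity by $x$, the substitution $x\mapsto xyx$, and use of the derived power identity $x^{k+2}=x^{2}$.

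I would start by recording the structure of $D$. Since $D=\langle x,e\mid x^{2}=0,\ e^{2}=e,\ exe=0\rangle$, its non-zero elements are exactly $e,x,ex,xe,xex$, and a word over $\{x,e\}$ represents a non-zero element iff it contains no subword $xx$ and no subword $e^{a}xe^{b}$ with $a,b\ge 1$; such a word necessarily has one of the shapes $e^{k}$, $x$, $e^{k}x$, $xe^{k}$, $xe^{k}x$ ($k\ge 1$). Applying $f$, i.e.\ substituting $x_{i}\mapsto f(x_{i})$ (each $f(x_{i})$ being a word in $x,e$), turns $\mathcal{U}=\mathcal{V}$ into a consequence over the two letters $\{x,e\}$ that still fails in $D$; renaming $e$ as $y$, I may therefore assume that $\mathcal{U},\mathcal{V}\in\{x,y\}^{+}$, that $\mathcal{U}\neq\mathcal{V}$ in $D$, and — since at most one of them is $0$ in $D$ — that $\mathcal{U}$ represents a non-zero element, so $\mathcal{U}\in\{y^{k},x,y^{k}x,xy^{k},xy^{k}x:k\ge 1\}$. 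Two remarks will be used throughout: a word having an occurrence of $x$ that is neither its first nor its last letter is already of the form $PxQ$ with $P,Q$ nonempty; and a word representing $0$ in $D$ is either $x^{2}$ or of that form.

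Then I would do a case analysis. If $\mathcal{V}$ has no $x$ it equals $y^{l}$, so $\mathcal{U}$ must contain $x$ (otherwise $\mathcal{U}=\mathcal{V}$ in $D$); multiplying the identity by $x$ on the left and on the right gives $x\mathcal{U}x=xy^{l}x$, whose right side has the required left-hand shape and whose left side has an internal $x$ — except when $\mathcal{U}=x$, when the identity reads $x=y^{l}$ and the variety is trivial. Dually, if $\mathcal{U}=y^{k}$ while $\mathcal{V}$ contains $x$, then $xy^{k}x=x\mathcal{V}x=PxQ$. It remains to treat the case where both $\mathcal{U}$ and $\mathcal{V}$ contain $x$, so $\mathcal{U}\in\{x,y^{k}x,xy^{k},xy^{k}x\}$, the $y^{k}x$ subcase being the reversal of the $xy^{k}$ one (using that $D$ is anti-isomorphic to itself and that \eqref{L4} is invariant under reversal). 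If $\mathcal{U}=x$: the substitution $x\mapsto xyx$ turns the identity into $xyx=\sigma(\mathcal{V})$, and since $\mathcal{V}\neq x$ as a word, $\sigma(\mathcal{V})$ properly contains the subword $xyx$ and hence is a $PxQ$. If $\mathcal{U}=xy^{k}$: right-multiplication by $x$ gives $xy^{k}x=\mathcal{V}x$, which is of the desired form whenever $\mathcal{V}$ has an occurrence of $x$ past its first letter; the only exception $\mathcal{V}=xy^{l}$ is impossible for $l\ge 1$ (it equals $\mathcal{U}$ in $D$) and is handled by the $x\mapsto xyx$ trick when $\mathcal{V}=x$. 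If $\mathcal{U}=xy^{k}x$: the left side is already in shape, so I only need $\mathcal{V}$ to be a $PxQ$; the obstructions are $\mathcal{V}\in\{x,x^{2}\}$, disposed of by observing that $y\mapsto x$ yields $x^{k+2}=\mathcal{V}$ and hence $xy^{k}x=x^{k+2}=x\cdot x\cdot x^{k}$, and $\mathcal{V}=xy^{l}$ or $y^{l}x$ with $l\ge 1$, disposed of by right-multiplying the identity by $y^{k}x$ and rewriting the resulting suffix $xy^{k}x$ as $xy^{l}$, which gives $xy^{l+k}x=xy^{k}\cdot x\cdot y^{l}$.

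The only point requiring care is the bookkeeping: one must check that the right-hand sides $\mathcal{V}$ which are not already of the form $PxQ$ form the short list $\{y^{l},x,x^{2},xy^{l},y^{l}x\}$, and that for each of them (with the shape of $\mathcal{U}$ at hand) one of the three moves above applies, the coincidences with $\mathcal{U}$ being genuinely excluded by $\mathcal{U}\neq\mathcal{V}$ in $D$. I expect no difficulty beyond this enumeration.
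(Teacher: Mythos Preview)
Your approach works, with one small gap in the bookkeeping: the list of words over $\{x,y\}$ that are \emph{not} of the form $PxQ$ with $P,Q$ nonempty must also include $xy^{l}x$ for $l\ge 1$ (its only $x$'s are the first and last letters). In your case analysis this matters only when $\mathcal{U}=xy^{k}x$; there $\mathcal{V}=xy^{l}x$ gives $xex=xex$ in $D$, contradicting $\mathcal{U}\neq\mathcal{V}$, so it is indeed one of the ``coincidences genuinely excluded'' you allude to --- but it should appear on the list. A more minor point: for $\mathcal{U}=xy^{k}x$ and $\mathcal{V}=y^{l}x$, your right-multiply-and-rewrite move actually produces $xy^{k+l}x=y^{l}xy^{k}x$, not $xy^{k}\cdot x\cdot y^{l}$; the conclusion is the same, but the displayed formula is specific to $\mathcal{V}=xy^{l}$.

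Your route differs from the paper's in one essential respect: you reduce to two variables at the outset by substituting the witnessing values $f(x_i)$ as words in the generators $x,e$ of $D$ --- exactly the trick the paper uses for $B$ in Lemma~\ref{l2}, but which it does \emph{not} apply to $D$. The paper's proof of Lemma~\ref{l4} keeps $\mathcal{U},\mathcal{V}$ over an arbitrary alphabet, splits first on $\chi(\mathcal{U})=\chi(\mathcal{V})$ and on whether $|\mathcal{U}|,|\mathcal{V}|<3$, invokes Propositions~\ref{p1} and~\ref{p2} for the short cases, and in the main case $|\mathcal{U}|,|\mathcal{V}|\ge 3$ writes $\mathcal{U}=aPb$, $\mathcal{V}=cQd$ and performs a lengthy analysis on whether the boundary letters $a,b,c,d$ reappear inside $P,Q$. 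Your reduction buys a dramatically shorter case analysis (five shapes for the nonzero $\mathcal{U}$, a handful of exceptional $\mathcal{V}$'s) at the price of invoking the anti-isomorphism $D\cong D^{op}$ and tracking which $\mathcal{V}$'s coincide with $\mathcal{U}$ in $D$; the paper's argument is more mechanical but considerably longer.
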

\begin{proof}
  There exists a mapping $f:\{x_1,x_2,\cdots ,x_n\} \rightarrow D$ such that \eqref{IN} holds. If $\chi(\mathcal{U})\neq\chi(\mathcal{V})$, then $\mathcal{U}=\mathcal{V}$ $\Rightarrow$ $y^n=P^*xQ^*$ $\Rightarrow$ $xy^nx=xP^*xQ^*x$, and the last identity has the form \eqref{L4}. Let     $\chi(\mathcal{U})=\chi(\mathcal{V})$. As above we shall look over all possible cases. Without loss of generality we can assume that $|\mathcal{U}|\leq|\mathcal{V}|$.
  \begin{enumerate}
    \item $|\mathcal{U}|<3$.
        \begin{enumerate}
          \item $\mathcal{U}=x_i$. Then  the identity $\mathcal{U}=\mathcal{V}$ takes the form  $x_i=x_i^m$ $(m>1)$. Set $x_i =xyx$ and obtain an identity $xyx=(xyx)^m$ of the form \eqref{L4}.
          \item $\mathcal{U}=x_i^2$. In this case  the identity $\mathcal{U}=\mathcal{V}$ takes the form $x_i^2=x_i^m$ $(m>2)$; but this identity holds in the semigroup $D$ in contradiction with the hypothesis.
          \item $\mathcal{U}=x_ix_j$ $(x_i\neq x_j)$. If $|\mathcal{V}|=2$, i.e., $\mathcal{V}=x_jx_i$ then the identity  $\mathcal{U}=\mathcal{V}$ $\Rightarrow$ $xy=yx$ implies the identity $xyx=yx^2$ of the form \eqref{L4}. If $|\mathcal{V}|\geq3$ then according to Propositions \eqref{p1} and \eqref{p2}, $\mathcal{U}= \mathcal{V}$ implies the identity $xyx=(xy)^{n+1}x$ of form \eqref{L4}.
        \end{enumerate}
    \item $|\mathcal{U}|\geq3$, $|\mathcal{V}|\geq3$. Let $\mathcal{U}=aPb$, $\mathcal{V}=cQd$, where $a,b,c,d$ are variables and $P,Q$ are nonempty words $(a,b,c,d\in\{x_1, x_2,\cdots ,x_n\})$.
        \begin{enumerate}
          \item $\{a,b\} \subset \chi(P)$, $\{c,d\} \subset \chi(Q)$. Then for any mapping $f:\chi(\mathcal{U})\rightarrow\chi(\mathcal{V})$ we have:
              $$f(\mathcal{U})=f(aPb)=f(\mathcal{V})=f(cQd)=\begin{cases}
                e, &\text{ if } f(x_i)=e \text{ for all }i,\\
                0, &\text{ otherwise; }
              \end{cases}$$
              Thus this case is impossible.
          \item $a\notin\chi(P)$ or $b\notin\chi(Q)$ or $c\notin\chi(Q)$ or  $d\notin\chi(Q)$. All these cases can be considered analogously; examine the first one. There exist the following possibilities:
              \begin{enumerate}
                \item $a \neq c$. If  $a\in\chi(Q)$, then sending $a$ into $x$, $b$ into $y$ (if $b \neq a$) and all letters from $P$ into $y$ we obtain an identity of the form \eqref{L4} or an identity $xy^n=P^*xQ^*$ \ ($P^*\neq0$), which implies the identity $xy^nx=P^*xQ^*x$ of the form \eqref{L4}. If $a\notin\chi(Q)$, then $a=d$ because  $\chi(\mathcal{U})=\chi(\mathcal{V})$, and the identity takes the form $aPb=cQa$, where $a\notin\chi(P)\cup\chi(Q)$ and $a\neq c$. If $a=b$ then $\mathcal{U}=\mathcal{V}$ $\Rightarrow$ $aPa=cQa$ $\Rightarrow$ $ac^na=c^ma$ $\Rightarrow$ $ac^ma=a^2c^na$, and the last identity is an identity of the form \eqref{L4}. If $a \neq b$ then the substitution $b\mapsto c$ transforms the identity $\mathcal{U}=\mathcal{V}$ into the identity $aP^*c=cQ^*a$, in which $a\notin\chi(P^*)$ and $a \notin\chi(Q^*)$; from this identity follows the identity $ac^n=c^ma$ which implies the identity $ac^na=c^ma^2$ of the form \eqref{L4} $(n,m\geq1$.
                \item $a=c$. In this case our identity takes the form $aPb=aQd$. Assume that $a=b=d$. If $a \in \chi(Q)$, then $\mathcal{U}=\mathcal{V}$ is an identity of the form \eqref{L4}. If $a\notin\chi(Q)$ then tt follows from the definition of the semigroup $D$ that $f(P)=e$ only if $f(x_i)=e$ for each letter $x_i\in\chi(P)$. Therefore, if $f(P)=e$ then $f(x_i)=e$ for any letter $x_i\neq a$. Since $a\notin\chi(Q)$, we have also $f(Q)=e$. Hence, $f(P)=e\Rightarrow f(Q)=e$ and the same argument proves that $f(Q)=e\Rightarrow f(P)=e$. Further,
                    $$f(\mathcal{U})=f(aPa)=\begin{cases}
                    xex, &\text{if }  a=x \text{ and } f(P)=e; \\
                    e, &\text{if } a=e \text{ and } f(P)=e; \\
                    0 &\text{otherwise,}\end{cases}$$
                    and the same formulas, in which $P$ is replaced with $Q$, give the value of $f(\mathcal{V})$.  But in our case $f(P)=e$ $\Leftrightarrow$ $f(Q)=e$, and, consequently, $f(\mathcal{U})=f(\mathcal{V})$ in contradiction with \eqref{IN}.

                     Assume that $a=b\neq d$. Then if $a\in\chi(Q)$, then, obviously, $\mathcal{U}=\mathcal{V}$ implies an identity of the form \eqref{L4}. Otherwise, $\mathcal{U}=\mathcal{V} \Rightarrow ad^ma=ad^n$ $\Rightarrow$ $ad^na=ad^ma^2$, and this is an identity of the form \eqref{L4}.

                     Assume that $a\neq b$ and $b\neq d$. Then $\mathcal{U}=\mathcal{V}$ $\Rightarrow$ $aPb=aQd$ $\Rightarrow$ $ab^n=aQa$ $\Rightarrow$ $ab^na=aQa^2$, and this is an identity of the form \eqref{L4}.

                     Assume that $a\neq b=d$. Then the identity $\mathcal{U}=\mathcal{V}$ takes the form $aPb=aQb$.

                     If $a\in\chi(Q)$, then $\mathcal{U}=\mathcal{V}$ $\Rightarrow$ $ab^n=aQ^*b$ $\Rightarrow$ $ab^na=aQ^*ba$ $\Rightarrow$ \eqref{L4}.

                     If $a\notin\chi(Q)$ and $b\in\chi(P)\backslash\chi(Q)$, then by setting $a\mapsto x;b\mapsto x; f(Q)=y^n$ we obtain the identity $xP^*(x)x=xy^nx$ which implies an identity of the form \eqref{L4}.

                     If $a\notin\chi(Q)$ and $b\in\chi(Q)\backslash\chi(P)$, then by setting $a\mapsto x; b\mapsto x; f(P)=y^n$ we obtain $xy^nx=xQ^*(x)x$ which implies an identity of the form \eqref{L4}.

                     If $a\notin\chi(Q)$ and $b\in\chi(Q)\cap\chi(P)$, then
                     $$f(\mathcal{U})=f(aPb)=\begin{cases}
                     e,  &\text{if }  f(x_i)=e \text{ for all }i;\\
                     xe,  &\text{if } f(a)=x,\, f(b)=e,\, f(P)=e; \\
                     0 &  \text{otherwise.}
                     \end{cases}$$
                     The same formulas (with $P$ replaced by $Q$) give the value of $f(\mathcal{V})$. Remark that in this case $\chi(P)=\chi(Q)$, therefore, $f(P)=e$ $\Leftrightarrow$ $f(Q)=e$, i.e., $f(\mathcal{U})=f(\mathcal{V})$ in contradiction with \eqref{IN}. Thus, this situation is impossible.

                     If $a\notin\chi(Q)$ and $b\notin\chi(Q)\cup\chi(P)$, then $$f(\mathcal{U})=f(aPb)=\begin{cases}
                     e, & \text{if } f(x_i)=e \text{ for all }i;\\
                     xe, & \text{if } a=x,\, b=e,\, f(P)=e; \\
                     ex, & \text{if } a=e,\, b=x,\, f(P)=e; \\
                     xex, & \text{if } a=x,\, b=x,\, f(P)=e; \\
                     0  & \text{otherwise.}
                     \end{cases}$$
                     The similar formulas give the value of $f(\mathcal{V})$.
                     Remark that in this case $\chi(P)=\chi(Q)$, therefore, $f(P)=e$ $\Leftrightarrow$ $f(Q)=e$, i.e., $f(\mathcal{U})=f(\mathcal{V})$ in contradiction with \eqref{IN}. Thus, this situation is impossible, too.

                     So, in all possible cases we have obtained an identity of the form \eqref{L4}.\end{enumerate}\end{enumerate}\end{enumerate}\end{proof}
Recall that
$$K_n=\{x,y \mid y^2=y^{2+n};x^2=0; yxy=0; xy^qx=0 (q=2,\cdots ,n);xyx=xy^{n+1}x\}.$$
It follows from this definition that the semigroup $K_n$ is finite; more precisely,
$$K_n=\{x,y^k,xy^k, y^kx, xyx, 0 \mid 1 \leq k \leq n+1\}.$$
Indeed, each element of $K_n$ is a word in $\{x,y\}$-alphabeth. The relation $yxy=0$ shows that every nonzero element of $K_n$ is equal to $x^iy^j$ or $y^jx^i$ or $x^iy^jx^k$; the relations $x^2=0$ and $xy^qx=0$ show that $xyx$ is
the only nonzero element of this semigroup in which $x$ occurs more than once and the relation $y^2=y^{2+n}$ shows that we can consider only the words in which $y$ enters not more than $n+1$ times. Remark, that all elements of $K_n$ from the above list are different.

\begin{lemma}\label{l5}
  If an identity $\mathcal{U}=\mathcal{V}$ fails in the semigroup  $K_n$ then this identity implies an identity of one of the following forms:
\begin{subequations}\label{L5}
\begin{align}
\label{L5.1}
x=PyQ;\\
\label{L5.2}
xy^{m_{1}}x=PxQ;\\
\label{L5.3}
xy^{m_{1}}x=xy^{m_{2}}x,
\end{align}
\end{subequations}
where $P,Q$ are words or empty symbols and $m_{1}\equiv1$ (mod $n$), $m_{2}\not\equiv1$ (mod $n$) and x,y are different variables.
\end{lemma}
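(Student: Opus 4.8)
The plan is to argue as in Lemmas \ref{l1}--\ref{l4}. Fix a substitution $f\colon\{x_1,\dots,x_m\}\to K_n$ with $f(\mathcal U)\neq f(\mathcal V)$ and assume, swapping $\mathcal U$ and $\mathcal V$ if necessary, that $f(\mathcal U)\neq 0$. First suppose $\chi(\mathcal U)\neq\chi(\mathcal V)$, say $z\in\chi(\mathcal V)\setminus\chi(\mathcal U)$. Substituting $z\mapsto x$ and every other variable $\mapsto y$ turns $\mathcal U=\mathcal V$ into a consequence $y^{p}=\mathcal W$, where $p=|\mathcal U|\ge 1$ and $\mathcal W\in\{x,y\}^{+}$ contains at least one $x$ (since $z$ occurs in $\mathcal V$). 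If $p=1$, the identity $y=\mathcal W$ has, after renaming, the form \eqref{L5.1}. If $p\ge 2$, choose $t\ge 0$ with $p+t\equiv 1\pmod n$; multiplying $y^{p}=\mathcal W$ on the left by $y^{t}$ and then on both sides by $x$ gives the consequence $xy^{p+t}x=xy^{t}\mathcal W x$, which has the form \eqref{L5.2}.

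Now suppose $\chi(\mathcal U)=\chi(\mathcal V)$. I would first reduce to two variables: every nonzero element of $K_n$ is a word in the generators $x,y$ of $K_n$, and $0=x^{2}$, so replacing each $x_i$ by a word $w_i\in\{x,y\}^{+}$ whose value in $K_n$ is $f(x_i)$ converts $\mathcal U=\mathcal V$ into an identity over $\{x,y\}$ which is implied by $\mathcal U=\mathcal V$ and which still fails in $K_n$ (under $x\mapsto x,\ y\mapsto y$). Since \eqref{L5.1}--\eqref{L5.3} are identities in $x,y$, it suffices to treat this two-variable identity; I write it again as $\mathcal U=\mathcal V$, with $\mathcal U,\mathcal V\in\{x,y\}^{+}$ and the value of $\mathcal U$ in $K_n$ nonzero, and apply the first paragraph if the variable sets have become unequal. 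The key structural fact, read off from the relations $x^{2}=0$, $yxy=0$, $xy^{q}x=0\ (2\le q\le n)$ and $y^{2}=y^{n+2}$, is that a word $\mathcal W\in\{x,y\}^{+}$ whose value in $K_n$ is nonzero contains at most two letters $x$, and is in fact of one of the shapes $y^{a}$, $xy^{b}$, $y^{a}x$, or $xy^{b}x$ with $b\equiv 1\pmod n$: indeed any factor $x^{2}$, $yxy$, or $xy^{q}x$ with $q\not\equiv 1\pmod n$ kills the value, and running through the maximal $y$-blocks of $\mathcal W$ leaves only those four shapes.

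With this in hand the analysis is short. If $\chi(\mathcal U)=\{y\}$ then $\mathcal U=\mathcal V$ is $y^{a}=y^{b}$ with $a<b$; when $a\ge 2$ it fails in $K_n$ only if $n\nmid b-a$, and then picking $m_1\ge a$ with $m_1\equiv 1\pmod n$ and setting $m_2=m_1+(b-a)$ yields the consequence $xy^{m_1}x=xy^{m_2}x$ of the form \eqref{L5.3}; when $a=1$ the identity is the law $w=w^{b}$, and substituting $w\mapsto xyx$ gives $xyx=(xyx)^{b}$ of the form \eqref{L5.2}. If $\chi(\mathcal U)=\{x\}$ the identity is $x=x^{b}$ with $b\ge 2$, again a law, and $w\mapsto xyx$ gives \eqref{L5.2}. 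Finally, if $\chi(\mathcal U)=\{x,y\}$ then by the structural fact $\mathcal U$ is $xy^{b}x$ with $b\equiv 1\pmod n$, or $xy^{b}$, or $y^{a}x$. In the first case $\mathcal V$ contains an $x$ (its variable set is $\{x,y\}$), so $\mathcal U=\mathcal V$ already has the form \eqref{L5.2}. In the second case, multiplying $xy^{b}=\mathcal V$ on the right by $y^{t}x$ with $b+t\equiv 1\pmod n$ gives $xy^{b+t}x=\mathcal V y^{t}x$, of the form \eqref{L5.2}; the third case is symmetric, multiplying on the left by $xy^{t}$.

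The main obstacle is the structural fact about $K_n$ together with the accompanying modular bookkeeping: one has to verify carefully that the hypothesis ``value nonzero'' really confines $\mathcal W$ to the four shapes, and that in every branch the shift $t$ (or the choice of $m_1,m_2$) can be made so that the left-hand side is genuinely $xy^{m_1}x$ with $m_1\equiv 1\pmod n$ and, where needed, $m_2\not\equiv 1\pmod n$, so that the output is an honest instance of \eqref{L5.2} or \eqref{L5.3} rather than a vacuous one. Everything else is routine manipulation, and each branch terminates in one of \eqref{L5.1}, \eqref{L5.2}, \eqref{L5.3}.
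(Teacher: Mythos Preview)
Your two-variable reduction via the presentation of $K_n$ is a clean idea, and the structural fact---that a word over $\{x,y\}$ with nonzero value in $K_n$ must literally be one of $x$, $y^{a}$, $xy^{b}$, $y^{a}x$, or $xy^{b}x$ with $b\equiv 1\pmod n$---is correct and does most of the work. The problem is in your last paragraph. For \eqref{L5.2} to have content (and this is exactly how it is used in the proof of Lemma~\ref{l10}) the $x$ on the right must be an \emph{inner} occurrence, i.e.\ both $P$ and $Q$ nonempty; otherwise \eqref{L5.2} is a tautology. Two of your branches do not deliver this. First, when $\mathcal U=xy^{b}x$ and $\mathcal V\in\{xy^{c},\,y^{a}x,\,xy^{c}x\}$, the right-hand side has no inner $x$, so ``already has the form \eqref{L5.2}'' is false; for $\mathcal V=xy^{c}x$ you should instead observe that failure in $K_n$ forces $c\not\equiv 1\pmod n$, giving \eqref{L5.3}, while $\mathcal V=xy^{c}$ or $y^{a}x$ requires a further multiplication to manufacture an inner $x$. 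Second, and more seriously, when $\mathcal U=xy^{b}$ and $\mathcal V=xy^{c}$, your right-multiplication by $y^{t}x$ yields $xy^{b+t}x=xy^{c+t}x$, again with no inner $x$ on the right; and this case is not vacuous, since $xy=xy^{n+1}$ genuinely fails in $K_n$ (the elements $xy$ and $xy^{n+1}$ are distinct there) yet has $b\equiv c\pmod n$, so your output is neither \eqref{L5.2} nor \eqref{L5.3}. The repair is a substitution rather than a multiplication: putting $y\mapsto yx$ in $xy=xy^{c}$ gives $xyx=x(yx)^{c}=(xy)^{c}x$, which for $c\ge 2$ has an inner $x$ and is \eqref{L5.2} with $m_1=1$.

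The paper's route is different: it does not reduce to two variables but analyses directly which original variables $x_i$ have $f(x_i)$ divisible by $y$, and then constrains the shape of $\mathcal U$ letter by letter. In the analogous spot (its case where $\mathcal U=x_jx_ix_j$ and $x_j$ occurs only once in $\mathcal V$) it explicitly lists the possibilities $\mathcal V=x_jx_i^{k}$, $x_i^{k}x_j$, $x_i^{k}x_jx_i^{l}$ and remarks that each implies an identity of the form \eqref{L5.2} via an extra step---precisely the step your argument skips.
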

\begin{proof}
  If $\chi(\mathcal{U}) \neq\chi(\mathcal{V})$, then the identity $\mathcal{U}=\mathcal{V}$ implies  an identity of the form \eqref{L5.1}. Let  $\chi(\mathcal{U})=\chi(\mathcal{V})=\{x_1,x_2,\cdots,x_n\}$. Since the identity $\mathcal{U}=\mathcal{V}$ does not hold in $K_n$, there exists a mapping $f:\{x_1,x_2,\cdots ,x_n\} \rightarrow K_n$ such that \eqref{IN} holds. As in the proofs of the precedent Lemmas, we shall examine all cases item-by-item.
  \begin{enumerate}
    \item $f(x_i)=x$ for all $i$. Then \eqref{IN} $\Rightarrow$ $|\mathcal{U}|=1$ or $|\mathcal{V}|=1$ $\Rightarrow$ $x=x^m$ ($m>1$) $\Rightarrow$ $xyx=xy^mx$, and the last identity has the form \eqref{L5.2}.
    \item There exists $i$ such that $f(x_i)$ is divisible by  $y$ and $f(x_j)$ is not divisible by $y$ for any $j\neq i$. This means that $f(x_j)=x$. We can assume without loss of generality that
        \begin{equation}\label{IN2}
          f(\mathcal{U})\neq0.
        \end{equation}
        If the letter $x_i$ enter in $\mathcal{U}$ more than once, then by the definition of $K_n$ the word $\mathcal{U}$ has one of the following forms:
        $$\dagger\quad x_i^qx_j, \quad \text{or} \quad \ddagger \quad x_jx_i^q\,\,\,\,(q>2),\quad\text{or} \quad \star \quad x_jx_i^mx_k \,\,(m\equiv 1\!\!\!\!\pmod{n})\,.$$
        If $\mathcal{U}$ has the form $\dagger$ and \eqref{L5.2} does not hold, then $\mathcal{V}$ necessarily has the form of $x_i^lx_j$ ($l>2$). Then $q \not\equiv l\pmod{n}$, and it follows that the identity $\mathcal{U}=\mathcal{V}$ implies \eqref{L5.3}. The case $\ddagger$ can be studied  in the same way. If we have the case $\star$ and \eqref{L5.2} does not hold then $\mathcal{V}$ has necessarily the form  $x_jx_i^lx_k$ or the form $x_kx_i^lx_j$, where $m \not\equiv l\pmod{n}$, i.e., the identity $\mathcal{U}=\mathcal{V}$ implies \eqref{L5.3}. Thus, $x_i$ enter in $\mathcal{U}$ only once. Further, for $j \neq i,k$ the letter $x_j$ can not follow immediately the letter $x_k$ because otherwise $f(\mathcal{U})=0$ in contradiction with \eqref{IN2}. Consider all remaining cases.
        \begin{enumerate}
          \item $\mathcal{U}=x_jx_ix_j$, $x_j \neq x_i$.
            \begin{enumerate}
              \item The letter $x_j$ enters in the  word $\mathcal{V}$ only once. Then $\mathcal{V}=x_jx_i^k$ or $\mathcal{V}=x_i^kx_j$ or $\mathcal{V}=x_i^kx_jx_i^l$ for some $k,l\geq1$. Hence, $\mathcal{U} =\mathcal{V}$ implies $xyx=xy^k$ or $xyx=y^kx$ or$xyx=y^kxy^l$. It remains to remark, that each of these identities implies an identity of the form \eqref{L5.2}.
              \item $x_j$ occurs in $\mathcal{V}$ at least twice and either the first or the last letter of the word $\mathcal{V}$ is not equal to $x_j$. Then $\mathcal{U} =\mathcal{V}$ implies \eqref{L5.2}.
              \item $\mathcal{V}=x_jP^*x_j$ and $x_j\notin \chi (P^*)$. In this case  $\mathcal{V}=x_jx_i^mx_j$. But $f(\mathcal{U})\neq f(\mathcal{V})$, therefore, $m>1$.

                  If $f(x_i)$ is not divisible by $x$, then $f(\mathcal{U})=f(x_j)f(x_i)f(x_j)=xf(x_i)x$ is divisible by $x^2=0$ or by $yxy=0$, i.e., $f(\mathcal{U})=0$ in contradiction with \eqref{IN2}. So, $f(x_i)$ is not divisible by $x$. This means that  $f(x_i)=y^k$ for some $k\geq1$, i.e.,  $f(\mathcal{U})=xy^{km}x$.

                  If $m\equiv 1\pmod{n}$ then $km-k=k(m-1)\equiv 0\pmod{n}$ and $xy^kx=xy^{km}x$ by the definition of the semigroup $K_n$; it follows that $f(\mathcal{U})=f(\mathcal{V})$ in contradiction with $(*)$. Thus, $m \not\equiv 1\pmod{n}$, and $\mathcal{U}=\mathcal{V}$ implies the identity $xyx=xy^mx$ of the form \eqref{L5.3}.
            \end{enumerate}
          \item $\mathcal{U}=x_jx_ix_k$ $(x_j\neq x_k; x_j \neq x_i; x_i\neq x_k)$.
              \begin{enumerate}
                \item $x_j$ or $x_k$ occurs inside the word $\mathcal{V}$, i.e., it is neither the first nor the last letter of the word $\mathcal{V}$. Then $\mathcal{U} =\mathcal{V} \Rightarrow xyx=PxQ$, where $ P\neq\emptyset$,  $Q\neq\emptyset$, and this is an identity of the form \eqref{L5.2}.
                \item Neither $x_j$ nor $x_k$ occurs inside $\mathcal{V}$. Then $\mathcal{V}=x_jx_i^mx_k$ or $\mathcal{V}=x_kx_i^mx_j$ and $f(\mathcal{U})=xf(x_i)x$. Applying again arguments of above cases we prove that $m\not\equiv1\pmod{n}$. Then $\mathcal{U}=\mathcal{V}$ implies the identity $xyx=xy^mx$ of the form \eqref{L5.3}.
              \end{enumerate}
          \item $\mathcal{U}=x_ix_j$ or $\mathcal{U}=x_jx_i$ or $\mathcal{U}=x_i$. Since the identity $\chi(\mathcal{U}=\chi(\mathcal{V})$ is not a tautology, it implies the identity $xy=yx$ or the identity $xy=\mathcal{V}$ with $|\mathcal{V}|\geq3$ and $\chi(\mathcal{V})=\{x,y\}$. Obviously,  each of them implies an identity of the form \eqref{L5.2}.
        \end{enumerate}
  \end{enumerate}
\end{proof}

\begin{lemma}\label{l6}
  If an identity $\mathcal{U}=\mathcal{V}$ fails in the semigroup  $F_\lambda $ then this identity  implies an identity of the form
  \begin{equation}\label{L6}
    xyP=x^{k}yQ
  \end{equation}
\end{lemma}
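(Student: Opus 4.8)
The plan is to follow the template of Lemmas \ref{l1}--\ref{l5}, exploiting the extreme rigidity of $F_\lambda$. The first step is to record the structure of $F_\lambda$: it has the six elements $x,y,x^{2},y^{2},xy,yx$, and the two defining relations attached to each of the "length-two" elements give at once $x^{2}w=x^{2}$, $y^{2}w=y^{2}$, $xyw=xy$, $yxw=yx$ for every nonempty $w\in\{x,y\}^{+}$ (for instance $xy\cdot x=xyx=xy$ and $xy\cdot y=xy^{2}=xy$, and one then inducts on $|w|$). Hence the value in $F_\lambda$ of a word $\mathcal{W}\in\{x,y\}^{+}$ equals $\mathcal{W}$ when $|\mathcal{W}|=1$ and equals the element named by the first two letters $l_{2}(\mathcal{W})\in\{xx,xy,yx,yy\}$ when $|\mathcal{W}|\ge 2$; I shall call this datum the \emph{profile} of $\mathcal{W}$, and the six profiles index the six pairwise distinct elements of $F_\lambda$.

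The second step is to reduce the failing identity to two letters, exactly as in the proof of Lemma \ref{l2}. Given $f$ witnessing \eqref{IN}, for each variable $x_{i}$ pick a word $w_{i}\in\{x,y,xx,yy,xy,yx\}$ with $w_{i}=f(x_{i})$ in $F_\lambda$, and substitute $x_{i}\mapsto w_{i}$ in $\mathcal{U}=\mathcal{V}$. The resulting identity is a consequence of $\mathcal{U}=\mathcal{V}$, is written over $\{x,y\}$, and still fails in $F_\lambda$ under the canonical substitution (its two sides evaluate to $f(\mathcal{U})\neq f(\mathcal{V})$). Replacing $\mathcal{U}=\mathcal{V}$ by it, I may assume $\mathcal{U},\mathcal{V}\in\{x,y\}^{+}$ have distinct profiles. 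Since $F_\lambda$ carries the automorphism interchanging $x$ and $y$, and since one is free to rename the letters of the target identity \eqref{L6} and to swap the two sides of $\mathcal{U}=\mathcal{V}$, only a handful of unordered pairs of profiles have to be examined.

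The third step is the case analysis. When one side, say $\mathcal{U}$, has a length-one profile: if $\mathcal{V}$ is also a single letter then $\mathcal{U}=\mathcal{V}$ is $x=y$ and implies everything; otherwise $|\mathcal{V}|\ge 2$, and the substitution $y\mapsto x$ (or nothing, if $\mathcal{V}$ is already a power of $x$) turns $\mathcal{U}=\mathcal{V}$ into $x=x^{m}$ with $m\ge 2$, whence right multiplication by $y$ gives $xy=x^{m}y$, an identity of the form \eqref{L6}. When both $\mathcal{U}$ and $\mathcal{V}$ have length $\ge 2$ with different profiles, I would, if necessary, left multiply both sides by one letter to align their leading letters; then one side has the shape $xyP$ (its profile being, or having become, $xy$), and the other side, as soon as it actually contains the letter $y$, has the shape $x^{k}yQ$ with $k$ equal to the length of its leading $x$-block — and one reads off $P,Q$.

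The only real difficulty is the configurations in which, after these reductions, one side of the two-letter identity is still a pure power $x^{m}$ containing no $y$, so that it cannot itself be of the shape $x^{k}yQ$. Here the device is to use the identity asymmetrically. One may assume the identity has been brought to the form $x^{m}=xy\mathcal{V}''$ with $m\ge 2$ (the remaining residual cases, where $x^{m}$ is set equal to a word beginning $yx\cdots$ or $yy\cdots$, are reduced to this one by left multiplying by $x$). Applying the identity to the two factorisations $x^{m}\cdot x$ and $x\cdot x^{m}$ of $x^{m+1}$ gives $xy\mathcal{V}''\cdot x=x^{m+1}=x\cdot xy\mathcal{V}''=x^{2}y\mathcal{V}''$, that is, $xy(\mathcal{V}''x)=x^{2}y\mathcal{V}''$, an identity of the form \eqref{L6} with $k=2$, nontrivial since its two sides disagree already in the second letter. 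I expect the main part of the write-up to be exactly this bookkeeping: checking that every residual sub-case can be routed through this device and that the identity produced meets whatever side conditions \eqref{L6} carries; the rest is of the same routine nature as in Lemmas \ref{l1}--\ref{l5}.
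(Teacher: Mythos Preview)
Your argument is correct and runs parallel to the paper's, but organized differently. The paper does \emph{not} first reduce to a two-letter alphabet; instead it works directly with the original variables and splits case~3 ($|\mathcal{U}|,|\mathcal{V}|>1$) according to whether the first letters of $\mathcal{U},\mathcal{V}$ agree and whether the second letters coincide with the first. When the first letters agree and both second letters are distinct from it (so $\mathcal{U}=xyP$, $\mathcal{V}=xzQ$ with $x,y,z$ pairwise distinct), the single substitution $z\mapsto x^{k-1}y$ produces \eqref{L6} in one stroke; when the first letters differ, left-multiplying by a fresh variable reduces to the previous case. Your two-letter reduction trades this slickness for a more uniform profile analysis.

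Your ``pure power'' device is a genuine addition. The paper's remaining sub-case (first letters equal, one second letter equal to the first: $\mathcal{U}=x^{2}P$, $\mathcal{V}=xyQ$) is dispatched with the bare instruction ``replace all variables except $x$ by $y$''; this yields \eqref{L6} only when $P$ contains some variable other than $x$, and is silent when $\mathcal{U}$ is literally a power $x^{m}$. Your observation that $x^{m}=xy\mathcal{V}''$ gives $xy\mathcal{V}''x=x\cdot x^{m}=x^{2}y\mathcal{V}''$ closes exactly this gap, so in that respect your write-up is more complete than the paper's.
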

where $P,Q$ are words or empty symbols, $x,y$ are different variables and $k>1$.
\begin{proof}
    Let  $\mathcal{U} =\mathcal{V}$ be an identity which does not hold in $F_\lambda$; let us study all possible cases. Without loss of generality assume that $|\mathcal{U}|\leq|\mathcal{V}|$.
    \begin{enumerate}
      \item $|\mathcal{U}|=1${\rm and }$|\mathcal{V}|=1$. As the identity  $\mathcal{U}=\mathcal{V}$ is not a tautology, $\mathcal{U}=\mathcal{V}$ takes the form of $x=y$, i.e., we come to an identity of the form \eqref{L6}.
      \item $|\mathcal{U}|=1${\rm  and }$|\mathcal{V}|>1$. $\mathcal{U}=\mathcal{V}$ $\Rightarrow$ $x=x^k$ ($k>1$) $\Rightarrow$ $xy=x^ky$, and the last identity is an identity of the form \eqref{L6}.
      \item $|\mathcal{U}|>1${\rm  and }$|\mathcal{V}|>1$. If $l_{1}(\mathcal{U})l_{2}(\mathcal{U})=l_{1}(\mathcal{V})l_{2}(\mathcal{V})$ then $\mathcal{U} =\mathcal{V}$ is fulfilled in $F_\lambda$ by the definition of this semigroup. Let $l_{1}(\mathcal{U})l_{2}(\mathcal{U})\neq l_{1}(\mathcal{V})l_{2}(\mathcal{V})$. Then the following cases are possible.
          \begin{enumerate}
            \item $\mathcal{U}=xyP $, $\mathcal{V}=xzQ$. The substitution $z\mapsto x^{k-1}y$ transforms the identity $\mathcal{U}=\mathcal{V}$ into an identity of the form \eqref{L6}.
            \item $\mathcal{U}=x^2P$, $\mathcal{V}=xyQ$. Replace all variable except $x$ by $y$.
            \item $\mathcal{U}=yP $,  $\mathcal{V}=zQ $. The identity  $\mathcal{U}=\mathcal{V}$ implies the identity $xyP=xzQ$, and we have seen in previous cases that the last identity implies an identity of the form \eqref{L6}.
          \end{enumerate}
    \end{enumerate}\end{proof}

The right analog of Lemma \eqref{l6} is valid for the semigroup $F_{\rho}$.

\begin{lemma}\label{l7}
  If an identity $\mathcal{U}=\mathcal{V}$ fails in the semigroup  $W_\lambda$ then this identity  implies an identity of one of the following  forms:
  \begin{subequations}\label{L7}
\begin{align}
\label{L7.1}
(xy)^{n+1}=Py^{2}Q;\\
\label{L7.2}
(xy)^{n+1}=Px^{2}Q;\\
\label{L7.3}
(ax)^{k_{1}}(ay)^{k_{2}}\cdots(ax)^{k_{s}}=(ay)^{l_{1}}(ax)^{l_{2}}\cdots(ay)^{l_{t-1}}(ax)^{l_{t}};\\
\label{L7.4}
(ax)^{k_{1}}=(ax)^{l_{1}}a
\end{align}
\end{subequations}
or the identity \eqref{L5.1}, where $P,Q$ are words or empty symbols, $k_1,\cdots,k_s,l_1,\cdots,l_t\geq1$,
and $a,x,y$ are different variables.
\end{lemma}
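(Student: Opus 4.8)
We shall follow the template of Lemmas~\ref{l1}--\ref{l6}: the proof is a case analysis driven by a substitution $f\colon\{x_1,\dots,x_n\}\to W_\lambda$ witnessing the failure of $\mathcal{U}=\mathcal{V}$. First we dispose of the case $\chi(\mathcal{U})\ne\chi(\mathcal{V})$: collapsing all variables but a suitably chosen one to a single letter gives, exactly as in the corresponding case of Lemma~\ref{l5}, an identity of the form \eqref{L5.1}. So assume $\chi(\mathcal{U})=\chi(\mathcal{V})$ and fix $f$ with, say, $f(\mathcal{U})\ne0$. Since $0$ is absorbing in $W_\lambda$, every variable occurring in the identity is sent to one of the eleven non-zero elements $a,x,y,ax,ay,xa,ya,axa,aya,xax,yay$, that is, to an alternating word in $a$ and $\{x,y\}$ of length at most three. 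As in Lemma~\ref{l2}, replacing each variable by a word over $\{a,x,y\}$ equal in $W_\lambda$ to its $f$-image turns $\mathcal{U}=\mathcal{V}$ into a consequence over the three-letter alphabet $\{a,x,y\}$ which still fails in $W_\lambda$ under the generator assignment; moreover the automorphism of $W_\lambda$ interchanging $x$ and $y$ (and fixing $a$) lets us normalise, when convenient, the first non-$a$ letter of $\mathcal{U}$. From now on $\mathcal{U},\mathcal{V}$ are words over $\{a,x,y\}$, the witnessing substitution is the identity on generators, and $\mathcal{U}$ is non-zero in $W_\lambda$, which means it has no factor from $\{a^2,x^2,y^2,xy,yx\}$, i.e.\ it alternates between $a$ and the set $\{x,y\}$.

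The combinatorial fact we use is that the value in $W_\lambda$ of a non-zero alternating word depends only on its first letter, its first non-$a$ letter, and whether or not it ends in $a$; this is immediate from the relations $ax\cdot ax=ax$, $xay=xax$, $xaxa=xa$ and their mirror images, which collapse every alternating word to one of the eleven canonical elements. The main dichotomy is whether $\mathcal{V}$ is non-zero in $W_\lambda$. If $\mathcal{V}$ evaluates to $0$ --- i.e.\ $\mathcal{V}$ contains a forbidden factor --- we aim at \eqref{L7.1} or \eqref{L7.2}: the substitution $y\mapsto x$ keeps $\mathcal{U}$ alternating, hence non-zero, and turns every $xy$-, $yx$- or $y^2$-factor of $\mathcal{V}$ into $x^2$; a further substitution $a\mapsto y$ turns $\mathcal{U}$ into a power $(xy)^k$ or $(xy)^kx$ (possibly after left- or right-multiplication) and turns any surviving $a^2$-factor of $\mathcal{V}$ into $y^2$; a routine renormalisation then yields an identity of one of the two forms. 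The degenerate case $|\mathcal{U}|=1$ is absorbed here: then $\mathcal{U}=\mathcal{V}$ is $x=x^m$ with $m>1$, and, arguing as in Proposition~\ref{p2} (multiply $(xy)^{n+1}$ on the right by a power of $y$ and use $y^m=y$), $x=x^m$ implies an identity $(xy)^{n+1}=Py^2$ of the form \eqref{L7.1}.

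There remains the case where $\mathcal{U}$ and $\mathcal{V}$ are both alternating words evaluating to \emph{distinct} elements of $W_\lambda$; here we aim at \eqref{L7.3} or \eqref{L7.4}. By the structural remark the two sides must disagree in the first letter, in whether they end in $a$, or in the first non-$a$ letter. If the disagreement is confined to a trailing $a$, then --- after left-multiplying the identity by $a$ if its first letter is not already $a$, and applying $y\mapsto x$ --- both sides become words of the shape $(ax)^k$ or $(ax)^ka$, so that, reading off the exponents, the identity takes the form \eqref{L7.4}. If the first non-$a$ letters disagree (so, after normalisation, $\mathcal{U}$ begins with an $x$-block and $\mathcal{V}$ with a $y$-block), then, after left-multiplying by $a$ if needed and right-multiplying both sides by a short word ($x$ or $ax$) so that both end in an $x$-block, the identity literally takes the alternating-block form \eqref{L7.3}. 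A first-letter disagreement of the ``$a$ versus non-$a$'' type is reduced to one of the previous cases (or to the dichotomy of the second paragraph) by left-multiplying by $a$. The main obstacle is exactly this last family of cases: because the defining relations of $W_\lambda$ collapse long alternating words to canonical blocks of length at most three, the witnessing substitution conveys essentially no information about the internal block structure of $\mathcal{U}$ and $\mathcal{V}$, so the pattern of $ax$- and $ay$-blocks exhibiting \eqref{L7.3} or \eqref{L7.4} has to be manufactured by a carefully chosen combination of one- and two-sided multiplications and substitutions rather than read off the data of $f$; enumerating all the ways two long alternating words can reduce to the same or to different canonical elements, and in each case selecting the manipulation that separates precisely the offending blocks, is the delicate part of the argument.
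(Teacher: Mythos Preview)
Your proposal follows essentially the same approach as the paper: reduce to a consequence over the three-letter alphabet $\{a,x,y\}$ via the witnessing substitution, then split on whether the other side evaluates to $0$ in $W_\lambda$ (yielding \eqref{L7.1} or \eqref{L7.2} after the collapse $y\mapsto x$, $a\mapsto y$) versus both sides being nonzero alternating words (where your structural observation---that the $W_\lambda$-value of such a word depends only on its first letter, its first non-$a$ letter, and whether it ends in $a$---is exactly what drives the paper's extraction of \eqref{L7.3} and \eqref{L7.4}). Your organization is marginally cleaner in that you state this structural fact about $W_\lambda$ up front rather than rediscovering it inside each sub-case, but the content and the manipulations (left-multiplication by $a$, right-multiplication by $x$ or $ax$ to normalize the trailing block) are the same as the paper's.
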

\begin{proof}
  Since  $\mathcal{U}=\mathcal{V}$  does not hold in the semigroup $W_\lambda$, there exists a mapping $f:\{x_1,x_2,\cdots ,x_n\} \rightarrow  W_\lambda$, such that  $f(\mathcal{U}) \neq f(\mathcal{V})$. For each $j$ the element $f(x_j)$ is a word in the alphabet $\{a,x,y\}$; let $X_j$ be the word obtained from $f(x_j)$ by replacing letters $a, \,x,\,y$ with  letters $A,\,X,\,Y$. We consider $A$, $X$, $Y$ as independent variables. Let $\mathcal{U}_1$, $\mathcal{V}_1$ be words obtained from $\mathcal{U}$, $\mathcal{V}$ by replacing $x_j$ with the words $X_j$. Then the identity $\mathcal{U}_1=\mathcal{V}_1$ follows from the identity $\mathcal{U}=\mathcal{V}$ and it is not fulfilled in $W_\lambda$ because
  \begin{equation}\label{IN3}
    g(\mathcal{U}_1)=f(\mathcal{U})\neq f(\mathcal{V})=g(\mathcal{V}_1),
  \end{equation}
  where $g$ is the mapping $\{A,X,Y\}\rightarrow W_\lambda $ defined by the formulae $g(A)=a$, $g(X)=x$, $g(Y)=y$.

  Without loss of generality assume that
  \begin{equation}\label{IN4}
    g(\mathcal{U}_1)\neq0.
  \end{equation}
  Consider all possible cases.
  \begin{enumerate}
    \item $\chi(\mathcal{U}_1) \neq\chi(\mathcal{V}_1)$. Then $\mathcal{U}_1=\mathcal{V}_1$ implies \eqref{L5.1}.
    \item $\chi(\mathcal{U}_1) =\chi(\mathcal{V}_1)\subseteq \{A,X,Y\}$.
        \begin{enumerate}
          \item The first letters of the words $\mathcal{U}_1$ and $\mathcal{V}_1$ are different. Then $\mathcal{U}_1=\mathcal{V}_1\Rightarrow xP(x,y)x=yQ(x,y)x$, and the substitution $x\mapsto ax$,  $y \mapsto ay$ produces an identity of the form \eqref{L7.3}.
          \item The words $\mathcal{U}_1$ and $\mathcal{V}_1$ begin with the same letter $T\in \{A,X,Y\}$. Let $|\mathcal{U}_1|=1$. Since $\mathcal{U}_1=\mathcal{V}_1$ is not a tautology, the identity $\mathcal{U}_1=\mathcal{V}_1$ implies the identity $x=x^k$ of the form \eqref{L7.2} ($k=|\mathcal{V}_1|>1$).

              If $|\mathcal{U}_1|>1$ then $g(\mathcal{U}_1)$ is divisible by $a$. Otherwise $g(\mathcal{U}_1)$should be divisible by $x^2$ or by $y^2$ or by $xy$ or by $yx$; in all cases $g(\mathcal{U}_1)=0 $ in contradiction with \eqref{IN4}.
              \begin{enumerate}
                \item $T=A$. In this case $\mathcal{U}_1=AT_1AT_2\cdots AT_{k-1}AT_k$, where $T_1,T_2,\cdots ,T_{k-1} \in \{X,Y\}$, $T_k \in\{X,Y\}$ or $T_k$ is the empty word, $g(\mathcal{V}_1)=AP_1AP_2\cdots AP_n$, where $P_i$ are words over $\{X,Y\}$ (maybe empty).

                    Assume that $g(\mathcal{V}_1)\neq0$. It is possible only if $P_1,\cdots P_{n-1} \in \{X,Y\}$, $P_n \in \{X,Y\}$ or $P_n=\emptyset$.   If exactly one of the words $T_k$ and $P_n$ is empty then the identity $\mathcal{U}_1=\mathcal{V}_1$ implies an identity of the form \eqref{L7.4}. Let $T_1=P_1$;  if both symbols $T_k$, $P_n$ are empty, then it follows from the description of the semigroup $W_\lambda $ that $g(\mathcal{U}_1)=ag(T_1)a=ag(P_1)a=g(\mathcal{V}_1)$, and this equality contradicts \eqref{IN3}. If  $T_1=P_1$, $T_k,P_n\in \{X,Y\}$, then $g(\mathcal{U}_1)=ag(T_1)=ag(P_1)=g(\mathcal{V}_1)$ in contradiction with \eqref{IN3}.

                    It remains to study the situation, in which $T_1\neq P_1$ and either both words $T_k$ and $P_n$ are empty or both words are not empty.  The second letters $T_1$, $P_1$ of the words $\mathcal{U}_1$, $\mathcal{V}_1$ are different and belong to the set $\{X,Y\}$; therefore the second letter of one of the words $\mathcal{U}_1$, $\mathcal{V}_1$ is $X$, and the second letter of another of these words is $Y$. Hence, the identity $\mathcal{U}_1=\mathcal{V}_1$ or the identity $\mathcal{U}_1X=\mathcal{V}_1X$ is an identity of the form \eqref{L7.3}.

                    Assume that $g(\mathcal{V}_1)=0$. This means that  $g(\mathcal{V}_1)$ contains a square of one of the letters $a,x,y$ or $xy$ or $yx$. Replacing $X$, $Y$ by $x$ and $A$ by $y$ we come either to the identity $(xy)^{n+1}=Px^2Q$ of the form \eqref{L7.1} or to the identity $(xy)^{n+1}=Py^2Q$ of the form \eqref{L7.2}.
                \item $T=X$ or $T=Y$. In this case $\mathcal{U}_1=TAT_1\cdots T_{k-1}AT_k$, where $T_1,\cdots ,T_{k-1}\in\{X,Y\}$, $T_k\in \{X,Y\}$ or $T_k=\emptyset$, because otherwise $g(\mathcal{U}_1)=0$, and $\mathcal{V}_1=TP_0AP_1\cdots AP_{n-1}AP_n$, where  $P_0,P_1,\cdots ,P_n$ are words over $\{X,Y\}$ or empty symbols.

                    Assume that $g(\mathcal{V}_1)\neq0$. Then $P_0=\emptyset$ and $P_1,\cdots , P_{n-1} \in \{X,Y\}$, $P_n\in \{X,Y\}$ or $P_n=\emptyset$. If exactly one of the symbols $T_k$, $P_n$ is empty, then the identity $\mathcal{U}_1=\mathcal{V}_1$ implies an identity of the form \eqref{L7.4}. If both symbols $T_k$, $P_n$ are empty, then it follows from the description of the semigroup $W_\lambda $ that $g(\mathcal{U}_1)=g(T)a=g(\mathcal{V}_1)$, and this equality contradicts $(*)$. If $T_k,P_n\in \{X,Y\}$, then $g(\mathcal{U}_1)=g(T)ag(T)=g(\mathcal{V}_1)$ in contradiction with \eqref{IN3}.

                    Assume that $g(\mathcal{V}_1)=0$. This means that  $\mathcal{V}_1$ contains a square of one of the letters $A,X,Y$ or $XY$ or $YX$. Replace $A$ with $y$ and $X$, $Y$ with $x$. We shall obtain the identity $(xy)^{n+1}=Px^2Q$ of the form \eqref{L7.1} or the identity $(xy)^{n+1}=Py^2Q$ of the form \eqref{L7.2}.
              \end{enumerate}
        \end{enumerate}
  \end{enumerate}
\end{proof}

\begin{lemma}\label{l8}
  If an identity $\mathcal{U}=\mathcal{V}$ fails in the semigroup  $N_3$ then this identity implies an identity of the form \eqref{1}.
\end{lemma}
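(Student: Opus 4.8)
The proof rests on one elementary observation about $N_{3}=\{a,b,0\mid a^{2}=b\}$. Writing $N_{3}=\{a,a^{2},0\}$ with $a^{3}=0$ and noting that every product of two elements other than $a\cdot a$ equals $0$ (so in particular $N_{3}$ is commutative), one sees that for any word $\mathcal{W}$ and any substitution $f$ of its variables into $N_{3}$ the value $f(\mathcal{W})$ is nonzero only if either $|\mathcal{W}|=1$, with the letter sent to $a$ or to $a^{2}$, or $|\mathcal{W}|=2$, with both letters sent to $a$ (the value then being $a^{2}$); every product of length $\geq 3$ is $0$. So if $\mathcal{U}=\mathcal{V}$ fails in $N_{3}$ via an $f$ with $f(\mathcal{U})\neq f(\mathcal{V})$, then, taking $f(\mathcal{U})\neq 0$ without loss of generality (the identity being unchanged under interchange of sides), we get $|\mathcal{U}|\leq 2$. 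The whole argument now splits according to $|\mathcal{U}|$ and $|\mathcal{V}|$, and in each branch a single short substitution will produce an identity of the form \eqref{1}.

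If $|\mathcal{U}|=1$ (the case $|\mathcal{V}|=1$ is symmetric), write $\mathcal{U}=x$; since a failing identity is not a tautology, $\mathcal{V}$ is a word different from $x$. When $|\mathcal{V}|\geq 2$, collapsing every variable to $x$ gives $x=x^{|\mathcal{V}|}$ with $|\mathcal{V}|\geq 2$, and multiplying by $x$ yields $x^{2}=x^{|\mathcal{V}|+1}$, of the form \eqref{1}; when $\mathcal{V}$ is a single variable $y\neq x$, the substitution $y\mapsto x^{2}$ gives $x=x^{2}$, hence again $x^{2}=x^{3}$. If $|\mathcal{U}|=2$ and $|\mathcal{V}|\geq 3$, collapsing every variable to $x$ turns $\mathcal{U}=\mathcal{V}$ into $x^{2}=x^{|\mathcal{V}|}$ with $|\mathcal{V}|\geq 3$, already of the required form. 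The only remaining case is $|\mathcal{U}|=|\mathcal{V}|=2$. If $\chi(\mathcal{U})=\chi(\mathcal{V})$, then up to renaming the identity is either the trivial $x^{2}=x^{2}$ or commutativity $xy=yx$; but the latter holds in the commutative semigroup $N_{3}$, contradicting the hypothesis. Hence $\chi(\mathcal{U})\neq\chi(\mathcal{V})$; after possibly interchanging the two sides, choose a variable $v\in\chi(\mathcal{U})\setminus\chi(\mathcal{V})$ and apply the substitution $v\mapsto x^{2}$, every other variable $\mapsto x$. Since $v$ occurs in $\mathcal{U}$ and $|\mathcal{U}|=2$, the left side becomes $x^{4}$ (if $\mathcal{U}=v^{2}$) or $x^{3}$ (if $v$ occurs once in $\mathcal{U}$), while the right side, all of whose variables differ from $v$, becomes $x\cdot x=x^{2}$; in both cases the resulting identity has the form \eqref{1}.

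I expect no genuine obstacle here. The one point requiring care is not to over-collapse: sending every variable to a single letter trivialises the identity exactly when $|\mathcal{U}|=|\mathcal{V}|$, so in the length-two case one must instead inflate exactly one ``unmatched'' variable to $x^{2}$ in order to keep the two sides of distinct $x$-degree, and one must first record that $N_{3}$ is commutative so as to dispose of the $xy=yx$ possibility. Once these small observations are in place the case analysis above is routine and exhaustive. (Alternatively, in the branch $\mathcal{U}=xy$, $|\mathcal{V}|\geq 3$ one could invoke Propositions \ref{p1} and \ref{p2}, matching the style of the preceding lemmas, but the direct collapse is shorter.)
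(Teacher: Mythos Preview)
Your proof is correct and follows essentially the same approach as the paper: reduce to $|\mathcal{U}|\leq 2$ using that every product of length $\geq 3$ in $N_3$ is zero, then in the unequal-length cases collapse all variables to $x$, and in the equal-length case exploit that any balanced identity holds in the commutative semigroup $N_3$. The only cosmetic difference is that where the paper explicitly lists the seven unbalanced degree-two identities and checks each by hand, you handle them uniformly via the observation $\chi(\mathcal{U})\neq\chi(\mathcal{V})$ and a single ``inflate one variable to $x^{2}$'' substitution; this is a modest tidying-up rather than a different argument.
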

\begin{proof}
  Examine all cases item by item.
  \begin{enumerate}
    \item $|\mathcal{U}|\geq3$ and $|\mathcal{V}|\geq3$. Then $\mathcal{U}=\mathcal{V}$ holds in $N_3$ in contradiction with the hypothesis.
    \item $|\mathcal{U}|<3$ or $|\mathcal{V}|<3$. If $|\mathcal{V}|<|\mathcal{U}|$ or $|\mathcal{V}|>|\mathcal{U}|$ then the $\mathcal{U}=\mathcal{V}\longrightarrow x^2=x^{n+2}$ where $n$ is equal to $\pm (|\mathcal{U}|-|\mathcal{V}|)$. Let $|\mathcal{V}|=|\mathcal{U}|$; if the identity $\mathcal{U}=\mathcal{V}$ is balanced (i.e., the multiplicity of occurrence of each letter in the word $\mathcal{U}$ is the same as the multiplicity of occurrence of this letter in the word $\mathcal{V}$), then this identity holds in the semigroup $N_3$ in contradiction with the assumption. Therefore, $\mathcal{U}=\mathcal{V}$ is not a balanced identity.

        There is only one unbalanced identity of degree 1 and only 7 unbalance identity of degree 2:

        $$x=y,\,\,\, x^2=y^2,\,\,\, x^2=xy,\,\,\, x^2=yx,\,\,\, xy=xz,\,\,\, xy=zx,\,\,\,xy=zy,\,\,\, xy=zt.$$
        The substitution $x\mapsto x,\,y\mapsto x^2$ for first four of this identities and the substitution $x,y\mapsto x$, $z,t\mapsto x^2$ makes them into identities of the form \eqref{1}.
  \end{enumerate}
\end{proof}
\begin{lemma}\label{l9}
    Let the identity \eqref{1} and an identity of the form \eqref{L2.1} hold in in the variety $V$. Then the identity \eqref{2} holds in the variety $V$ as well.
\end{lemma}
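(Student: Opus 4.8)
The plan is to normalize the given identity by left--right duality, reduce \eqref{2} to a ``one-sided'' consequence of the hypotheses, obtain that consequence by a case analysis, and finish with a short exponent-matching computation. For the normalization: it suffices to treat identities of the form $xyx=P(xy)^2Q$ with $P,Q$ words over $\{x,y\}$ (possibly empty), since reversing every word carries an identity $xyx=P(yx)^2Q$ into the identity $xyx=\widetilde Q\,(xy)^2\,\widetilde P$ (tildes denote reversal) holding in the variety anti-isomorphic to $V$, while both \eqref{1} and \eqref{2} are invariant under reversal — the word $(xy)^{n+1}x=x(yx)^{n+1}$ being a palindrome. So from now on assume $V$ satisfies $\theta\colon\ xyx=P(xy)^2Q$.

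I claim it is enough to show that $V$ satisfies $xyx=(xy)^kx$ for some $k\ge2$ (the dual identity $xyx=x(yx)^k$ is treated symmetrically). Granting this, multiplying $xyx=(xy)^kx$ on the left by $xy$ gives $(xy)^2x=(xy)^{k+1}x$, hence $(xy)^mx=(xy)^{m+k-1}x$ for all $m\ge2$; and \eqref{1} applied with $xy$ in place of $x$ gives $(xy)^2=(xy)^{n+2}$, hence $(xy)^mx=(xy)^{m+n}x$ for all $m\ge2$. Therefore
\[(xy)^{n+1}x=(xy)^{(n+1)+(k-1)}x=(xy)^{k+n}x=(xy)^kx=xyx,\]
which is exactly \eqref{2}.

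It remains to obtain the one-sided identity from $\theta$ and \eqref{1}. Putting $y=x$ in $\theta$ gives $x^3=x^{3+d}$ with $d=|P|+|Q|+1$, and with \eqref{1} this yields $x^2=x^{2+g}$ where $g=\gcd(d,n)$, so that in $V$ every word $u$ satisfies $u^m=u^{m+g}$ and $u^m=u^{m+n}$ for $m\ge2$. One then argues by cases on the first and last letters of $P$ and $Q$, in the style of Lemmas \ref{l1}--\ref{l8}, using \eqref{1} to absorb powers at the junctions of $P,Q$ with the block $(xy)^2$ and, where $P$ ends or $Q$ begins with $y$, rewriting $\theta$ as an instance of the dual form or as one with shorter $P,Q$. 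The base case $P=Q=\emptyset$ displays the flavour: there $\theta$ is $xyx=(xy)^2$, so $y=x$ together with \eqref{1} forces $x^2=x^3$, whence $(xy)^2=(xy)^3$; now $(xy)^2=xyxy=xyx$ by $\theta$, while $(xy)^3=(xy)(xy)^2=(xy)(xyx)=xyxyx=(xyxy)x=(xyx)x=xyx^2$ again by $\theta$, so $xyx=xyx^2$, and finally $(xy)^2x=xyxyx=xyx^2=xyx$, the one-sided identity with $k=2$.

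The whole weight of the proof lies in this last case analysis. Two points make it delicate: $P$ and $Q$ are arbitrary, so one must keep careful account of how the block $(xy)^2$ merges with their boundary letters under the successive rewritings; and one must control exponents, because the periodicity contributed by $\theta|_{y=x}$ (period dividing $d$) and that contributed by \eqref{1} (period dividing $n$) both intervene, whereas the conclusion \eqref{2} must come out with exponent precisely $n+1$. It is to avoid that bookkeeping that one first distils a bare identity $xyx=(xy)^kx$ with unspecified $k$ and only afterwards invokes the exponent-matching computation displayed above.
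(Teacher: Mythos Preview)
Your proposal has a genuine gap: the central step --- deriving $xyx=(xy)^kx$ for some $k\ge2$ from the hypothesis $xyx=P(xy)^2Q$ together with \eqref{1} --- is not carried out. You do the base case $P=Q=\emptyset$ cleanly, and your exponent-matching argument from $xyx=(xy)^kx$ to \eqref{2} is correct. But for general $P,Q$ you merely announce ``one then argues by cases on the first and last letters of $P$ and $Q$, in the style of Lemmas~\ref{l1}--\ref{l8},'' and then explain why this is delicate. That is a description of a strategy, not a proof. It is not clear that such a word-combinatorial induction on $|P|+|Q|$ terminates or even goes through: when you substitute or absorb powers at the boundaries of $P,Q$ with the block $(xy)^2$, you need to show the result is again of the form $xyx=P'(xy)^2Q'$ with strictly smaller $|P'|+|Q'|$ (or already one-sided), and you have not done that for a single nontrivial case.

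The paper avoids all of this bookkeeping by a structural argument. From \eqref{1} one gets $(xy)^2=(xy)^{n+2}$, so $P(xy)^2Q$ contains $(xy)^3$ (or $(yx)^3$) as a subword; hence $xyx$ is divisible by $yxyxy$, and in the free semigroup of $V$ the elements $xyx$, $yxyxy$, and $t=(xyx)(yxyxy)(xyx)$ are $\mathcal J$-equivalent. The principal factor $K_{xyx}$ is therefore not a null semigroup; being periodic and $0$-simple, it is completely $0$-simple by Munn's theorem. Since $t=(xyx)z(xyx)\ne 0$ in $K_{xyx}$, the elements $xyx$ and $t$ lie in the same $\mathcal H$-class, so they share left identities; as $(xy)^n$ is a left identity for $t$, it is one for $xyx$, giving $xyx=(xy)^{n+1}x$ directly. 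This sidesteps the case analysis entirely and yields the exact exponent $n+1$ without your separate matching step.
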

\begin{proof}
  Let the identity $xyx=P(yx)^2Q$ or the identity $xyx=P(xy)^2Q$ hold in $V$. It follows from \eqref{1} that $(yx)^2=(yx)^{2+n}$ and $(xy)^2=(xy)^{2+n}$; therefore, $xyx$ is divisible by $(yx)^3=yxyxyx$ or $(xy)^3=xyxyxy$. In both cases $xyx$ is divisible by $yxyxy$ and the elements $xyx$,  $yxyxy$ belong to the same $\mathcal{J}$-class. Let $t=(xyx)\cdot (yxyxy)\cdot (xyx)$. Since $xyx$ is divisible by $(xy)^{2+nk}$ for all $k$, $xyx$ is divisible by $t$, and $xyx\mathcal{J} t$.

  Let $S$ be the free semigroup of the variety $V$ with free generators $x$, $y$ and let $K_{xyx}$ be the principal factor of the element $xyx$. Since $z=yxyxy\mathcal{J}xyx$ and $t\mathcal{J}xyx$, the elements $z$, $t$ belong to $ K_{xyx}$ and $z \neq0$, $t \neq0$ in $K_{xyx}$. Since $xyx=(xyx)\cdot z\cdot (xyx)$, the semigroup $K_{xyx}$ is not a semigroup with zero multiplication. Any principal factor of a semigroup is a 0-simple semigroup. It follows from \eqref{1} that $K_{xyx}$ is periodic. According to the Mann's theorem, a 0-simple periodic semigroup is either a semigroup with $0$ multiplication or  a completely $0$-simple semigroup. So, $K_{xyx}$ is a completely 0-simple semigroup. Since $t=(xyx) \cdot z \cdot (xyx) \neq0$ in $K_{xyx}$, the elements $xyx$ and $t$ are contained in the same $\mathcal{H}$-class of the semigroup $K_{xyx}$. Therefore, the sets of their right unities and the sets of their left unities coincide. As the element $(xy)^n$ is a left unity for $t$, it is a left unity for $xyx$ too. Thus, $xyx=(xy)^{n+1}x$.
\end{proof}

\begin{lemma}\label{l10}
  The identities \eqref{1} and \eqref{2} hold in a semigroup variety $V$ for some $n$ if and only if $V$ does not contain any of indicator Burnside semigroups $(1)$---$(7)$.
\end{lemma}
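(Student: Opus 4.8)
The plan is to prove the two implications separately: the forward one as a direct finite verification, and the converse by assembling the preceding lemmas, with Lemma~\ref{l8} producing \eqref{1} and Lemma~\ref{l9}, fed by a case analysis resting on Lemmas~\ref{l1}--\ref{l5}, producing \eqref{2}.

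\textsc{Forward direction.} Suppose $V\models$\eqref{1},\eqref{2} for some $n\ge 1$. The semigroup $N_{3}$ already fails \eqref{1}, since there $a^{2}=b\neq 0=a^{n+2}$. The other six do satisfy \eqref{1} (in each, every square is idempotent or $0$, and $K_{m}$ satisfies \eqref{1} for $n=m$ thanks to $y^{2}=y^{2+m}$), so for them \eqref{2} is the operative obstruction: in each of $A,B,C_{\lambda},C_{\rho},D,K_{m}$ there is a substitution sending $xyx$ to a nonzero element while already sending $(xy)^{2}$ to $0$, so that $(xy)^{n+1}x=0\neq xyx$ for every $n\ge 1$. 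Concretely, in $A$ send $x\mapsto e,\ y\mapsto y$ (then $xyx=z$, $(xy)^{2}=z^{2}=0$); in $B$ send $x\mapsto x,\ y\mapsto y$ (then $xyx=c$, $(xy)^{2}=a^{2}=0$); in $D$ send $x\mapsto x,\ y\mapsto e$ (then $xyx=xex\neq 0$, $(xy)^{2}=xexe=0$); in $K_{m}$ send $x\mapsto x,\ y\mapsto y$ (then $xyx\neq 0$ while $(xy)^{2}=x(yxy)=0$); and $C_{\lambda},C_{\rho}$ are treated the same way, using for $C_{\rho}$ the antiisomorphism with $C_{\lambda}$, under which \eqref{2} turns into $xyx=x(yx)^{n+1}$. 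Hence no variety satisfying \eqref{1} and \eqref{2} contains any of the seven.

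\textsc{Converse.} Suppose $V$ avoids all seven semigroups; if $V$ is trivial there is nothing to prove, so assume $V$ nontrivial. Since $N_{3}\notin V$, some identity of $V$ fails in $N_{3}$, so by Lemma~\ref{l8} that identity, hence $V$, satisfies \eqref{1} for some $n_{0}\ge 1$. By Lemma~\ref{l9} it now suffices to exhibit an identity of the form \eqref{L2.1} valid in $V$. Since $B\notin V$, some identity of $V$ fails in $B$, so by Lemma~\ref{l2} the variety $V$ satisfies an identity of one of the forms \eqref{L2.1}, \eqref{L2.2}, \eqref{L2.3}; in the first case we are done. In the other two cases we bring in the remaining exclusions: from $A\notin V$, $C_{\lambda}\notin V$, $C_{\rho}\notin V$, $D\notin V$ and, for each of the finitely many $m$ with $m\mid n_{0}$ (for $m\nmid n_{0}$ the condition $K_{m}\notin V$ already follows from \eqref{1}), $K_{m}\notin V$, Lemmas~\ref{l1}, \ref{l3}, \ref{l4}, \ref{l5} yield further short identities valid in $V$ --- of the shapes $x^{k}yx^{l}=PyQyR$, $x^{m}yx=x^{n}yx^{p}$ with $p>1$, $xy^{m}x=PxQ$ with $P,Q$ nonempty, $xy^{m_{1}}x=xy^{m_{2}}x$ with $m_{1}\equiv 1\not\equiv m_{2}$, and their mirror images. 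The aim is then to show that any identity of the form \eqref{L2.2} or \eqref{L2.3}, after multiplying on appropriate sides and combining it with \eqref{1} and one of these extra identities, either reduces to an identity of the form \eqref{L2.1} --- whereupon Lemma~\ref{l9} closes the argument --- or else would force one of the seven semigroups back into $V$, which is impossible.

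The real work, I expect, is precisely this last case analysis for the \eqref{L2.2} and \eqref{L2.3} branches: moving the square into the $(xy)^{2}$-position using the periodicity identity \eqref{1}, disposing of the left/right dual sub-cases, keeping the auxiliary variables and word lengths from proliferating, and for every surviving configuration either reading off an \eqref{L2.1}-type identity or locating one of $A,C_{\lambda},C_{\rho},D$ or some $K_{m}$ as a divisor of a two-generated relatively free semigroup of $V$. The conceptual core is the mechanism already used in Lemma~\ref{l9}: once a square of $xy$ or $yx$ sits on the right-hand side, \eqref{1} forces the principal factor of $xyx$ to be completely $0$-simple rather than null, and then $xyx$ shares an $\mathcal{H}$-class with a power of $xy$, which delivers \eqref{2}.
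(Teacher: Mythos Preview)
Your overall strategy matches the paper's: the forward direction is the same finite verification, and for the converse you correctly identify that Lemma~\ref{l8} gives \eqref{1}, that Lemma~\ref{l2} splits into the three cases \eqref{L2.1}--\eqref{L2.3}, and that Lemma~\ref{l9} closes case~\eqref{L2.1}. The genuine gap is that you have not carried out the cases \eqref{L2.2} and \eqref{L2.3} at all --- you explicitly defer them (``The real work, I expect, is precisely this last case analysis''), and this is where almost all of the content of the proof lies. In the paper this analysis occupies several pages of nested subcases, and it is not routine: one must track the positions of $x$ and $y$ in the words $P,Q$, make carefully chosen substitutions, and repeatedly invoke \eqref{1} to pump exponents before an identity of shape \eqref{L2.1} emerges.

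Two specific points about how the paper actually proceeds, which differ from your sketch. First, the reduction is not ``either get \eqref{L2.1} or force a forbidden semigroup back into $V$''; rather, the exclusions $A,C_{\lambda},C_{\rho},D,K_{n}\notin V$ are used \emph{only} to supply, via Lemmas~\ref{l1}, \ref{l3}, \ref{l4}, \ref{l5}, extra identities that are then combined with the \eqref{L2.2}/\eqref{L2.3} identity to manufacture something of shape \eqref{L2.1}. Second, the architecture is that case~\eqref{L2.3} is reduced to case~\eqref{L2.2} (this is where $C_{\lambda}\notin V$ and $A\notin V$ enter, via Lemmas~\ref{l3} and~\ref{l1}), and within case~\eqref{L2.2} the hardest subcase is when the identity collapses to $xyx=xy^{l}x$; here one takes $n$ minimal for \eqref{1}, shows $l=n+1$, and then splits on $n=1$ (using $D\notin V$ and Lemma~\ref{l4}) versus $n>1$ (using $K_{n}\notin V$ and Lemma~\ref{l5}). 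Your remark about only needing $K_{m}$ for $m\mid n_{0}$ is in the right spirit but not how the argument is organized: only the single semigroup $K_{n}$ for the minimal period $n$ is invoked.
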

\begin{proof}\textbf{Necessity.} Assume that the identities \eqref{1},
    \eqref{2} hold in $V$. Show that the semigroups $(1)$---$(7)$ are not contained in $V$.
    \begin{enumerate}
      \item The identity \eqref{2} does not hold in the semigroup $A$: $eye=z; (ey)^{n+1}e=0$.
      \item The identity \eqref{2} does not hold in the semigroup $B$: $xyx=c; (xy)^{n+1}x=0$.
      \item The identity \eqref{2} does not hold in the semigroup $C_\lambda$: $xyx=z; (xy)^{n+1}x=0$.
      \item The identity \eqref{2} does not hold in the semigroup $C_\rho$ (as in the case $(3))$.
      \item The identity \eqref{1} does not hold in the semigroup $N_3$: $a^2=b; a^{2+n}=0$.
      \item The identity \eqref{2} does not hold in the semigroup $D$: $xex\neq0; (xe)^{n+1}x=0$.
      \item For every $m$ the identity \eqref{2} does not hold in the semigroup $K_m$: $$xyx \neq0; (xy)^{n+1}x=x(yxy)(xy)^{n-1}x=0.$$
    \end{enumerate}
  \textbf{Sufficiency.} The semigroup $N_3\notin V$ is not contained in $V$; therefore, there is an identity of $V$ which does not hold in $N_3$, and by Lemma \ref{l8} there exists a positive integer $n$ such that the identity $x^2=x^{2+n}$ holds in $V$. Similarly, since  $B\notin V$, there is an identity of $V$ which does not hold in $B$. According to Lemma \ref{l2}, this identity implies an identity of one of the forms \eqref{L2.1}, \eqref{L2.2} or \eqref{L2.3}. Consider these three cases separately.
  \begin{enumerate}
    \item An identity of the form \eqref{L2.1} holds in $V$. By Lemma \ref{l9} an identity of the form \eqref{2} holds in $V$.
    \item An identity of the form \eqref{L2.2} holds in $V$. Assume that $xyx=Py^2xQ$ is an identity of $V$ (the case of the identity $xyx=Pxy^2Q$ can be studied in the same way).
        \begin{enumerate}
          \item $x \notin\chi(P)$. Then an identity of the form $xyx=y^mxP^*$ $(m\geq 2)$ is fulfilled in $V$. Since $y^2=y^{2+n}$ in $V$, we have for all $k$:
              $$xyx=y^mxP^*=y^{m+nk}xP^*=y^{nk}xyx.$$
              Replacing $x$ with $y$ and $y $ with $x$, we obtain the identity $yxy=x^{nk}yxy$. Therefore,
              $$xyx=y^{nk-1}\cdot(yxy)\cdot x=y^{nk-1}x^{nk}yxyx=y^{nk-1}x^{nk}(yx)^2$$
              is an identity of the variety $V$. It follows now from Lemma \ref{l9} that the identity $xyx=(xy)^{n+1}x$ of the form \eqref{2} holds in $V$.
          \item $x \in \chi(P)$.
            \begin{enumerate}
              \item $Q\neq\emptyset$ and $y\notin\chi(Q)$. The identity $xyx=Py^2xQ$ implies the identity $xyx=Py^2x^m$ with $m>1$. It follows: $xyx=Py^2x^m=Py^2x^{m+n}=xyx^{1+n}$. Replacing $y$ with $yx$ in the identity $xyx=Py^2xQ$, we obtain the identity $xyx^2=P(yx)^2xQ$. Hence, $xyx=xyx^{1+n}=xyx^2x^{n-1}=P(yx)^2xQx^{n-1}$ is an identity of $V$. By Lemma \ref{l9} the identity $xyx=(xy)^{n+1}x$ of the form \eqref{2} holds in $V$.
              \item $Q\neq\emptyset$ and $y\in\chi(Q)$. Our identity takes the form
                  \begin{equation}\label{FORMi}
                    xyx=P^*xQ^*y^2xSyT,
                  \end{equation}
                where $y\notin\chi(S)$.  Replace if needed all variables except $x$, $y$ with $x$; then $S=x^{k-1}$, $k\geq 1$, and we see that $xyx$ is divisible by $y^2x^ky$. Replacing $x$ with $y$ and $y$ with $x^k$, we obtain that $yx^ky$ is divisible by $x^{2k} y^kx^k$. Since the relation of divisibility is transitive, $xyx$ is divisible by $x^2y^kx$.

                If $k=1$ then substituting in \eqref{FORMi} the variable $y$ for $xy$ we conclude that $x^2yx$ is divisible by $(xy)^2$, and therefore, $xyx$ is divisible by $(xy)^2$. By Lemma \ref{l9}, the identity \eqref{1} is fulfilled in $V$.

                Let $k>1$. Replacing $y$ with $xy^k$ we conclude that $x^2y^kx$ is divisible by $x^2(xy^k)^kx$, and consequently $xyx$ is divisible by $(xy^k)^2$ (because $k>1$). But $(xy^k)^2=xy^kxy^k$, therefore, $xyx$ is divisible by $y^2xy$. Changing the variables $x$, $y$ respectively into $x$, $xy$ and $y$, $x$, we obtain that $x^2yx$ is divisible by $(xy)^2$ and $yxy$ is divisible by $x^2yx$. Thus, $xyx$ is divisible by $y^2xy$ and consequently by $yxy$, $yxy$ is divisible by $x^2yx$ and $x^2yx$ is divisible by $(xy)^2$; it follows that $xyx$ is divisible by $(xy)^2$, and Lemma \ref{l9} guarantees that the identity $xyx=(xy)^{n+1}x$ holds in $V$.
              \item $Q=\emptyset$. The identity \eqref{L2.2} has the form $xyx=P^*xQ^*y^2x$. If $y\in\chi(P^*)$ then $xyx$ is divisible by $yx^ky^2$ $(k>1)$, and the same argument as in the previous case (where $y^2x^ky$ divided $xyx$) shows that \eqref{2} holds in $V$.

                  Let  $y\notin\chi(P^*)$. If $P\neq\emptyset$, then $xyx=x^mQ^*y^2x$, $m>1$ and $xyx=x^mQ^*y^2x=x^{m+n}Q^*y^2x=x^nxyx=x^{n+1}yx$. Therefore, $xyx=x^{n+1}yx$  and consequently $xyx$ is  divisible by $x^2yx$. On the other hand, changing $y$ into $xy$ in the identity $xyx=x^mQ^*y^2x$ we see that $x^2yx$ is divisible by $(xy)^2$. Thus, $xyx$ is divisible by $(xy)^2$, and by Lemma \ref{l9} the identity \eqref{2} holds in $V$.

                  Let $P^*=\emptyset$. In this case the identity \eqref{L2.2} transforms into the identity $xyx=xQ^*y^2x$. If $Q$ contains a subword $yx$ then $xyx$ is divisible by $yx^ky^2$. As we have seen above, it implies the identity \eqref{2}.

                  If  $Q$ does not contain any subword of the form $yx$ then the identity \eqref{L2.2} implies the identity $xyx=x^my^kx$ $(k>1)$. If $m>1$ then $xyx=x^my^kx=x^{m+n}y^kx=x^{n+1}yx$. But we have just seen that this identity implies the identity \eqref{2}.

                  Let $m=1$. This means that for some $l>1$ the identity
                  \begin{equation}\label{L10}
                    xyx=xy^{l}x,
                  \end{equation}
                  where $l\geq2$ holds in the variety $V$. Without loss of generality we can assume that $n$ is the least integer for which the identity \eqref{1} is fulfilled in $V$ and that $l<n+2$. Show that $l=n+1$; indeed, let $l\leq n$. Replacing $y$ by $x$ in the identity \eqref{L10} we prove that $x^3=x^{l+2}$ and, consequently, $x^2=x^{n+2}=x^{3+n-l}$ are identities of the variety $V$. But $1\leq1+(n-l)<n$ in contradiction with the minimality of $n$.

                  Let at first $n=1$. In this case, $y^2=y^3=\cdots=y^k=\cdots$, and the identities $xyx=xy^qx$ hold in $V$ for all $q$. Since $D \notin V$, Lemma \ref{l4} guarantees the fulfillment of the identity $xyx=PxQ$ with $P\neq\emptyset$, $Q\neq\emptyset$. If $y \notin\chi(P)$ or $y \notin\chi(Q)$ then either $xyx=x^{n+1}yx$ or $xyx=xyx^{n+1}$. By the same  argument as in previous cases we conclude that the identity \eqref{2} holds in $V$. Let  $y \in \chi(P)$ and $y \in \chi(Q)$. In this case, $xyx$ is divisible by $yx^my$ $(m>1)$. Changing $y$ into $y^l$ we obtain that $xy^lx$ is divisible by $y^2x^my^2$.  Since $xy^lx=xyx$, $xyx$ is divisible by $yx^my^2$. This situation has been already studied, and we have shown there that the identity \eqref{2} holds in $V$.

                  Let $n>1$. Since $K_n\notin V$, one of the identities \eqref{L5.1}, \eqref{L5.2}, \eqref{L5.3} holds in $V$ by Lemma \ref{l5}. Examine all possibilities.
                  \begin{description}
                    \item[\eqref{L5.1}] $x^m=PyQ$. Substitute $x$, $y$ for $y$, $(xy)^2$; we obtain that $y^q$ is divisible by $(xy)^2$ for all $q\geq m$. Therefore, $xyx=xy^{1+mn}x$ is divisible by $(xy)^2$ and the identity \eqref{2} holds in $V$ by Lemma \ref{l9}.
                    \item[\eqref{L5.2}] $xy^mx=PxQ$, $P \neq\emptyset$, $Q\neq\emptyset$, $m\equiv 1\pmod{n}$. If $m\neq1$ then $y^l=y^{n+1}=y^m$ by \eqref{1}, and making use of \eqref{L10} we obtain that $xyx=xy^lx=xy^mx=PxQ$. If $m=1$ then $xyx=PxQ$, i.e., in any case we have got the identity: $xyx=PxQ$, where $P\neq\emptyset$, $Q\neq\emptyset$. It remains to use the same argument as in the case of $n=1$.
                    \item[\eqref{L5.3}] $xy^{m_1}x=xy^{m_2}x$, $m_1\equiv1\pmod{n}$, $m_2\not\equiv 1\pmod{n}$. But this case is not possible because it follows from \eqref{L5.3} that $x^{2+m_1}=x^{2+m_2}$ and, consequently, that $m_1-m_2=(2+m_1)-(2+m_2)$ is divisible by $n$.
                  \end{description}
            \end{enumerate}
        \end{enumerate}
    \item An identity of the form \eqref{L2.3} holds in $V$. Assume that the identity $xyx=Px^2yQ$ is an identity of $V$ (the case of the identity $xyx=Pyx^2Q $ can be studied in a similar way). We shall show that then an identity of the form \eqref{L2.2} is fulfilled in $V$, and we have already proved that in this case the identity \eqref{2} holds in $V$.
        \begin{enumerate}
          \item $y\in\chi(P)$. Then $xyx$ is divisible by $yx^ky$ for an integer $k>1$. Changing $x$ into $y$ and $y$ into $x^k$, we obtain that $yx^ky$ is divisible by $x^ky^kx^k $. Therefore, $xyx$ is divisible by $x^ky^kx^k$, $k >1$, and it means that an identity of the form \eqref{L2.2} holds in $V$.
          \item $y \notin\chi(P)$. Then $(L2.3)$ turns into the identity $xyx=x^myQ$ with $m>1$. If $Q$ contains a subword $xy$ then $xyx$ is divisible by $yx^ky$. Changing $x$ into $y$ and $y$ into $x^k$ in the identity $xyx=x^myQ$ we obtain that $yx^ky=y^mx^kQ^*$ is divisible by $y^2x$. Therefore, $xyx$ is divisible by $y^2x$ and an identity of the form \eqref{L2.2} holds in $V$.

              Assume now that $Q$ does not contain any subword of the form $xy$. It is possible only if the identity \eqref{L2.3} has the form of $xyx=x^my^kx^p$ or of $xyx=x^my^k$ (for an integer $m>1$). If $k>1$ then these identities are identities of the form \eqref{L2.2}. Let $k=1$. Since
              $$xyx=x^my\Rightarrow xyx=x^{m+n}y=x^nx^my=x^{n+1}yx,$$
              we can restrict us to the case of the identity $xyx=x^myx^p$.

              The semigroup $C_\lambda$ is not contained in $V$; therefore, it follows from Lemma \ref{l3} that one of the identities \eqref{L3.1} or \eqref{L3.2} is fulfilled in $V$. In the second case $$xyx=x^myx^p=x^{m+sn}yx^p=x^lx^syxx^{p-1}=x^lPyQyR^*,$$ where $l=m+sn-s\geq 2$. If $x\notin\chi(Q)$, then the last identity is an identity of the form \eqref{L2.2}. Otherwise, $xyx$ is divisible by $yx^ky$ ($k\geq 1$), and $yx^ky=y^mx^ky^{mp}$ is divisible by $y^2x$; therefore, $xyx$ is divisible by $y^2x$, and an identity of the form \eqref{L2.2} holds in $V$.

              Suppose, at last, that the identity \eqref{L3.1} holds in $V$. The semigroup $A$ does not belong to $V$; by Lemma \ref{l1}, the identity \eqref{L1} hold in $V$. Then
              $$x^syx=x^tyx^q\Rightarrow x^{s+k}yx=x^{k+t}yx^{q+ln}=x^tPyQyRx^{q+ln-l},$$
              and this identity is an identity of the form \eqref{L3.2}. But we have just seen that it implies an identity of the form \eqref{L2.2}.
        \end{enumerate}
  \end{enumerate}\end{proof}
\begin{lemma}\label{l11}
  Let the identities \eqref{1} and \eqref{2} be fulfilled in the variety $V$ for a positive integer $n$, and let $L_{2}^{1} \notin V$ and $R_{2}^{1}\notin V$. If the identity
  \begin{equation}\label{L11}
    (ax)^{k_1}(ay)^{k_2}\cdots (ax)^{k_p}=(ay)^{l_1}(ax)^{l_2}\cdots (ay)^{l_{q-1}}(ax)^{l_q},
  \end{equation}
  where $k_i>1$, $l_j>1$ for all $i$, $j$, holds in $V$, then the identities
  \begin{subequations}
    \begin{align}
      \label{L11.1}
      (ay)^{2n}axaya=axaya;\\
      \label{L11.2}
      axaya(ay)^{2n}=axaya
    \end{align}
  \end{subequations}
  hold in $V$ too.
\end{lemma}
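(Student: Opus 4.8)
The plan is to exploit the structure of completely $0$-simple principal factors, exactly as in the proof of Lemma~\ref{l9}. First I would substitute $x\mapsto ax$, $y\mapsto ay$ (or rather work directly with the identity as given, whose both sides are already of this form) and observe that, by \eqref{1} applied to the letter $ax$ and to the letter $ay$, we have $(ax)^{k}=(ax)^{k+n}$ and $(ay)^{k}=(ay)^{k+n}$ for every $k\ge 2$; hence both sides of \eqref{L11} are divisible (as elements of the free semigroup $S$ of $V$ on $\{a,x,y\}$) by $(ax)^{2}$ and by $(ay)^{2}$, and in fact each side is $\mathcal J$-equivalent in $S$ to, say, $w=axaya$. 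The key point is that, letting $K_{w}$ be the principal factor of $w$ in $S$, the word $t=w\cdot(ay)^{2n}\cdot w$ is again $\mathcal J$-equivalent to $w$: indeed from \eqref{L11} one side begins and ends with $ay$-blocks while the other begins and ends with $ax$-blocks, so reconciling the two sides lets one reinsert an arbitrary $(ay)^{2n}$ block on the left or right of $w$ without changing the $\mathcal J$-class. Thus $w$, $(ay)^{2n}w$ and $w(ay)^{2n}$ all lie in $K_{w}$ and are all nonzero there.

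Next I would check that $K_{w}$ is not a null semigroup and is periodic, so by Mann's theorem it is completely $0$-simple. The hypotheses $L_{2}^{1}\notin V$, $R_{2}^{1}\notin V$ and Proposition~\ref{p3} (or the direct arguments in its proof about idempotents with a common two-sided unit) guarantee that distinct idempotents of the same $\mathcal D$-class of a semigroup in $V$ cannot share a two-sided identity; I expect to use this to pin down the $\mathcal H$-class of $w$ inside $K_{w}$. Since $(ay)^{2n}$ is idempotent modulo \eqref{1} (as $((ay)^{2})^{n}$ with $(ay)^{2}=(ay)^{2+n}$), and $(ay)^{2n}w\,\mathcal J\,w$ with the product nonzero in $K_{w}$, the element $(ay)^{2n}$ is a left identity of $w$ in $K_{w}$; passing back to $S$ via the natural map $J(w)\to K_{w}$ (which is injective on the $\mathcal H$-class of $w$) gives $(ay)^{2n}w=w$, i.e. \eqref{L11.1}. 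The identity \eqref{L11.2} is obtained symmetrically, using that $(ay)^{2n}$ is a right identity of $w$ in $K_{w}$.

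The main obstacle I anticipate is the step asserting $w(ay)^{2n}\,\mathcal J\,w$ and $(ay)^{2n}w\,\mathcal J\,w$ with the relevant products nonzero in $K_{w}$ --- i.e. that multiplying $w$ by the $(ay)$-power stays in the same principal factor rather than dropping to a strictly lower one. This is where one genuinely must use the shape of \eqref{L11}: because the right-hand side is a word in the blocks $ay$ and $ax$ that both starts and ends with an $ay$-block while $w=axaya$ can be written using the left-hand side to start and end with $ax$-blocks, one shows that $w$ itself equals in $V$ an expression of the form $(ay)^{l_{1}}\cdots(ay)^{l_{q}}$-pattern, so prepending or appending $(ay)^{2n}$ only fuses two $ay$-blocks via $(ay)^{k}=(ay)^{k+n}$ and cannot kill the product. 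Once that divisibility-in-both-directions is in hand, the completely $0$-simple machinery of Lemma~\ref{l9} applies essentially verbatim, and the two displayed identities drop out as the statements that $(ay)^{2n}$ is a two-sided identity for $w$ in its $\mathcal H$-class.
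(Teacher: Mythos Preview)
Your plan has a genuine gap at the crucial inference. You write that from ``$(ay)^{2n}w\,\mathcal J\,w$ with the product nonzero in $K_{w}$'' you can conclude that $(ay)^{2n}$ is a left identity of $w$ in $K_w$. This is false: in a completely $0$-simple semigroup \emph{every} pair of nonzero elements is $\mathcal J$-equivalent, so that hypothesis is vacuous once the product is nonzero. What you actually need is $(ay)^{2n}w\,\mathcal R\,w$ (equivalently $(ay)^{2n}w\,\mathcal H\,w$, since $ew\,\mathcal L\,w$ is automatic for an idempotent $e$ with $ew\neq 0$), and nothing in your outline establishes that. Your supporting divisibility argument is also based on a misreading of \eqref{L11}: the right-hand side ends with $(ax)^{l_q}$, not an $(ay)$-block, so your ``fuse two $ay$-blocks'' picture does not match the identity as stated.

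The paper's proof is organized differently, and the difference is exactly where the hypotheses $L_2^1,R_2^1\notin V$ enter. Rather than working in the principal factor of $w=axaya$ in the free semigroup $S$, the paper passes to the subsemigroup $S^*\subseteq S$ generated by all $(ax)^k,(ay)^l$ with $k,l\ge 2$, shows (using \eqref{1} and \eqref{2}) that $S^*$ is \emph{regular}, and then invokes Proposition~\ref{p3}: since $L_2^1,R_2^1,B_2^1\notin V$, the regular semigroup $S^*$ is a subdirect product of completely $0$-simple semigroups. Assuming \eqref{L11.1} fails, one obtains a homomorphism $f\colon S^*\to M\in CS^0(B_n)$ separating the two sides. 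Setting $t=(ax)^{n+1}$, $s=(ay)^{n+1}$ (so $t=t^{n+1}$, $s=s^{n+1}$), identity \eqref{L11} gives $P:=t^{k_1}s^{k_2}\cdots t^{k_p}=s^{l_1}t^{l_2}\cdots t^{l_q}=:Q$ in $S^*$; one checks $f(P)=f(Q)\neq 0$ and, because $P$ begins and ends with powers of $t$, obtains $f(t)\,\mathcal H\,f(P)$. Since $Q$ begins with $s^{l_1}$, the idempotent $f(s)^{2n}$ is a left unit for $f(Q)=f(P)$ and hence (by $\mathcal H$-equivalence) for $f(t)$; this yields $f(s)^{2n}f(t)=f(t)$ and a contradiction. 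So the $\mathcal H$-class pinning you were hoping to get from Proposition~\ref{p3} is achieved not inside a single principal factor of $S$, but after mapping the regular subsemigroup $S^*$ to a completely $0$-simple quotient and exploiting the left/right asymmetry between the two sides of \eqref{L11}.
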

\begin{proof}
  Suppose that \eqref{L11.1} does not hold. Then the identity $(ay)^{2n}axayaxa=axayaxa$ does not hold. Indeed, if it was fulfilled then we should obtain, taking into account the identity \eqref{2}:
  $$(ay)^{2n}axaya=(ay)^{2n}(a\cdot xay)^{n+1}a=(a\cdot xay)^{n+1}a= a\cdot xay\cdot a$$
  in contradiction with  the assumption. Thus, there exists a semigroup $S\in V$ and elements $a, x, y\in S$ such that
  \begin{equation}\label{L11*}
    (ay)^{2n}axayaxa \neq axayaxa,
  \end{equation}
  Then
  $$(ay)^{2n}(ax)^{n+1}(ay)^{n+1}(ax)^{n+1}a \neq(ax)^{n+1}(ay)^{n+1}(ax)^{n+1}a$$
  and, consequently,
  \begin{equation}\label{L11**}
    (ay)^{2n}(ax)^{n+1}(ay)^{n+1}(ax)^{n+1}\neq(ax)^{n+1}(ay)^{n+1}(ax)^{n+1}.
  \end{equation}
  Consider the subsemigroup $S^*$ of the semigroup $S$ generated by all elements $(ax)^k$ and $(ay)^l$ with $k\geq 2, l\geq 2$, and show that it is a regular semigroup. Let $Z$ be an element of $S^*$; then
  $$ Z=(at_1)^{k_1}(at_2)^{k_2}\cdots (at_m)^{k_m},\quad k_1, k_2,\cdots,k_m>1, \quad t_1,t_2,\cdots ,t_m \in\{x,y\}.$$
  Because of the identity \eqref{1} we can assume without loss of generality that $k_m>3$. Therefore, making use of the identity \eqref{2} we obtain:
  \begin{align}
    \nonumber
    Z=((at_1)^{k_1}(at_2)^{k_2}\cdots (at_{m})^{k_m-1})^{n+1}(at_m)=((at_1)^{k_1}\cdots (at_m)^{k_m-1})^nZ=\\ \nonumber ((at_1)^{k_1}\cdots (at_m)^{k_m-1+n})^nZ=(Z(at_m)^{n-1})^nZ.
  \end{align}
  Thus, $S^*$ is a regular semigroup.

  Remark that $V$ does not contain the semigroup $B_{2}^{1}$  because the identity \eqref{2} does not hold in $B_{2}^{1}$, and the semigroups $L_{2}^{1}$ and $R_{2}^{1}$ are not contained in $V$ by our hypothesis. According to Proposition 3 the semigroup $S^*$ can be embedded into the direct product of its principal factors, and by Mann's theorem these principal factors are completely 0-simple semigroups. It follows from the identity \eqref{1} that all principal factors of the semigroup $S^*$ belong to $CS^{0}(B_{n})$. Denote by $\mathcal{U}$ and $\mathcal{V}$ respectively the left and the right sides of \eqref{L11**}. Let $t=(ax)^{n+1}$ and $s=(ay)^{n+1}$. Making use of the identity \eqref{1} we obtain that $t=t^{n+1}$ and $s=s^{n+1}$. Since $\mathcal{U} \neq\mathcal{V}$ in $S^*$ and $S^*$ is the subdirect product of semigroups belonging to $CS^{0}(B_{n})$,there exists a homomorphism $\phi:S^* \rightarrow T \in CS^{0}(B_{n})$ such that
  \begin{equation}\label{L11***}
    f(\mathcal{U})\neq f(\mathcal{V}).
  \end{equation}
  Since $\mathcal{U}$ and $\mathcal{V}$ are divisible in $S^*$ by $ts$ and by $st$, the inequality \eqref{L11***} implies that $f(ts)\neq0$ and  $f(st)\neq0$ And it follows from \eqref{L11} that $t^{k_1}s^{k_2}\cdots t^{k_p}=s^{l_1}t^{l_2}\cdots t^{l_q}$. Denote the left and the right sides of the last inequality respectively by $P$ and $Q$. Since $t=t^{n+1}$, $t$ is divisible by $t^2$. Therefore, $f(t^2)\neq0$: otherwise we should have $f(t)=0$ and  $f(\mathcal{U})= f(\mathcal{V})=0$ in contradiction with \eqref{L11***}. Similarly, $f(s^2) \neq0$.

   Remind that if the product $z_1z_2\cdots z_r$ of elements of any completely 0-simple semigroup is equal to $0$ then $z_iz_{i+1}=0$ for at least one $i$, $1\leq i<r$. Therefore, since  $f(st),f(ts), f(t^2),f(s^2)\neq0$, we see that $f(P)=f(Q)\neq0$. We know from the matrix description of completely 0-simple semigroups that if an element of such semigroup is divisible on the right and on the left by another element of this semigroup, then both elements belong to the same $\mathcal{H}$-class. Hence, $f(t)\mathcal{H}f(P)$, and $f(t)\mathcal{H}f(Q)$ because $f(P)=f(Q)$.  Therefore, any right unity of the element $f(Q)$ is in the same time a right unity of the element $f(t)$. But it follows from \eqref{1} that $f(s)^{2n}$ is a right unity of $f(Q)$; hence, it is a right unity of the element $f(t)$ too. thus, $f(s)^{2n}f(t)= f(t)$, and $f(\mathcal{U})=f(s)^{2n}f(tst)=f(tst)=f(\mathcal{V})$ in contradiction with \eqref{L11***}. Therefore, our assumption was erroneous and the identity \eqref{L11.1} holds in $V$.

   The identity \eqref{L11.2} can be proved similarly.\end{proof}

\begin{lemma}\label{l12}
  Let the identities \eqref{1}, \eqref{2} and \eqref{L7.1} hold in $V$. Then the following identities hold in $V$ for all $k,l,m\neq0:$
  \begin{subequations}\label{L12}
    \begin{align}
      \label{L12.1}
      xyx=(xyx)^{n+1};\\
      \label{L12.2}
      x^{k}y^{l}x^{m}=(x^{k}y^{l})^{n+1}x^{m};\\
      \label{L12.3}
      x^{k}y^{l}x^{m}=x^{k}(y^{l}x^{m})^{n+1}.
    \end{align}
  \end{subequations}
\end{lemma}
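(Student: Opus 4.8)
The plan is to verify each of the three identities in the relatively free semigroup $S$ of $V$ on the two generators $x,y$ — this suffices since all of them involve only $x$ and $y$ — and to exploit the structure of the principal factors of $S$, which by \eqref{1} and Mann's theorem are either null semigroups or completely $0$-simple semigroups whose maximal subgroups have exponent dividing $n$.

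The identities \eqref{L12.2} and \eqref{L12.3} follow from \eqref{2} alone, by substitution. Note first that \eqref{2} can equally be written $xyx=x(yx)^{n+1}$, the two right-hand sides being literally the same word. For \eqref{L12.2}: when $m\geq k$, substitute $x\mapsto x^{k}$, $y\mapsto y^{l}$ in \eqref{2} and multiply on the right by $x^{m-k}$; when $m<k$, substitute $x\mapsto x^{m}$, $y\mapsto x^{k-m}y^{l}$ in \eqref{2}, which gives $x^{k}y^{l}x^{m}=(x^{k}y^{l})^{n+1}x^{m}$ at once. The identity \eqref{L12.3} is obtained symmetrically from $xyx=x(yx)^{n+1}$: prepend $x^{k-m}$ to the instance $x\mapsto x^{m}$, $y\mapsto y^{l}$ when $k\geq m$, and substitute $x\mapsto x^{k}$, $y\mapsto y^{l}x^{m-k}$ when $k<m$.

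For \eqref{L12.1} I would argue through the principal factor $K=K_{xyx}$ of $xyx$ in $S$, in the style of Lemma~\ref{l9}. First, $K$ is not null: by periodicity $(xy)^{n+1}$ lies in the maximal subgroup of the monogenic semigroup it generates (whose index is at most $2$ by \eqref{1}), hence is $\mathcal H$-related to an idempotent; and $(xy)^{n+1}\mathcal J xyx$, since $xyx=(xy)^{n+1}x$ by \eqref{2} gives one direction while $(xy)^{n+2}$ has the word $xyx$ as a prefix and $(xy)^{n+2}\mathcal J(xy)^{n+1}$ by \eqref{1} gives the other; so the $\mathcal J$-class of $xyx$ contains a nonzero idempotent. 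By Mann's theorem, then, $K\cong M^{0}(G;L,R,P)$ with the exponent of $G$ dividing $n$. Now comes the point where \eqref{L7.1} is used: if $P$ had a zero entry $p_{r_{0}l_{0}}=0$, pick a row $r_{x}$ and a column $l_{x}$ with $p_{r_{x}l_{0}}\neq0$ and $p_{r_{0}l_{x}}\neq0$ (possible since no row and no column of $P$ is all zero); then $u=(l_{0},1,r_{0})$ satisfies $u^{2}=0$ while $v=(l_{x},1,r_{x})$ gives $(vu)^{2}\neq0$, so $vu$ lies in a group $\mathcal H$-class and $(vu)^{n+1}\neq0$, whereas substituting $x\mapsto v$, $y\mapsto u$ in \eqref{L7.1} forces $(vu)^{n+1}=P(v,u)\,u^{2}\,Q(v,u)=0$ — a contradiction. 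So $P$ has no zero entry, and therefore every nonzero element of $K$, in particular the image $\bar w$ of $xyx$, lies in a group $\mathcal H$-class; as the exponent of $G$ divides $n$ this yields $\bar w^{\,n+1}=\bar w$ in $K$. Finally, exactly as in Lemma~\ref{l9}, the element $(xyx)^{n+1}=(xyx)^{n}\cdot xyx$ lies in $J(xyx)$ and has the same, nonzero, image $\bar w$ as $xyx$ in the Rees quotient $K$; hence $(xyx)^{n+1}=xyx$ already holds in $S$, that is, \eqref{L12.1} holds in $V$.

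The main obstacle is the step establishing that $xyx$ lies in a \emph{group} $\mathcal H$-class of $K_{xyx}$, rather than merely that $K_{xyx}$ is completely $0$-simple. Non-nullity of $K_{xyx}$ uses only \eqref{1} and \eqref{2}, but these alone are not enough for \eqref{L12.1}: the completely $0$-simple semigroup $A_{2}\in CS^{0}(B_{1})$ satisfies \eqref{1} and \eqref{2} yet fails \eqref{L7.1}, and in it $(xyx)^{2}$ can vanish while $xyx\neq0$. So the zero-free-matrix argument — which is precisely where \eqref{L7.1} enters — is indispensable; carrying it out carefully, together with the routine $\mathcal J$-comparisons needed for non-nullity, is the bulk of the work, after which \eqref{L12.2} and \eqref{L12.3} are immediate consequences of \eqref{2}.
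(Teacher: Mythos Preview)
Your proof is correct, and in fact your derivation of \eqref{L12.2} and \eqref{L12.3} is cleaner than the paper's: you obtain them directly from \eqref{2} by substitution (the case split $m\ge k$ versus $m<k$, and dually for \eqref{L12.3}), so these two identities hold in any variety satisfying \eqref{2}, independently of \eqref{L7.1}. The paper instead deduces \eqref{L12.2} and \eqref{L12.3} from \eqref{L12.1} together with \eqref{1} and \eqref{2}, which is more circuitous and makes \eqref{L7.1} look necessary where it is not.

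For \eqref{L12.1} both arguments go through the principal factor $K_{xyx}$, but they pin down the group $\mathcal H$-class differently. The paper combines \eqref{L7.1} with \eqref{2} to get $xyx=Py^{2}Q'$ and then substitutes $x\mapsto y$, $y\mapsto xyx$, yielding $yxyxy=P^{*}(xyx)^{2}Q^{*}$; since $xyx$ is divisible by $yxyxy$ (via \eqref{1} and \eqref{2}), this shows directly that $xyx$ is divisible by $(xyx)^{2}$, hence lies in a group $\mathcal H$-class of $K_{xyx}$. Your route instead first establishes that $K_{xyx}$ is completely $0$-simple using only \eqref{1} and \eqref{2} (via $(xy)^{n+1}\mathcal J\,xyx$ and the idempotent in the monogenic subsemigroup on $xy$), and then invokes \eqref{L7.1} to rule out zeros in the sandwich matrix, forcing every nonzero element---in particular $\overline{xyx}$---into a group $\mathcal H$-class. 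The paper's substitution trick is shorter and more self-contained; your matrix argument is more structural and makes explicit that under \eqref{L7.1} the factor $K_{xyx}$ is in fact a completely simple semigroup with zero adjoined, which is a bit more information than the lemma needs.
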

\begin{proof}
  The identity $(xy)^{n+1}x=Py^2Qx$ follows from \eqref{L7.1}; according to \eqref{2}, $xyx=(xy)^{n+1}x=Py^2Q^*$. After the substitution $x \mapsto y$, $y\mapsto xyx$ we obtain  the identity $yxyxy=P^*(xyx)^2Q^*$. We see from the last identity that $xyx$ is divisible by $(xyx)^2$. This means that the principal factor $K_{xyx}$ is not a semigroup with zero  multiplication. According to Mann's theorem, $K_{xyx}$ is a completely 0-simple semigroup and $xyx$ is an element of a subgroup of the semigroup $K_{xyx}$; since all groups from $V$ satisfy the identity $x^2=x^{n+2}$, they are groups of exponent $n$, and $xyx=(xyx)^{n+1}$. The last identity is the identity \eqref{L12.1}.

  Let now $k,l,m>0$. By \eqref{2} the element $x^ky^lx^m$ is a left unity for $x^ky^lx^k$. Making use of \eqref{L12.1} and then of \eqref{1}, we obtain:
  \begin{align}
    \nonumber
    x^ky^lx^m=x^{k-1}(xy^lx)x^{m-1}=x^{k-1}(xy^lx)^{n+1}x^{m-1}=x^ky^lx^2Q=\\ \nonumber
    x^ky^lx^{2+kn}Q = x^ky^lx^kx^{2+k(n-1)}Q.
  \end{align}
  Therefore, $(x^ky^l)^n$ is  a left unity for the element  $x^ky^lx^m$ as well, i.e, $x^ky^lx^m=(x^ky^l)^{n+1}x^m$. The identity \eqref{L12.2} can be proved in a similar way.
\end{proof}

\begin{lemma}\label{l13}
  Let the identities \eqref{1} , \eqref{2} and \eqref{L7.1} hold in $V$. Then for any $k,l,s,t$ the identity
  \begin{equation}\label{L13}
    xyx=R(x^ky^l)^{n+1} (x^sy^t)^{n+1}T,
  \end{equation}
  where $R$ and $T$ are words or empty symbols, holds in $V$.
\end{lemma}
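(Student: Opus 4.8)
The plan is to distil from Lemma~\ref{l12} a few divisibility relations, reduce the statement to a single concrete divisibility of $xyx$ by a short word, and then settle that divisibility inside the completely $0$-simple principal factor of $xyx$.

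\emph{Preliminary facts.} As in the proof of Lemma~\ref{l12}, \eqref{L7.1} and \eqref{2} give an identity $xyx=Py^{2}Q$ in $V$; after the substitution $x\mapsto u$, $y\mapsto v$ this says that $uvu$ is divisible by $v^{2}$ for all words $u,v$. Also \eqref{L12.1} gives $xyx=(xyx)^{n+1}=xy(x^{2}y)^{n}x$, so by the same substitution $uvu=uv(u^{2}v)^{n}u$, which (since $n\ge1$) contains $u^{2}$; hence $uvu$ is divisible by $u^{2}$ as well. Reducing exponents modulo $n$ via \eqref{1} into the range $[2,n+1]$ and splitting $u^{n+2}=u^{j}u^{n+2-j}$, we obtain that $uvu$ is divisible by $u^{j}$ and by $v^{j}$ for all words $u,v$ and all $j\ge1$; in particular $xyx$ is divisible by $x^{j}$ and by $y^{j}$ for every $j\ge1$. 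Finally $xyx=(xyx)^{n+1}$, so by \eqref{1} and Mann's theorem the principal factor $K=K_{xyx}$ in $S=F_{V}(x,y)$ is completely $0$-simple with subgroups of exponent dividing $n$, and $xyx$ is a group element of $K$.

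\emph{Reduction.} Applying \eqref{L12.2} twice (to the block $x^{k}y^{l}$, then to $x^{s}y^{t}$) yields, for every $m\ge1$,
$$x^{k}y^{l}x^{s}y^{t}x^{m}=(x^{k}y^{l})^{n+1}x^{s}y^{t}x^{m}=(x^{k}y^{l})^{n+1}(x^{s}y^{t})^{n+1}x^{m}.$$
Thus if $xyx=R_{0}\cdot x^{k}y^{l}x^{s}y^{t}x\cdot T_{0}$ holds in $V$, substituting this identity with $m=1$ gives $xyx=R_{0}\cdot(x^{k}y^{l})^{n+1}(x^{s}y^{t})^{n+1}\cdot(x\,T_{0})$, which is of the required shape with $R=R_{0}$ and $T=x\,T_{0}$. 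Hence it suffices to prove that $xyx$ is divisible by $x^{k}y^{l}x^{s}y^{t}x$ in $V$.

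\emph{The divisibility and the main obstacle.} I would prove the last divisibility by showing $x^{k}y^{l}x^{s}y^{t}x\ \mathcal{J}\ xyx$ in $S$, which gives $xyx\in S^{1}\,x^{k}y^{l}x^{s}y^{t}x\,S^{1}$ as needed. Both of these words are group elements of completely $0$-simple factors: for $xyx$ by \eqref{L12.1}, and for $x^{k}y^{l}x^{s}y^{t}x=x\,(x^{k-1}y^{l}x^{s}y^{t})\,x$ by \eqref{L12.1} with $v=x^{k-1}y^{l}x^{s}y^{t}$; so it is enough to see that the two factors coincide. For this I would write $K=M^{0}(G;L,R;P)$ and $xyx=(\ell_{0},g_{0},r_{0})$, and use the divisibility of $xyx$ by all the powers $x^{j},y^{j}$, together with \eqref{L12.1}--\eqref{L12.3}, to check that the image of $x^{k}y^{l}x^{s}y^{t}x$ in $K$ is a nonzero element lying in the $\mathcal{H}$-class of $xyx$. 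This last verification is the crux, and its difficulty lies in the fact that $l$ and $t$ are arbitrary: since $y^{l}$ and $y^{t}$ cannot in general be simplified in $V$, one must show that these unreduced $y$-blocks can nonetheless be realized as factors of a word equal to $xyx$ --- which is exactly where the strong form of the preliminary fact ($uvu$ divisible by $v^{2}$ for \emph{every} word $v$), together with a careful bookkeeping of which left and right units of $K$ survive the successive multiplications, comes into play.
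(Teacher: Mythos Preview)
Your reduction is correct: once you know that $xyx$ is divisible by $x^{k}y^{l}x^{s}y^{t}x$, two applications of \eqref{L12.2} immediately yield the form \eqref{L13}. The problem is that you do not prove this divisibility --- you only announce a strategy and explicitly flag it as the ``crux'' without carrying it out.

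Moreover, the strategy you sketch is circular. You propose to check that the image of $w=x^{k}y^{l}x^{s}y^{t}x$ in $K=K_{xyx}$ is a nonzero element in the $\mathcal{H}$-class of $xyx$. But $K$ is the Rees quotient $J(xyx)/(I_{xyx}\cap J(xyx))$; there is no homomorphism $S\to K$, only a partial map defined on $J(xyx)$. For $w$ to even have an image in $K$ you need $w\in J(xyx)$, and for that image to be nonzero you need $xyx\in J(w)$ --- which is precisely the divisibility you are trying to establish. So ``check the image in $K$ is nonzero'' is not a method, it is a restatement of the goal. The Rees matrix coordinates $(\ell_{0},g_{0},r_{0})$ of $xyx$ cannot help here because the individual letters $x,y$ do not lie in $K$, so you cannot compute $w$ inside $K$ by multiplying images of its letters.

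The paper's proof avoids any structural analysis of $K$ and stays purely equational. It first shows $xyx=R_{1}\,x^{k}y^{l}\,R_{2}$ by expanding $xyx=(xyx)^{2}(xyx)^{n-1}$, applying \eqref{L12.1} to $yx^{2}y$ to produce a factor $y^{2}x^{2}$, and then boosting exponents with \eqref{1}. Squaring again and applying \eqref{L12.1} to the block $x^{k}y^{l}\,R_{2}R_{1}\,x^{k}y^{l}$ produces $(x^{k}y^{l})^{2}$; then \eqref{L12.2} and another exponent boost give a trailing $x^{s}$; finally repeated use of \eqref{L12.3} and \eqref{1} yields the second block $(x^{s}y^{t})^{n+1}$. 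Every step is an identity valid in $V$, so the divisibility drops out as a by-product of an explicit equality. Your preliminary facts (that $uvu$ is divisible by $u^{2}$ and by $v^{2}$) are exactly the engine behind these moves, but you need to chain them in this concrete order rather than hope to read the answer off the matrix structure of $K$.
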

\begin{proof}
  By Lemma \ref{l12}, the identity \eqref{L12.1} holds in $V$. Hence,
  \begin{align}
    \nonumber
    xyx=(xyx)^2(xyx)^{n-1}=xyx^2yx(xyx)^{n-1}=x(yx^2y)^{n+1}x(xyx)^{n-1}=\\
    \nonumber
    x(yx^2y)(yx^2y)S=xyx^2y^2x^2yS=xyx^{2+kn}y^{2+ln}Rx^2yS=R_1x^ky^lR_2.
  \end{align}
  Therefore, making use several times of the identities \eqref{1}, \eqref{L12.1}, we obtain:
  \begin{align}
    \nonumber
    xyx= (xyx)^2(xyx)^{n-1}=R_1x^ky^lR_2R_1x^ky^lR_3=R_1(x^ky^lR_2R_1x^ky^l)^{n+1}R_3=\\
    \nonumber
    S_1 (x^ky^l)^2S_2=S_1(x^ky^l)^{n+1}(x^ky^l)S_2=S_1[(x^ky^l)^{n+1}x^k]^{n+1}S_2=\\
    \nonumber
    S_1(x^ky^l)^{n+1}x^{2k}S_3=S_1(x^ky^l)^{n+1}x^{2k+sn}S_3=S_1(x^ky^l)^{n+1}x^sS_4.
  \end{align}
  Using the identities \eqref{1}, \eqref{L12.1}, \eqref{L12.3} and again \eqref{L12.3}, obtain the needed identity \eqref{L13}:
  \begin{align}
    \nonumber
    xyx=S_1(x^ky^l)^{n}x^ky^lx^sS_4=S_1(x^ky^l)^{n}x^k(y^lx^s)^{n+1}S_4=S_1(x^ky^l)^{n}x^ky^lx^sy^lS_5=\\
    \nonumber
    S_1(x^ky^l)^{n}x^k(y^lx^sy^l)^{n+1}S_5=S_1(x^ky^l)^{n+1}x^sy^{2l}S_6=S_1(x^ky^l)^{n+1}x^sy^{2l+tn}S_6=\\
    \nonumber
    S_1(x^ky^l)^{n+1}x^sy^tS_7=S_1(x^ky^l)^{n+1}(x^sy^t)^{n+1}S_7.
  \end{align}
\end{proof}

\begin{lemma}\label{l14}
  Let the identities \eqref{1}, \eqref{2} and \eqref{L7.1} hold in $V$. If the variety $V$ does not contain any of the semigroups $F_{\lambda}$, $F_{\rho}$, $L_{2}^{1}$, $R_{2}^{1}$, then the following identities are fulfilled in the variety $V$:
  \begin{subequations}\label{L14}
    \begin{align}
      \label{L14.1}
      xyx=x^{n+1}yx;\\
      \label{L14.2}
      xyx=xyx^{n+1}.
    \end{align}
  \end{subequations}
\end{lemma}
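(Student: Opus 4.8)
The plan is to use the exclusion of $F_{\lambda}$ to extract a usable identity of $V$, push it into the principal factor of $xyx$ --- which is completely $0$-simple by Lemma \ref{l12}, Mann's theorem and \eqref{1} --- absorb the resulting tail with the help of Lemmas \ref{l12} and \ref{l13}, and finally read off \eqref{L14.1}. I would prove \eqref{L14.1} in detail and obtain \eqref{L14.2} by the symmetric argument, working with $F_{\rho}$ and $R_{2}^{1}$ in place of $F_{\lambda}$ and $L_{2}^{1}$ (and using that \eqref{L7.2} also holds in $V$, since it is a consequence of \eqref{1} and \eqref{L7.1}).

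Since $F_{\lambda}\notin V$, Lemma \ref{l6} gives an identity of $V$ of the form \eqref{L6}, say $xyP=x^{k}yQ$ with $k>1$. If this is one of the degenerate identities produced by Lemma \ref{l6} --- namely $x=y$, or $xy=x^{k}y$ arising from $x=x^{k}$ --- then either $V$ is trivial, or, combining $x=x^{k}$ with \eqref{1}, the subsemigroup generated by $x$ is a cyclic group whose order divides $\gcd(k-1,n)$, so that $x=x^{n+1}$ is an identity of $V$ and both \eqref{L14.1} and \eqref{L14.2} follow at once. So I would assume that $P$ and $Q$ are not both empty; after substituting every variable other than $x$ and $y$ by $x$, one may take $P,Q$ to be (possibly empty) words over $\{x,y\}$, and the resulting identity is still nontrivial since its two sides have different second letters.

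Next I would substitute $y\mapsto yx$ in $xyP=x^{k}yQ$ and regroup the leading $x$ with the substituted block, obtaining an identity of $V$ of the shape
\[
  xyx\cdot A \;=\; x^{k-1}\cdot xyx\cdot B ,
\]
with $A,B$ again words over $\{x,y\}$ (possibly empty). If $A$ and $B$ are both empty this reads $xyx=x^{k}yx$, and then substituting $y\mapsto x$ produces $x^{3}=x^{k+2}$, which together with \eqref{1} forces $x^{k}=x^{n+1}$ in $V$, i.e.\ \eqref{L14.1}. In general the remaining task is to absorb the tails $A$ and $B$, and here I would argue along the lines of Lemma \ref{l11}. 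Since $L_{2}^{1},R_{2}^{1}\notin V$ and, because \eqref{2} holds, $B_{2}^{1}\notin V$ as well, Proposition \ref{p3} applies: every regular subsemigroup of a member of $V$ is a subdirect product of completely $0$-simple semigroups, which by \eqref{1} belong to $CS^{0}(B_{n})$. Writing $A$ and $B$ as products of blocks $x^{a}y^{b}$, using \eqref{L12.1}--\eqref{L12.3} to recognise $xyx$, $x^{k-1}xyx$ and these blocks as elements of subgroups of such factors, and \eqref{L13} to collapse the blocks, I would compare $xyx\cdot A$ with $x^{k-1}\cdot xyx\cdot B$ inside each completely $0$-simple image; the comparison should show that $x^{n}$ acts there as a left identity on $xyx$, and transporting this back gives $xyx=x^{m}yx$ in $V$ for some $m>1$. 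Finishing as before (substitute $y\mapsto x$ and apply \eqref{1}) yields $x^{m}=x^{n+1}$ and hence \eqref{L14.1}; the identity \eqref{L14.2} comes out dually.

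The hard part is the absorption step: controlling the shape of $A$ and $B$ through these substitutions and checking, case by case, that every configuration collapses to $xyx=x^{m}yx$ rather than to an irreducible $xyx=x^{m}yx^{p}$ or $xyx=x^{m}y^{l}x$. The completely $0$-simple structure of the principal factor of $xyx$ (Mann's theorem together with \eqref{1}) and the one-sided stabilizer identities supplied by Lemma \ref{l11} and Proposition \ref{p3} are what make each case go through, but the case analysis itself --- not any single conceptual step --- is where I expect the bulk of the work to lie.
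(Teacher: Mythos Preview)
Your overall architecture matches the paper's: use Lemma~\ref{l6} to extract an identity from $F_{\lambda}\notin V$, work inside a regular subsemigroup of the free $V$-semigroup on $\{x,y\}$, approximate it by completely $0$-simple semigroups via Proposition~\ref{p3} (legitimate since $L_{2}^{1},R_{2}^{1},B_{2}^{1}\notin V$), and transfer back using \eqref{L12.2}. The gap is exactly where you place it, and it is a real one; the paper closes it with a move you have not made.

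From $xyP=x^{k}yQ$ you substitute $y\mapsto yx$ and carry two \emph{different} tails forward: $xyx\cdot A=x^{k}yx\cdot B$ with $A\ne B$ in general. You then propose to run an $\mathcal H$-class/left-unity argument ``along the lines of Lemma~\ref{l11}'' in each completely $0$-simple image. But that argument, both in Lemma~\ref{l11} and in the paper's proof here, needs the two sides to share a common right-hand end so that their images land in the same $\mathcal H$-class; with $A\ne B$ you do not have this, and the equality $f(xyx\cdot A)=f(x^{k}yx\cdot B)$ in $M$ does not by itself relate the $\mathcal H$-classes of $f(xyx)$ and $f(x^{k}yx)$. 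No case analysis on the shape of $A,B$ repairs this without a further idea.

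That idea is to equalise the tails \emph{before} substituting. Since $k\ge 2$, identity \eqref{1} gives $x^{n}\cdot x^{k}yQ=x^{k}yQ$; combined with \eqref{L6} this yields $x^{n}\cdot xyP=xyP$, i.e.\ $xyP=x^{n+1}yP$ in $V$, with the \emph{same} tail on both sides. Only now does the paper substitute $y\mapsto yxy$, multiply on the right by $(xy)^{n+1}$, and rewrite via \eqref{L12.2}--\eqref{L12.3} to obtain an equality $r=t$ in the regular subsemigroup $S^{*}$, where $r$ starts with $(xy)^{n+1}$, $t$ starts with $(x^{n+1}y)^{n+1}$, and both end with $(xy)^{n+1}$. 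The $\mathcal H$-class argument then runs cleanly: setting $a=(xy)^{n+1}$, Lemma~\ref{l13} supplies the divisibilities showing $f(r)\ne 0$ and $f(a)\,\mathcal H\,f(r)$ in each image; since $b=(x^{n+1}y)^{2n}$ is a left unity for $t$ and $f(r)=f(t)$, it is a left unity for $f(a)$. Hence $a=ba$ in $V$, i.e.\ $(xy)^{n+1}=(x^{n+1}y)^{2n}(xy)^{n+1}$; multiplying by $x$ and using \eqref{L12.2} gives \eqref{L14.1}. So the missing idea is the one-line reduction $xyP=x^{n+1}yP$; once you have that, your ``absorption'' problem dissolves.
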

\begin{proof}
    Lemma \ref{l12} asserts that the identities \eqref{L12.1}, \eqref{L12.2}, \eqref{L12.3} are fulfilled in $V$. Let $S=<x,y>$ be the free semigroup of the variety $V$. Consider the subsemigroup $S^*\subseteq S$ generated by the elements $(x^ky^l)^m$ and $xPx$, where $m\geq 2$ and $P$ is a word or an empty symbol. We shall show that $S^*$ is a regular semigroup. Let $z_1, z_2,\cdots , z_p$ be elements of the mentioned form, and let $W=z_1,z_2,\cdots ,z_p$. If $z_p=xPx$ then $W=xQx$ and by the identity \eqref{L12.1} we have $W=W^{n+1}\in WS^*W$. This means that $W$ is a regular element of $S^*$. Let $z_p=(x^ky^l)^m$, $(m\geq 2)$. Then
    \begin{align}
      \nonumber
      W\overset{\eqref{1}}\to=z_1\cdots z_{p-1}(x^ky^l)^{m+n-1}x^ky^l\overset{\eqref{L12.2}}\to=(z_1\cdots z_{p-1}(x^ky^l)^{m+n-1})^{n+1}x^ky^l=\\
      \nonumber
      (z_1\cdots z_{p-1}(x^ky^l)^{m+n-1})^{n}W\overset{\eqref{1}}\to=(z_1\cdots z_{p-1}(x^ky^l)^{m+3n-1})^nW=(W (x^ky^l)^{3n-1})^nW.
    \end{align}

    By definition $(x^ky^l)^{3n-1} \in S^*$. Therefore $W \in WS^*W$, and $W$ is again a regular element of $S^*$. Thus, $S^*$ is a regular semigroup.

    By Lemma \ref{l13} the identity \eqref{L13} holds in $V$. Using \eqref{1} and \eqref{2} we come to the identity
    \begin{equation}\label{L14*}
      xyx=(xy)^nR(x^ky^l)^{n+1}(x^sy^t)^{n+1}(x^sy^t)^nT(yx)^n.
    \end{equation}

    But the element $(xy)^nR   (x^ky^l)^n$ can be written as $(x^{k_1}y^{l_1})(x^{k_2}y^{l_2})\cdots (x^{k_p}y^{l_p})$. Using several times the identity \eqref{L12.2}, transform this element into
    \begin{equation}\label{L14**}
      (x^{k_1}y^{l_1})^{n+1} (x^{k_2}y^{l_2})^{n+1}\cdots (x^{k_p}y^{l_p})^{n+1}x.
    \end{equation}

    Substituting in \eqref{L14*} the product $(xy)^nR(x^ky^l)^n$ for \eqref{L14**}, we obtain  the identity
    $$xyx=P(x^ky^l)^{n+1}(x^sy^t)^{n+1})Q,$$
    where $P=(x^{k_1}y^{l_1})^{n+1}(x^{k_2}y^{l_2})^{n+1}\cdots (x^{k_p}y^{l_p})^{n+1}$, $Q \in xSx$. Both elements $P$, $Q$ are contained in $S^*$. Thus, for every $k,l,s,t\geq1$ the element $xyx$ is divisible in $S^*$ by $(x^ky^l)^{n+1}(x^sy^t)^{n+1}$.

    Since $F_\lambda \notin V$, Lemma \ref{l6} guarantees the fulfillment of the identity \eqref{L6}. Without loss of generality assume $P$ and $Q$ to be words over $<x,y>$. The element $x^n$ is a left unity for $x^kyQ$, because $x^k=x^{k+n}$ by \eqref{1}. Therefore, $x^n$ is a left unity for $xyP$; it follows that $xyP=x^{n+1}yP$ and $xyPxy=x^{n+1}yPxy$ or, after the substitution $y \mapsto yxy$, that $xyxP^*y=x^{n+1}yxP^*y$. The last equality can be written as follows:
    $$xyx^{k_1}y^{l_1}x^{k_2}y^{l_2}\cdots x^{k_s}y^{l_s}=x^{n+1}yx^{k_1}y^{l_1}\cdots x^{k_s}y^{l_s}.$$
    It implies that
    $$(xy)(x^{k_1}y^{l_1})(x^{k_2}y^{l_2})\cdots (x^{k_s}y^{l_s})(xy)^{n+1}=(x^{n+1}y)(x^{k_1}y^{l_1})\cdots (x^{k_s}y^{l_s})(xy)^{n+1};$$
    using \eqref{L12.2}, we obtain the identity
    \begin{align}
      \nonumber
      (xy)^{n+1}(x^{k_1}y^{l_1})^{n+1}\cdots (x^{k_s}y^{l_s})^{n+1}(xy)^{n+1}=
      \nonumber
      (x^{n+1}y)^{n+1}(x^{k_1}y^{l_1})^{n+1}\cdots(x^{k_s}y^{l_s})^{n+1}(xy)^{n+1}.
    \end{align}
    Denote by $r$ and $t$ respectively the left and the right sides of the last equality. By definition $r,t \in S^*$.

    Let $a$ and $b$ denote  the elements $(xy)^{n+1}$ and $(x^{n+1}y)^{2n}$. It is clear that $a,b \in S^*$. Prove that $a=ba$. Assume the contrary. By \eqref{2} $B_{2}^{1}\notin V$, and $L_{2}^{1}, R_{2}^{1} \neq S^*$ by the hypothesis of Lemma. According to Proposition 3, $S^*\in V$ can be approximated by its completely 0-simple principal factors. Hence, there exists a homomorphism $f:S^* \rightarrow M\in CS^{0}(B_{n})$ such that $f(a) \neq f(b)$. We have:
    \begin{align}
      \nonumber
      (xy)^{n+1}=xyx\mathcal{U}y\overset{\eqref{L12.1}}\to=(xyx)^{n+1}\mathcal{U}y=(xyx)(x\mathcal{V}y)=  (xyx)(x^{m_1}y^{n_1}\cdots x^{m_q} y^{n_q})\overset{\eqref{L12.2}}\to=\\
      \nonumber
      (xyx)(x^{m_1}y^{n_1})^{n+1}(x^{m_2}y^{n_2})^{n+1}\cdots (x^{m_{q-1}}y^{n_{q-1}})^{n+1}x^{m_q} y^{n_q}\overset{\eqref{L12.3}}\to=\\
      \nonumber
      =(xyx)(x^{m_1}y^{n_1})^{n+1}(x^{m_2}y^{n_2})^{n+1}\cdots (x^{m_{q-1}} y^{n_{q-1}})^{n+1}(x^{m_q} y^{n_q})^{n+1}.
    \end{align}
    Therefore, $a$ is divisible in  $S^*$ by $xyx$. But we have seen above that $xyx$ is divisible by  $(x^ky^l)^{n+1} \cdot (x^sy^t)^{n+1})$ for all $k,l,s,t \geq 1$; hence, $a$ is divisible by these elements too.

    Let us denote $(xy)^{n+1}$ by $z$, $(x^{k_i}y^{l_i})^{n+1}$ by $z_i$, $i=1,\cdots ,s$, and $(x^{n+1}y)^{n+1}$ by $z^*$; then $r=zz_1\cdots z_sz$, $t=z^*z_1\cdots z_sz$. If $f(r)=f(z)f(z_1)\cdots f(z_s)f(z)=0$, then at least one of the products  $f(z)f(z_1)$, $f(z_1)f(z_2)$,$\cdots$,$f(z_s)f(z)$ is equal to 0 because $M$ is a 0-simple semigroup. But we have seen that $a$ is divisible by all products $zz_1$, $z_1z_2$,$\cdots$,$z_sz$. Therefore, $f(a)$ is divisible in $M$ by $f(z)f(z_1)$, $f(z_1)f(z_2)$,$\cdots$,$f(z_s)f(z)$, and consequently, $f(a)=0$. Then $f(ba)=f(b)f(a)=f(b)\cdot 0=0=f(a)$ in contradiction with our assumption. Hence, $f(r)\neq0$. Then $f(a)\neq0$, because $r$ is divisible by $a$.

    The non-zero elements $f(r)$ and $f(a)$ are contained in the same $\mathcal{H}$-class of $M$, because $r$ is divisible by $a$ both on the left and on the right. Since $f(r)=f(t)$, we have also $f(a)\mathcal{H} f(t)$. The element $f(b)=f( (x^{n+1}y)^{2n}) $ is a left unity of the element $f(t)$. Therefore, $f(b)$ is a left unity for $f(a)$ too, i.e., $f(a)=f(b)f(a)=f(a)$ in contradiction with the choice of $f$.

    Hence, $a=ba$, i.e., $(xy)^{n+1}=(x^{n+1}y)^{2n}(xy)^{n+1}$. This identity implies the identity  $(xy)^{n+1}x=(x^{n+1}y)^{2n}(xy)^{n+1}x$. Making use of \eqref{L12.2} we obtain: $xyx=(x^{n+1}y)^{2n}xyx$. But it follows from \eqref{1} that $x^n$ is a left unity for the right side of the last equality; hence, it is a left unity for the left side, i.e., $xyx=x^{n+1}yx$.

    Dual reasonning enables us to prove the identity \eqref{L14.2}.\end{proof}

\begin{lemma}\label{l15}
  Let the identities \eqref{1}, \eqref{2} and $xyx=Py^2Q$ be fulfilled in $V$. If the variety $V$ does not contain any of the semigroups $L_{2}^{1}$, $R_2^1$, $F_\lambda$, $F_\rho$, then the identities
  \begin{subequations}
    \begin{align}
      \label{L15.1}
      axaya=(ay)^{2n}axaya;\\
      \label{L15.2}
      axaya=axaya(ay)^{2n},
    \end{align}
  \end{subequations}
  hold in $V$ too.
\end{lemma}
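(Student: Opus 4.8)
The plan is to reduce the statement to Lemma \ref{l11}, whose conclusion \eqref{L11.1}, \eqref{L11.2} is word-for-word the assertion \eqref{L15.1}, \eqref{L15.2} to be proved. Lemma \ref{l11} applies as soon as $V$ satisfies \eqref{1}, \eqref{2}, avoids $L_2^1$ and $R_2^1$ (all granted here) and satisfies \emph{some} identity of the form \eqref{L11}, so everything comes down to exhibiting such an identity in $V$. First I would note that the hypotheses here are exactly the operative hypotheses of Lemmas \ref{l12}--\ref{l14}: those proofs use not the full \eqref{L7.1} but its consequence $xyx=Py^2Q$, which is given to us directly. Hence \eqref{L12.1}--\eqref{L12.3}, the divisibility identity \eqref{L13}, and --- since $F_\lambda,F_\rho\notin V$ as well --- the power-absorption identities \eqref{L14.1}: $xyx=x^{n+1}yx$ and \eqref{L14.2}: $xyx=xyx^{n+1}$ all hold in $V$; recall moreover that the proof of Lemma \ref{l14} exhibits $xyx$, hence $(xy)^{n+1}$, as divisible in the relevant free object of $V$ by $(x^ky^l)^{n+1}(x^sy^t)^{n+1}$ for all $k,l,s,t\ge1$.

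Next I would manufacture the required \eqref{L11}-identity. The observation to exploit is that the substitution $x\mapsto ax$, $y\mapsto ay$ turns any identity in the letters $x,y$ into an identity between products of blocks $(ax)^{k}$, $(ay)^{l}$; applied to $xyx=Py^2Q$ it already produces such a block identity, and when the prefix $P$ is empty --- so that the two sides of $xyx=Py^2Q$ begin with different letters --- multiplying by $ax$ on the right and merging adjacent like blocks yields an identity of the form \eqref{L11} outright. In the general case I would first reduce to this favourable shape: use \eqref{L13} to rewrite $xyx$ (equivalently $(xy)^{n+1}$) as $A(x^ky^l)^{n+1}(x^sy^t)^{n+1}B$, trim powers off its ends with \eqref{L14.1}, \eqref{L14.2}, and combine with the hypothesis so as to expose two block decompositions of one element whose leading blocks lie over the two different letters; then substitute $x\mapsto ax$, $y\mapsto ay$, raise every exponent above $1$ via \eqref{1}, and read off an identity of the form \eqref{L11}. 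Lemma \ref{l11} then delivers \eqref{L15.1} and \eqref{L15.2}.

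The step I expect to be the main obstacle is precisely this construction of the \eqref{L11}-identity, because every identity of $V$ otherwise in hand --- \eqref{L14.1}, \eqref{L14.2}, or the identity $xyP=x^{k}yQ$ furnished by $F_\lambda\notin V$ through Lemma \ref{l6} --- has its two sides agreeing in their first letter, so the change of leading block between the two sides of \eqref{L11} can only be engineered through the hypothesis $xyx=Py^2Q$ (whose right side may genuinely begin with $y$) together with careful bookkeeping of the words $R$, $T$ produced in the proof of Lemma \ref{l13}. Should this explicit derivation prove unwieldy, an equivalent route is to re-run the completely $0$-simple argument of Lemma \ref{l11}: form the regular subsemigroup $S^{*}\le S$ generated by the blocks $(ax)^k$, $(ay)^l$ with $k,l\ge2$, embed it by Proposition \ref{p3} into a direct product of principal factors from $CS^{0}(B_n)$ (legitimate since \eqref{2} excludes $B_2^1$ while the hypothesis excludes $L_2^1$, $R_2^1$), and use the divisibility of $xyx$ by arbitrary double blocks to conclude that in each such factor $f\bigl((ay)^{2n}\bigr)$ is a one-sided unit of $f\bigl((ax)^{n+1}\bigr)$ --- which is exactly \eqref{L15.1}, \eqref{L15.2}.
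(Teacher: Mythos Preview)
Your overall strategy --- reduce to Lemma~\ref{l11} by manufacturing an identity of the form \eqref{L11}, or else re-run its completely $0$-simple argument directly --- is sound, and you are right that Lemmas~\ref{l12}--\ref{l14} are available under the present hypotheses. But you have correctly located, and not closed, the real gap: producing a \emph{three-variable} identity whose two sides begin with blocks over different letters.

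Neither of your proposed sources for that identity works in general. The hypothesis $xyx=Py^{2}Q$ is a two-variable identity; when $P$ begins with $x$ (say $P=x$, $Q=x$), the substitution $x\mapsto ax$, $y\mapsto ay$ yields two words both starting with an $(ax)$-block, and neither \eqref{L13} nor \eqref{L14.1}--\eqref{L14.2} can alter that --- inspection of the proof of Lemma~\ref{l13} shows its prefix $R$ always begins with $x$, and the power-absorption identities preserve first letters. Your fallback route has the same lacuna: the divisibility of $xyx$ by the double blocks $(x^{k}y^{l})^{n+1}(x^{s}y^{t})^{n+1}$ lives in the two-letter free object and supplies no relation inside the three-variable regular subsemigroup $S^{\ast}\subseteq S(a,x,y)$ that would force $f\bigl((ay)^{2n}\bigr)$ to be a left unit of $f\bigl((ax)^{n+1}\bigr)$ in a $CS^{0}$-factor. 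For that one needs, in $S^{\ast}$, an equality $R=T$ with $(ax)^{n+1}$ dividing $R$ on both sides and $(ay)^{2n}$ a left unit of $T$.

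The paper obtains exactly this from the hypothesis you used only for Proposition~\ref{p3}, namely $L_{2}^{1}\notin V$. Since $L_{2}^{1}$ distinguishes words by the first letter following an idempotent prefix, its exclusion from $V$ yields a genuinely three-variable identity $a^{k}x^{l}P=a^{p}y^{q}Q$. Substituting $x\mapsto xa$, $y\mapsto ya$ and normalizing with \eqref{L12.2}, \eqref{L12.3}, \eqref{L14.1}, \eqref{L14.2} produces an equality $R=T$ in $S^{\ast}$ with $R$ bracketed by $(ax)^{n+1}$ and $T$ bracketed by $(ay)^{n+1}$; the paper then runs precisely the $\mathcal{H}$-class argument you sketch (assume \eqref{L15.1} fails, pass to a $CS^{0}$-factor via Proposition~\ref{p3}, show $f(c)\mathcal{H}f(R)=f(T)$, read off the left-unit property, contradiction). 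Equivalently, one could multiply $R=T$ on the right by $(ax)^{n+1}$, raise exponents via \eqref{1}, and quote Lemma~\ref{l11} directly --- so your strategy is salvageable once the correct source of the three-variable identity is identified.
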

\begin{proof}
  Remark at first that an identity of the form $a^kx^lP=a^py^qQ$, where $a,x,y$ are different variables and $P,Q $ are words over $\{a,x,y\}$, hold in $V$ because $L_{2}^{1}\notin V$. The substitution $x \mapsto xa$; $y\mapsto ya$ transforms it into the identity $$a^kxa^{k_1}\mathcal{U}_1a^{k_2}\mathcal{U}_2\cdots a^{k_{s-1}}\mathcal{U}_{s-1}a^{k_s}=a^lxa^{l_1}\mathcal{V}_1a^{l_2}\mathcal{V}_2\cdots a^{l_{t-1}}\mathcal{V}_{t-1}a^{l_t},$$
  where $\mathcal{U}_1,\mathcal{U}_2,\cdots,\mathcal{U}_{s-1}\in\{x,y\}$ and $\mathcal{V}_1,\mathcal{V}_2,\cdots ,\mathcal{V}_{t-1} \in \{x,y\}$. Without loss of generality we can assume $k=l=n+1$, because otherwise the substitution $x \mapsto a^{(n-1)k+1}x$, $y \mapsto a^{(n-1)l+1}y$ should return us to this case. Further, Lemmas \ref{l12} and \eqref{L14} guarantee the fulfillment of the identities \eqref{L12.2}, \eqref{L12.3}, \eqref{L14.1} and \eqref{L14.2}. Making use of these identities obtain the identity
  \begin{align}
    \nonumber
    R=(ax)^{n+1}(a^{k_1}\mathcal{U}_1)^{n+1}\cdot\cdots\cdot(a^{k_s}\mathcal{U}_s)^{n+1}(ax)^{n+1}=\\
    \nonumber
    (ay)^{n+1}(a^{l_1}\mathcal{V}_1)^{n+1}\cdot\cdots\cdot(a^{l_t}\mathcal{V}_t)^{n+1}(ay)^{n+1}=T.
    \end{align}

    Show that if the identity \eqref{L15.1} does not hold in $V$ then
    $$(ax)^{n+1}\cdot(ay)^{n+1}\cdot(ax)^{n+1}\neq(ay)^{n+1}\cdot(ax)^{n+1}\cdot(ay)^{n+1}\cdot(ax)^{n+1}$$ in the free semigroup $S=S(a,x,y)$ of the variety $V$. In fact, if $$(ax)^{n+1}\cdot(ay)^{n+1}\cdot(ax)^{n+1}=(ay)^{n+1}\cdot(ax)^{n+1}\cdot(ay)^{n+1}\cdot(ax)^{n+1}$$ then by \eqref{L12.2} we should have $axayaxa=(ay)^{2n}axayaxa$ and consequently, $a(xaya)^{n+1}=(ay)^{2n}a(xaya)^{n+1}.$ Then it follows from \eqref{2} that $axaya=(ay)^{2n}axaya $ in contradiction with the assumption.

    Let $c$ denote  $(ax)^{n+1}\cdot(ay)^{n+1}\cdot(ax)^{n+1}$ and let $d$ denote $(ay)^{n+1}\cdot(ax)^{n+1}\cdot(ay)^{n+1}\cdot(ax)^{n+1}$. We have just shown that $c\neq d$.

    Let us consider the semigroup $S^*$ generated by the elements  $(a^px)^m$ and $(a^qy)^m$, where $m>2$. Remark that $B_{2}^{1}\notin V$, because \eqref{2} does not hold in $B_2^1$, and $L_2^1$, $R_2^1\notin V$ by the hypothesis. Therefore, we can prove exactly in the same way as in the proof of Lemma \ref{l14} that $S^*$ is a regular semigroup.

    Since $c,d\in S^*$ and $c\neq d$, there exists a homomorphism $f:S^*\rightarrow  M$, of the semigroup $S^*$ into a semigroup $M\in CS^0(V)$ such that $f(c)\neq f(d)$. Denote by $g_{p,q}$ the element $(a^px)^{n+1}(a^qy)^{n+1}$ and by $ h_{p,q}$ the element $(a^qy)^{n+1}(a^px)^{n+1}$. Show that for all $p$ and $q$ the element $c$ is divisible in $S^*$ by $ g_{p,q}$ and by $h_{p,q}$. Indeed, it follows from the identity \eqref{L12.2} that $$c=(ax)^{n+1}ay(ay)^n(ax)^{n+1}=ax(ay)^{n+1}ax(ax)^n=axay(ax)^{n+1}.$$ Further, making use of \eqref{1}, \eqref{L14.1} and \eqref{L14.2} obtain
    \begin{align}
      \nonumber
      c=axay(ax)^{n+1}=a^{n+1}xay(ax)^{n+1}=a^{n+1}xayax(ax)^{n}=\\
      \nonumber
      a^{n+1}xa^{n+1}y(ax)^{n+1}=a^{np+1}xa^{nq+1}y(ax)^{n+1}=a^{p^*}a^pxa^{q^*}a^qy(ax)^{n+1},
    \end{align}
    where $p^*=(n-1)p+1$, $q^*=(n-1)q+1$. Apply several times the identity\eqref{L12.2}:
    \begin{align}
      \nonumber
      c=a^{p^*}(a^pxa^{q^*}a^qy)(ax)^{n+1}=a^{p^*}(a^pxa^{q^*}a^qy)^{n+1}(ax)^{n+1}=\\
      \nonumber
      a^{p^*}a^pxa^{q^*}a^qya^pxaW=a^{p^*+p}xa^{q^*}((a^qy)^{n+1}(a^px)^{n+1})^{n+1}aW=\\
      \nonumber
      (a^{p^*+p}x)^{n+1}(a^{q^*+q}y)^{n+1}(a^qy)^{n}(a^px)^{n+1}(a^qy)^{n+1}(a^px)^{n+1}W_1.
    \end{align}

    It is obvious that $aW$ and $W_1$ are contained in $S^*$; therefore, $c$ is divisible in $S^*$ by $(a^px)^{n+1}(a^qy)^{n+1}=g_{p,q}$ and $(a^qy)^{n+1}(a^px)^{n+1}=h_{p,q}$.

    If $f(R)=0$ then there exist $p$, $q$ such that $f(g_{p,q})=0$ or $f(h_{p,q})=0$ because of 0-simplicity. But in this case $f(c)=0$ and consequently, $f(d)=0$, i.e., $f(c)=f(d)$ in contradiction with the hypothesis. Thus, $f(R) \neq0$.

    Since the nonzero elements $f(s)$ and $f(R)$ have a common two-side unity $f((ax)^{2n}))$, we have $f(c)\mathcal{H}f(R)$. The elements $f(R)=f(T)$ and $f(T)$ have a common left unity $f((ay)^{2n})$, therefore, $f((ay)^{2n})$ is a left unity for $f(c)$, i.e., $f(c)=f((ay)^{2n})\cdot f(c)$. This implies the equality $f(c)=f(d)$ which contradicts the hypothesis.

    So, we have proved that the identity \eqref{L15.1} holds in $V$. The fulfillment of \eqref{L15.2} can be proved similarly.\end{proof}

The following result was obtained and proved by the author in \cite{Hall1}.

\begin{proposition}\label{p4}
  Let $V$ be a semigroup variety satisfying the identities \eqref{1}, \eqref{2}, \eqref{L15.1} and \eqref{L15.2} or the identities \eqref{1}, \eqref{2} and \eqref{3}. Then for any semigroup $S$ of $V$ and for any regular elements $\mathcal{U}$, $\mathcal{V}$ there exists  a principal factor of $S$ which is a completely 0-simple semigroup and an epimorphism $f:S\rightarrow M$ such that $f(\mathcal{U})\neq f(\mathcal{V})$.\qed
\end{proposition}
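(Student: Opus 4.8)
The plan is to reduce the statement to the construction of one homomorphism onto a principal factor, and to obtain that homomorphism from the action of $S$ on that factor. For the reduction, let $\mathcal U,\mathcal V$ be distinct regular elements and choose among $J_{\mathcal U}$ and $J_{\mathcal V}$ a $\mathcal J$-class $J$ such that the remaining one of $\mathcal U,\mathcal V$ is not $\mathcal J$-above any element of $J$; if $\mathcal U$ and $\mathcal V$ are $\mathcal J$-related this is no restriction and we take $J=J_{\mathcal U}$. The set $I_{0}$ of elements of $S$ not lying $\mathcal J$-above $J$ is an ideal; in the Rees quotient $\bar S=S/I_{0}$ the class $J$ is still a single, now $0$-minimal, $\mathcal J$-class, the ``remaining'' element is sent to $0$, and the chosen one survives. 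By \eqref{1} the semigroup $\bar S$ is periodic, and since $J$ is regular, $M:=J\cup\{0\}$ is a $0$-minimal ideal of $\bar S$ with $M^{2}\neq0$, hence $0$-simple, hence completely $0$-simple by Mann's theorem, with subgroups of exponent dividing $n$ by \eqref{1}; moreover $M$ is exactly the principal factor $K_{\mathcal U}$ (respectively $K_{\mathcal V}$) of $S$. So it suffices to build an epimorphism $g\colon\bar S\to M$ that restricts to the identity on $J$: then $f=g\circ(S\twoheadrightarrow\bar S)$ fixes $\mathcal U$ and $\mathcal V$, and since one of them lies outside $J$ in the first case and both lie in $J$ in the second, $f(\mathcal U)\neq f(\mathcal V)$.

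To build $g$, use that $M$ is an ideal of $\bar S$, so each $s\in\bar S$ acts on $M$ by left and right multiplication, producing a bitranslation $\beta(s)$; the relation $\beta(s)=\beta(t)$ is a congruence $\theta$, and $M$ embeds into $\bar S/\theta$ as an ideal because completely $0$-simple semigroups are weakly reductive. The whole content of the proposition now reduces to the single claim that $\bar S/\theta\cong M$, i.e. that \emph{every element of $\bar S$ induces an inner bitranslation of $M$}: granting it, $\beta$ factors through an epimorphism $g\colon\bar S\to M$ identical on $J$, and the reduction above finishes the proof. This claim fails for an arbitrary ideal extension of a completely $0$-simple semigroup --- already $M^{1}$ violates it --- so it is exactly here that the hypotheses must be used, and I expect it to be the main obstacle.

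To establish the claim I would argue by contradiction: if some $s\in\bar S$ induced a non-inner bitranslation of $M$, then a suitable finitely generated subsemigroup of $\bar S$ built from $s$ together with a few elements of $M$ would realise one of a short list of obstruction configurations --- an adjoined partial identity, an $A_{2}^{1}$- or $B_{2}^{1}$-type overhang, and the like --- precisely the minimal semigroups that Lemmas \ref{l1}--\ref{l8} are designed to detect, which is why the indicator semigroups $A,B,C_{\lambda},C_{\rho},N_{3},D,F_{\lambda},F_{\rho},W_{\lambda},\dots$ were singled out earlier. In each of the finitely many cases one substitutes appropriately into \eqref{1}, \eqref{2} and the absorption identities \eqref{L15.1}, \eqref{L15.2} --- or, under the alternative hypothesis, into \eqref{3}, from which \eqref{L15.1} and \eqref{L15.2} are recovered as in Lemmas \ref{l14} and \ref{l15} --- and concludes that $s$ was $\theta$-equivalent to an element of $M$ after all. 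The delicate bookkeeping throughout, just as in the proofs of Lemmas \ref{l11}, \ref{l14} and \ref{l15}, is to track how the idempotent ``local units'' of type $(ax)^{2n}$ supplied by \eqref{L15.1}, \eqref{L15.2} interact with the sandwich matrix of $M$; once all cases are assembled, the claim, and with it the proposition, follows.
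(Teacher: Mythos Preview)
The paper does not prove this proposition at all: it is attributed to \cite{Hall1} and closed with a bare \qed, so there is no argument here to compare yours against. Your reduction is sound --- passing to the Rees quotient $\bar S$ in which the chosen regular $\mathcal J$-class $J$ becomes the $0$-minimal ideal $M$, and then seeking a retraction $\bar S\to M$ through the translational hull, is a standard and correct strategy, and your handling of the two cases ($\mathcal U\,\mathcal J\,\mathcal V$ versus not) is fine.

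But the proposal is not a proof. The step you yourself identify as the main obstacle --- that every element of $\bar S$ induces an \emph{inner} bitranslation of $M$ --- is left as a hand-wave, and the sketch you offer for it is misdirected in two ways. First, the hypotheses of the proposition are the identities \eqref{1}, \eqref{2}, \eqref{L15.1}, \eqref{L15.2} (or \eqref{3}); the absence of the indicator semigroups is the conclusion of Theorem~\ref{t1}, which lies \emph{downstream} of this proposition and invokes it, so you cannot appeal to that equivalence here. Second, Lemmas~\ref{l1}--\ref{l8} run in the wrong direction for your purpose: each of them starts from the failure of an identity in a fixed small semigroup and deduces an identity consequence, whereas what you need is to deduce a structural fact about $\bar S$ directly from the four identities assumed. (Likewise, Lemmas~\ref{l14} and \ref{l15} do not derive \eqref{L15.1}--\eqref{L15.2} from \eqref{3}; their hypotheses involve the exclusion of $F_\lambda,F_\rho,L_2^1,R_2^1$.) A genuine argument would have to show, using \eqref{L15.1}, \eqref{L15.2} together with the Rees matrix structure of $M$, that for every $s\in\bar S$ there exists $m\in M$ with $sm'=mm'$ and $m's=m'm$ for all $m'\in M$; that computation is the entire content of the result, and nothing in your proposal carries it out.
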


\begin{proposition}\label{p5}
  The semigroup $S_0$ is not residually in the class of completely 0-simple semigroups and semigroups with zero multiplication.
\end{proposition}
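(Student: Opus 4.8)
The plan is to isolate a single pair of distinct elements of $S_0$, namely $c$ and $0$, and show that no homomorphism from $S_0$ into a semigroup which is completely $0$-simple or has zero multiplication can separate them; since $S_0$ being residually in that class would in particular require a homomorphism into the class separating $c$ from $0$, this suffices. So I would assume for contradiction that $f\colon S_0\to T$ is a homomorphism with $f(c)\neq f(0)$, $T$ of one of the two listed types, and begin with the purely formal step of recording the relations forced on the image. From $0\cdot 0=0$ and $0\cdot x=x\cdot 0=0$ for $x\in\{a,b,c\}$ one gets that $f(0)$ is an idempotent which absorbs $f(a),f(b),f(c)$ on both sides; from the defining relations of $S_0$ one gets $f(a)^2=f(b)^2=f(0)$ and $f(a)f(b)=f(b)f(a)=f(c)$. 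In particular $f(a)\neq f(0)$ and $f(b)\neq f(0)$, for otherwise $f(c)=f(a)f(b)$ would collapse to $f(0)$.

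The case in which $T$ has zero multiplication is immediate: then $f(0)=f(0)^2$ is the zero of $T$, and so is $f(c)=f(a)f(b)$, contradicting $f(c)\neq f(0)$.

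The substantial case is $T=M^{0}(G;L,R,P)$ completely $0$-simple, which I would handle in two steps. First, show $f(0)=0_T$: being an idempotent, $f(0)$ is either $0_T$ or a (primitive) nonzero idempotent $e$, whose $\mathcal H$-class $H_e$ is a group with identity $e$. If $f(0)=e\neq 0_T$, write $f(a)=(l,g,r)$; since $f(a)^2=e\neq 0_T$, the multiplication rule forces $p_{rl}\neq 0$ and $f(a)^2=(l,gp_{rl}g,r)$, whence $f(a)$ has the same $L$- and $R$-components as $e$, i.e.\ $f(a)\in H_e$; then $f(a)=f(a)\cdot e=f(a)f(0)=f(0)$, contradicting $f(a)\neq f(0)$. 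So $f(0)=0_T$. Second, with $f(0)=0_T$, the elements $f(a)f(b)$ and $f(b)f(a)$ both equal $f(c)\neq 0_T$; reading the product $(l_1,g_1,r_1)(l_2,g_2,r_2)=(l_1,g_1p_{r_1l_2}g_2,r_2)$, the first forces $f(c)$ to carry the $L$-component of $f(a)$ and the $R$-component of $f(b)$, the second forces the $L$-component of $f(b)$ and the $R$-component of $f(a)$. Hence $f(a)$ and $f(b)$ share a common $L$-component $l$ and a common $R$-component $r$, and $p_{rl}\neq 0$ since $f(a)f(b)\neq 0_T$; but $f(a)^2=0_T$ says exactly $p_{rl}=0$, a contradiction. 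This exhausts all cases, so $c$ and $0$ are inseparable and $S_0$ is not residually in the stated class.

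The only genuinely delicate point is the first step of the completely $0$-simple case --- ruling out that $f(0)$ is a nonzero idempotent of $T$ --- after which everything reduces to elementary coordinate bookkeeping in the Rees matrix representation; the null case is trivial.
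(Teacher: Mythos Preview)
Your proof is correct. Both your argument and the paper's pick the same separating pair $(c,0)$ and dispose of the zero-multiplication case identically, but the completely $0$-simple case is handled differently. The paper does not split on whether $f(0)=0_T$: it observes (implicitly using $f(c)\neq 0_M$, which follows from $f(0)=f(c)^2$) that $f(a),f(b),f(c)$ all lie in a single $\mathcal H$-class $P$, that $P$ is then a group since it contains the product $f(a)f(b)$, and that $f(0)=f(a)^2\in P$ is its identity; from this $f(c)=f(0)$ drops out. Your route is more coordinate-based: you first force $f(0)=0_T$ by a separate argument, and then extract the contradiction $p_{rl}=0$ versus $p_{rl}\neq 0$ from the Rees matrix bookkeeping. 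Your version is longer but entirely self-contained and makes every step explicit; the paper's version is shorter and more structural, trading the case split for a one-line appeal to Green's relations and the group structure of an $\mathcal H$-class.
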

\begin{proof}
  Let $K=CS^0\cup H$ where $H$ is the class of all semigroups with zero multiplication and $CS^0$ is the class of all completely 0-simple semigroups. Assume that $S_0$ is a residually  $K$-semigroup; then there exists a homomorphism $f:S_0\rightarrow M\in K$ such that $f(c)\neq f(0)$. It follows immediately that $M$ is not a semigroup with zero multiplication. Therefore, the elements $f(a)$, $f(b)$, $f(c)$ belongs to a same $\mathcal{H}$-class  $P$ of the semigroup $M$ and this class is not a semigroup with zero multiplication. Hence, $P$ is a subgroup of $M$, and $f(0)=f(a)^3\in P$, i.e., $f(0)$ is the unity of the group $P$. Thus, $f(c)=f(0)$.
\end{proof}

Let $\Lambda =\{1,2\}$, $X=\{a,b\}$; denote by $T_l$ the set
$(\Lambda\times X)\cup\Lambda\cup X\cup\{0\}$ and define a binary
operation on $T_l$ by the following formulae:
$$(m,x)(n,y)=(\min(m,n),y);$$
$$\alpha \beta=0;$$
$$(m,x)n=(m,x)y=0;$$
$$x(n,y)=y;$$
$$m(n,y)=\begin{cases}
  m, &\text{ if } m\leq n,\\
  y, &\text{ otherwise}
\end{cases}$$
$$z\cdot 0=0\cdot z=0$$
(in these formulae $z\in T_{l}; \alpha,\beta\in \Lambda \cup X; m,n\in \Lambda$; $x,y\in X$). It is easy to check that this operation is associative, i.e., $T_l$ is a semigroup. Denote by $T_r$ the semigroup which is antiisomorphic to $T_l$.

\begin{proposition}\label{p6}
  The semigroups $T_l$, $T_r$ are not residually completely 0-simple semigroups.
\end{proposition}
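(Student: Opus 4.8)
The plan is to exhibit one pair of distinct elements of $T_l$ that is collapsed by \emph{every} homomorphism into a completely 0-simple semigroup; I claim the two elements $a,b\in X$ do the job. So fix an arbitrary completely 0-simple semigroup $M$ and a homomorphism $f\colon T_l\to M$, and aim to show $f(a)=f(b)$. Throughout I would write $0'=f(0)$, which is a zero of the subsemigroup $f(T_l)$ because $0$ is a zero of $T_l$, and I would use three standard facts about $M$: it has a unique zero $0_M$; if $uv\neq 0_M$ in $M$ then $u\,\mathcal{R}\,uv$ and $v\,\mathcal{L}\,uv$; and any $\mathcal{H}$-class of $M$ containing an idempotent is a group, hence contains a single idempotent.

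First I would dispose of the degenerate branch $f(a)=0'$: then, from the relation $a\cdot(1,b)=b$ of $T_l$ and the fact that $0'$ is a zero of $f(T_l)$, we get $f(b)=f(a)f((1,b))=0'\cdot f((1,b))=0'=f(a)$, and we are done; by symmetry (using $b\cdot(1,a)=a$) the hypothesis $f(b)=0'$ likewise forces $f(a)=0'$. So it remains to handle $f(a)\neq 0'$, equivalently $f(b)\neq 0'$. A brief preliminary step then shows that under this hypothesis none of $f((1,a)),f((2,a)),f((1,b)),f((2,b))$ can equal $0_M$: for instance $f((1,a))=0_M$ together with $a=2\cdot(1,a)$ would give $f(a)=f(2)\cdot 0_M=0_M$, so $0_M\in f(T_l)$, and then by uniqueness of the zero $0_M=0'=f(a)$, a contradiction; the other three are disposed of in the same way using $2=2\cdot(2,a)$, $b=2\cdot(1,b)$, $2=2\cdot(2,b)$.

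The core of the argument is the following. Put $P=f((1,a))$ and $P'=f((2,a))$; these are nonzero idempotents of $M$ (images of the idempotents $(1,a),(2,a)$), and the identities $(1,a)(2,a)=(2,a)(1,a)=(1,a)$ in $T_l$ give $PP'=P'P=P\neq 0_M$. From $PP'=P\neq 0_M$ we obtain $P\,\mathcal{L}\,P'$, and from $P'P=P\neq 0_M$ we obtain $P\,\mathcal{R}\,P'$, so $P$ and $P'$ lie in one $\mathcal{H}$-class of $M$; since this class contains the idempotent $P$ it is a group, and therefore its unique idempotent $P'$ coincides with $P$. Now the relations $2\cdot(1,a)=a$ and $2\cdot(2,a)=2$ yield
$$f(a)=f(2\cdot(1,a))=f(2)\,P=f(2)\,P'=f(2\cdot(2,a))=f(2).$$
Running the identical computation with $b$ in place of $a$ (using $2\cdot(1,b)=b$, $2\cdot(2,b)=2$, and $(1,b)(2,b)=(2,b)(1,b)=(1,b)$) gives $f(b)=f(2)$, so $f(a)=f(b)$ again. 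Hence in every case $f$ identifies $a$ and $b$, and since $a\neq b$ the semigroup $T_l$ is not residually completely 0-simple. For $T_r$ I would simply note that $T_r$ is anti-isomorphic to $T_l$ and that the class of completely 0-simple semigroups is closed under anti-isomorphism, so composing any homomorphism $T_r\to M$ with the anti-isomorphism $T_l\to T_r$ and passing to $M^{\mathrm{op}}$ reduces the claim for $T_r$ to the one just proved.

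I do not expect a real obstacle. The argument is entirely structural once the $\mathcal{H}$-class trick is in hand; the only thing requiring care is checking directly from the definition of the multiplication on $T_l$ that the handful of products used ($a\cdot(1,b)=b$, $b\cdot(1,a)=a$, $2\cdot(1,a)=a$, $2\cdot(2,a)=2$, $(1,a)(2,a)=(1,a)$, and their $b$-analogues, together with the idempotency of each $(i,x)$) are exactly as stated, and each of these is immediate. The one point worth spelling out in the write-up is the elimination step, i.e.\ why $0_M\in f(T_l)$ forces $0_M=f(0)$ — namely the uniqueness of a zero in a semigroup — since that is what lets one pass freely between ``$=0_M$'' and ``$=f(0)$''.
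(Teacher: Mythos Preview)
Your proof is correct and follows essentially the same approach as the paper: exhibit the pair $a,b$, show that the idempotents $f((1,a))$ and $f((2,a))$ are $\mathcal{H}$-related nonzero idempotents and hence equal, and then use $2\cdot(1,a)=a$, $2\cdot(2,a)=2$ (and their $b$-analogues) to conclude $f(a)=f(2)=f(b)$. The paper frames it as a proof by contradiction from $f(a)\neq f(b)$ and is a bit looser about the distinction between $0_M$ and $f(0)$, whereas you handle that more carefully; but the substance is identical.
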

\begin{proof}
  Assume the contrary. Then there exists a homomorphism $f$ of the semigroup $T_l$ into a completely 0-simple semigroup $M$ such that $f(a)\neq f(b)$. Since $a(1,b)=b$, $b(1,a)=a$, we obtain that $f(a)\neq0$, $f(b)\neq0$. Further, $a(n,a)=a$, therefore, $f(1,a)\neq0$, $f(2,a)\neq0$. It follows from the definition of the multiplication on $T_l$ that $(1,a)(2,a)=(2,a)(1,a)=(1,a)$; hence, $f(1,a)$ is divisible by $f(2,a)$ on the left and on the right, i.e. $f(1,a)\mathcal{H} f(2,a)$. But $f(1,a)$ and $f(2,a)$ are nonzero idempotents, therefore, $f(1,a)=f(2,a)$, and $f(2)=f(2)f(2,a)=f(2)f(1,a)=f(a)$. The same argument proves that $f(2)=f(b)$, i.e., $f(a)=f(b)$ in contradiction with the above inequality $f(a)\neq f(b)$.
\end{proof}

\begin{proposition}\label{p7}
  If the identity $xy=xy^{n+1}$ holds in a semigroup variety $V$ and $T_l\notin V$, then an identity of one of the following forms holds in $V$:
  \begin{subequations}
    \begin{align}
      \label{P7.1}
      xR=xPxQ;\\
      \label{P7.2}
      Px=Qy,
    \end{align}
  \end{subequations}
  or the identity \eqref{L5.1}, where $P$, $Q$ are words or empty symbolds, $x$, $y$ are different variables and $x\notin \chi(R)$.
\end{proposition}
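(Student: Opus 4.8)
The plan is to imitate the case analysis of Lemmas~\ref{l1}--\ref{l8}. Since $T_l\notin V$, fix an identity $\mathcal U=\mathcal V$ which holds in $V$ but fails in $T_l$, together with a substitution $f$ of the variables into $T_l$ for which $\mathcal U(f)\neq\mathcal V(f)$; interchanging the two sides if necessary, assume $\mathcal U(f)\neq0$. First I would record the combinatorics of $T_l$: writing $P=\Lambda\times X$ for the set of ``pairs'', one checks from the multiplication table that a product $z_1\cdots z_k$ of non-zero elements of $T_l$ can be non-zero only when $z_2,\dots,z_k$ all belong to $P$, and that in this case its value is determined in an evident way by $z_1$, by the first coordinates of $z_2,\dots,z_k$ (via a minimum) and by the second coordinate of $z_k$; it is a pair when $z_1\in P$ and lies in $\Lambda\cup X$ otherwise. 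In particular $\mathcal U(f)\neq0$ already forces $f$ to send every variable occurring in $\mathcal U$ outside its first position into $P$.

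The argument then splits according to whether $\chi(\mathcal U)=\chi(\mathcal V)$. If $\chi(\mathcal U)\neq\chi(\mathcal V)$, then identifying one distinguishing variable with $y$ and all the others with $x$ produces an identity $x^{k}=\mathcal W$ of $V$ with $y\in\chi(\mathcal W)\setminus\{x\}$. For $k=1$ this is already \eqref{L5.1}; for $k\ge2$ one left-multiplies by $y$, and since $y\notin\chi(x^{k})$ while $\mathcal W$ contains $y$, the identity $yx^{k}=y\mathcal W$ has the form \eqref{P7.1} with $y$ in the role of $x$. The hypothesis $xy=xy^{n+1}$ is not needed in this case.

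Now suppose $\chi(\mathcal U)=\chi(\mathcal V)$. Because pairs are idempotent, $|\mathcal U|=1$ or $|\mathcal V|=1$ would force $\mathcal U(f)=\mathcal V(f)$, so both words have length $\ge2$. I would then split on the type of $f(u_1)$, where $u_1$ is the first letter of $\mathcal U$. If $f(u_1)\in P$, then every variable is sent into $P$, so $\mathcal U(f)$ and $\mathcal V(f)$ are pairs whose first coordinates both equal the minimum of all first coordinates --- the same for the two sides, since $\chi(\mathcal U)=\chi(\mathcal V)$ --- whence their second coordinates differ, i.e. $\mathcal U$ and $\mathcal V$ end in different variables, and we get \eqref{P7.2}. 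If $f(u_1)\in\Lambda\cup X$, then $u_1$ occurs only once in $\mathcal U$, so $\mathcal U=u_1R$ with $u_1\notin\chi(R)$; comparing with $\mathcal V$ yields \eqref{L5.1} when $R$ is empty, \eqref{P7.1} when $u_1=v_1$ and $u_1$ occurs at least twice in $\mathcal V$, and \eqref{P7.2} when $u_1=v_1$ occurs once in $\mathcal V$ (a computation like the previous one, all other variables being sent into $P$, again forces the last variables to differ) or whenever $\mathcal U$ and $\mathcal V$ already end in different letters.

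The remaining, and principal, difficulty is the case $\chi(\mathcal U)=\chi(\mathcal V)$, $f(u_1)\in\Lambda\cup X$, $\mathcal U=xR$ with $x\notin\chi(R)$ and $R\neq\emptyset$, $\mathcal V=AxB$ with $A,B\neq\emptyset$ and $x\notin\chi(A)$, and $\mathcal U,\mathcal V$ ending in the same letter: none of the three forms is visible in $\mathcal U=\mathcal V$ itself. This is precisely where the hypothesis $xy=xy^{n+1}$, its consequences \eqref{1} and \eqref{2} furnished by Proposition~\ref{p2}, and the fact that the identity obtained from $\mathcal U=\mathcal V$ by swapping $x$ with one of the other variables also holds in $V$, have to be combined. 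Collapsing all variables but $x$ to a single variable $y$ gives an identity $xy^{j}=y^{i}xB''y$ of $V$ with $i,j\ge1$ and $B''$ a word in $x,y$; playing this identity against its $x\leftrightarrow y$ version and shortening or re-expanding the word by means of \eqref{1}, \eqref{2} and $xy=xy^{n+1}$ --- as in the sample computation $x^{2}y=x(xy)=x(yxy)=(xyx)y=(yx)y=yxy=xy$ valid under the identity $xy=yxy$ --- should produce an identity with both sides beginning with $x$, in which $x$ occurs exactly once on one side and at least twice on the other, i.e. an identity of the form \eqref{P7.1}. Carrying out this last reduction uniformly over all admissible shapes of $A$, $B$ and $R$ is the crux of the argument.
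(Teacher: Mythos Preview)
Your case decomposition is close to the paper's, though organized around the type of $f(u_1)$ rather than on whether the first letters of $\mathcal U$ and $\mathcal V$ agree; the easy cases go through essentially as in the paper. Two minor slips: the claim that $|\mathcal U|=1$ forces $\mathcal U(f)=\mathcal V(f)$ fails when $f(u_1)\in\Lambda\cup X$ (then $f(u_1)^2=0\neq f(u_1)$), and ``yields \eqref{L5.1} when $R$ is empty'' is wrong under the standing hypothesis $\chi(\mathcal U)=\chi(\mathcal V)$. Both are harmless, since the resulting identity $x=x^k$ is itself of the form \eqref{P7.1}.

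The real gap is the hard case, which you isolate correctly but do not close; your sketch misses the two specific moves the paper uses. First, before collapsing one applies $xy=xy^{n+1}$ to $\mathcal V$ to pump the letter $x=u_1$: since $\mathcal V=AxB$ with $A\neq\emptyset$, replacing $\mathcal V$ by $Ax^{n+1}B$ is legitimate in $V$ and guarantees that after sending all other variables to $y$ the right-hand side has the shape $y^{p}xP'xQ'$ with $x$ occurring at least twice. Multiplying on the right by a suitable power of $y$ and using $xy=xy^{n+1}$ again normalizes the left side, giving
\[
xy \;=\; y^{p}\,x\,P'\,x\,Q
\]
as an identity of $V$. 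Second --- and this is where your ``$x\leftrightarrow y$ swap'' is not the right move --- one substitutes $x\mapsto y^{p}$, $y\mapsto x$ in this identity, obtaining $y^{p}x = x^{p}y^{p}T = xP$ for a word $P$. Feeding this back yields
\[
xy \;=\; (y^{p}x)\,P'\,x\,Q \;=\; xP\,P'\,x\,Q,
\]
which is \eqref{P7.1} with $R=y$. The substitution is asymmetric, and neither \eqref{1} nor \eqref{2} is invoked; the two moves above suffice uniformly, so the ``crux'' you anticipate dissolves once the pumping and the $x\mapsto y^{p}$ substitution are in place.
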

\begin{proof}
  Assume that none of the identities \eqref{L5.1}, \eqref{P7.1}, \eqref{P7.2} holds in $V$. Since $T_l\notin V$, there is an identity $\mathcal{U}=\mathcal{V}$ of the variety $V$ which fails in the semigroup $T_l$. It is obvious that $\chi (\mathcal{U})=\chi(\mathcal{V})$: otherwise an identity of the form \eqref{L5.1} should hold in $V$. Denote by $Y=\{x_1,\dots,x_n\}$ the set $\chi (\mathcal{U})=\chi (\mathcal{V})$. Since the identity $\mathcal{U}=\mathcal{V}$ does not hold in $V$ there is a mapping $f:Y\rightarrow T_l$ such that $f(\mathcal{U})\neq f(\mathcal{V})$. Let us consider possible cases.
  \begin{enumerate}
    \item The first letters $x_i$, $x_j$ of the words $\mathcal{U}$, $\mathcal{V}$ are distinct. Since $\chi (\mathcal{U})=\chi (\mathcal{V})$, the words $\mathcal{U}$, $\mathcal{V}$ have the form $\mathcal{U}=x_iPx_jQ$, $\mathcal{V}=x_jP^*x_iQ^*$. The identity $xy=xy^{n+1}$ is fulfilled in $V$, therefore, $f(\mathcal{U})=f(x_iPx_jQ)=f(x_iPx_j^{n+1}Q)$ and similarly $f(\mathcal{V})=f(x_jP^*x_i^{n+1}Q^*)$. Hence, without loss of generality we can assume that the letter $x_j$ occurs in the word $\mathcal{U}$ at least twice and the letter $x_i$ occurs in the word $\mathcal{V}$ at least twice.
        \begin{enumerate}
          \item $f(x_i)\in\Lambda\cup X\cup\{0\}$, $f(x_j)\in\Lambda\cup X\cup\{0\}$. This case is not possible because $I=\Lambda \cup X\cup\{0\}$ is an ideal of $T_l$ and the product of any two elements of $I$ is equal to 0; therefore, $f(\mathcal{U})=f(\mathcal{V})=0$.
          \item $f(x_i)\in \Lambda \cup X\cup \{0\}$, $f(x_j)\notin\Lambda\cup X\cup \{0\}$ (or \textit{vice versa}). Since $x_i$ occurs in $\mathcal{V}$ at least twice, we conclude as above that $f(\mathcal{V})=0$. Therefore, $f(\mathcal{U})\neq0$; it follows that $x_i$ occurs in $\mathcal{U}$ only once. Replacing $x_i$ with $x$ and all other variables with $y$ obtain the identity $xy^m=y^pxP'xQ'$, in which $P'$, $Q'$ are words over $\{x,y\}$ or empty symbols. Multiply both sides of this identity by $y^{(n-1)m+1}$ on the right and make use of the identity $xy=xy^{n+1}$; our identity turns into the identity
              \begin{equation}\label{P7*}
                xy=xy^{mn+1}=y^pxP'xQ,
              \end{equation}
              in which $Q=Q'y^{(n-1)m}+1$. Substitute in \eqref{P7*} the variable $x$ for $y^p$ and the variable $y$ for $x$; we come to the identity $y^px=x^py^pT=xP$. Therefore, the identity $$xy=y^pxP'xQ=xPP'xQ$$ of the form \eqref{P7.1} holds in $V$ in contradiction with the hypothesis.
          \item $f(x_i)\notin \Lambda \cup X\cup \{0\}$, $f(x_j)\notin\Lambda \cup X\cup \{0\}$. It means that $f(x_i),f(x_j)\in \Lambda \times X$. If $f(x_k)\in \Lambda\cup X\cup \{0\}$ for a variable $x_k\in Y$ then we should have:
              $$f(\mathcal{U})=f(x_iP'x_kQ')=f(x_iP'x_k^{n+1}Q')=0,$$
              and similarly $f(\mathcal{V})=0$, i.e., $f(\mathcal{U})=f(\mathcal{V})$ in contradiction with the assumption. Therefore, $f(x_k)\in \Lambda \times X$ for all $x_k\in Y$. It follows from the definition of the multiplication in the semigroup $T_l$ that $f(\mathcal{U})=(m,f(z))$, where $z$ is the last letter of $\mathcal{U}$ and $m$ is the minimum of the first components of the pairs $f(x_s)\in \Lambda\times X$ ($x_s\in Y$). Similarly, $f(\mathcal{V})=(m,f(t))$, where $t$ is the last letter of $\mathcal{U}$. Since the identity \eqref{P7.2} fails in $V$, the last letters $z$, $t$ of the words $\mathcal{U}$, $\mathcal{V}$ coincide, and $f(\mathcal{U})=f(\mathcal{V})$ in contradiction with the assumption.
        \end{enumerate}
    \item The words $\mathcal{U}$, $\mathcal{V}$ begin with the same letter $x$. If $x$ occurs only once in $\mathcal{U}$ and it occurs at least twice in $\mathcal{V}$ or \textit{vice versa}, then the identity $\mathcal{U}=\mathcal{V}$ has the form \eqref{P7.2} in contradiction with the assumption. If $x$ occurs two or more times in each of the words $\mathcal{U}$ and $\mathcal{V}$ then, as in previous case, we obtain $f(\mathcal{U})=f(\mathcal{V})$. Therefore, we can assume that $x$ occurs in each of the words $\mathcal{U}$, $\mathcal{V}$ only once.
        \begin{enumerate}
          \item $f(x)\in \Lambda \cup X\cup \{0\}$. If $f(x_k)\in \Lambda \cup X\cup \{0\}$ for some $x_k\in Y$, $x_k\neq x$, then $f(\mathcal{U})=0=f(\mathcal{V})$. If $f(x_k)\notin \Lambda \cup X\cup \{0\}$ for all $x_k\in Y$, $x_k\neq x$, then $f(x_k)\in \Lambda \times X$ and it follows easily from the definition of the multiplication on $T_l$ that either $f(\mathcal{U})=f(x)$, $f(\mathcal{V})=f(x)$ or $f(\mathcal{U})=f(z)$, $f(\mathcal{V})=f(t)$, where $z$, $t$ are the last letters of the words $\mathcal{U}$, $\mathcal{V}$. Since the identity \eqref{P7.2} fails in $V$, the last letters of the words $\mathcal{U}$, $\mathcal{V}$ coincide, and in all cases $f(\mathcal{U})=f(\mathcal{V})$ in contradiction with the assumption.
          \item $f(x)\notin \Lambda \cup X\cup \{0\}$. Exactly as in the previous case  we obtain that $f(\mathcal{U})=f(\mathcal{V})$.
        \end{enumerate}
  \end{enumerate}\end{proof}
Denote by $M_l$ the semigroup
$$M_l=\{c,a,b,e,f,0|c=ae=af=be;\,ce=cf=c;\,ef=e^2=e;\, fe=f^2=f\}$$
(all products which are not mentioned here are equal to 0). Further, denote
by $M_r$ the semigroup antiisomorphic to $M_l$.

\begin{proposition}\label{p8}
  The semigroups $M_l$, $M_r$ are not residually $B_2$-semigroups.
\end{proposition}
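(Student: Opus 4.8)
The plan is to prove that every homomorphism $f$ from $M_l$ into a $B_2$-semigroup $M$ satisfies $f(c)=f(0)$; since $c$ and $0$ are distinct in $M_l$, this says precisely that $M_l$ cannot be embedded into a direct product of $B_2$-semigroups, i.e.\ that it is not residually in that class. So fix such an $f$ and suppose, for contradiction, that $f(c)\neq f(0)$. Write $M=M^0(G;L,R,P)$; by the criterion recalled just after the definition of a $B_2$-semigroup we may assume $P$ is block-diagonal with every entry inside a block non-zero, so that $p_{rl}\neq 0$ precisely when $l$ and $r$ lie in one common block. I will also use the standard fact that in such a Rees matrix semigroup the product $(l_1,g_1,r_1)(l_2,g_2,r_2)$ is non-zero exactly when $p_{r_1l_2}\neq 0$, in which case it equals $(l_1,g_1p_{r_1l_2}g_2,r_2)$ and hence is $\mathcal{R}$-related to its left factor and $\mathcal{L}$-related to its right factor. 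The first step is to reduce to the case $f(0)=\theta$, the zero of $M$: since $0$ is a two-sided zero of $M_l$ with $0^2=0$, the element $f(0)$ is an idempotent which is a two-sided zero of $f(M_l)$, so $f(0)f(c)=f(c)f(0)=f(0)$; if $f(0)\neq\theta$, these equalities (whose values are $\neq\theta$) give $f(c)\,\mathcal{L}\,f(0)$ and $f(c)\,\mathcal{R}\,f(0)$, whence $f(c)$ lies in the $\mathcal{H}$-class of the idempotent $f(0)$, a group with identity $f(0)$, and then $f(0)f(c)=f(c)$ forces $f(c)=f(0)$, a contradiction. So $f(0)=\theta$ and therefore $f(c)\neq\theta$; reading off $c=ae=af=be$, none of $f(a),f(b),f(e),f(f)$ is $\theta$.

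The second step analyses the idempotents $f(e),f(f)$. They are non-zero idempotents with $f(e)f(f)=f(e)$ and $f(f)f(e)=f(f)$ (coming from $ef=e$, $fe=f$), and $f(e)\neq f(f)$, since otherwise $f(c)=f(be)=f(b)f(e)=f(b)f(f)=f(bf)=f(0)=\theta$. Writing $f(e)=(l_1,e_1,r_1)$ and $f(f)=(l_2,e_2,r_2)$, idempotency gives $p_{r_1l_1}\neq 0\neq p_{r_2l_2}$, while $f(e)f(f)=f(e)\neq\theta$ forces $r_2=r_1=:r$; and $l_1\neq l_2$, because $l_1=l_2$ would make idempotency of $f(e)$ and of $f(f)$ imply $e_1=e_2$ and hence $f(e)=f(f)$. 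Thus $p_{rl_1}\neq 0$ and $p_{rl_2}\neq 0$, so $l_1$, $l_2$ and $r$ all lie in one common block $B$ of $P$.

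The third step derives the contradiction using $b$. Write $f(b)=(l_3,g_3,r_3)$. From $f(b)f(e)=f(be)=f(c)\neq\theta$ we get $p_{r_3l_1}\neq 0$, so $r_3\in B$; since $l_2\in B$ and $P$ has no zero entries inside $B$, also $p_{r_3l_2}\neq 0$, hence $f(b)f(f)\neq\theta$. But $f(b)f(f)=f(bf)=f(0)=\theta$ --- a contradiction. (Equivalently, the rows $r,r_3$ and columns $l_1,l_2$ would exhibit in $P$ a $2\times 2$ submatrix of the form $\left(\begin{smallmatrix}\ast&\ast\\ \ast&0\end{smallmatrix}\right)$ with the $\ast$'s non-zero, which produces $A_2$ as a divisor of $M$.) Hence $f(c)=f(0)$ for every homomorphism of $M_l$ into a $B_2$-semigroup, so $M_l$ is not residually in that class. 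For $M_r$ the same conclusion follows by duality: that class is closed under anti-isomorphism (a completely $0$-simple $S$ has $A_2$ as a divisor iff $S^{\mathrm{op}}$ does, because $A_2$ is anti-isomorphic to itself), and $M_r$ is anti-isomorphic to $M_l$.

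The only step needing real care --- the main obstacle --- is the reduction in the first paragraph, namely ruling out a spurious $f(0)$ that is a non-zero idempotent acting as a zero on the image; once $P$ is put into block-diagonal normal form the rest is a short computation in Rees coordinates, with the single relation $bf=0$ doing all the work by distinguishing the behaviour of $b$ from that of $a$.
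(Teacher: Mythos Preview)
Your proof is correct and follows essentially the same approach as the paper: reduce to $f(0)=\theta$ by a group-in-$\mathcal{H}$-class argument, then produce a $2\times 2$ submatrix of $P$ with exactly one zero entry, contradicting the block-diagonal form of a $B_2$-semigroup. The only minor variation is that the paper places the columns $i_e,i_f$ in a common block directly from $ae=af=c$ (using $\lambda_a$ as the linking row), whereas you use the relations $ef=e$, $fe=f$ to force $r_1=r_2$ and put $l_1,l_2$ in one block; both routes lead to the same obstruction, and your parenthetical remark already records the paper's formulation.
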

\begin{proof}
  Assume the contrary. Then there exists a $B_2$-semigroup $M$ and a homomorphism $f:M_l\rightarrow M$ such that $f(c)\neq f(0)$. Let $\theta $ be the zero element of the semigroup $M$. If $f(0)\neq\theta $ then the $\mathcal{H}$-class of the idempotent $f(0)$ is a group and $f(0)$ is the unity of this group. Since $(f(c))^2=f(c^2)=f(0)\neq\theta $, the elements $f(c)$ and $f(0)$ belong to the same $\mathcal{H}$-class and $f(c)=f(c)f(0)=f(0)$ in contradiction with the assumption. Therefore, $f(0)=\theta $.

  For any element $x\in M_l$ such that $f(x)\neq0$ we shall denote the element $f(x)$ from the completely simple semigroup $M\backslash\theta $ by $(i_x,g_x,\lambda_x)$. Let $(P_{\lambda ,i})$ be the sandwich-matrix defining this semigroup. Since $f(c)\neq\theta $, we have: $P_{\lambda_a,i_e}\neq\theta $, $P_{\lambda _a,i_f}\neq\theta $, $P_{\lambda _b,i_e}\neq\theta $. But $bf=0$; hence $P_{\lambda _b,i_f}=0$ and $M$ is not a $B_2$-semigroup.
\end{proof}
\begin{proposition}\label{p9}
  Let $V$ be a semigroup variety and let $M_l\notin V$. Then an identity of one of the following forms is fulfilled in $V$: \eqref{L5.1},
  \begin{subequations}
    \begin{align}
      \label{P9.1}
      xy^n=PxQxR;\\
      \label{P9.2}
      axP=ayQ,
    \end{align}
  \end{subequations}
  (here $a$, $x$, $y$ are different variables, $P$, $Q$, $R$ are words or empty symbols and $a\notin\chi (P)\cup \chi (Q)$).
\end{proposition}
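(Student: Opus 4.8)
The plan is to argue by contradiction, in the style of Proposition~\ref{p7} and the preceding lemmas. Assume none of \eqref{L5.1}, \eqref{P9.1}, \eqref{P9.2} holds in $V$. Since $M_l\notin V$, some identity $\mathcal{U}=\mathcal{V}$ of $V$ fails in $M_l$; exactly as in the proof of Lemma~\ref{l5}, if $\chi(\mathcal{U})\neq\chi(\mathcal{V})$ this already yields an identity of the form \eqref{L5.1}, so I may assume $\chi(\mathcal{U})=\chi(\mathcal{V})=:Y$ and fix a map $f\colon Y\to M_l$ with $f(\mathcal{U})\neq f(\mathcal{V})$ and, interchanging $\mathcal{U}$ and $\mathcal{V}$ if necessary, $f(\mathcal{U})\neq 0$ (each of the three forms is unchanged by swapping the two sides of an identity).

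Everything then hinges on a short analysis of $M_l$. Two facts suffice: (a) $ma=mb=mc=0$ for every $m\in M_l$, so the letters $a,b,c$ can occur only at the very front of a non-zero product; and (b) $\{e,f\}$ is a left-zero subsemigroup with $\{e,f\}\cdot\{a,b,c\}=\{0\}$, so once all letters lie in $\{e,f\}$ a non-zero product equals the $f$-image of its first letter. From $f(\mathcal{U})\neq0$ these force: every variable of $Y$ is sent into $\{a,b,c,e,f\}$; at most one variable $w$ is sent into $\{a,b,c\}$; and if such a $w$ exists it occurs in $\mathcal{U}$ exactly once, as the first letter, all other variables going into $\{e,f\}$. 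This splits the argument in two. If no variable goes into $\{a,b,c\}$, then $f(\mathcal{U})=f(l(\mathcal{U}))$ and $f(\mathcal{V})=f(l(\mathcal{V}))$, so $l(\mathcal{U})\neq l(\mathcal{V})$ as variables; writing $\mathcal{U}=l(\mathcal{U})P^{*}$, $\mathcal{V}=l(\mathcal{V})Q^{*}$ and left-multiplying $\mathcal{U}=\mathcal{V}$ by a fresh variable $a$ gives an identity of the form \eqref{P9.2}. If one variable $w$ goes into $\{a,b,c\}$, write $\mathcal{U}=wP'$ with $w\notin\chi(P')$ and look at how often $w$ occurs in $\mathcal{V}$.

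When $w$ occurs at least twice in $\mathcal{V}$, collapsing $w$ to a variable $x$ and every other variable to a variable $y$ yields an identity of $V$ whose right side contains $x$ at least twice, i.e.\ one of the form \eqref{P9.1}; the degenerate subcase $\mathcal{U}=w$ forces $Y=\{w\}$, hence $\mathcal{V}=w^{k}$ with $k\ge2$, and multiplying $w=w^{k}$ on the right by $y^{k-1}$ for a fresh variable $y$ again gives the form \eqref{P9.1}. When $w$ occurs exactly once in $\mathcal{V}$, write $\mathcal{V}=DwE$ with $w\notin\chi(D)\cup\chi(E)$; now $Y=\{w\}$ makes $\mathcal{U}=\mathcal{V}$ a tautology, so $|Y|\ge2$ and $P',E$ are non-empty. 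If $D\neq\emptyset$ the first letter of $\mathcal{V}$ is a variable $\neq w$, and left-multiplying by a fresh $a$ gives \eqref{P9.2}; if $D=\emptyset$ then $f(\mathcal{U})=f(w)f(l(P'))$ and $f(\mathcal{V})=f(w)f(l(E))$ because $P'$ and $E$ are non-empty words over the left-zero set $\{e,f\}$, whence $l(P')\neq l(E)$ and $\mathcal{U}=\mathcal{V}$ is itself of the form \eqref{P9.2} with leading variable $a:=w$. Each branch contradicts the assumption, proving the proposition. I expect the only fiddly point to be the final bookkeeping: verifying in every branch that the distilled identity genuinely has the shape \eqref{P9.1} or \eqref{P9.2} — in particular that $w$, respectively the fresh $a$, does not reappear inside the subwords playing the roles of $P,Q,R$ — which is exactly where the ``$w$ occurs once and at the front of $\mathcal{U}$'' part of the structural analysis is used each time, and is otherwise purely combinatorial.
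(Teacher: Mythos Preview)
Your proof is correct and takes essentially the same route as the paper: reduce to $\chi(\mathcal U)=\chi(\mathcal V)$, then split on the first letters, on the multiplicity of that first letter, and finally on the second letters, extracting \eqref{P9.1} or \eqref{P9.2} in each surviving branch. The only cosmetic difference is that you fix a single witness $f$ with $f(\mathcal U)\neq0$ and let the structure of $M_l$ drive the case analysis, whereas the paper argues each branch by checking directly that $\mathcal U=\mathcal V$ would otherwise hold in $M_l$; the branches and the derived identities coincide.
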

\begin{proof}
  Let $M_l\notin V$. Then an identity $\mathcal{U}=\mathcal{V}$ of the variety $V$ does not hold in $M_l$. Assume that this identity does not imply any of the identities of the forms \eqref{L5.1}, \eqref{P9.1}, \eqref{P9.2}. Then obviously $\chi (\mathcal{U})=\chi (\mathcal{V})$. If the first letters $x$, $y$ of the words $\mathcal{U}$, $\mathcal{V}$ are different, then $a\mathcal{U}=a\mathcal{V}$, where the letter $a$ does not belong to $\chi (\mathcal{U})\cup \chi (\mathcal{V})$, is an identity of the form \eqref{P9.2}. Therefore, the words $\mathcal{U}$, $\mathcal{V}$ begin with the same letter $a$. Let us consider all possible cases.
  \begin{enumerate}
    \item The letter $a$ occurs at least twice in each of the words $\mathcal{U}$, $\mathcal{V}$. In this case the identity $\mathcal{U}=\mathcal{V}$ holds in $M_l$, and it contradicts the assumption.
    \item The letter $a$ occurs only once in $\mathcal{U}$ and at least twice in $\mathcal{V}$ (or \textit{vice versa}). In this case the identity $\mathcal{U}=\mathcal{V}$ implies, evidently, an identity of the form \eqref{P9.1}, and this contradicts the hypothesis.
    \item The letter $a$ occurs only once both in $\mathcal{U}$ and $\mathcal{V}$. If the second letters of the words $\mathcal{U}$, $\mathcal{V}$ coincide as well, then the identity $\mathcal{U}=\mathcal{V}$ holds in $M_l$; therefore, the second letters $x$, $y$ of the words $\mathcal{U}$, $\mathcal{V}$ are different and they differ from $a$. Hence, $\mathcal{U}=\mathcal{V}$ is an identity of the form \eqref{P9.2} in contradiction with the assumption.
  \end{enumerate}\end{proof}

\begin{proposition}\label{p10}
  Let $V$ be the semigroup variety defined by the identities $xy=xy^{n+1}$, $axay=ayax$, $xabc=xacb$. Then any semigroup $S$ of the variety $V$ is residually completely 0-simple.
\end{proposition}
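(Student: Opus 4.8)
The plan is to verify that $V$ meets the hypotheses of the auxiliary results above, to separate distinct regular elements of an arbitrary $S\in V$ by means of Proposition \ref{p4}, and to handle the remaining pairs by a direct construction built from the three defining identities.

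Since $xy=xy^{n+1}$ is identity \eqref{P1.2}, Proposition \ref{p2} yields \eqref{1} and \eqref{2} in $V$; multiplying $xy=xy^{n+1}$ on the left by $x$ gives $xyx=xy^{2}y^{n-1}x$, an identity of the form $xyx=Py^{2}Q$. One then checks, by substituting concrete elements, that $V$ contains none of $L_{2}^{1}$, $R_{2}^{1}$, $F_{\lambda}$, $F_{\rho}$ (each fails $axay=ayax$: for $L_{2}^{1}$ and $R_{2}^{1}$ take $a$ to be the outer unit, for $F_{\lambda}$ and $F_{\rho}$ a suitable triple from their six elements) nor $B_{2}^{1}$ (it fails \eqref{2}). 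By Lemma \ref{l15} the identities \eqref{L15.1}, \eqref{L15.2} then hold in $V$, so Proposition \ref{p4} applies: for every $S\in V$ any two distinct regular elements are separated by an epimorphism of $S$ onto a completely $0$-simple principal factor.

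It remains to separate a pair $u\neq v$ in $S\in V$ of which at least one, say $u$, is not regular. Here I would use that $T_{l},T_{r}\notin V$ (they too fail $axay=ayax$), so that Proposition \ref{p7} supplies an auxiliary identity of one of the shapes \eqref{P7.1}, \eqref{P7.2}, \eqref{L5.1} in $V$, where \eqref{L5.1} is ruled out already by the two-element null semigroup $N_{2}=\langle a\mid a^{2}=0\rangle\in V$. Together with \eqref{1}, \eqref{2}, \eqref{L15.1}, \eqref{L15.2} and the consequence of $axay=ayax$ and $xabc=xacb$ that a product $xwx$ depends only on $x$ and on the exponents, reduced modulo $n$, of the letters of $w$ (so that $\mathcal{H}$-classes of $S$ behave as in a Rees matrix semigroup over an abelian group of exponent $n$), this information locates the non-regular elements of $S$ in the nilpotent positions attached to the regular $\mathcal{J}$-classes. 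Since every null semigroup, and more generally every such nilpotent configuration, embeds into a completely $0$-simple semigroup --- a Rees matrix semigroup over the trivial group whose sandwich matrix is chosen so that the required products vanish --- one obtains a homomorphism $f\colon S\to M$ with $M$ completely $0$-simple and $f(u)\neq f(v)$. Collecting all the separating homomorphisms exhibits $S$ as a subdirect product of completely $0$-simple semigroups, that is, $S$ is residually completely $0$-simple.

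The main obstacle is this last step: one must extract from $xy=xy^{n+1}$, $axay=ayax$, $xabc=xacb$ together with \eqref{L15.1}, \eqref{L15.2} and the auxiliary identity from Proposition \ref{p7} a description precise enough to locate the non-regular elements and to manufacture, for each such pair, an honest completely $0$-simple quotient separating them --- reconciling the ``abelian of exponent $n$'' behaviour forced on the regular part with the freedom of the nilpotent part.
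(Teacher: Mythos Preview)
Your handling of the regular case is correct and matches the paper: \eqref{1}, \eqref{2} follow from Proposition~\ref{p2}, the excluded semigroups give Lemma~\ref{l15}, and Proposition~\ref{p4} then separates any two distinct regular elements. The paper does exactly this.

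The gap is the non-regular case, and you acknowledge it yourself in the final paragraph. Your sketch invokes Proposition~\ref{p7} and speaks of ``nilpotent positions attached to the regular $\mathcal{J}$-classes'' and of embedding null semigroups into Rees matrix semigroups, but none of this produces a \emph{homomorphism of $S$} into a completely $0$-simple semigroup separating the given pair. An embedding of a null subsemigroup is not the same as a quotient of $S$, and the vague claim that $\mathcal{H}$-classes ``behave as in a Rees matrix semigroup over an abelian group of exponent $n$'' is not a construction. The paper does not use Proposition~\ref{p7} here at all.

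What the paper actually does for a non-regular $x$ (with $x\in S^{2}$, the case $x\notin S^{2}$ being trivial) is quite concrete and is the real content of the proposition. First it passes to $S/I_{x}$, so every nonzero element divides $x$. Then it sets $L=\{a\in S\mid x=xa^{2n}\}$ and $H=S\setminus L$, and uses the three defining identities to prove that $L$ is a subsemigroup, that $H$ is a two-sided ideal annihilated on the left by $S$, and that every nonzero $y\in H$ satisfies $x=yv$ for some $v\in L$. On $L$ it introduces the congruence $a\,\tau\,b\Leftrightarrow xa=xb$ and shows $G=L/\tau$ is a group; on $S\setminus\{0\}$ it defines a relation $\lambda$ via common left multiples. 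With $\Lambda=S\setminus\{0\}$, $I=\Lambda/\lambda^{t}$ and an explicit sandwich matrix built from these data, it writes down a Rees matrix semigroup $M=M^{0}(G,\Lambda,I,P)$ and a map $f\colon S\to M$ given by four explicit case formulas, and verifies case by case that $f$ is a homomorphism with $f(x)\neq f(x^{*})$. This explicit construction (occupying the bulk of the paper's proof) is precisely the ``honest completely $0$-simple quotient'' you say you still need to manufacture; without it the argument is not a proof.
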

\begin{proof}
  It is not difficult to show that $V$ is a Rees---Sushkevich variety. Let $x\neq x^*$ be arbitrary elements of the semigroup $S\in V$.

  If the elements $x$, $x^*$ are regular elements, then, by Proposition \ref{p4}, they can be separated by a homomorphism into a completely 0-simple semigroup. Assume that the element $x\neq0$ is not regular. If $x\notin S^2$ then the homomorphism $S\rightarrow\{0,1\}$ taking $x$ into 1 and taking all other elements into 0 separates $x$ from all other elements. Therefore, without loss of generality, we can assume that $x\in S^2$; then $x=uv$ for $u,v\in S$, and $x=uv=uv^{n+1}=xv^n\in xS$.

  Denote by $I_x$ the set of all elements $y\in S$ such that $x\notin S^1yS^1$; it is clear that $I_x$ is a two-side ideal of $S$. Since $x\notin I_x$, the elements $x$, $x^*$ remains distinct in the factor semigroup $S/I_x$; therefore, we can replace $S$ with $S/I_x$ and assume, without loss of generality, that any nonzero element of $S$ is a divisor of $x$.

  Let $L=\{a\in S|x=xa^{2n}\}$, $H=S\backslash L$. The element $x$ is not contained in $L$ (otherwise $x$ should be a regular element of $S$); therefore, $x\in H$. Since the identity $(ab)^m=a^mb^m$ holds in $V$ (it easily follows from the identities defining the variety $V$), $L$ is a subsemigroup of $S$. Let $y\in H$, $z\in S$; if $zy\neq0$, then $zy$ divides $x$, i.e., there exist elements $u,v\in S^1$ such that $x=uzyv$. Making us of the defining identities of the variety $V$ we obtain: $x=uzyv=uzy^{n+1}v=uzy^{2n}v=uzyvy^{2n}=xy^{2n}$, and $y\in L$. But it contradicts the assumption $y\in H=S\backslash L$. Therefore, $zy=0$ for every $y\in H$, $z\in S$. Thus, $H$ is the right annihilator of $S$ and, consequently, $H$ is an isolated two-side ideal.

  Let $y\in H$, $y\neq0$. The element $y$ divides $x$; therefore, there are elements $u,v\in S^1$ such that $x=uyv$. If $u\in S$ then $uy=0$ and $x=0$; hence, $u\notin S$, i.e., $u=1$ and $x=yv=yv^{n+1}=yv^{2n+1}=xv^{2n}$. Thus, we have proved that for any nonzero element $y\in H$ there is an element $v\in L$ such that $x=yv$.

  If $x^*\in L$ then the elements $x$, $x^*$ are separated by the homomorphism $f:S\rightarrow \{0,1\}$ defined by the formulae: $f(z)=0$ if $z\in H$, $f(z)=1$ if $z\in L$.

  Let $x^*\notin L$; assume at first that $x^*=0$. Introduce a binary relation $\lambda $ on the set $S\backslash \{0\}$:
  $$u\lambda v\Longleftrightarrow \text{ there exists }w\in S\text{ , such that }u\in wS^1,v\in wS^1.$$

  Show that if $u\in H$, $u\lambda v$, then $v\in H$. Indeed, otherwise $v\in L$. There exist elements $a,b\in S^1$ such that $u=wa$, $v=wb$. Hence, $wb=v\in L$, and $w\notin H$ (because $H$ is a two-side ideal), i.e., $w\in L$. Since $wa=u\neq0$, the element $a$ does not belong to the right annihilator $H$ of the semigroup $S$; it follows that $a=1$ or $a\in L$. In both cases $u=wa$ is contained in the semigroup $L$ in contradiction with the hypothesis.

  Let $z\in S$, $zu=0$, $u\lambda v$; show that $zv=0$. In fact, if $u\in H$ then $v\in H$ and $zv=0$. If $u\notin H$ then $v\notin H$, i.e., both elements $u$, $v$ are contained in $L$. If $zv\neq0$ then there are elements $c,d\in S^1$ such that $x=czvd$; since $u\lambda v$, there are elements $w\in S$, $a,b\in S^1$ such that $u=wa$, $v=wb$. Then $$x=czwbd=czwbda^{2n}=czwa^{2n}bd=czua^{2n-1}bd=0$$
  in contradiction with the assumption $x\neq0$.

  Let $\lambda^t$ be the transitive closure of the relation $\lambda $. It is clear that the relation $\lambda ^t$ is an equivalence and that it has (together with $\lambda $) the properties:
  \begin{enumerate}
    \item if $u\in H$ and $u\lambda ^tv$ then $v\in H$;
    \item if $z\in S$, $zu=0$, $u\lambda ^tv$ then $zv=0$.
  \end{enumerate}

  Let $M=M^0(\{1\}), \Lambda ,I,P)$ be the semigroup of matrix type with $\Lambda =S\backslash\{0\}$, $I=\Lambda /\lambda ^t$ and with the matrix $P$ defined by the rule: for any elements $z_1\in \Lambda $, $\bar z_2=z_2\pmod{\lambda ^t}\in I$ the entry $P_{z_1\bar z_2}$ is equal to 1, if $z_1z_2\neq0$ and $P_{z_1\bar z_2}=0$ if $z_1z_2=0$ (this definition does not depend on the choice of a representative $z_2$ of the equivalency class $\bar z_2$).

  Define a mapping $f:S\rightarrow M$ by the following rule:
  $$f(z)=\begin{cases}
  0, &\text{ if }z=0;\\
  (\bar z,1,x), &\text{ if }z\in L;\\
  (\bar x,1,x), &\text{ if }z\notin L\quad,z\in zL;\\
  (\bar x,1,z), &\text{ if }z\notin L\quad,z\notin zL.
  \end{cases}$$

  We shall prove that $f$ is a homomorphism of semigroups; this homomorphism separates $x\neq0$ and $x^*=0$, because $f(x)\neq0$, $f(x^*)=0$.

  To verify that $f$ is an homomorphism take elements $z_1,z_2\in Z$, denote $z=z_1z_2$ and consider all possible cases. If at least one of the elements $z_1$, $z_2$ is equal to 0 then $z=0$ and $f(z)=0=f(z_1)f(z_2)$. Let $z_1\neq0$, $z_2\neq0$.
  \begin{enumerate}
    \item The elements $z_1$, $z_2$ are contained in $H$. Then $z=0$, and $f(z_1)f(z_2)=(\bar x,1,u)(\bar x,1,v)$, where $u,v\in \{z_1,z_2,x\}\subseteq H$. Hence, $ux=0$ and consequently $P_{u,\bar x}=0$; therefore, $f(z_1)f(z_2)=0=f(z)$.
    \item The element $z_1$ is not contained in $H$ and $z_2\in H$. Then $z\in z_1H=0$ and $z_1\in L$. The entry $P_{x\bar x}$ of the matrix $P$ is equal to 0 because $x^2\in xH=0$ and $f(z_1)f(z_2)=(\bar z_1,1,x)(\bar x,1,v)=0=f(z)$.
    \item The element $z_1$ is contained in $H$ and and the element $z_2$ is not contained in $H$.
        \begin{enumerate}
          \item $x$ divides $z_1$. There is elements $u,v\in S^1$ such that $z_1=uxv$. Since $x\in H$ and $H$ annihilates $S$ from the right, $u\notin S$, i.e., $u=1$ and $z_1=xv$. We have for any $w\in L$: $$z_1w^{2n}=xvw^{2n}=xw^{2n}vw^{2n}=xw^{4n}v=xv=z_1.$$
              It follows that $z_1w\neq0$ for any $w\in L$; in particular, $z_1z_2\neq0$. But $H$ is a two-side ideal of $S$; therefore, $z_1z_2\in H$. Further, $z_1z_2=z_1z_2^{n+1}\in z_1z_2L$. By the definition of the mapping $f$ we have: $f(z_1)f(z_2)=(\bar x,1,x)(\bar z_2,1,x)=(\bar x,P_{x,\bar z_2},x)$, $f(z_1z_2)=(\bar x,1,x)$. But $xz_2\neq0$, because $0\neq x=xz_2^{2n}$; hence, $P_{x,\bar z_2}=1$ and $f(z_1z_2)=f(z_1)f(z_2)$.
          \item $x$ does not divide $z_1$.
           \begin{enumerate}
                \item $z_1z_2=0$. If $z_1\in z_1L$, then $z_1=z_1b$ for an element $b\in L$; since $z_1\neq0$, $0\neq x=z_1v=z_1bv=z_1bvz_2^{2n}=z_1bz_2^{2n}v=z_1z_2^{2n}v=0$. Therefore, $z_1\notin z_1L$ and $f(z_1)f(z_2)=(\bar x,1,z_1)(\bar z_2,1,x)=0$, because $P_{z_1,\bar z_2}=0$, and $f(z_1z_2)=f(0)=0$.
                \item $z_1z_2\neq0$. In this case $f(z_1)f(z_2)=(\bar x,1,u)(\bar z_2,1,x)$, where $u$ is one of the elements $x$, $z_1$. But both elements $xz_2$, $z_1z_2$ are not equal 0; hence, $uz_2\neq0$ and $P_{u,\bar z_2}=1$. Therefore, $f(z_1)f(z_2)=(\bar x,1,x)$. Since $z_1z_2\notin L$ and $z_1z_2=z_1z_2^{n+1}\in z_1z_2L$, the image $f(z_1z_2)$ of the element $z_1z_2$ is also equal to $(\bar x,1,x)$.
              \end{enumerate}
        \end{enumerate}
    \item Both elements $z_1$, $z_2$ are not contained in $H$. In this case $z_1\in L$, $z_2\in L$, $z_1z_2\in L$, and $f(z_1z_2)=(\overline{z_1z_2},1,x)$. Since $xz_2\neq0$, the entry $P_{x,\bar z_2}$ is equal to 1 and $f(z_1)f(z_2)=(\bar z_1,1,x)(\bar z_2,1,x)=(\bar z_1,1,x)$. But it follows from definition that $z_1\lambda z_1z_2$; therefore, $\bar z_1=\overline{z_1z_2}$, and $f(z_1)f(z_2)=f(z_1z_2)$.
  \end{enumerate}

  Let now $x^*\neq0$. If $x$ does not divide $x^*$ then denote by $S^*$ the factor semigroup of $S$ over the two-side ideal of $S$ generated by $x$. The semigroup $S^*$ belongs to the variety $V$, the image of $x^*$ in it is distinct from 0 and the image of $x$ in it is equal to 0; hence, we came to the situation which we have already examined above (with permutated $x$, $x^*$). Therefore, it remains to consider the case of $x$ dividing $x^*$.

  Introduce a binary relation $\tau $ on the semigroup $L$ by the rule
  $$a\tau b\Longleftrightarrow xa=xb.$$
  It follows immediately from this definition that if $a\tau b$, $t\in L$, then $at\tau bt$. Moreover, in this situation
  $$xta=xt^{2n+1}a=xt^{2n}at=xat=xbt=xt^{2n}bt=xt^{2n+1}tb=xtb,$$
  i.e., $ta\,\tau \,tb$. Thus, $\tau $ is a congruence on the semigroup $L$.

  Remark that $x=xa^{2n}=xb^{2n}$, $xb=xa^{2n}b=xba^{2n}$, i.e., $a^{2n}\tau b^{2n}$, $b\tau a^{2n}b\tau ba^{2n}$ for every $a,b\in L$. It means that the factor semigroup $G=L/\tau $ is a group. We shall denote by $|a|$ the $\tau$-class of the element $a\in L$; as above, we denote by $\bar z$ the $\lambda ^t$-class of the element $z\in S\backslash \{0\}$.

  Let $M$ be the semigroup of matrix type $M^0(G,\Lambda ,I,P)$, where $\Lambda=S\backslash \{0\}$, $I=\Lambda /\lambda ^t$ and the matrix $P$ is defined by the rule:
  $$P_{z_1,\bar z_2}=\begin{cases}
  0, &\text{ if }z_1z_2=0,\\
  1, &\text{ if }z_1z_2\neq0,z_1\in z_1L,\\
  |b^{2n-1}z_2^{2n-1}|, &\text{ if }z_1z_2\neq0,\,z_1\notin z_1L,\,x=z_1z_2b\in z_1z_2L,\\
  1 &\text{ otherwise.}\end{cases}$$

  Define a mapping $f:S\rightarrow M$:
  $$f(z)=\begin{cases}
  0, &\text{ if }z=0,\\
  (\bar z,|z^{2n+1}|,x), &\text{ if }z\in L,\\
  (\bar x,|b^{2n-1}|,x), &\text{ if }z\notin L,z\in zL,x=zb,\\
  (\bar x,1,z), &\text{ if }z\notin L,z\notin zL.\end{cases}$$

  Show that these definitions do not depend on the choice of the element $b$. Indeed, let $x=zb_1=zb_2$ for $b_1,b_2\in L$, and let $z\in zL$; then there is an element $c\in L$ such that $z=zc$ and $$xb_1=xb_1b_2^{2n}=zcb_1b_1b_2^{2n}=zcb_2b_1^2b_2^{2n-1}=xb_1^2b_2^{2n-1}.$$

  It follows that $xb_1b_2=xb_1^2$, i.e., $|b_1||b_2|=|b_1b_2|=|b_1^2|=|b_1|^2$, and, since $G$ is a group, $|b_1|=|b_2|$.

  Now we shall prove that $f$ is a homomorphism of semigroups. If $z_1=0$ or $z_2=0$ then $f(z_1z_2)=f(0)=0=f(z_1)f(z_2)$. Let $z_1\neq0$, $z_2\neq0$; consider all possible cases.
  \begin{enumerate}
    \item The elements $z_1$, $z_2$ are contained in $H$. Then $z_1z_2=0$ and $f(z_1z_2)=0$, $f(z_1)f(z_2)=(\bar x,g_1,u)(\bar x,g_2,v)$, where $u,v\in \{x,z_1,z_2\}$. Therefore, $ux=0$ and $P_{u,\bar x}=0$, $f(z_1)f(z_2)=(\bar x,0,v)=0$.
    \item $z_2$ is contained in $H$ and $z_1$ is not contained in $H$. Hence, $z_1\in L$. In this case $z_1z_2=0$ again. Since $x^2=0$, the entry $P_{x,\bar x}$ of the matrix $P$ is equal to 0, and $f(z_1)f(z_2)=(\bar x,g_1,x)(\bar x,g_2,v)=0=f(z_1z_2)$.
    \item $z_1$ is contained in $H$ and $z_2$ is not contained in $H$, i.e., $z_2\in L$.
        \begin{enumerate}
          \item $z_1\in z_1L$. There are elements $b_1,b,c\in L$ such that $x=z_1b_1=z_1z_2b$, $z_1=z_1c$. Then $f(z_1)f(z_2)=(\bar x,|b_1^{2n-1}|,x)(\bar z_2,|z_2^{2n+1}|,x)$. Remark that $x=xz_2^{2n}\in xL$; therefore, $P_{x,\bar z_2}=1$ and $f(z_1)f(z_2)=(\bar x,|b_1^{2n-1}z_2^{2n+1}|,x)$. On the other hand, $f(z_1z_2)=(\bar x,|b^{2n-1}|,x)$, because $z_1z_2\notin L$, $z_1z_2=z_1z_2^{n+1}\in z_1z_2L$. But we have:  $xb_1=z_1z_2bb_1=z_1cz_2bb_1=z_1cb_1z_2b=z_1b_1z_2b=xz_2b$; therefore, $|b_1|=|z_2b|$ and $|b_1^{2n-1}z_2^{2n+1}|=|z_2^{4n}b_1^{2n-1}|=|b_1^{2n-1}|$, because $xz_2^{4n}b_1^{2n-1}=xb_1^{2n-1}$. Thus, $f(z_1)f(z_2)=f(z_1z_2)$.
          \item $z_1\notin z_1L$. If $z_1z_2=0$ then $P_{z_1\bar z_2}=0$ and $$f(z_1)f(z_2)=(\bar x,1,z_1)(\bar z_2,|z_2|^{2n+1},x)=0=f(0)=f(z_1z_2).$$
              Let $z_1z_2\neq0$; since $z_1z_2\in H$, there is an element $b\in L$ such that $x=z_1z_2b$. Further, $z_1z_2=z_1z_2^{n+1}\in z_1z_2L$. It follows from the definition that $P_{z_1,\bar z_2}=|b^{2n-1}z_2^{2n-1}|$; therefore,
              \begin{align}
                \nonumber
                f(z_1)f(z_2)=(\bar x,1,z_1)(\bar z_2,|z_2^{2n+1}|,x)=(\bar x,|b^{2n-1}z_2^{2n-1}z_2^{2n+1}|,x)=\\
                \nonumber
                (\bar x,|b^{2n-1}||z_2^{4n}|,x)=(\bar x,|b^{2n-1}|,x)=f(z_1z_2).
              \end{align}
        \end{enumerate}
    \item Both elements $z_1$, $z_2$ are not contained in $H$. Then $z_1,z_2,z_1z_2\in L$. Remark that $z_1\in z_1S^1$, $z_1z_2\in z_1S^1$, therefore, $z_1\lambda z_1z_2$ and $\bar z_1=\overline {z_1z_2}$. Since $P_{x,\bar z_2}=1$, we obtain:
        \begin{align}
          \nonumber
          f(z_1)f(z_2)=(\bar z_1,|z_1^{2n+1}|,x)(\bar z_2,|z_2^{2n+1}|,x)=(\bar z_1,|z_1^{2n+1}||z_2^{2n+1}|,x)=\\
          \nonumber
          (\overline{z_1z_2},|(z_1z_2)^{2n+1}|,x)=f(z_1z_2).
        \end{align}
  \end{enumerate}
  Thus, in all cases $f(z_1z_2)=f(z_1)f(z_2)$, i.e., $f$ is a homomorphism.

  Suppose that $f(x)=f(x^*)$. Recall that $x$ divides $x^*$; since $x\in H$, it means that $x^*=xd$ for an element $d\in S$. But $d\notin H$ (otherwise $x^*=xd=0$), therefore, $d\in L$ and $x^*=xd=xd^{2n}d=x^*d^{2n}\in x^*L$. The element $x^*=xd$ is contained (together with the element $x$) in the ideal $H$. Therefore, there exist elements $b,c\in L$ such that $x=xb$, $x=x^*c$. Then $f(x)=(\bar x,|b^{2n-1}|,x)$, $f(x^*)=(\bar x,|c^{2n-1}|,x)$. Since $f(x)=f(x^*)$, we obtain that $|b^{2n-1}|=|c^{2n-1}|$. But $|b|=|b^2|$ because $xb=xb^2$, therefore, $|b|=1$ and $|c^{2n-1}|=1$. Hence, $|c^{2n}|=|c|$ and $xc=xc^{2n}=x$.

  On the other hand, $x^*=xd$ and $x=x^*c=xdc=xc^{2n}dc=xc^{2n}cd=xcd=xd=x^*$. It contradicts the assumption $x\neq x^*$. Therefore, $f(x)\neq f(x^*)$.
\end{proof}

\begin{proposition}\label{p11}
  The semigroups $L_2^1$, $R_2^1$, $F_\lambda $, $F_\rho $ are not residually completely 0-simple.
\end{proposition}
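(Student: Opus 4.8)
The plan is to use the characterization recorded in Section~2, namely that a semigroup is residually completely 0-simple exactly when it embeds into a direct product of completely 0-simple semigroups; equivalently, to show that a finite semigroup $S$ is \emph{not} residually completely 0-simple it suffices to exhibit two distinct elements $u,v\in S$ that are identified by \emph{every} homomorphism of $S$ into a completely 0-simple semigroup. I will produce such a pair for each of the four semigroups, using two elementary facts about the Rees matrix picture $M=M^{0}(G;L,R,P)$ of a completely 0-simple semigroup (both immediate from the formula for multiplication, cf.\ \cite{Clifford1}): (i) if $\varepsilon\in M$ is a nonzero idempotent and $\gamma\in M$ satisfies $\varepsilon\gamma=\gamma\varepsilon=\gamma$, then comparing first and last coordinates of the nonzero products $\varepsilon\gamma=\gamma$ and $\gamma\varepsilon=\gamma$ shows $\gamma\in H_{\varepsilon}$, so if moreover $\gamma$ is idempotent then $\gamma=\varepsilon$; and (ii) if $a\in M$ with $a^{2}\neq0$ then $a$ lies in a maximal subgroup of $M$, whence $a^{3}=a^{2}$ forces $a^{2}=a$.

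For $L_{2}^{1}=\{a,b,e\}$ I take the pair $(a,b)$. Let $f\colon L_{2}^{1}\to M$ be any homomorphism into a completely 0-simple semigroup and put $\alpha=f(a)$, $\beta=f(b)$, $\varepsilon=f(e)$. From $ba=b$ we get $\beta\alpha=\beta$, so if $\alpha=0$ then $\beta=0=\alpha$ and there is nothing to prove; assume $\alpha\neq0$. Then $\varepsilon\alpha=\alpha\neq0$ forces $\varepsilon\neq0$; also $\varepsilon^{2}=\varepsilon$, $\alpha^{2}=\alpha$, and $ae=ea=a$ gives $\alpha\varepsilon=\varepsilon\alpha=\alpha$. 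By fact~(i), $\alpha$ is an idempotent of the group $H_{\varepsilon}$, hence $\alpha=\varepsilon$; and since $\alpha=\alpha\beta$ forces $\beta\neq0$, the same argument gives $\beta=\varepsilon$. Thus $f(a)=f(b)$ for every such $f$, so $L_{2}^{1}$ is not residually completely 0-simple. As $R_{2}^{1}$ is anti-isomorphic to $L_{2}^{1}$ and the class of completely 0-simple semigroups is closed under anti-isomorphism, $R_{2}^{1}$ is not residually completely 0-simple either.

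For $F_{\lambda}$, which is the $6$-element semigroup $\{x,y,x^{2},y^{2},xy,yx\}$, I take the pair $(x^{2},xy)$, genuinely distinct in $F_{\lambda}$. Let $f\colon F_{\lambda}\to M$ be a homomorphism into a completely 0-simple semigroup and set $p=f(x)$, $q=f(y)$; the images of the defining relations give $p^{3}=p^{2}q=p^{2}$, $pq=pq^{2}$, $qp=qp^{2}$ and $q^{2}p=q^{2}$. If $p^{2}=0$, then $qp=qp^{2}=0$, hence $q^{2}=q^{2}p=q(qp)=0$, hence $pq=pq^{2}=0=p^{2}$. If $p^{2}\neq0$, then by fact~(ii) the relation $p^{3}=p^{2}$ forces $p^{2}=p$, and then $pq=p^{2}q=p^{2}=p$. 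In either case $f(x^{2})=p^{2}=pq=f(xy)$, so no homomorphism into a completely 0-simple semigroup separates $x^{2}$ from $xy$; hence $F_{\lambda}$ — and, by anti-isomorphism, $F_{\rho}$ — is not residually completely 0-simple.

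I do not anticipate a genuine obstacle; the collapse under an arbitrary homomorphism is in each case a two-line relation-chase resting on facts~(i) and~(ii). The only delicate point is the \emph{choice} of the pair of elements and the verification that it is not already collapsed in the semigroup itself. In particular for $F_{\lambda}$ one must resist the temptation to use the obvious non-idempotent/idempotent pair $(x,x^{2})$: that pair \emph{can} be separated (for instance by mapping both generators of $F_{\lambda}$ onto an element of square $0$ in $B_{2}$), and it is $(x^{2},xy)$ that is the unavoidable identification.
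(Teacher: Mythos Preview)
Your proof is correct and follows essentially the same route as the paper. For $F_\lambda$ your pair $(x^2,xy)$ is exactly the paper's pair $(xy,x^2y)$, since $x^2y=x^2$ in $F_\lambda$, and your case split on whether $f(x)^2=0$ mirrors the paper's argument verbatim; for $L_2^1$ the paper merely remarks that completely 0-simple semigroups contain no copy of $L_2^1$ and leaves the rest to the reader, whereas you spell out the collapse $f(a)=f(e)=f(b)$ explicitly via your fact~(i), which is a welcome clarification rather than a different idea.
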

\begin{proof}
  Since completely 0-simple semigroups do not contain any subsemigroup isomorphic with $L_2^1$ or $R_2^1$, it is easy to show that the 3-element semigroups $L_2^1$, $R_2^1$ are not residually completely 0-simple.

  Assume that the semigroup $F_{\lambda}$ is residually completely 0-simple. Let $a=xy$, $b=x^2y$. It follows from this definition of the semigroup $F_\lambda$ that $a\neq b$. According to our hypothesis there exists a completely 0-simple semigroup $M$ and a homomorphism $f:F_\lambda \rightarrow M$ such that $f(a)\neq f(b)$. If $f(x^2)=0$ then $f(b)=f(x^2)f(y)=0$ and $f(a)=f(xy)=f(xyx)=f(xyx^2)=f(xy)f(x^2)=0$. Therefore, $f(x^2)\neq0$, and it follows from the equality $x^2=x^3$ that $(f(x))^2=(f(x))^3$. Since $M$ is a completely 0-simple semigroup, we obtain that $f(x)=(f(x))^2$ and $f(x)=f(x^2)$. Hence, $f(a)=f(xy)=f(x)f(y)=f(x^2)f(y)=f(x^2y)=f(b)$ in contradiction with the assumption. Thus, $F_\lambda $ is not a residually completely 0-simple semigroup.

  Similarly we can prove that $F_\rho $ is not a residually completely 0-simple semigroup.\end{proof}

\section{The proof of the main results}

\begin{theorem}\label{t1}
  A semigroup variety $V$ is a Rees---Sushkevich variety if and only if it contains none of indicator Burnside semigroups (1)--(13).
\end{theorem}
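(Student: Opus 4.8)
The proof splits into the two implications. Necessity is a direct check; sufficiency is where the auxiliary lemmas and Proposition~\ref{p4} are combined.

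\textbf{Necessity.} Suppose $V$ is a Rees--Sushkevich variety, so \eqref{1}, \eqref{2}, \eqref{3} hold in $V$ for some $n$; in particular every semigroup of $V$ lies in $VarCS^{0}(B_{n})$ and satisfies \eqref{3}. I would verify that each indicator Burnside semigroup violates one of \eqref{1}, \eqref{2}, \eqref{3}. For $A,B,C_{\lambda},C_{\rho},N_{3},D,K_{m}$ this is precisely the necessity part of Lemma~\ref{l10}. For $F_{\lambda},F_{\rho},W_{\lambda},W_{\rho},L_{2}^{1},R_{2}^{1}$ one checks that \eqref{1} and \eqref{2} do hold but that \eqref{3} fails for every $n$: e.g. in $L_{2}^{1}$ the substitution $x\mapsto e,\ h\mapsto a,\ z\mapsto e,\ y\mapsto b$ makes the left side of \eqref{3} equal $a$ and the right side equal $b$, and comparably short substitutions break \eqref{3} in the remaining five semigroups. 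Hence none of $(1)$--$(13)$ can belong to $V$.

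\textbf{Sufficiency.} Assume $V$ omits all of $(1)$--$(13)$. Since it omits $(1)$--$(7)$, Lemma~\ref{l10} furnishes an $n$ with \eqref{1}, \eqref{2} valid in $V$; fix it. It remains to produce \eqref{3}. From $W_{\lambda}\notin V$, Lemma~\ref{l7} gives that $V$ satisfies one of \eqref{L7.1}, \eqref{L7.2}, \eqref{L7.3}, \eqref{L7.4}, \eqref{L5.1}, and the right-hand analogue applied to $W_{\rho}\notin V$ yields the dual list. I would treat the cases as follows. If \eqref{L5.1} holds, then $V$ is extremely restricted (every element lies in a subgroup of exponent dividing $n$ and, by \eqref{L5.1}, the variety is ``thin''), and \eqref{3} is verified by hand. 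If \eqref{L7.1} holds, then \eqref{2} turns it into an identity $xyx=Py^{2}Q$, so Lemma~\ref{l15} (applicable because $L_{2}^{1},R_{2}^{1},F_{\lambda},F_{\rho}\notin V$) gives \eqref{L15.1}, \eqref{L15.2}, while Lemma~\ref{l14} gives \eqref{L14.1}, \eqref{L14.2}; \eqref{L7.2} is the mirror case. If \eqref{L7.3} holds, then after normalising its exponents with \eqref{1} it becomes an identity of the form \eqref{L11}, and Lemma~\ref{l11} (using $L_{2}^{1},R_{2}^{1}\notin V$) again yields \eqref{L15.1}, \eqref{L15.2}; the case \eqref{L7.4} I expect to reduce to one of these, or to contradict the omission of $L_{2}^{1}$ or $R_{2}^{1}$, by substitution together with \eqref{1}, \eqref{2}. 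Thus in every non-degenerate case $V$ satisfies \eqref{1}, \eqref{2}, \eqref{L15.1}, \eqref{L15.2} together with enough further identities (of the type \eqref{L14.1}, \eqref{L14.2}) to control regular elements.

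Now Proposition~\ref{p4} applies: any two distinct regular elements of any $S\in V$ are separated by an epimorphism onto a completely $0$-simple principal factor of $S$, which by \eqref{1} belongs to $CS^{0}(B_{n})$ and hence satisfies \eqref{1}, \eqref{2}, \eqref{3}. To deduce \eqref{3} for $V$ itself, pass to the free $V$-semigroup $F$ on $\{x,h,z,y\}$ and write $L=(xhz)^{n}xyz$, $R=xyz(xhz)^{n}$. Using \eqref{2} and the identities established above one shows that $L$ and $R$ are regular elements of $F$ (for $n\ge 2$ they are $\mathcal{J}$-equivalent to the idempotent power of $xhz$; the subgroup-free case $n=1$ is covered directly by the matrix description of combinatorial completely $0$-simple semigroups). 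Were $L\ne R$, Proposition~\ref{p4} would separate them by a completely $0$-simple image of $F$ in which \eqref{3} holds, a contradiction; hence $L=R$, i.e. \eqref{3} holds in $V$, so $V$ is a Rees--Sushkevich variety. The main obstacle is the sufficiency direction, in two places: the case analysis that extracts \eqref{L15.1}, \eqref{L15.2} (the stubborn cases being \eqref{L7.4} and, above all, the degenerate \eqref{L5.1}, where one must still confirm that the resulting heavily constrained variety is Rees--Sushkevich), and the verification that both sides of \eqref{3} are regular in $F$, which is exactly what licenses the appeal to Proposition~\ref{p4}.
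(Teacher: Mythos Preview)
Your overall architecture matches the paper's proof exactly: Lemma~\ref{l10} for \eqref{1}--\eqref{2}, Lemma~\ref{l7} for the five-way split coming from $W_{\lambda}\notin V$, reduction of every branch to \eqref{L15.1}--\eqref{L15.2}, and then Proposition~\ref{p4} applied to the two sides of \eqref{3} as regular elements of the free $V$-semigroup. The invocation of Lemma~\ref{l14} is superfluous here (it is used inside the proof of Lemma~\ref{l15}, not alongside it), and your description of \eqref{L7.2} as a ``mirror case'' is slightly off: the paper reduces \eqref{L7.2} to \eqref{L7.1} directly, since $(xy)^{n+1}=(xy)^{2n+1}$ forces $(xy)^{n+1}$ to be divisible by $(yx)^{n+1}=P^{*}y^{2}Q^{*}$.

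The genuine gaps are precisely the two cases you flag as obstacles, and both are resolved in the paper by short explicit manipulations that collapse them to \eqref{L7.1} rather than by any ad hoc argument. For \eqref{L5.1}, the substitution $x\mapsto xy$, $y\mapsto y^{2}$ turns $x^{m}=PyQ$ into $(xy)^{m}=P^{*}y^{2}Q^{*}$, and multiplying by $xy$ gives an identity of the form \eqref{L7.1}; your claim that \eqref{L5.1} makes $V$ ``thin'' enough to verify \eqref{3} by hand is not substantiated and is not needed. For \eqref{L7.4}, choose $p>1$ with $k_{1}+p\equiv 1\pmod{n}$, multiply $(ax)^{k_{1}}=(ax)^{l_{1}}a$ on the right by $x(ax)^{p-1}$, apply \eqref{1}, and then substitute $a\mapsto x$, $x\mapsto y$ to obtain $(xy)^{n+1}=(xy)^{l_{1}-1}xy^{2}(xy)^{p-1}$, again of the form \eqref{L7.1}. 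Finally, the regularity of $L=(xhz)^{n}xyz$ and $R=xyz(xhz)^{n}$ is not argued via your $\mathcal{J}$-equivalence sketch; the paper cites \cite{Mashevitzky6}, whose criterion is that a word covered by cycles is regular in any Rees--Sushkevich variety, and both $L$ and $R$ are covered by cycles (every length-$2$ subword sits inside a subword beginning and ending with $x$ or with $z$). With these three points filled in, your proof coincides with the paper's.
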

\begin{proof}
  \textbf{Necessity.} Let $V$ be a Rees---Sushkevich variety. Lemma \ref{l10} asserts that $V$ does not contain any of the  semigroups (1)--(7).  The remaining semigroups $F_\lambda$, $F_\rho$, $W_\lambda$, $W_\rho$, $L_{2}^{1}$ and $R_{2}^{1}$ are not contained in $V$ because the identity \eqref{3} is not fulfilled in these semigroups.

  \textbf{Sufficiency.} Let $V$ be a semigroup variety which does not contain any of the semigroups (1)--(13). According to Lemma \ref{l10}, the identities \eqref{1} and \eqref{2} hold in $V$. By Lemma \ref{l7} one of the identities \eqref{L5.1}, \eqref{L7.1},\eqref{L7.2}, \eqref{L7.3}, \eqref{L7.4} holds in $V$ as well. Let us prove that in all cases the identities \eqref{L15.1} and \eqref{L15.2} hold in $V$ as well.
  \begin{description}
    \item[The identity \eqref{L7.1}.] Making use of \eqref{2}, we obtain the identity $xyx=Py^2Q^*$ which implies, by Lemma \ref{l15}, the identities \eqref{L15.1} and \eqref{L15.2}.
    \item[The identity \eqref{L7.2}.] In this case  $(xy)^{n+1}=(xy)^{2n+1}$ because of \eqref{1}; therefore, $(xy)^{n+1}$ is divisible by  $(yx)^{n+1}=P^*y^2Q^*$. Thus, \eqref{L7.2} implies \eqref{L7.1}, and it was shown above that \eqref{L15.1} and \eqref{L15.2} hold in $V$.
    \item[The identity \eqref{L7.3}.] Remark that the only difference between the identities \eqref{L7.3} and \eqref{L11} is that the degrees $k_i$, $l_j$ are greater or equal to 1 in the first identity and they are strictly greater than 1 in the second identity. Making use of \eqref{2} we easily transform \eqref{L7.3} into $(L11)$, and Lemma \ref{l11} guarantees the fulfillment of \eqref{L11.1} and \eqref{L11.2}; but these identities coincide with \eqref{L15.1} and \eqref{L15.2}.
    \item[The identity \eqref{L7.4}.] Let $p>1$ be an integer such that $k+p\equiv 1\pmod{n}$. Multiplying \eqref{L7.4} on the right by $x(ax)^{p-1}$, making use of \eqref{1} and then replacing $a$ with $x$, $x$ with $y$ obtain the identity $(xy)^{m-1}xy^2(xy)^{p-1}=(xy)^{n+1}$ of the form \eqref{L7.1}. As we have seen in the first case, the last identity implies the identities \eqref{L15.1} and \eqref{L15.2}.
    \item[The identity \eqref{L5.1}.] The substitution $x\mapsto xy$, $y\mapsto y^2$ transforms this identity into the identity $(xy)^n=P^*y^2Q^*$ which implies the identity $(xy)^{n+1}=P^*y^2Q^*xy$ of the form \eqref{L7.1} and, consequently, the identities \eqref{L15.1} and \eqref{L15.2}.
  \end{description}
  If the identity \eqref{3} is not satisfied in $V$ then the following inequality is true in the free semigroup $S=S(x,y,z,h)$ of the variety $V$:
  $$xyz\cdot(xhz)^n\neq(xhz)^n\cdot xyz.$$
  Let $R$ and $T$ denote respectively the left and the right sides of this inequality. It was shown in \cite{Mashevitzky6} that $R$ and $T$ are regular elements of $S$. By Proposition \ref{p4} there exists a homomorphism $f:S\rightarrow M \in CS^0(V)$ such that $f(R) \neq f(T)$. But the identity \eqref{3} holds in $M$, therefore, $f(R)=f(T)$. This contradiction proves that \eqref{3} holds in $V$. Thus, the identities \eqref{1}, \eqref{2} and \eqref{3} hold in $V$, i.e., $V$ is a Rees---Sushkevich variety.\end{proof}

\begin{theorem}\label{t2}
  Let $V$ be a periodic semigroup variety. The variety $V$ is generated by completely 0-simple semigroups if and only if $V$ is a Rees---Sushkevich variety and one of the following condition is fulfilled:
  \begin{enumerate}
    \item $B_{2}\in V$, $A_{0}\notin V$;
    \item $A_{2}\in V$;
    \item $N_{2}\notin V$.
  \end{enumerate}
\end{theorem}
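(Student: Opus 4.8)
The plan is to reduce both directions to subdirectly irreducible (SI) members of $V$, using throughout that in a periodic Rees---Sushkevich variety a principal factor is, by Mann's theorem, either null (and a nonzero null principal factor contains $N_{2}$) or completely $0$-simple (and then its $\mathcal J$-class is regular); hence an element is non-regular precisely when it lies in a null principal factor, which forces $N_{2}\in V$.

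\textbf{Necessity.} Suppose $V=Var(\mathcal C)$ with $\mathcal C$ a class of completely $0$-simple semigroups. Periodicity of $V$ gives an identity $x^{k}=x^{k+n}$, so every $M\in\mathcal C$ has all subgroups of exponent dividing $n$; thus $\mathcal C\subseteq CS^{0}(B_{n})$, $V\subseteq VarCS^{0}(B_{n})$, and $V$ satisfies \eqref{1}, \eqref{2}, \eqref{3}, i.e.\ $V$ is a Rees---Sushkevich variety. Assume now that none of (1)--(3) holds; then $N_{2}\in V$ and $A_{2}\notin V$. If the sandwich matrix of every $M\in\mathcal C$ had no zero entry, each such $M$ would be a union of groups of exponent dividing $n$ and so would satisfy $x^{n+1}=x$, hence so would $V$ --- impossible as $N_{2}\in V$. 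So some $M=M^{0}(G;L,R,P)\in\mathcal C$ has $p_{r_{0}\ell_{0}}=0$; choosing $r_{1}$ with $p_{r_{1}\ell_{0}}\neq0$ and $\ell_{1}$ with $p_{r_{0}\ell_{1}}\neq0$, the $2\times 2$ submatrix on $\{r_{0},r_{1}\}\times\{\ell_{0},\ell_{1}\}$ is, after normalization, the sandwich matrix of $A_{2}$ or of $B_{2}$, so $A_{2}$ or $B_{2}$ is a subsemigroup of $M$ and thus lies in $V$. Since $A_{2}\notin V$ we get $B_{2}\in V$, whence the failure of (1) forces $A_{0}\in V$. Finally, $A_{2}\notin V$ makes every $M\in\mathcal C$ a $B_{2}$-semigroup, so from the block form of its sandwich matrix it satisfies the identity $\sigma\colon\ (x^{2n}y^{2n})^{n+1}=x^{2n}y^{2n}$: the values of $x^{2n}$ and $y^{2n}$ are idempotents or $0$, and their product is either $0$ or lies in a subgroup of exponent dividing $n$. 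Hence $V\models\sigma$; but in $A_{0}$, with $x,y$ the two generating idempotents, $x^{2n}y^{2n}=xy\neq0$ while $(xy)^{n+1}=0$, so $A_{0}\not\models\sigma$, contradicting $A_{0}\in V$.

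\textbf{Sufficiency.} Let $V$ be a periodic Rees---Sushkevich variety satisfying one of (1)--(3) and put $W=Var(CS^{0}(V))\subseteq V$; it is enough to show every SI $S\in V$ lies in $W$. If $S$ is regular, its monolith is a pair of regular elements, so Proposition \ref{p4} yields an epimorphism of $S$ onto a completely $0$-simple principal factor separating that pair; its kernel omits the monolith and hence is trivial, so $S$ itself is completely $0$-simple, $S\in CS^{0}(V)\subseteq W$. If $S$ is not regular, then $N_{2}\in V$, so (3) fails and we are under (1) or (2). Here I would invoke the structure of non-regular SI semigroups of a periodic Rees---Sushkevich variety: since $x^{2}=x^{n+2}$ forces every nilpotent element to square to $0$, such an $S$ is assembled from a completely $0$-simple ``core'' --- a principal factor of $S$, hence a member of $CS^{0}(V)$ --- together with a null ideal on which the idempotents of the core act, the minimal configurations being $N_{2}$ and, up to anti-isomorphism (which does not change the generated variety), $A_{0}$. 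One then exhibits $S$ as a divisor of a single completely $0$-simple semigroup $M\in V$ obtained from a core principal factor by inserting the appropriate zero pattern into its sandwich matrix: under (2) this works because $A_{2}\in V$ has both $N_{2}$ and $A_{0}$ as divisors; under (1), $B_{2}\in V$ supplies the $N_{2}$-pattern, and the clause $A_{0}\notin V$ is precisely what excludes an $A_{0}$-appendage, leaving only configurations realizable inside a $B_{2}$-semigroup. In all cases $S\preceq M\in CS^{0}(V)$, so $S\in W$, and $V=W$.

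The exponent bound and the submatrix computations in the necessity part, and the regular case of sufficiency (which is essentially Proposition \ref{p4}), are routine. The main obstacle is the non-regular case of sufficiency: pinning down the possible non-regular SI semigroups of a periodic Rees---Sushkevich variety and, above all, fitting the required null appendages together with the regular core inside a single completely $0$-simple semigroup that still belongs to $V$. It is exactly here that the precise form of conditions (1) and (2) --- in particular the extra clause $A_{0}\notin V$ --- is forced.
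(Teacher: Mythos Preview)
Your necessity argument is essentially the paper's: you show $V$ is Rees--Sushkevich, then assume (1)--(3) all fail and derive a contradiction via an identity that holds in every $B_{2}$-semigroup but not in $A_{0}$. The paper uses $x^{2}y^{2}=(x^{2}y^{2})^{n+1}$ where you use $(x^{2n}y^{2n})^{n+1}=x^{2n}y^{2n}$; either works. The submatrix argument producing $A_{2}$ or $B_{2}$ is the same.

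The regular case of your sufficiency is also fine: for a regular SI semigroup $S$, Proposition~\ref{p4} applied to a pair in the monolith gives an injection onto a completely $0$-simple principal factor, so $S\in CS^{0}(V)$.

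The genuine gap is the non-regular case, and you acknowledge it yourself. Your plan rests on three unproved structural claims: that a non-regular SI member of a Rees--Sushkevich variety decomposes as a completely $0$-simple ``core'' plus a null appendage; that the minimal such configurations are $N_{2}$ and $A_{0}$; and that the whole assembly can be realized as a divisor of a single $M\in CS^{0}(V)$ by ``inserting the appropriate zero pattern into its sandwich matrix''. None of these is established, and the last one is essentially the theorem itself. Non-regular SI semigroups in $VarCS^{0}(B_{n})$ can carry nontrivial group structure and are not obviously catalogued by $N_{2}$ and $A_{0}$ alone, so the reduction you sketch needs real work that is not supplied here.

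The paper takes a completely different route for sufficiency and never classifies SI semigroups. It shows directly that the free semigroup $S$ of $V$ is residually $CS^{0}(V)$: for regular pairs it invokes Proposition~\ref{p4}, and for a non-regular word $\mathcal{U}$ it runs an induction on $|\mathcal{U}|+|\mathcal{V}|$, finding a ``blocking'' letter that splits $\mathcal{U}$ into pieces with disjoint content and then writing down an explicit separating homomorphism into $B_{2}$ (under hypothesis~(1), using $A_{0}\notin V$ to make products of regulars regular) or into $A_{2}$ (under hypothesis~(2)). Case~(3) is immediate from Proposition~\ref{p3}. This word-combinatorial approach sidesteps entirely the structural analysis of SI semigroups that your proposal leaves open.
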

\begin{proof}
  \textbf{Necessity.} If $V$ is generated by its 0-simple semigroups then any free semigroup of the variety $V$ is a residually $CS^0(V)$-semigroup, i.e., $V$ is a Rees---Sushkevich variety.

  If $B_2 \notin V$ then all completely 0-simple semigroups are completely simple semigroups (possibly with the outer zero). Let us show that in this case $N_{2} \notin V$. If the contrary is true then the identity $x=x^{n+1}$ does not hold in the free semigroup $S$ of the variety $V$. As we have seen above, there exists a homomorphism $f$ of the semigroup  $S$ into a semigroup $M^0 \in CS(V)$, such that $f(x) \neq f(x^{n+1})$. But the identity $z=z^{n+1}$ holds in the semigroup $M^0$, and therefore, $f(x)=(f(x))^{n+1}=f(x^{n+1})$ in contradiction with the previous statement.

  Assume that neither (2) nor (3) is true and show that then (1) is fulfilled. Assume the contrary. We have seen above that if $B_2\notin V$ then $N_{2}\notin V$, i.e., (3) is true; therefore, $B_2\in V$. But we assumed that (1) fails; it is possible only if $A_0\in V$.  Therefore, the identity
  \begin{equation}\label{t2*}
    x^2y^2=(x^2y^2)^{n+1}
  \end{equation}
  does not hold in $V$, and $x^2y^2\neq(x^2y^2)^{n+1}$ in the free semigroup $S=S(x,y)$ of the variety $V$.  The variety $V$ is generated by completely 0-simple semigroups; hence, there is a semigroup $M\in CS^0(V)$ and a homomorphism $f:S\rightarrow M$ such that $f(x^2y^2)\neq f((x^2y^2)^{n+1})$.

  Since $A_2\notin V$, the sandwich-matrix of $M$ (as the sandwich-matrix of any 0-simple semigroup of $V$) consists of diagonal blocks, and every block contains only nonzero elements. The identity $(*)$ holds in $B_2$; therefore, both sides of the inequality $f(x^2y^2)\neq f((x^2y^2)^{n+1})$ lay in the domain of action of the same block (or both sides are equal to 0). This means that the values of the left and the right sides of this inequality are not equal already in a completely simple semigroup. But it is not possible because all completely simple semigroups satisfy \eqref{t2*}.

  \textbf{Sufficiency.} Let $V$ be a Rees---Sushkevich variety. We shall prove that if any of the conditions (1)-(3) hold then $V$ is generated by completely 0-simple semigroups.
  \begin{enumerate}
    \item $B_2\in V$, $A_0\notin V$. Let us show that free semigroups of $V$ are approximated by completely 0-simple semigroups of the variety $V$. Let $S$ be a free semigroup of $V$ and let $X$ be the set of its free generators. Further, let $\mathcal{U},\mathcal{V}\in S$ and $\mathcal{U}\neq\mathcal{V}$.

        If the elements $\mathcal{U}$, $\mathcal{V}$ are regular then the existence of a homomorphism $f$ of the semigroup $S$ into a semigroup $M\in CS^0(V)$, such that $f(\mathcal{U})\neq f(\mathcal{V})$, is ensured by Proposition \ref{p4}. Assume that one of the elements $\mathcal{U}$, $\mathcal{V}$ is not regular. Without loss of generality we can suppose that $\mathcal{U}$ is not regular.
        \begin{enumerate}
          \item $\chi (\mathcal{U})\neq\chi (\mathcal{V})$. Then there exists a canonical homomorphism of $S$ onto the semigroup $\{0,1\}$ such that $f(\mathcal{U})\neq f(\mathcal{V})$. It remains to note that $\{0,1\}\subset B_2\in V$.
          \item $\chi (\mathcal{U})= \chi (\mathcal{V})$. Prove the statement by induction on $m=|\mathcal{U}|+|\mathcal{V}|$. It is true if $m=2$. Indeed, in this case $\mathcal{U}=x$, $\mathcal{V}=x$ and $\mathcal{U}=\mathcal{V}$; therefore, the inequality $\mathcal{U}\ne\mathcal{V}$ implies the existence of the needed homomorphism because the first term of this implication always fails.

              Let $m>2$ and assume that the needed homomorphism  exist for any words $P$, $Q$ which are not equal in $S$ and the sum of the lengths of which is less than $m$.

              Let at first $|\mathcal{U}|=1$; then $|\mathcal{V}|\geq 2$ and $\mathcal{V}\in S^2$. Take for $f$ the homomorphism $S\rightarrow B_2$ defined by the formulae
              $$f(x)=\begin{cases}
                    \left(
                      \begin{array}{cc}
                        0 & 1 \\
                        0 & 0 \\
                      \end{array}
                    \right), &\text{ if } x=\mathcal{U}\\
                    \left(
                      \begin{array}{cc}
                        0 & 0 \\
                        0 & 0 \\
                      \end{array}
                    \right), &\text{ otherwise.}
              \end{cases}$$
              Then $f(\mathcal{U})\neq0$, $f(\mathcal{V})=0$, i.e., $f(\mathcal{U})\neq f(\mathcal{V})$. If $\mathcal{U}=x\in S^2$ then the identity of the form $x=x^{n+1}$ holds in $V$ and, consequently, $S$ is a regular semigroup. Therefore, $\mathcal{U}=\mathcal{U}^{n+1}$ and $\mathcal{U}$ is a regular element in contradiction with the assumption.

              Before considering another case let us make prove some auxiliary facts. Recall that $A_0\notin V$; it is easy to prove that then an identity of the kind of $x^ky^l=PyxQ$, where $P$, $Q$ are words or empty symbols holds in $V$. Moreover, let $p,q\ge 2$ be arbitrary integers. Then
              \begin{equation}\label{t2**}
                x^py^q=x^{p+nk}y^{q+nl}=x^{p+(n-1)k}x^ky^ly^{q+(n-1)l}=P^*yxQ^*.
              \end{equation}

              Let now $z$ be an arbitrary word in which every letter occurs more than once. Show that $z$ is a regular element. In fact, the word $z$  decomposes into the product $z_1z_2\dots z_q$, all factors $z_i$ of which can be covered with cycles. But any element of a semigroup from the Rees---Sushkevich variety which can be covered with cycles, is regular (see \cite{Mashevitzky6}). Thus, it remains to prove that the product of regular elements is a regular element. Let $c=ab$, $a$ and $b$ being regular elements of $S$. Then there exist elements $a',b'\in S$ such that $a=aa'a$, $b=bb'b$. Making use of \eqref{2} and \eqref{t2**} (for $p=q=n+1$) we obtain:
              $$c=ab=(aa'a)(bb'b)=a(a'a)^{n+1}(bb')^{n+1}b=aP^*(bb')(a'a)Q^*b.$$
              We see that $c$ is covered with cycles; therefore, the element $c$ is regular.

              Let now $\mathcal{U}\geq 2$.  We shall call a letter $t$ of the word $\mathcal{U}$ blocking if $\mathcal{U}=\mathcal{U}_1t\mathcal{U}_2$, $\chi (\mathcal{U}_1)\cap\chi (\mathcal{U}_2)=\emptyset$ and $t\notin \chi (\mathcal{U}_1)\cup \chi (\mathcal{U}_2)$; thus, every blocking letter occurs in $\mathcal{U}$ only once.

              If there is no blocking letters in the word $\mathcal{U}$ then, using \eqref{2}, transform $\mathcal{U}$ into a word in which every letter occurs at least twice. But we have just seen that any such word is a regular element of $S$. Therefore, $\mathcal{U}$ is a regular element in contradiction with the assumption.

              Hence, there is a blocking letter $t$ in the word $\mathcal{U}$, i.e., $\mathcal{U}=\mathcal{U}_1t\mathcal{U}_2$, $\chi (\mathcal{U}_1)\cap\chi (\mathcal{U}_2)=\emptyset$ and $t\notin \chi (\mathcal{U}_1)\cup \chi (\mathcal{U}_2)$. Let $t$ enter in $\mathcal{V}$ at least twice; then $\mathcal{V}=PtQtR$, and $t\notin\chi (Q)$ (maybe, $Q$ is the empty word).  The semigroup $B_2$ is generated by two elements $a$, $b$ such that $a^2=b^2=0$, $aba=a$, $bab=b$; let $f$ be the homomorphism of $S$ into the semigroup $B_2\in V$ which acts on the free generators $x\in X$ of the semigroup $V$ by formulae:

              $$f(x)=\begin{cases}
              a, &\text{if }  x=t; \\
              ab, &\text{if } x\in \chi (\mathcal{U}_1); \\
              ba, &\text{if } x\in \chi (\mathcal{U}_2);\\
              ba &\text{in all other cases. }
              \end{cases}$$
              Then $f(\mathcal{U})=a\neq0$, $f(\mathcal{V})=0$, and $f(\mathcal{U})\neq f(\mathcal{V})$.

              Let now $t$ occur in $\mathcal{V}$ only once; then $\mathcal{V}=\mathcal{V}_1t\mathcal{V}_2$, $t\notin \chi (\mathcal{V}_1)\cup \chi (\mathcal{V}_2)$. If $\chi(\mathcal{U}_1)\cap\chi (\mathcal{V}_2)\ne\emptyset$ or $\chi (\mathcal{U}_2)\cap \chi (\mathcal{V}_1)\neq\emptyset$, then we have for the same mapping $f:S\rightarrow B_2$:
              $$f(\mathcal{U})=a\neq0=f(\mathcal{V}).$$
              Let $\chi (\mathcal{U}_1)\cap\chi (\mathcal{V}_2)=\chi (\mathcal{U}_2)\cap \chi (\mathcal{V}_1)=\emptyset$. But $\chi (\mathcal{U}_1)\cup \chi (\mathcal{U}_2)=\chi \mathcal{U}=\chi \mathcal{V}=\chi (\mathcal{V}_1)\cup \chi (\mathcal{V}_2)$; therefore, $\chi (\mathcal{U}_1)=\chi (\mathcal{V}_1)$, $\chi (\mathcal{U}_2)=\chi (\mathcal{V}_2)$. Since $\mathcal{U}\neq\mathcal{V}$, at least one of the equalities $\mathcal{U}_1t=\mathcal{V}_1t$, $t\mathcal{U}_2=t\mathcal{V}_2$ fails (otherwise $\mathcal{U}=\mathcal{U}_1t\mathcal{U}_2=\mathcal{V}_1t\mathcal{U}_2=\mathcal{V}_1t\mathcal{V}_2=\mathcal{V}$). Assume that $\mathcal{U}_1t\ne\mathcal{V}_1t$ (another case can be treated similarly). By the inductive hypothesis, $f(\mathcal{U}_1t)\neq f(\mathcal{U}_2t)$ for a homomorphism $f$ of the semigroup $S$ into a semigroup $M\in CS^0(V)$. There exists a right unity $e\in M$ of the element $f(t)$. Let $\hat f:S\rightarrow M$ be the homomorphism whose action on the free generators $x\in X$ of the semigroup $S$ is defined by the formulas
              $$\hat f(x)=\begin{cases}
              f(x), &\text{if }  x=t\text{ or } x\in \chi (\mathcal{U}_1); \\
              e,    &\text{in all other cases.}
              \end{cases}$$

              Then
              \begin{align}
                \nonumber
                \hat f(\mathcal{U})=\hat f(\mathcal{U}_1)\hat f(t)\hat f(\mathcal{U}_2)=f(\mathcal{U}_1)f(t)e\dots e=f(\mathcal{U}_1)f(t)=\\
                \nonumber
                f(\mathcal{U}_1t)\neq f(\mathcal{V}_1t)=f(\mathcal{V}_1)f(t)e\dots e=\hat f(\mathcal{V}_1t\mathcal{V}_2)=\hat f(\mathcal{V})
              \end{align}

              Thus, we have proved, that free semigroups of $V$ are approximated by completely 0-simple semigroups of $V$ if the condition (1) is fulfilled.
        \end{enumerate}
    \item $A_{2}\in V$. Let $\mathcal{U}$, $\mathcal{V}$ be words from $S$ and $\mathcal{U}\ne\mathcal{V}$.
        \begin{enumerate}
          \item $\chi (\mathcal{U})\neq\chi (\mathcal{V})$. As in the previous case there exists a canonical homomorphism of $S$ onto the semigroup $\{0,1\}$ such that $f(\mathcal{U})\neq f(\mathcal{V})$. It remains to note that $\{0,1\}\subset B_2\in V$.
          \item $\chi (\mathcal{U})= \chi (\mathcal{V})$. We can assume without loss of generality that $\mathcal{U}$ is not a regular element and that $|\mathcal{U}|>1$, $|\mathcal{V}|>1$. Then $\mathcal{U}=\mathcal{U}_1\mathcal{U}_2$, where $\mathcal{U}_1$ and $\mathcal{U}_2$ are not empty words and $\chi (\mathcal{U}_1)\cap \chi (\mathcal{U}_2)=\emptyset$.

              If a 2-letter subword $ab$ such that $a\in \chi (\mathcal{U}_2)$, $b\in\chi (\mathcal{U}_1)$ occurs in the word $\mathcal{V}$, then it is obvious that there exists a homomorphism $f$ of $S$ into the semigroup $A_2\in V$, such that $f(\mathcal{U})\neq f(\mathcal{V})$. Therefore, we can assume that any such 2-letter subword does not occur in the word $\mathcal{V}$; since $\chi(\mathcal{U})=\chi (\mathcal{V})$, it is possible only if $\mathcal{V}=\mathcal{V}_1\mathcal{V}_2$, where $\chi (\mathcal{V}_1)=\chi (\mathcal{U}_1)$, $\chi (\mathcal{V}_2)=\chi (\mathcal{U}_2)$. The words $\mathcal{U}=\mathcal{U}_1\mathcal{U}_2$ and $\mathcal{V}=\mathcal{V}_1\mathcal{V}_2$ represent different elements of $S$; therefore, $\mathcal{U}_1\ne\mathcal{V}_1$ or $\mathcal{U}_2\neq\mathcal{V}_2$. Both cases can be treated in the same way; restrict us to the case $\mathcal{U}_1\ne\mathcal{V}_1$.

              Assume at first that the last letters $u$, $v$ of the words $\mathcal{U}_1$, $\mathcal{V}_1$ do not coincide. The semigroup $A_2$ is generated by the elements $a$, $b$ connected with the defining relations $aba=a$, $bab=b$, $a^2=a$, $b^2=0$. Then the homomorphism $f:S\rightarrow A_2$ which sends $u$ into $a$, $v$ into $ab$ and all other letters of $X$ into $ab$ separates the elements $\mathcal{U}$ and $\mathcal{V}$, because $f(\mathcal{U})=a$ and $f(\mathcal{V})=f(\mathcal{V}_1)f(\mathcal{V}_2)=ab\,f(\mathcal{V}_2)=0$ (recall that the word $\mathcal{V}_2$ is not empty because the word $\mathcal{U}_2$ is not empty and $\chi(\mathcal{V}_2)=\chi (\mathcal{U}_2)$).

              Let now $\mathcal{U}_1$ and $\mathcal{V}_1$ terminate with the same letter $c$. By the inductive hypothesis there is a homomorphism $f$ of $S$ into a semigroup $M\in CS^0(V)$ such that $f(\mathcal{U}_1)\neq\mathcal{V}_1$. Let $e\in M$ be a right unity of the element $f(c)$. Define the mapping $\hat f:X\rightarrow M$ by the formulae
              $$\hat f(z)=\begin{cases}
              f(z), &\text{if } x\in\chi(\mathcal{U}_1)=\chi(\mathcal{V}_1);\\
              e,    &\text{otherwise. }
              \end{cases}$$

              Extend this mapping till a homomorphism $\hat f:S\rightarrow M$. Then
              $$\hat f(\mathcal{U})=\hat f(\mathcal{U}_1)\neq\hat f(\mathcal{V}_1)=\hat f(\mathcal{V}).$$
              Thus, we have proved, that free semigroups of $V$ are approximated by completely 0-simple semigroups of $V$ if the condition (2) is fulfilled.
        \end{enumerate}
    \item $N_{2}\notin V$. In this case the identity $x=x^{n+1}$ holds in $V$; it follows that all semigroups of the variety $V$ are regular semigroups. The semigroups $L_2^1$, $R_2^1$, $B_2^1$ are not contained in $V$; therefore, by Proposition \ref{p3}, all semigroups of $V$ can be approximated by completely 0-simple semigroups of the variety $V$.
  \end{enumerate}\end{proof}
\begin{corollary1}
  Let $T$ be a class of periodic semigroups of totally bounded period. If the variety $Var(T)$ is generated by $0$-simple semigroups than it is generated by the principal factors of the semigroups from $T$.
\end{corollary1}
\begin{proof}
  Assume the the variety $Var(T)$ is generated by completely 0-simple semigroups of bounded period. Then the identities \eqref{1}, \eqref{2}, \eqref{3} hold in $Var(T)$ for some integer $n$. Let $T^{*}$ stand for the class of principal factors of semigroups from $T$. Let us consider possible cases.
  \begin{enumerate}
    \item $A_2\in Var (T)$.
        \begin{enumerate}
          \item $A_2\not\in Var (T^{*})$. Let $S$ be an arbitrary semigroup from $T$, $a,x,y$ are an arbitrary elements from $S$. Denote by $\mathcal{U},\mathcal{V}$ the elements $axyax$, $axyxay$.

              Suppose that $\mathcal{U}^2{\mathcal{V}}^2\neq(\mathcal{U}^2{\mathcal{V}}^2)^{n+1}$.
              We can consider the elements $\mathcal{U}^2{\mathcal{V}}^2$ and $(\mathcal{U}^2{\mathcal{V}}^2)^{n+1}$ as the words over $\{ a,x,y\}$. This words are covered by cycles. Therefore they are regular elements of $S$ (see \cite{Clifford1}). From Proposition \ref{p4} there exist a homomorphism $f$ from $S$ onto a principal factor $K$ of $S$ such that $f(\mathcal{U}^2{\mathcal{V}}^2)\neq f(\mathcal{U}^2{\mathcal{V}}^2)^{n+1}$. Note $K$ is completely $0$-simple semigroup. Since $ A_2\not\in Var(T^{*})$ and $K\in T^{*}$ then $ A_2\notin Var (K)$. It follows that $K$ is $B_2$-semigroup. In every $B_2$-semigroup the identity $t^2s^2=(t^2s^2)^{n+1}$ holds.

              Hence the equality  $f(\mathcal{U}^2)\cdot f({\mathcal{V}}^2)=f^{n+1}(\mathcal{U}^2{\mathcal{V}}^2)$ holds in $K$, a contradiction. Therefore the equality  $\mathcal{U}^2{\mathcal{V}}^2=(\mathcal{U}^2{\mathcal{V}}^2)^{n+1}$ holds in $S$. It follows that the identity $(axyax)^2\cdot (axyxay)^2=[(axyax)^2\cdot (axyxay)^2]^{n+1}$ holds in $Var (T)$. It is easy to see that $A_2$ does not satisfy the last identity, that is $A_2\notin Var (T)$, a contradiction. Hence the case $1.1.$ is impossible.
          \item  $A_2\in Var (T^{*})$. Let $F$ be a free semigroup from $Var (T)$ over countable set of generators. Assume ${\mathcal{U},V}\in F$ and $\mathcal{U}\neq{\mathcal{V}}$. From the definitions follows that there exists a semigroup $S$ from $T$ and homomorphism $\varphi$ from $F$ onto $S$ such that $\varphi (\mathcal{U})\neq\varphi ({\mathcal{V}})$.

              If $\mathcal{U}$ and ${\mathcal{V}}$ are regular elements in $F$ then $\varphi (\mathcal{U})$ and $\varphi ({\mathcal{V}})$ are regular in $S$ also. From Proposition \ref{p4} there exists a homomorphism $f$ from $S$ onto the some principal factor $K$ of $S$ such that $f[\varphi (\mathcal{U})]\neq f[\varphi ({\mathcal{V}})]$. Note that $K\in T^{*}$. Basing on it we can to prove that $F$ is residualy in class $T^{*}\cup \{ A_2\}$ by the some method as well as in the Theorem \ref{t2}. It follows that $Var(T)=Var(T^{*})$.
        \end{enumerate}
    \item $A_2\not\in Var (T)$.
        \begin{enumerate}
          \item $B_2\in Var (T)$.
            \begin{enumerate}
              \item $B_2\not\in Var (T^{*})$. Let $S$ be an arbitrary semigroup from $T$, $x,y$ are the arbitrary elements of $S$. Denote the elements $(xy)^2, (xyx^2)^{n+1}(xy)^2$ respectively by $\mathcal{U}$ and $\mathcal{V}$.

                  Suppose that $\mathcal{U}\neq{\mathcal{V}}$. We can consider the elements ${\mathcal{U},V}$ as words over $\{ x,y\}$. This words are covered by cycles. Therefore they are regular elements of $S$. From Proposition \ref{p4} there exists a homomorphism $f$ from $S$ onto a principal factor $K$ of $S$ such that $f(\mathcal{U})\not= f({\mathcal{V}})$, where $K$ is a completely $0$-simple semigroup.

                  Since $B_2\notin Var (T^{*})$ and $K\in T^{*}$ then $B_2\notin Var(K)$. It follows that $K$ is completely simple semigroup possibly with zero adjoined. Thus $K$ satisfies the identity $(xy)^2=(xyx^2)^{n+1}(xy)^2$. It follows that $f (\mathcal{U})= f ({\mathcal{V}})$, a contradiction. Hence the identity $(xy)^2=(xyx^2)^{n+1}(xy)^2$ holds in $Var (T)$. It easy to see that $B_2$ does not satisfy the last identity, that is $B_2\notin Var (T)$, a contradiction. Hence this case is impossible.
              \item $B_2\in Var (T^{*})$. Let $F$ be a free semigroup from $Var (T)$. We can to prove that $F$ is residualy in the class $T^{*}\cup \{ B_2\}$ the some method as well as in the above cases. It follows that $Var (T)= Var (T^{*})$.
            \end{enumerate}
          \item $B_2\not\in Var (T)$. As it was shown in the Theorem \ref{t2} in this case the identity $x=x^{n+1}$ holds in $Var (T)$. Then from Proposition \ref{p4} follows that $Var (T)= Var (T^{*})$.
        \end{enumerate}
  \end{enumerate}\end{proof}
\begin{theorem}\label{t3}
  Let $V$ be a periodic semigroup variety. The following conditions are equivalent.
  \begin{enumerate}
    \item Any free semigroup of the variety $V$ is the subdirect product of completely 0-simple semigroups.
    \item Any finitely generated free semigroup of the variety $V$ is the the subdirect product of completely 0-simple semigroups.
    \item Any semigroup of the variety $V$ is the subdirect product of completely 0-simple semigroups.
    \item Any finitely generated semigroup of the variety $V$ is approximated by completely 0-simple semigroups of the variety $V$.
    \item Any finite semigroup of the variety $V$ is approximated by completely 0-simple semigroups of the variety $V$.
    \item Any finite semigroup of the variety $V$ is the subdirect product of completely 0-simple semigroups.
    \item $V=qVar(W)$ for a class of completely 0-simple semigroups.
    \item $V$ is a Rees---Sushkevich variety and $N_{2}\neq V$.
    \item The identities $x=x^{n+1}$ and $(axyb)^n=(ayxb)^n$ are fulfilled in the variety $V$.
    \item Any semigroup of $V$ is the normal band of groups.
    \item $V$ does not contain the semigroups $N_{2}$, $S_0$, $S_{il}$, $S_{ir}$ $(i=1,2,3)$, $L_2^1$, $R_2^1$.
  \end{enumerate}
\end{theorem}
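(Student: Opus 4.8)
The plan is to prove the theorem by a cycle of implications, splitting the eleven conditions into a \emph{syntactic group} $(8),(9),(10),(11)$ and an \emph{approximation group} $(1)$--$(7)$, and joining them through complete regularity together with Proposition~\ref{p4}. (In conditions $(1),(3),(6)$ the phrase ``subdirect product of completely $0$-simple semigroups'' is to be read, in agreement with $(4)$ and $(5)$, as ``subdirect product of completely $0$-simple semigroups \emph{lying in} $V$'', i.e.\ ``residually $CS^{0}(V)$''.) The combinatorial device underlying everything is this: since $V$ is periodic, its free one-generated semigroup is a finite cyclic semigroup $C=\langle x\mid x^{a}=x^{a+b}\rangle$, and if $a\ge 2$ then the Rees quotient $C/\{x^{2},x^{3},\dots\}$ is isomorphic to $N_{2}$, so $N_{2}\in V$, while $N_{2}$ satisfies no law $x=x^{n+1}$. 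Hence, for a periodic variety,
$$N_{2}\notin V\quad\Longleftrightarrow\quad x=x^{n+1}\ \text{holds in }V\ \text{for some }n,$$
and in that case every member of $V$ is completely regular with subgroups of exponent dividing $n$ (as $x=x^{n+1}$ gives $x=x^{n}xx^{n}$ with $x^{n}$ idempotent). The same quotient remark shows that every semigroup possessing an element $t$ whose cyclic subsemigroup has index $\ge 2$ -- in particular each of $A,B,C_{\lambda},C_{\rho},N_{3},D,K_{m},F_{\lambda},F_{\rho},W_{\lambda},W_{\rho}$ and each of $S_{0},S_{il},S_{ir}$ -- has $N_{2}$ as a divisor.

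First I would dispatch the syntactic group. For $(9)\Rightarrow(8)$: $x=x^{n+1}$ gives \eqref{1} (multiply by $x$) and \eqref{2} (substitute $x\mapsto xy$ and multiply by $x$); moreover $x=x^{n+1}$ forbids $N_{2}$ and hence all of $A,\dots,W_{\rho}$ above, whereas $(axyb)^{n}=(ayxb)^{n}$ fails in $L_{2}^{1}$ and in $R_{2}^{1}$ (evaluate with $a,b$ at the adjoined unit and $x,y$ at the two left, respectively right, zeros); by Lemma~\ref{l10} the semigroups $(1)$--$(7)$ are absent too, so Theorem~\ref{t1} gives that $V$ is a Rees--Sushkevich variety, i.e.\ $(8)$. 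For $(8)\Rightarrow(9)$: being Rees--Sushkevich gives \eqref{1}, \eqref{2}, \eqref{3}, and $N_{2}\notin V$ gives a law $x=x^{n+1}$, so every $S\in V$ is completely regular; its principal factors are then its $\mathcal{J}$-classes (completely simple of exponent dividing $n$) with a possibly isolated zero, and in such a semigroup $(axyb)^{n}=(ayxb)^{n}$ holds because $(axyb)^{n}$ is the idempotent of the $\mathcal{H}$-class of $axyb$, which depends only on the $L$-coordinate of $a$ and the $R$-coordinate of $b$; by Proposition~\ref{p4} $S$ embeds in the product of its principal factors, so the law holds in $V$. Next $(8)\Leftrightarrow(11)$: by Theorem~\ref{t1}, $(8)$ is the absence of the thirteen indicator semigroups, which by the divisor observation is equivalent to the absence of $N_{2},S_{0},S_{il},S_{ir},L_{2}^{1},R_{2}^{1}$ (one way because $N_{2}\notin V$ kills $A,\dots,W_{\rho}$ and $L_{2}^{1},R_{2}^{1}$ are among the thirteen; the other because $(8)$ supplies $x=x^{n+1}$, which kills $S_{0},S_{il},S_{ir}$). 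Finally $(9)\Leftrightarrow(10)$: under $x=x^{n+1}$ every $S\in V$ is completely regular, and by the structure theory of completely regular semigroups the law $(axyb)^{n}=(ayxb)^{n}$ is exactly what forces $\mathcal{H}$ to be a congruence and the idempotent band to be normal, i.e.\ turns $S$ into a normal band of groups, while conversely a normal band of groups of exponent dividing $n$ satisfies both laws of $(9)$; I expect this last equivalence to be the most delicate point of this part.

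Next I would show that $(8)$ (equivalently $(9)$) implies the whole approximation group. Under $x=x^{n+1}$ every $S\in V$ is completely regular, so \emph{every} element of $S$ is regular; since $V$ also satisfies \eqref{1}, \eqref{2}, \eqref{3}, Proposition~\ref{p4} applies to every pair of distinct elements of $S$ and produces, for each such pair, a completely $0$-simple principal factor of $S$ (which lies in $V$) together with an epimorphism separating the pair. Thus every semigroup of $V$ is residually $CS^{0}(V)$; this gives $(3)$ at once, hence $(1),(2),(6)$ by restriction to the free, finitely generated free, and finite members of $V$, and likewise $(4),(5)$, and $(7)$ with $W=CS^{0}(V)$ (the inclusion $V\subseteq qVar(CS^{0}(V))$ is just residual $CS^{0}(V)$-ness, and $qVar(CS^{0}(V))\subseteq V$ since $CS^{0}(V)\subseteq V$). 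Together with the previous paragraph, $(8)$ implies every other condition.

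It remains to close the cycle, i.e.\ to derive $(8)$ from the approximation group; it suffices to prove $(5)\Rightarrow(11)$, the remaining implications $(3)\Rightarrow(1)\Rightarrow(2)$, $(3)\Rightarrow(6)$, $(3)\Rightarrow(4)\Rightarrow(5)$, $(6)\Rightarrow(5)$ and $(7)\Rightarrow(3)$ being obtained by routine manipulations (retractions $F_{\omega}(V)\twoheadrightarrow F_{k}(V)$, passage to subsemigroups, local finiteness once a law $x=x^{n+1}$ is available, closure of the elementary class of completely $0$-simple semigroups under ultraproducts, and $qVar(W)\subseteq\mathbf{ISPP}_{u}(W)$). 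For $(5)\Rightarrow(11)$: if $S_{0}$, or one of $S_{1l},S_{2l},S_{3l}$ or a dual, or $L_{2}^{1}$ or $R_{2}^{1}$ belonged to $V$, it would be a finite semigroup of $V$ that is not even residually $CS^{0}$ (Proposition~\ref{p5} for $S_{0}$, Proposition~\ref{p11} for $L_{2}^{1},R_{2}^{1},F_{\lambda},F_{\rho}$, and \cite{Kublanovsky6} for the $S_{il},S_{ir}$), hence \emph{a fortiori} not residually $CS^{0}(V)$, contradicting $(5)$. The case $N_{2}\in V$ is the crux, and the main obstacle of the whole proof: a separating homomorphism of $N_{2}$ into a completely $0$-simple semigroup forces that semigroup to contain an element of square zero, hence to be genuinely completely $0$-simple, hence (via its block decomposition) to have $A_{2}$ or $B_{2}$ as a divisor; so $(5)$ and $N_{2}\in V$ yield $A_{2}\in V$ or $B_{2}\in V$, whence (using $A_{2}^{1}\in Var(A_{2})$, $B_{2}^{1}\in Var(B_{2})$ and $L_{2}^{1}\hookrightarrow A_{2}^{1}$) $L_{2}^{1}\in V$ or $B_{2}^{1}\in V$; but neither $L_{2}^{1}$ nor $B_{2}^{1}$ is residually $CS^{0}$ (for $B_{2}^{1}$, any homomorphism into a completely $0$-simple semigroup must send the adjoined unit onto a group identity and so identify $B_{2}$'s square-zero element with it), so $(5)$ fails -- a contradiction, and therefore $N_{2}\notin V$. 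Assembling these implications closes the cycle and proves the theorem.
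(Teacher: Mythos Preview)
Your overall architecture is sound and close to the paper's, and several of your implications (notably $(8)\Leftrightarrow(9)$ via complete regularity and Proposition~\ref{p4}, and $(8)\Rightarrow(3)$) are correct and in fact slightly cleaner than the paper's route through Theorem~\ref{t2} and \cite{Petrich1}. The genuine problem is the step $(5)\Rightarrow(11)$, specifically your handling of $N_{2}$ and of the semigroups $S_{il}$.

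For $N_{2}$, your opening move is fine: a separating homomorphism $f:N_{2}\to M\in CS^{0}(V)$ must send $0$ to $0_{M}$ (if $f(0)=e\ne 0_{M}$ then $f(a)e=ef(a)=e$ forces $f(a)\mathcal{H}e$ and then $f(a)=e$), so $f(a)$ is a nonzero square-zero element and $A_{2}$ or $B_{2}$ divides $M$, hence $A_{2}\in V$ or $B_{2}\in V$. But the next step fails: the claims $A_{2}^{1}\in Var(A_{2})$ and $B_{2}^{1}\in Var(B_{2})$ are both false. Indeed $A_{2}$ and $B_{2}$, being completely $0$-simple over the trivial group, satisfy the identity $xyx=(xy)^{2}x$ (this is \eqref{2} with $n=1$), whereas $A_{2}^{1}$ and $B_{2}^{1}$ do not: substitute $x=1$ and $y$ a square-zero element. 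So you cannot deduce $L_{2}^{1}\in V$ or $B_{2}^{1}\in V$ from $A_{2}\in V$ or $B_{2}\in V$, and your contradiction collapses. The appeal to \cite{Kublanovsky6} for the $S_{il}$ is also off: $S_{1l}$ actually embeds in $A_{2}$ (send $e\mapsto ab$, $a\mapsto b$), so it \emph{is} residually $CS^{0}$, and you cannot exclude it from $V$ by that route.

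The paper closes this gap quite differently. It first uses Proposition~\ref{p5} to get $S_{0}\notin V$, then invokes \cite{Kublanovsky6} to obtain one of the identities $xy=(xy)^{n+1}$, $xy=x^{n+1}y$, $xy=xy^{n+1}$ in $V$; any of these fails in $B_{2}$, so $B_{2}\notin V$, and then every member of $CS^{0}(V)$ is completely simple with zero adjoined and satisfies $x=x^{n+1}$ and $(axyb)^{n}=(ayxb)^{n}$. Condition~$(5)$ now forces every \emph{finite} semigroup of $V$ to satisfy these two identities, which immediately excludes $N_{2}$, the $S_{il},S_{ir}$, $L_{2}^{1}$ and $R_{2}^{1}$. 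Your argument can be repaired along exactly these lines: once you know $A_{2}\in V$ or $B_{2}\in V$, observe that $S_{0}$ is a homomorphic image of a subsemigroup of $B_{2}$ (and $B_{2}$ divides $A_{2}$), contradicting $S_{0}\notin V$ --- or simply abandon the direct attack on $N_{2}$ and follow the paper's $S_{0}\Rightarrow B_{2}\notin V\Rightarrow$ identity route.
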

\begin{proof}
  The implications $(3)\Rightarrow (1)\Rightarrow (2)$, $(3)\Rightarrow (4)$,  $(3)\Rightarrow (6)\Rightarrow (5)$ are trivial, and the implications $(9)\Rightarrow (10)\Rightarrow (11) \Rightarrow (9)$ follow from the results of the papers \cite{Kublanovsky6} and \cite{Kublanovsky7}.

  Show that $(2)\Rightarrow (8)$. Let $(2)$ hold for a periodic semigroup variety $V$. It follows from this condition that the variety $V$ consists of residually completely 0-simple semigroups. By Theorem \ref{t2}, $V$ is a Rees---Sushkevich variety and one of the conditions (1)-(3) of Theorem \ref{t2} is fulfilled. Let $S$ denote the free semigroup of the variety $V$ with free generators $x$, $y$. We shall prove that $xy=(xy)^{n+1}$, where $n$ is the period of $V$; then the semigroups $A_2$, $B_2$ are not contained in $V$, i.e., the conditions (1) and (2) of Theorem \ref{t2} can not be satisfied Therefore, the statement (3): $N_{2}\neq V$ is true.

  If $xy\neq(xy)^{n+1}$ then according to (2) there is an epimorphism $f:S\rightarrow M\in CS^0(V)$ such that $f(xy)\neq(f(xy))^{n+1}$. Denote $f(x)$ and $f(y)$ by $x^*$ and $y^*$. The element $x^*y^*$ is a regular element of $M$; since the semigroup $M$ is generated by $x^*$ and $y^*$, the quasi-inverse element of the element $x^*y^*$ is a word $P(x^*,y^*)$ over $x^*$, $y^*$, and $x^*y^*=x^*y^*P(x^*,y^*)x^*y^*$. Hence, the element $x^*y^*$ is divisible by $y^*x^*$. If the element $y^*x^*$ was equal to 0 then we should have $x^*y^*=0$ and $(x^*y^*)^{n+1}=0=x^*y^*$ in contradiction with the assumption. Therefore, $y^*x^*\neq0$. Further, the inequality $x^*y^*\neq(x^*y^*)^{n+1}$ implies the inequality $x^*y^*\neq0$.

  Show that $(x^*y^*)^{n+1}\neq0$. Indeed, if the contrary was true we should have $$x^*y^*x^*y^*\dots x^*y^*=0.$$
  But the product of several elements of a completely 0-simple semigroup is 0 only if the product of two adjacent factors is equal to 0; in our case it means that $x^*y^*=0$ or $y^*x^*=0$. But we have seen that $x^*y^*\neq0$, $y^*x^*\neq0$.

  In any $n$-periodic completely 0-simple semigroup $a=a^{n+1}$ if $a^{n+1}\neq0$. Therefore, $x^*y^*=(x^*y^*)^{n+1}$ in contradiction with the assumption. This concludes the proof of the implication $(2)\Rightarrow(8)$.

  Let us show now that $(8)\Rightarrow (9)$. Assume that $V$ is a Rees---Sushkevich variety and $N_{2}\notin V$; then the semigroup $B_2\supset N_{2}$ does not belong to $V$. Therefore, all completely 0-simple semigroups of $V$ are completely simple (may be, with the adjoined outer 0). Hence, the identities $x=x^{n+1}$, $(axyb)^n=(ayxb)^n$ hold in $CS^0(V)$. Since, by Theorem \ref{t2}, the Rees---Sushkevich variety $V$ is generated by $CS^0(V)$, these identities hold in $V$ as well. Thus, we have proved that $(8)\Rightarrow (9)$.

  To prove the implication $(10)\Rightarrow (3)$ remark that any normal band of groups is the subdirect product of its principal factors, and that all these principal factors are completely simple semigroups, possibly with the adjoined outer 0 (see \cite{Petrich1}).

  Assume now that the condition (4) is satisfied, i.e., every finitely generated semigroup of $V$ is approximated with completely 0-simple semigroups of the variety $V$. By Theorem \ref{t2}, $V$ is a Rees---Sushkevich variety satisfying one of the conditions (1)-(3). By Proposition \ref{p5}, $S_0\notin V$. It was shown in \cite{Kublanovsky6} that in this case one of the identities $xy=(xy)^{n+1}$, $xy=x^{n+1}y$, $xy=xy^{n+1}$ holds in the variety $V$. Then $B_2\notin V$ and consequently $A_2\supset B_2$ is not contained in $V$; by Theorem \ref{t2}, $N_{2}\notin V$. Thus, $(4)\Rightarrow (8)$.

  If (5) holds then $S_0\notin V$ by Proposition \ref{p5}. As above it implies that $B_2\notin V$. Therefore, any completely 0-simple semigroup of the variety $V$ is a completely simple semigroup, possibly with the adjoined outer 0, and the identities $x=x^{n+1}$, $(axyb)^n=(ayxb)^n$ hold in $CS^0(V)$. But it follows from  (5) that the variety $V$ is generated by semigroups from  $CS^0(V)$; therefore, these identities are fulfilled in every finite semigroup of the variety $V$, and the semigroups $N_{2}$, $S_0$, $S_{il}$, $S_{ir}$ $(i=1,2,3)$, $L_2^1$, $R_2^1$ are not contained in $V$. Thus, $(5)\Rightarrow (11)$.\end{proof}

\begin{theorem}\label{t4}
  Let $V$ be a periodic semigroup variety. The following statements are equivalent.
  \begin{enumerate}
    \item Any free semigroup of the variety $V$ decomposes into the subdirect product of its principal factors.
    \item Any finitely generated free semigroup of the variety $V$ decomposes into the subdirect product of its principal factors.
    \item Any semigroup of the variety $V$ decomposes into the subdirect product of its principal factors.
    \item Any finite semigroup of the variety $V$ decomposes into the subdirect product of its principal factors.
    \item Any finitely generated semigroup of the variety $V$ decomposes into the subdirect product of its principal factors.
    \item Any semigroup of the variety $V$ decomposes into the subdirect product of completely 0-simple semigroups and semigroups with 0 multiplication.
    \item The identities $xy=x^{n+1}y^{n+1}$ and $(axyb)^n=(ayxb)^n$ are fulfilled in $V$.
    \item Any semigroup of the variety $V$ is residually in the class of completely 0-simple semigroups and semigroups with 0 multiplication.
    \item Any finite semigroup of the variety $V$ is residually in the class of completely 0-simple semigroups and semigroups with zero multiplication.
    \item The semigroups $S_0$, $S_{1l}$, $S_{1r}$, $S_{3l}$, $S_{3r}$ are not contained in $V$.
    \item The variety $V$ is finitely separated from one-side ideals.
    \item Any semigroup of the variety $V$ is the inflation of a normal band of groups.
    \item $V=qVar(W)$ for a set $W$ of completely 0-simple semigroups and semigroups with zero multiplication.
  \end{enumerate}
\end{theorem}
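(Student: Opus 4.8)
The plan is to prove all thirteen conditions equivalent by a single web of implications built around the identity condition $(7)$, which plays here the role that condition $(9)$ of Theorem \ref{t3} plays there. First the routine implications. $(3)\Rightarrow(1)\Rightarrow(2)$ and $(3)\Rightarrow(5)\Rightarrow(4)$ are immediate. By Mann's theorem every principal factor of a periodic semigroup is a null semigroup or a completely $0$-simple semigroup, which gives $(3)\Leftrightarrow(6)$; and since ``residually in a class $\mathcal C$'' means exactly ``embeds in a direct product of members of $\mathcal C$'' one gets $(6)\Leftrightarrow(8)$, the restriction $(8)\Rightarrow(9)$, and $(8)\Leftrightarrow(13)$. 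Finally $(12)\Leftrightarrow(3)$ and the equivalence with $(11)$ I would import from the structure theory of inflations of normal bands of groups (\cite{Petrich1}, \cite{Kublanovsky6}): a normal band of groups is the subdirect product of its completely simple principal factors, adjoining an inflation ``skin'' keeps this and contributes only null principal factors, and ``finitely separated from one-sided ideals'' is a reformulation of the separation property in $(9)$.

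For $(7)\Rightarrow(3)$ I would first observe that the two identities of $(7)$ force $V$ to be a Rees---Sushkevich variety: substituting $x\mapsto y$, then $y\mapsto x^{2}$, into $xy=x^{n+1}y^{n+1}$ gives $x^{2}=x^{2n+2}$ and $x^{3}=x^{3n+3}$, whence \eqref{1} as in Proposition \ref{p2}; \eqref{2} follows from $xy=x^{n+1}y^{n+1}$ by the substitution $x\mapsto xy$, $y\mapsto x$ and a second use of the identity; and \eqref{3}, equivalently \eqref{L15.1} and \eqref{L15.2}, follows from $(axyb)^{n}=(ayxb)^{n}$ together with \eqref{1}, \eqref{2}. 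Then for any $S\in V$ Proposition \ref{p4} separates any two regular elements by a homomorphism onto a completely $0$-simple principal factor of $S$. If $\mathcal U\in S$ is not regular, its principal factor $K_{\mathcal U}$ is forced to be null — otherwise $K_{\mathcal U}$ would be completely $0$-simple and $\mathcal U=\mathcal U^{n+1}$ would make $\mathcal U$ regular — so the canonical map $S\to S/I_{\mathcal U}\to K_{\mathcal U}$ separates $\mathcal U$ from all elements outside its $\mathcal J$-class, while pairs inside one null $\mathcal J$-class are separated by the evident Rees quotient. Assembling these homomorphisms exhibits $S$ as a subdirect product of its principal factors, which is $(3)$; and $(3)$ together with the identities already in hand yields $(6)$, $(8)$, $(11)$, $(12)$, $(13)$.

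The heart of the matter is $(2)\Rightarrow(7)$, and I expect it to be the main obstacle. Let $F$ be the free semigroup of $V$ on two (resp. four) generators and let $n$ be the period of $V$; by hypothesis $F$ is a subdirect product of its principal factors, each of which, being a periodic divisor of $F\in V$, is null or completely $0$-simple over a group of exponent dividing $n$. Suppose $xy\ne x^{n+1}y^{n+1}$ in $F$ and pick a surjective separating homomorphism onto a principal factor $K$. A null $K$ is impossible since both words have length $>1$, so $K=M^{0}(G;L,R,P)$ is generated by the images $a$, $b$ of $x$, $y$; and since a nonzero element $u$ of a completely $0$-simple semigroup of exponent $n$ satisfies $u^{n+1}=u$ when $u^{2}\ne0$ and $u^{n+1}=0$ when $u^{2}=0$, the only possibility is (up to swapping $x$, $y$) that $ab\ne0$ while $a^{2}=0$. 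A coordinate computation then pins down $P$ as a $2\times2$ matrix whose zero pattern is that of $A_{2}$ or $B_{2}$, so that $S_{1l}$ (dually $S_{1r}$) embeds in $K$, whence $S_{1l}\in V$ or $S_{1r}\in V$. Since $S_{1l}$ is itself not a subdirect product of its principal factors — its elements $a$ and $0=a^{2}$ cannot be separated — one then has to argue that this obstruction propagates to the relevant free semigroup of $V$, contradicting $(2)$; this propagation step is the delicate point. Granting it, $xy=x^{n+1}y^{n+1}$ holds in $V$, and the substitutions above make $V$ a Rees---Sushkevich variety. The identity $(axyb)^{n}=(ayxb)^{n}$ is handled in the same spirit: the two words are covered by cycles, hence regular in the free semigroup (\cite{Mashevitzky6}), so a failure would be realized in a completely $0$-simple principal factor, and a parallel analysis forces one of $S_{3l}$, $S_{3r}$, $L_{2}^{1}$, $R_{2}^{1}$ into $V$ and hence a contradiction.

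It remains to bring in $(10)$. The implication $(7)\Rightarrow(10)$ is a direct check that none of $S_{0}$, $S_{1l}$, $S_{1r}$, $S_{3l}$, $S_{3r}$ satisfies both identities of $(7)$: for instance $xy=c\ne 0=x^{n+1}y^{n+1}$ in $S_{0}$, and in $S_{3l}$ there is an assignment of $a,x,y,b$ with $(axyb)^{n}\ne(ayxb)^{n}$. For $(10)\Rightarrow(7)$ I would follow Theorem \ref{t3}: from $S_{0}\notin V$ the analysis of \cite{Kublanovsky6} yields one of $xy=(xy)^{n+1}$, $xy=x^{n+1}y$, $xy=xy^{n+1}$, $xy=x^{n+1}y^{n+1}$ in $V$, and each of the first three implies the fourth; the absence of $S_{1l}$, $S_{1r}$ (via arguments in the style of Propositions \ref{p1}, \ref{p7}, \ref{p9}) supplies the commuting room to run that analysis, and the absence of $S_{3l}$, $S_{3r}$ is precisely what forces $(axyb)^{n}=(ayxb)^{n}$ — the analogue, for the ``inflation of a normal band of groups'' setting, of the way $L_{2}^{1}$, $R_{2}^{1}$, $B_{2}^{1}$ control the corresponding question in Proposition \ref{p3}. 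With $(10)\Leftrightarrow(7)$ adjoined to the implications above, all thirteen conditions are equivalent.
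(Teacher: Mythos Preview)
Your approach to $(2)\Rightarrow(7)$ has the gap you flag, and it is a genuine obstruction rather than a technicality: having $S_{1l}\in V$ says nothing about the \emph{free} semigroups of $V$, and there is no general mechanism to ``propagate'' the failure of subdirect decomposability from $S_{1l}$ back to a free object. The paper avoids this entirely by first proving the single identity $xy=(xy)^{n+1}$, exactly as in Theorem~\ref{t3}. If it failed in the free semigroup $F=F(x,y)$, condition~(2) would give an epimorphism onto a principal factor $K$ separating $xy$ from $(xy)^{n+1}$; a null $K$ is impossible, so $K\in CS^{0}(V)$, and in $K$ the image of $xy$ is regular with quasi-inverse a word in the images of $x,y$, forcing $yx\ne 0$, hence $(xy)^{n+1}\ne 0$, hence $xy=(xy)^{n+1}$ in $K$ --- contradiction. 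Once $xy=(xy)^{n+1}$ holds in $V$ one has $B_{2}\notin V$, so every completely $0$-simple semigroup in $V$ is completely simple with at most an adjoined zero; both identities of~(7) hold in such semigroups and in null semigroups, hence by~(2) in every finitely generated free semigroup of $V$, hence in $V$. No coordinate computation or forbidden-subsemigroup chase is needed.

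Your $(7)\Rightarrow(3)$ also has a hidden gap: the ``canonical map $S\to S/I_{\mathcal U}\to K_{\mathcal U}$'' does not exist in general, since $S/I_{\mathcal U}$ contains all $\mathcal J$-classes above $J_{\mathcal U}$, not just $J_{\mathcal U}$. What makes it work here --- and what you must prove first --- is that under~(7) every element of $S^{2}$ is regular (from $xy=x^{n+1}y^{n+1}$ one derives $xy=x^{n+1}y=xy^{n+1}$ and then $xy=(xy)^{n+1}$), so a non-regular $\mathcal U$ lies outside $S^{2}$, whence $J_{\mathcal U}$ is maximal and $S/I_{\mathcal U}=K_{\mathcal U}$. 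The paper makes this structure explicit: it shows $S_{r}=\{s^{n+1}:s\in S\}$ is a regular subsemigroup containing all of $S^{2}$, that $\psi(s)=s^{n+1}$ is a retraction $S\to S_{r}$, and then applies Proposition~\ref{p3} (not Proposition~\ref{p4}) to $S_{r}$, noting that principal factors of $S_{r}$ are principal factors of $S$. For the equivalences with (10), (11), (12) the paper simply cites \cite{Kublanovsky7} for $(5)\Leftrightarrow(10)\Leftrightarrow(11)\Leftrightarrow(12)$ rather than attempting the direct $(10)\Rightarrow(7)$ you sketch.
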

\begin{proof}
  Let $(9)$ hold in $V$;  by Proposition \ref{p5}, $S_0\notin V$. It was shown in \cite{Kublanovsky6} that in this case one of the identities $xy=(xy)^{n+1}$, $xy=x^{n+1}y$, $xy=xy^{n+1}$ holds in the variety $V$. Therefore, $B_2\notin V$, and any completely 0-simple semigroup of the variety $V$ is a completely simple semigroup, possibly with the adjoined outer 0, and the identities $xy=x^{n+1}y^{n+1}$, $(axyb)^n=(ayxb)^n$ hold in $CS^0(V)$. The semigroups with zero multiplication satisfy these identity too. But it follows from  $(9)$ that the variety $V$ is generated by semigroups from  $CS^0(V)$ and semigroups with zero multiplication; therefore, these identities are fulfilled in every finite semigroup of the variety $V$, and, consequently, the semigroups  $S_0$, $S_{1l}$, $S_{1r}$, $S_{1l}$, $S_{1r}$ are not contained in $V$. Thus, $(9)\Rightarrow (10)$.

  Prove that $(2)\Rightarrow (7)$. Let the variety $V$ satisfy $(2)$. The same argument as at the beginning of the proof of Theorem 3 shows that the identity $xy=(xy)^{n+1}$ holds in $V$. Therefore, $B_2\notin V$, and, as above, any completely 0-simple semigroup of the variety $V$ is a completely simple semigroup, possibly with the adjoined outer 0. Hence, the identities $xy=x^{n+1}y^{n+1}$, $(axyb)^n=(ayxb)^n$ hold in $CS^0(V)$. The semigroups with zero multiplication satisfy these identity too. The principal factors of any periodic semigroup are either semigroups with zero multiplication or completely 0-simple semigroups, and it follows from $(2)$ that the same identities are fulfilled in any finitely generated free semigroup of the variety $V$ and consequently in any semigroup of $V$.

  Let (7) be true in $V$ and let $S$ be any semigroup of $V$. Let $\mathcal{U}$, $\mathcal{V}$ be elements of $S$ and $\mathcal{U}\ne\mathcal{V}$. If $\mathcal{U}\notin S^2$ then the canonical epimorphism $f$ of $S$ onto the principal factor of the element $\mathcal{U}$ in $S$ defined by the rule: $f(\mathcal{U})=\mathcal{U}$, $f(x)=0$ for $x\neq\mathcal{U}$, separates $\mathcal{U}$ and $\mathcal{V}$. Similarly, if $\mathcal{V} \notin S^2$ then there exists an epimorphism of $S$ onto a principal factor of the semigroup $S$ which separates $\mathcal{U}$ and $\mathcal{V}$.

  Let now $\mathcal{U},\mathcal{V}\in S^2$. Denote by $S_r$ the set $\{x^{n+1}|x\in S\}$. It follows from the identities (7) that $S_r$ is a semigroup and that $a=a^{n+1}$ for each $a\in S_r$; therefore, $S_r$ is a regular semigroup. Applying the identities (7) once more we obtain: $\mathcal{U}=\mathcal{U}^{n+1}\in S_r$, $\mathcal{V}=\mathcal{V}^{n+1}\in S_r$. Remark that the semigroups $B_2^1$, $R_2^1$, $L_2^1$ are not contained in $V$ because at least one of the identities (7) is not fulfilled in any of these semigroups. By Proposition \ref{p3}, the regular semigroup $S_r\in V$ decomposes into the subdirect product of its principal factors; hence, there is an epimorphism $\varphi $ of $S_r$ onto its principal factor separating $\mathcal{U}$ and $\mathcal{V}$. But the mapping $\psi$ defined by the rule $\psi(x)=x^{n+1}$ is an epimorphism of $S$ onto $S_r$ because of (7), and any principal factor of $S_r$ is a principal factor of $S$. Therefore, $\varphi \psi$ is an epimorphism of $S$ on its principal factor separating $\mathcal{U}$ and $\mathcal{V}$.

  Thus, the semigroup $S$ is approximated by surjective homomorphisms onto its principal factors; it means that $S$ decomposes into the subdirect product of its principal factors. Hence, we proved that $(7)\Rightarrow (3)$.

  It was shown in \cite{Kublanovsky7} that the statement (5) is equivalent to each of the statements $(10)$, $(11)$, $(12)$. The implications $(1)\Rightarrow (3)\Rightarrow (2)\Rightarrow (1)$, $(3)\Rightarrow (4)\Rightarrow(9)$,  $(3)\Rightarrow (5)\Rightarrow (9)$, $(3)\Rightarrow (6)\Rightarrow (9)$, $(3)\Rightarrow (7)\Rightarrow (9)$, $(8)\Rightarrow (13)\Rightarrow (8)$ are trivial.\end{proof}

\begin{theorem}\label{t5}
    For every variety of semigroup $V$ the following conditions are equivalent.
  \begin{enumerate}
    \item Every semigroup from $V$ is embeds into a direct product of completely 0-simple semigroups.
    \item Every finitely generated semigroup from $V$ is embeds into a direct product of completely 0-simple semigroups.
    \item $V$ satisfies one of following systems of identities:
    \begin{enumerate}
      \item $[xy = x^{n+1}y^{n+1}, (axyb)^{n} = (ayxb)^{n}]$;
      \item $[xy = xy^{n+1}, axay = ayax, abxy = abyx]$;
      \item $[xy = x^{n+1}y, xaya = yaxa, xyab = yxab]$.
    \end{enumerate}
  \end{enumerate}
\end{theorem}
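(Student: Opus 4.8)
The plan is to prove the cycle $(1)\Rightarrow(2)\Rightarrow(3)\Rightarrow(1)$. The implication $(1)\Rightarrow(2)$ is trivial. The step I expect to be hardest is $(2)\Rightarrow(3)$, where a finite identity basis must be extracted from the purely structural hypothesis that every finitely generated member of $V$ embeds into a direct product of completely 0-simple semigroups (equivalently, is a residually completely 0-simple semigroup); the implication $(3)\Rightarrow(1)$, by contrast, is essentially a compilation of results already established.

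For $(3)\Rightarrow(1)$ I would treat the three systems in turn. System (c) is antiisomorphic to system (b), so it is enough to deal with (a) and (b). After renaming variables, the identity $abxy=abyx$ of system (b) is the same identity as $xabc=xacb$, so system (b) is precisely the defining system of Proposition \ref{p10}; hence Proposition \ref{p10} gives that every semigroup of $V$ is residually completely 0-simple, i.e.\ embeds into a direct product of completely 0-simple semigroups, and system (c) follows by the antiisomorphic (right-hand) analogue of Proposition \ref{p10}. System (a) is exactly condition $(7)$ of Theorem \ref{t4}, so by Theorem \ref{t4} every semigroup of $V$ is a subdirect product of completely 0-simple semigroups and semigroups with zero multiplication. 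To finish I would observe that a direct product of semigroups with zero multiplication again has zero multiplication, and that any semigroup $Z$ with zero multiplication embeds into a direct power of $B_2$: for each non-zero $z\in Z$ the map sending $z$ to a fixed non-zero (nilpotent) element of $B_2$ and every other element of $Z$ to $0$ is a homomorphism (one uses that the square of that element of $B_2$ is $0$), and these homomorphisms separate the points of $Z$. Consequently every semigroup of $V$ embeds into a direct product of completely 0-simple semigroups.

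For $(2)\Rightarrow(3)$ the clean route goes through Theorem \ref{t4}. The semigroup $S_0$ is finitely generated and, by Proposition \ref{p5}, is not residually completely 0-simple, so condition $(2)$ forces $S_0\notin V$. By the result of \cite{Kublanovsky6} already quoted in the proofs of Theorems \ref{t3} and \ref{t4}, one of the identities $xy=(xy)^{n+1}$, $xy=x^{n+1}y$, $xy=xy^{n+1}$ then holds in $V$; in particular $V$ is periodic (it satisfies $x^2=x^{2n+2}$ or $x^2=x^{n+2}$). Now, by condition $(2)$, every finite semigroup of $V$ is residually completely 0-simple, hence a fortiori residually in the class consisting of all completely 0-simple semigroups and all semigroups with zero multiplication; for the periodic variety $V$ this is exactly condition $(9)$ of Theorem \ref{t4}, and Theorem \ref{t4} makes it equivalent to condition $(7)$ of that theorem, i.e.\ to system (a). Thus $V$ satisfies $(3)$.

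It is also worth sketching the more hands-on argument which produces the three systems according to the three branches of the trichotomy, since this explains why all of (a), (b), (c) occur in the statement. In each branch $B_2\notin V$ (each of the three identities fails in $B_2$), so, as in the proof of Theorem \ref{t2}, every completely 0-simple semigroup of $V$ is completely simple with at most an adjoined outer zero, and such semigroups satisfy $xy=x^{n+1}y^{n+1}$ and $(axyb)^n=(ayxb)^n$; in the branch $xy=(xy)^{n+1}$ this is pushed to system (a). In the branch $xy=xy^{n+1}$ one first notes (Propositions \ref{p6} and \ref{p11} with condition $(2)$) that $T_l,F_\lambda,F_\rho,L_2^1,R_2^1\notin V$, applies Proposition \ref{p7} to obtain one of the weak alternatives $xR=xPxQ$ (with $x\notin\chi(R)$), $Px=Qy$, or $x=PyQ$, and then refines this, using the excluded semigroups, to the permutation identities $axay=ayax$ and $abxy=abyx$, i.e.\ to system (b); the branch $xy=x^{n+1}y$ is dual and yields system (c). The main obstacle in this second approach is precisely this refinement step: one must show that, modulo $xy=xy^{n+1}$ together with the exclusion of the semigroups above, each of the three alternatives supplied by Proposition \ref{p7} either collapses $V$ to a degenerate (essentially one-element, or one-sided-zero) variety or already forces both permutation laws of system (b).
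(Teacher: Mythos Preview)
Your $(3)\Rightarrow(1)$ is fine and essentially matches the paper (the $B_2$ embedding of zero-multiplication semigroups is the missing observation that upgrades Theorem~\ref{t4} from residually $\{CS^0\cup\text{zero mult}\}$ to residually $CS^0$).

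Your ``clean route'' for $(2)\Rightarrow(3)$, however, proves too much and is therefore wrong. You deduce from condition~(2) that system~(a) holds, via condition~(9)$\Leftrightarrow$(7) of Theorem~\ref{t4}. But system-(a) is \emph{not} a consequence of~(2): take the variety $V$ defined by system~(b). By Proposition~\ref{p10} every semigroup of $V$ is residually completely $0$-simple, so~(2) (even~(1)) holds; yet $S_{1l}\in V$ (it satisfies all three identities of~(b)) while $S_{1l}$ fails $xy=x^{n+1}y^{n+1}$ (since $ae=a\neq 0=a^{n+1}e^{n+1}$). The flaw is in your reading of Theorem~\ref{t4}(9): for the implication (9)$\Rightarrow$(7) the completely $0$-simple approximants must lie in $V$ (this is how the paper's proof uses it, passing from~(9) to ``$V$ is generated by $CS^0(V)$ together with zero-multiplication semigroups''). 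Condition~(2) of Theorem~\ref{t5} does \emph{not} give you that: $S_{1l}$ embeds in $B_2$, but $B_2\notin V$ once $xy=xy^{n+1}$ holds, and one checks directly that $S_{1l}$ admits no separating homomorphism into any completely simple semigroup with adjoined~$0$ or into any zero-multiplication semigroup. So your chain (2)$\Rightarrow$Theorem~\ref{t4}(9)$\Rightarrow$Theorem~\ref{t4}(7) breaks at the first arrow.

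Your ``hands-on'' sketch is the paper's actual route, and you have correctly located the hard step. What the paper does there, and what your outline is missing, is a structural argument showing that under~(2) with $xy=xy^{n+1}$ and $xy\neq x^{n+1}y^{n+1}$ \emph{all groups in $V$ are abelian}: it builds, for any group $G\in V$ and subgroup $H$, the semigroup $(G:H)_r$, shows it lies in $V$, chooses $H$ so that $(G:H)_r$ is subdirectly irreducible, and then uses the embedding into a completely $0$-simple semigroup to force a contradiction unless the commutator is trivial. Only after this, and after excluding $T_l$ (Propositions~\ref{p6},~\ref{p7}), does the paper run through the three alternatives of Proposition~\ref{p7}; the first two are shown to collapse to the case $xy=(xy)^{n+1}$ (hence to system~(a)), and the third (the identity $Px=Qy$) combined with abelianness and Lemma~\ref{l14} yields $axay=ayax$ and $abxy=abyx$, i.e.\ system~(b). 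Your outline anticipates none of the abelian-group step, which is the real content of the refinement you flagged.
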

\begin{proof}
  The implication $(1)\Longrightarrow (2)$ is trivial, and the implication $(3)\Longrightarrow (2)$ follows from Proposition \ref{p10} and Theorem \ref{t4}. It remains to prove that $(2)\Longrightarrow (3)$. Assume that (2) is fulfilled. By Proposition \ref{p5}, the semigroup $S_0$ is not contained in $V$. It was shown in \cite{Kublanovsky6} that then one of the following identities holds in $V$: $xy=(xy)^{n+1}$, $xy=xy^{n+1}$, $xy=x^{n+1}y$.  It follows that the semigroup $B_2$ is not contained in $V$. Further, by Proposition \ref{p11} the semigroups $L_2^1$, $R_2^1$, $F_\lambda $, $F_\rho $ are not residually completely 0-simple; therefore, these semigroups are not contained in $V$, either. We can apply now Proposition \ref{p2} and obtain that the identities \eqref{1} and \eqref{2} are fulfilled in $V$.

  Denote by $V_r$ the intersection of the variety $V$ and the variety defined by the identity $x=x^{n+1}$. According to Proposition 3, each semigroup of the variety $V_r$ is the subdirect product of its principal factors. Therefore, by Theorem \ref{t4}, the identity $(axyb)^n=(ayxb)^n$ holds in $V_r$. Consider different possibilities.
  \begin{enumerate}
    \item The identity $xy=(xy^{n+1})$ holds in $V$. It follows from this identity and the identity $(axyb)^n=(ayxb)^n$ of the variety $V_r$ that the identity
        $$(a_1a_2x_1x_2y_1y_2b_1b_2)^n=(a_1a_2y_1y_2b_1b_2)^n$$
        is fulfilled in $V$. Making use of \eqref{1} and \eqref{2}, we obtain the identity $$xyx=(xy)^{5n}W=((xy)(xyx)(yxy)(xy))^nW=((xy)(yxy)(xyx)(xy))^nW=Py^2Q,$$
        where $P$, $Q$ are words on the alphabet $\{x,y\}$. Therefore, we can apply Lemma \ref{l14}. It follows from this lemma that the identities \eqref{L14.1}, \eqref{L14.2} hold in $V$. These identities and the identity $xy=(xy)^{n+1}$ imply: $$xy=xyxy(xy)^{n-1}y=x^{n+1}yxy(xy)^{n-1}=x^n(xy)^{n+1}=x^nxy=x^{n+1}y$$ and similarly $xy=xy^{n+1}$. Therefore,  $xy=x^nxy=x^nxy^{n+1}=x^{n+1}y^{+n+1}$. The last identity and the identity \eqref{1} imply the identity $(axyb)^n=(ayxb)^n$. Thus, the set of identities \textit{(a)} holds in $V$.
    \item The identity $xy=xy^{n+1}$ holds in $V$. We have just considered the case in which the identity $xy=x^{n+1}y^{n+1}$ holds in $V$. Hence, we can assume that this identity does not hold in $V$.

        Let $G$ be any group contained in the semigroup variety $V$, and let $H$ be a subgroup of $G$. Denote by $A=\{gH|g\in G\}$ the set of left cosets of the group $G$ with respect to the subgroup $H$. Extend the multiplication on the group $G$ till a binary operation on the set $S=G\cup A\cup \{0\}$ by setting for every $g_1,g_2\in G$:
        \begin{align}
          \nonumber
          g_1\cdot (g_2H)=g_1g_2H;
          \nonumber
          (g_1H)\cdot (g_2H)=0;
          \nonumber
          (g_1H)\cdot g_2=0.
        \end{align}
        It is easy to check that this binary operation is associative, i.e., $S$ is a semigroup. We shall denote this semigroup by $(G:H)_l$. Similarly we can construct the semigroup $(G:H)_r$ antiisomorphic to the semigroup $(G:H)_l$. These semigroups were introduced by the author in the paper \cite{Kublanovsky7}. It was shown in this paper that for any semigroup variety $V$ in which the identity $xy=xy^{n+1}$ holds and the identity $xy=x^{n+1}y^{n+1}$ fails, any group $G\in V$ and any subgroup $H\subseteq G$ the semigroup $(G:H)_r$ is contained in $V$.

        Let us prove now that all groups from the semigroup variety $V$ are abelian. Assume the contrary. Denote by $V_g$ the variety of all groups contained in $V$. Since $xy=xy^{n+1}$ in $V$, all groups of the variety $V_g$ are periodic of exponent $n$.

        Let $G^*$ be the free group of the variety $V_g$ with two free generators $x^*$, $y^*$. It is sufficient to prove that the group $G^*$ is abelian. If it is not true, then the commutator $[x^*,y^*]$ is not equal to the unity $e^*$ of the group $G^*$. Prove that $[x^*,y^*]$ is not contained in the cyclic group $\langle y^*\rangle$ generated by $y^*$. In fact, otherwise $[x^*,y^*]=(y^*)^k$ for an integer $k\neq0$. But every relation between free generators of a free group of a variety is an identity of this variety. The substitution $x^*\mapsto e^*$ turns this identity into the identity $(y^*)^k=e^*$. Therefore, we obtain that $[x^*,y^*]=e^*$ in contradiction with the above assumption.

        Let $H^*$ be a subgroup of $G^*$ which is maximal with respect to the following properties: $y^*\in H^*$, $[x^*,y^*]\not\in H^*$. Such subgroup exists by the Zorn's lemma. Let $N$ be the greatest normal subgroup of $G^*$ contained in $H^*$, $G=G^*/N$, $H=H^*/N$. Denote by $e,x,y\in G$ the images of the elements $e^*,x^*,y^*\in H^*$ under the canonical epimorphism $G^*\rightarrow G^*/N=G$. If the element $y^*\in H$ was contained in $N$ then we should have $[x^*,y^*]\in N\subseteq H^*$. Hence, $y^*\not\in N$, and  $y\not=e$.

        The subgroup $H$ of the group $G$ has the following properties:
        \begin{itemize}
          \item $H$ does not contain any nontrivial normal subgroup of $G$;
          \item the intersection of all subgroups of $G$ which strictly contain the subgroup $H$ does not coincide with $H$ (because this intersection contains the element $[x,y]\not\in H$).
        \end{itemize}
        It was shown in \cite{Kublanovsky7} that in this situation the semigroup $(G:H)_r$ is subdirectly indecomposable.

        But the semigroup $(G:H)_r$  is contained in $V$; therefore, $(G:H)_r$ is a residually completely 0-simple semigroup. Since $(G:H)_r$ is subdirectly indecomposable, it follows that the semigroup $(G:H)_r$ can be embedded into a completely 0-simple semigroup $M$. Let $f$ be an injective homomorphism of $(G:H)_r$ into $M$.  The elements $f(y)$ and $f(e)$ belong to the same subgroup $f(G)$ of the semigroup $M$; therefore, $f(e)\mathcal{H}f(y)$.

        On the other hand, $f(H)f(e)=f(He)=f(H)\neq0$, $f(H)f(y)=f(Hy)=f(H)\neq0$. Making use of the matrix description of completely 0-simple semigroups we deduce from these relations that $f(e)=f(y)$ in contradiction with the injectivity of the mapping $f$. Therefore, all groups from the semigroup variety $V$ are abelian.

        By Proposition \ref{p6}, the semigroup $T_l$ is not contained in $V$. It follows from Proposition \ref{p7} that an identity of one of the forms $x^m=PyQ$, $xR=xPxQ$ ($x\notin \chi (R)$) or $Px=Qy$ holds in $V$. Consider these cases separately.
        \begin{enumerate}
          \item An identity of the form $x^m=PyQ$ holds in $V$. Then $xy=xy^{n+1}=xy^{nm+1}=P^*(xy)^2Q^*$. It follows that the principal factor of the element $xy$ is not a semigroup with zero multiplication and, consequently, this principal factor is a 0-simple semigroup. By Mann's theorem, any 0-simple periodic semigroup is a completely 0-simple semigroup. Therefore, $xy$ is a regular element, i.e., $xy\in S_r$ and $xy=(xy)^{n+1}$ in contradiction with the assumption. Thus, this case is impossible.
          \item An identity of the form $xR=xPxQ$ ($x\notin \chi (R)$) holds in $V$. Then $xy^{n+1}=xPxQ^*$ and $xy=xy^{n+1}=xPxQ^*$. If $y\in \chi (P)\cup \chi(Q^*)$ then $xy$ is divisible by $(yx)^{n+1}=(yx)^{2n+1}$ or $xy$ is divisible by $(xy)^{n+1}$. In both cases $xy$ is divisible by $(xy)^{n+1}$; it follows that the principal factor of the element $xy$ is not a semigroup with zero multiplication and, by Mann's theorem, this principal factor is a completely 0-simple semigroup. Therefore, $xy=(xy)^{n+1}$, and it contradicts the assumption.

              Let $y\notin \chi (P)\cup \chi(Q^*)$. Then an identity of the form $x^m=P'yQ'$ holds in $V$. But we have seen above that this leads to a contradiction. Thus, this case is not possible, either.
          \item An identity of the form $Px=Qy$ holds in $V$. The semigroups $B_2^1$, $L_2^1$, $R_2^1$ are not contained in $V$. Therefore, by Proposition \ref{p3}, any regular semigroup of $V$ is the subdirect product of its principal factors. Since $B_2\notin V$ and $Px=Qy$ is an identity of $V$, all completely 0-simple semigroups of the variety $V$ are left groups. We have seen that these groups are abelian. Therefore, the identity $byx=bxy$ is fulfilled in completely 0-simple semigroups of the variety $V$ and, consequently, in regular semigroups of $V$.

              Introduce a binary relation $\rho $ on the free semigroup $S$ of the variety $V$ by the rule:
              $$x\rho y\Longleftrightarrow ax=ay \text{ for all }a\in S.$$
              It is obvious that $\rho $ is a congruence on $S$ and that the identity $z=z^{n+1}$ holds in $S/\rho $. Therefore, $S/\rho $ is a regular semigroup. Hence, the identity $bxy=byx$ is fulfilled in $S/\rho $. It follows immediately that the identity $abxy=abyx$ holds in $S$. The same argument as in the case \textit{1} enables us to conclude that an identity of the form $xyx=Py^2Q$ holds in $V$.

              It was shown above that the semigroups $L_2^1$, $R_2^1$, $F_\lambda $, $F_\rho $ are not contained in $V$. We can apply Lemma \ref{l14} and conclude that the identity $xyx=x^{n+1}yx$ is fulfilled in $V$. Making use of this identity and the identity $abxy=abyx$ we obtain:
              $$axay=a^{n+1}xay=aa^nxay=aa^nyax=a^{n+1}yax=ayax.$$
              We have proved that all identities of the set \textit{(b)} hold in $V$.
        \end{enumerate}
    \item The identity $xy=x^{n+1}y$ holds in $V$. This case can be treated exactly as the previous case.
  \end{enumerate}\end{proof}
\begin{theorem}\label{t6}
  Let $V$ be a semigroup variety. The following statements are equivalent.
  \begin{enumerate}
    \item Every semigroup of the variety $V$ is a residually $B_2$-semigroup.
    \item Every finitely generated semigroup of the variety $V$ is a residually $B_2$-semigroup.
    \item One of the following sets of identities holds in the variety $V$:
    \begin{enumerate}
      \item $xy=x^{n+1}y^{n+1}$, $(axyb)^n=(ayxb)^n$;
      \item $xy=xy^{n+1}$, $axy=ayx$;
      \item $xy=x^{n+1}y$, $xya=yxa$.
    \end{enumerate}
    \item Every semigroup of the variety $V$ is finitely approximated by Green relations $\mathcal{L}$ and $\mathcal{R}$.
    \item Every semigroup of the variety $V$ is finitely approximated by Green relation $\mathcal{H}$.
    \item The variety $V$ is residually finite or every semigroup of the variety $V$ is an inflation of a normal band of groups.
  \end{enumerate}
\end{theorem}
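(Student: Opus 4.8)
The argument follows the template of Theorem \ref{t5}: I would prove the cycle $(1)\Rightarrow(2)\Rightarrow(3)\Rightarrow(1)$ and then tie $(4)$, $(5)$, $(6)$ into it. Only $(1)\Rightarrow(2)$ is automatic. For $(2)\Rightarrow(3)$, observe that every $B_2$-semigroup is completely $0$-simple, so a residually $B_2$-semigroup is residually completely $0$-simple; hence if $(2)$ holds then condition $(2)$ of Theorem \ref{t5} holds, and $V$ satisfies one of the systems $(a)$, $(b)$, $(c)$ of that theorem. The first is identical with system $(a)$ here; systems $(b)$ and $(c)$ are weaker than those here and must be strengthened from $\{xy=xy^{n+1},\,axay=ayax,\,abxy=abyx\}$ to $\{xy=xy^{n+1},\,axy=ayx\}$ (and dually). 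Performing this strengthening is where I expect the main difficulty to lie.

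To strengthen case $(b)$: by Proposition \ref{p8} the semigroup $M_l$ is not residually $B_2$, hence $M_l\notin V$, so by Proposition \ref{p9} the variety $V$ satisfies one of the identities \eqref{L5.1}, \eqref{P9.1}, \eqref{P9.2}. The identities \eqref{L5.1} and \eqref{P9.1} each force the element $xy$ of the free semigroup of $V$ to become regular (exactly as in case $1$ of the proof of Theorem \ref{t5}), hence $xy=(xy)^{n+1}$, which combined with $xy=xy^{n+1}$ drives $V$ into case $(a)$, contradicting our being in case $(b)$. Thus \eqref{P9.2}, namely $axP=ayQ$ with $a\notin\chi(P)\cup\chi(Q)$, holds in $V$. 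In case $(b)$ the identities \eqref{1} and \eqref{2} and an identity of the form $xyx=Py^2Q$ are available (from the proof of Theorem \ref{t5}), so Lemma \ref{l14} applies and yields \eqref{L14.1}--\eqref{L14.2}, i.e. $xyx=x^{n+1}yx$. Now a routine chain of substitutions --- reducing suffixes by means of $xy=xy^{n+1}$ and permuting them by means of $abxy=abyx$, much as in the final display of the proof of Theorem \ref{t5} --- turns $axP=ayQ$, together with these identities, into $axy=ayx$. Case $(c)$ is dual, using $M_r$, Proposition \ref{p8}, the right-hand analogue of Proposition \ref{p9} and the identity $xyx=xyx^{n+1}$.

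For $(3)\Rightarrow(1)$ I would argue by cases, the unifying observation being that $A_2$ fails each of the three identity systems, so $A_2\notin V$; since a completely $0$-simple semigroup is a $B_2$-semigroup precisely when $A_2$ is not one of its divisors, every completely $0$-simple semigroup lying in $V$ is a $B_2$-semigroup. In case $(a)$ the identity basis is exactly that of Theorem \ref{t4}, so every semigroup of $V$ is a subdirect product of its principal factors, each of which is a null semigroup or a completely $0$-simple semigroup belonging to $V$; the null semigroups are residually $B_2$ (separate $u\neq v$, say with $u\neq 0$, by the homomorphism sending $u$ to an element of $B_2$ of square $0$ and all other elements to $0$), and the others are $B_2$-semigroups, so $(1)$ follows. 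In cases $(b)$ and $(c)$ the strengthened identities imply those defining the variety of Proposition \ref{p10}, so every semigroup of $V$ is residually completely $0$-simple; the completely $0$-simple semigroups appearing in that approximation satisfy $xy=xy^{n+1}$ (resp.\ $xy=x^{n+1}y$), which $A_2$ does not, so they too have no $A_2$-divisor and are $B_2$-semigroups, whence $(1)$. Checking that the approximating semigroups produced by the construction of Proposition \ref{p10} genuinely obey these identities --- equivalently, that their sandwich matrices are block-diagonal with $0$-free blocks --- is the secondary point that requires attention.

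Finally, the equivalences with $(4)$, $(5)$, $(6)$: one has $(4)\Rightarrow(5)$ since $\mathcal{H}=\mathcal{L}\cap\mathcal{R}$, and $(1)\Rightarrow(4)$ since in a completely $0$-simple, a fortiori $B_2$-, semigroup any two elements not related by $\mathcal{L}$ (resp.\ $\mathcal{R}$) stay unrelated in a suitable \emph{finite} completely $0$-simple quotient obtained by collapsing the structure group and shrinking the index sets. Conversely the identity systems of $(3)$ are precisely those governing $\mathcal{L},\mathcal{R},\mathcal{H}$-approximability and, in cases $(b)$ and $(c)$, residual finiteness of $V$; here I would quote the descriptions of residually finite semigroup varieties and of varieties finitely approximated by Green's relations obtained by the author in \cite{Kublanovsky6,Kublanovsky7}, so that $(3)\Leftrightarrow(4)\Leftrightarrow(5)\Leftrightarrow(6)$ reduces to matching identity bases, the forbidden semigroups $S_0$, $S_{il}$, $S_{ir}$, $N_2$, $L_2^1$, $R_2^1$ being exactly the obstructions singled out there, while case $(a)$ coincides with the class of inflations of normal bands of groups and so accounts for the alternative in $(6)$.
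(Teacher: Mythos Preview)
Your route through $(2)\Rightarrow(3)$ is the paper's route: invoke Theorem \ref{t5}, then use $M_l\notin V$ and Proposition \ref{p9} to upgrade $abxy=abyx$ to $axy=ayx$. (The detour through Lemma \ref{l14} is unnecessary: in case $(b)$ you already have $xy=xy^{n+1}$, which with $x\mapsto a$, $y\mapsto x$ gives $ax=ax^{n+1}$ directly, so exponents in $axP=ayQ$ can be normalised without \eqref{L14.1}.) The equivalences $(3)\Leftrightarrow(4)\Leftrightarrow(5)\Leftrightarrow(6)$ are likewise dispatched by citation in both your sketch and the paper.

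The divergence, and the gap, is in $(3)\Rightarrow(1)$ for cases $(b)$ and $(c)$. Your plan is to feed $V$ into Proposition \ref{p10} and then argue that the completely $0$-simple semigroups $M$ produced there satisfy $xy=xy^{n+1}$, hence have no $A_2$-divisor. But those $M$ are built abstractly from $S$ and need not lie in $V$; in particular $B_2$ itself fails $xy=xy^{n+1}$ (take $x=a$, $y=b$: $ab\neq 0=ab^{n+1}$), so a $B_2$-semigroup can perfectly well violate that identity. Thus ``satisfies $xy=xy^{n+1}$'' is strictly stronger than ``is a $B_2$-semigroup'', and your ``equivalently, block-diagonal sandwich matrix'' is not an equivalent reformulation but the very conclusion you need. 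The approach is salvageable: under the stronger identity $axy=ayx$ one can check directly that the zero pattern of the sandwich matrix in Proposition \ref{p10} is block-diagonal (if $z_1z_2\neq 0$, $z_1z_2'\neq 0$, $z_1'z_2\neq 0$, use $z_2'\in L$ and $axy=ayx$ to move a power of $z_2'$ past $z_2$ in an expression for $x$, forcing $z_1'z_2'\neq 0$). But that is a genuine computation you have not supplied.

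The paper avoids this entirely. For case $(b)$ it quotes from \cite{Kublanovsky7} the list of subdirectly irreducible semigroups in the variety defined by $xy=xy^{n+1}$, $axy=ayx$ --- namely $Z(p)$, $Z^0(p)$, $L_2$, $L_2^0$, and $(G:H)_r$ with $G$ cyclic of prime order and $H$ trivial --- and then embeds each of these explicitly into a Brandt semigroup. Your treatment of case $(a)$ via Theorem \ref{t4} is fine and in fact more explicit than the paper, which leaves that case to the reader.
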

\begin{proof}
  The equivalences $(3)\Longleftrightarrow (4)\Longleftrightarrow (5)\Longleftrightarrow (6)$ were proved by the author in \cite{Kublanovsky7}, the implication $(1)\Longrightarrow (2)$ is trivial.

  Let the statement (2) be true. It follows from Theorem \ref{t5} that one of the following sets of identities holds in the variety $V$:
  \begin{subequations}
    \begin{align}
    \label{5.3.1}
    xy=x^{n+1}y^{n+1}, (axyb)^n=(ayxb)^n;\\
    \label{5.3.2}
    xy=xy^{n+1}, axay=ayax, abxy=abyx;\\
    \label{5.3.3}
    xy=x^{n+1}y, xaya=yaxa, xyab=yxab.
    \end{align}
  \end{subequations}
  Assume that the identities \eqref{5.3.2} holds in $V$ but at least one of the identities \eqref{5.3.1} fails. By Proposition \ref{p8}, the semigroup $M_l$ does not belong to $V$. It follows from Proposition \ref{p9} that an identity of one of the form \eqref{L5.1}, \eqref{P9.1}, \eqref{P9.2} holds in $V$. The argument of the proof of Theorem \ref{t5} shows that in our situation each of the identities \eqref{L5.1}, \eqref{P9.1} imply all identities of the set \eqref{5.3.1}. Let \eqref{P9.2} hold in $V$. Making use of the identity $abxy=axby$ we obtain the identity $ax^my^k=ay^px^q$. It follows that $ax^{n+1}y^{n+1}=ay^{p^{*}}x^{q^*}$, i.e., $axy=ay^{p^{*}}x^{q^*}$.

  Let $n$ be the period of the variety $V$. Then it is evident that $p^*\equiv1\pmod{n}$, $q^*\equiv 1\pmod{n}$; therefore,  the last identity takes the form $axy=ayx$. Thus, we have proved that \eqref{5.3.2}$\Longrightarrow$ $xy=xy^{n+1}$, $axy=ayx$. The implication \eqref{5.3.3}$\Longrightarrow$ $xy=x^{n+1}y$, $xya=yxa$ can be proved in the same way. Hence, we proved that $(2)\Longrightarrow (3)$.

  It remains to prove that $(3)\Longrightarrow (1)$. Let the identities $xy=xy^{n+1}$, $axy=ayx$ hold in $V$. It was shown in \cite{Kublanovsky7} that in this case only the following semigroups are subdirectly indecomposable in $V$:
  \begin{enumerate}
    \item cyclic group $Z(p)$ of prime order $p$, and the semigroup $Z^0(p)$ obtaines from  $Z(p)$ by adjoining outer 0;
    \item 2-element semigroup of left cycles $L_2$ and the semigroup $L_2^0$ obtaines from  $L_2$ by adjoining outer 0;
    \item $(G:H)_r$, where $G$ is a cyclic group of prime order and $H$ is its unity subgroup.
  \end{enumerate}
  It is evident that the semigroups 1, 2 are $B_2$-semigroups and consequently they are residually $B_2$-semigroups. The semigroups of the form 3 can be embedded into a Brandt semigroup over the group $G$. In fact, let $B(G)=M^0(G,P,I,\Lambda )$, where $I=\Lambda=\{1,2\}$, $P=\left(
                                                              \begin{array}{cc}
                                                                1 & 0 \\
                                                                0 & 1 \\
                                                              \end{array}
                                                            \right)$.
  Define a mapping $f:(G:H)_r\rightarrow B(G)$ by the formulae:
  $$f(g)=(1,g,1),\quad f(Hg)=(2,g,1) \quad \text{for every }g\in G.$$
  It is easy to check that $f$ is an injective homomorphism of semigroups.

  Thus, any semigroup of the variety $V$ is the subdirect product of subdirectly indecomposable semigroups, each of which can be embedded into a $B_2$-semigroup. It means that every semigroup of the variety $V$ is residually $B_2$-semigroup.

  The case when the identities $xy=x^{n+1}y$, $xya=yxa$ are fulfilled in $V$ can be studied in the same way. Thus, in all cases $(3)\Longrightarrow(1)$.\end{proof}
\begin{theorem}\label{t7}
  \begin{enumerate}
    \item Every semigroup of the variety $V$ is a residually Brandt semigroup.
    \item Every finitely generated semigroup of the variety $V$ is a residually Brandt semigroup.
    \item One of the following sets of identities holds in the variety $V$:
        \begin{enumerate}
          \item $xy=x^{n+1}y^{n+1}$, $axy^nb=ay^nxb$;
          \item $xy=xy^{n+1}$, $axy=ayx$;
          \item $xy=x^{n+1}y$, $xya=yxa$.
        \end{enumerate}
    \item The variety $V$ is residually finite or every semigroup of the variety $V$ is an inflation of a normal orthodox band of groups.
  \end{enumerate}
\end{theorem}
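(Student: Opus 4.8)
The plan is to follow the pattern of Theorems \ref{t5} and \ref{t6}: reduce to the already-established description of residually $B_2$-semigroups, then add a Brandt-specific separation argument. The implication $(1)\Rightarrow(2)$ is immediate, and $(3)\Leftrightarrow(4)$ should follow, just as the analogous equivalence in Theorem \ref{t6}, from the descriptions of residually finite varieties and of (orthodox) bands of groups in \cite{Kublanovsky7}, \cite{Kublanovsky6} together with the structure theory of \cite{Petrich1}. Thus the substantive work is concentrated in $(2)\Rightarrow(3)$ and $(3)\Rightarrow(1)$.

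For $(2)\Rightarrow(3)$: every Brandt semigroup $M^0(G;I,I,\Delta)$ is a $B_2$-semigroup, since $\Delta$ is already block-diagonal with $1\times1$ blocks none of which contains a zero; hence a residually Brandt semigroup is residually a $B_2$-semigroup, and Theorem \ref{t6} gives that one of the systems $\{xy=x^{n+1}y^{n+1},\,(axyb)^n=(ayxb)^n\}$, $\{xy=xy^{n+1},\,axy=ayx\}$, $\{xy=x^{n+1}y,\,xya=yxa\}$ holds in $V$. The last two are exactly (b) and (c). In the first case I must strengthen $(axyb)^n=(ayxb)^n$ to $axy^nb=ay^nxb$; I expect this either to follow by a direct identity manipulation from $xy=x^{n+1}y^{n+1}$ (as in case 1 of the proof of Theorem \ref{t5}, where that identity already forces $xy=x^{n+1}y$ and $xy=xy^{n+1}$), or, failing that, to require one more forbidden semigroup: a small $E$ (and its dual) satisfying $xy=x^{n+1}y^{n+1}$ and $(axyb)^n=(ayxb)^n$ but not $axy^nb=ay^nxb$, shown in the manner of Proposition \ref{p8} to be non-residually-Brandt because two of its idempotents can never be kept apart inside an inverse semigroup, and then, in the manner of Proposition \ref{p9}, used to extract the identity $axy^nb=ay^nxb$ from $E\notin V$.

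For $(3)\Rightarrow(1)$ I would argue case by case. In each case the subdirectly indecomposable members of $V$ are already listed in \cite{Kublanovsky7}, so it is enough to embed each of them into a Brandt semigroup lying in $V$ and to invoke that a subdirect product of semigroups each embeddable in a Brandt semigroup is residually Brandt. The cyclic groups $Z(p)$ and their $0$-extensions $Z^0(p)$ are Brandt semigroups over a one-point index set; the semigroups $(G:H)_r$ embed into $B(G)=M^0(G;\{1,2\},\{1,2\},\Delta)$ via $g\mapsto(1,g,1)$, $Hg\mapsto(2,g,1)$, exactly as at the close of the proof of Theorem \ref{t6}; and the remaining subdirectly indecomposable semigroups should yield to the same coset construction carried out over a larger index set, while regular semigroups avoiding $B_2^1$, $L_2^1$, $R_2^1$ are handled, as before, through Propositions \ref{p3} and \ref{p4}. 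The main obstacle I anticipate is the first case of $(2)\Rightarrow(3)$: deciding whether a new forbidden semigroup is needed at all and, if so, pinning down the right one — the true gap between $B_2$-semigroups and Brandt semigroups is the presence of several idempotents in one $\mathcal{L}$- or $\mathcal{R}$-class, i.e. the failure of commutativity of idempotents — and then, as in Theorem \ref{t6}, carrying out the exponent bookkeeping modulo $n$ that turns the raw identity produced by the forbidden-semigroup analysis into the symmetric form $axy^nb=ay^nxb$.
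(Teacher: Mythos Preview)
Your proposal is correct and takes essentially the same approach as the paper: the paper's entire proof of Theorem~\ref{t7} is the single sentence ``The proof of this Theorem is a slight modification of the proof of Theorem~\ref{t6},'' and your plan is precisely to carry out that modification, correctly identifying the Brandt/$B_2$ distinction (commuting idempotents, i.e.\ the orthodox condition) as the place where the argument of Theorem~\ref{t6} must be adjusted. Your outline is in fact considerably more explicit than what the paper supplies.
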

\begin{proof}
  The proof of this Theorem is a slight modification of the proof of Theorem \ref{t6}.
\end{proof}

\end{document}